\newtheorem{theorem}{Theorem}[section]
\newtheorem{corollary}[theorem]{Corollary}
\newtheorem{lemma}[theorem]{Lemma}
\newtheorem{proposition}[theorem]{Proposition}
\theoremstyle{definition}
\newtheorem{definition}[theorem]{Definition}
\theoremstyle{remark}
\newtheorem{remark}[theorem]{Remark}
\numberwithin{equation}{section}
\newcommand{\g}{\geqslant}
\newcommand{\RR}{\mathbb{R}}
\newcommand{\ZZ}{\mathbb{Z}}
\newcommand{\CC}{\mathbb{C}}
\newcommand{\sph}{\mathbb{S}}
\newcommand{\NN}{\mathbb{N}}
\newcommand{\p}{\partial}
\newcommand{\les}{\leqslant}
\newcommand{\lesa}{\lesssim}
\newcommand{\mc}[1]{\mathcal{#1}}
\newcommand{\mb}[1]{\mathbf{#1}}
\newcommand{\mf}[1]{\mathfrak{#1}}
\newcommand{\eref}[1]{(\ref{#1})}
\newcommand{\lr}[1]{ \langle #1 \rangle}
\newcommand{\ind}{\mathbbold{1}}
\DeclareSymbolFont{bbold}{U}{bbold}{m}{n}
\DeclareSymbolFontAlphabet{\mathbbold}{bbold}
\DeclareMathOperator*{\sgn}{sgn}
\DeclareMathOperator*{\supp}{supp}
\DeclareMathOperator*{\diam}{diam}
\DeclareMathOperator*{\diag}{diag}
\DeclareMathOperator*{\dist}{dist}
\begin{document}

\title{On the Division Problem for the Wave Maps Equation}%
\author{Timothy Candy}%
\address{Universit\"at Bielefeld, Fakult\"at f\"ur Mathematik, Postfach 10 01 31, 33501 Bielefeld, Germany
}
\email{tcandy@math.uni-bielefeld.de}%

\author{Sebastian Herr}%
\address{Universit\"at Bielefeld, Fakult\"at f\"ur Mathematik, Postfach 10 01 31, 33501 Bielefeld, Germany}
\email{herr@math.uni-bielefeld.de}%
\subjclass{35L15,35L52}%
\keywords{wave maps, division problem, bilinear Fourier restriction, atomic spaces}%

\begin{abstract}
We consider Wave Maps into the sphere and give a new proof of small data global well-posedness and scattering in the critical Besov space, in any space dimension $n \g 2$. We use an adapted version of the atomic space $U^2$ as the single building block for the iteration space. Our approach to the so-called division problem is modular as it systematically uses two ingredients: atomic bilinear (adjoint) Fourier restriction estimates and an algebra property of the iteration space, both of which can be adapted to other phase functions.
\end{abstract}

\maketitle

\section{Introduction}\label{sec:intro}
Let $(\RR^{1+n},\eta)$ be the Minkowski space-time with metric $(\eta_{\alpha \beta})=\diag(-1,1,\ldots,1)$ and $M$ be a smooth manifold with Riemannian metric $g$. Formally, a wave map is map  $\phi:\RR^{1+n}\to M$ which is a critical point of the
Langrangian
\[
\mathcal{L}(\phi)=\frac{1}{2}\int_{\RR^{1+n}} \langle \partial^\alpha \phi,\partial_\alpha \phi \rangle_{g(\phi)} dtdx .
\]
Space-time coordinates are denoted by $(t,x)$, we use the standard summation convention,  raise indices according to $\partial^\alpha=\eta^{\alpha\beta}\partial_\beta $, and write $\Box=-\partial^\alpha\partial_\alpha=\partial_t^2-\Delta$ for the d'Alembertian.
In the extrinsic formulation, assuming that $M$ is a submanifold of some Euclidean space $\RR^m$, a wave map is a solution  $\phi:\RR^{1+n}\to M\subset \RR^m$ to
\begin{equation}\label{eq:wm-extr}
\Box \phi = -S(\phi)(\partial^\alpha \phi,\partial_\alpha\phi),
\end{equation}
where $S(p):T_pM \times T_pM\to (T_pM)^\perp$ is the second fundamental form at $p\in M$. For the purposes of this paper, the important point is that the Wave Maps equation \eqref{eq:wm-extr} takes the form of a nonlinear wave equation with null structure, more specifically
\begin{equation}\label{eq:wm-sphere}
\Box \phi = \phi (|\nabla \phi|^2-|\partial_t \phi|^2)
\end{equation}
in the case of the target manifold $M=\mathbb{S}^2\subset \RR^3$. We remark that for classical (smooth) solutions to the equation \eqref{eq:wm-sphere} one can drop the target constraint because if the initial data $(\phi(0),\partial_t \phi (0)):\RR^n \to \RR^3$ satisfy $|\phi(0)|=1 $ and $\partial_t \phi (0)\cdot \phi (0)=1$, one can prove $|\phi(t)|=1$ for all $t$.

Solutions can be rescaled according to $\phi(t,x)\to \phi(\lambda t,\lambda x)$. Therefore $\dot{H}^{\frac{n}{2}}(\RR^n)$ is the critical Sobolev regularity for global well-posedness, which barely fails to control the $L^\infty$-norm. Wave Maps conserve the energy
\[
E(\phi)=\frac{1}{2}\int_{\RR^{n}} |\partial_t \phi |^2+|\nabla \phi |^2 dx,
\]
therefore the space dimension $n=2$ is the energy-critical dimension.
It turned out that, even in the case of small initial data, the Cauchy problem is challenging to solve in the critical Sobolev space, in particular in low space dimensions $n=2,3$. For instance, the problem cannot be solved iteratively in Fourier restriction norms only \cite{Klainerman2002}. In the (smaller) critical Besov space $\dot{B}^{\frac{n}{2}}_{2,1}(\RR^n)$ the breakthrough global well-posedness and scattering result in dimension $n=2,3$ was obtained by Tataru \cite{Tataru2001}. The small data problem in the critical Sobolev space is more subtle. The example of Nirenberg \cite[p. 45]{Klainerman1980} shows that the  scalar model problem $\Box u = \partial^\alpha u \partial_\alpha u$ is ill-posed in $\dot{H}^{\frac{n}{2}}(\RR^n)$. In the critical Sobolev space the Wave Maps problem exhibits a quasilinear behaviour and a renormalization is necessary. In the case $M=\mathbb{S}^2$ this was solved by Tao \cite{Tao2001}, later by Krieger \cite{Krieger2004} for the hyperbolic plane target, and for more general targets by Klainerman-Rodnianski \cite{Klainerman2001} and Tataru \cite{Tataru2005}.

Building on the small data results, Sterbenz-Tataru \cite{Sterbenz2010a,Sterbenz2010b} and Krieger-Schlag \cite{Krieger2012} solved the global regularity problem in the energy-critical dimension $n=2$ for initial data below the threshold given by  nontrivial harmonic maps of lowest energy (if any).
We refer the reader to \cite{Shatah1998,Tao2006,Tataru2004,Koch2014,Geba2017} for more comprehensive introductions to various aspects of the theory of Wave Maps and further references.

In this paper, we revisit the problem of iteratively solving the Cauchy problem associated with \eqref{eq:wm-sphere} in the critical Besov space for small data, which was solved first in \cite{Tataru2001}.
This problem is also known as the \emph{division problem}. Citing \cite[p. 195]{Tataru2004} (see also \cite{Krieger2003}), the name stems ``from the fact that in
Fourier space the parametrix $\Box^{-1}$ for the wave equation is essentially the division by'' the symbol $|\xi|^2-\tau^2$ of the d'Alembertian, which fails to be locally integrable. In essence, it consists in constructing a function space $S^{\frac{n}{2}}$ with the properties
\begin{equation}\label{eq:mapping}
S^{\frac{n}{2}}\cdot S^{\frac{n}{2}} \to S^{\frac{n}{2}} \text{ and } \Box^{-1} (S^{\frac{n}{2}} \cdot \Box S^{\frac{n}{2}}) \to S^{\frac{n}{2}},
\end{equation}
see Theorem \ref{thm:div-prob} below for a precise statement. The division problem arises on the level of the Littlewood-Paley pieces and we do not address the \emph{summation problem}, which is the second ingredient for a proof of global well-posedness in the critical Sobolev space and was solved first in \cite{Tao2001}. We emphasize that a solution of the division problem is crucial for all later developments on Wave Maps mentioned above and the original construction \cite{Tataru2001} has been successfully used, adapted and refined in related problems, such as \cite{Bournaveas2016,Bejenaru2015,Bejenaru2016,Krieger2017,Oh2016}, among others.
Further, the division problem is universal in the sense that it crucially arises
in many other nonlinear dispersive evolution equations at the critical regularity or for global-in-time problems,
such as for Schr\"odinger maps \cite{Bejenaru2011}.

One of the key difficulties in the solution of the division problem originates in the fact that, even for solutions $u$ on the unit frequency scale, it is impossible to obtain global-in-time control $\Box u$ in $L^1_tL^2_x$. Instead, in \cite{Tataru2001} Tataru introduces characteristic (or null) coordinate frames $(t_\Theta,x_\Theta)$, for unit vectors $\Theta$ on the cone, and $t_{\Theta}=\Theta \cdot (t,x)$. If $u$ is Fourier localized in a transversal direction, it is possible to control $\Box u$ in $L^1_{t_\Theta} L^2_{x_\Theta}$. Then, by an involved construction of an atomic function space in addition to the standard Fourier restriction space he succeeds in proving the requires estimates alluded to above, which rest on certain bilinear estimates in $L^2_{t,x}$. Due to its complexity we do not describe the solution to the division problem of \cite{Tataru2001} in more detail here but refer to \cite{Tataru2001,Tao2001,Krieger2004} instead. In a first version of \cite{Tataru2001}  Tataru took a different route, based on the space of functions of bounded $p-$variation $V^p$ \cite{Wiener1924} and its predual $U^q$ \cite{Pisier1987}. In this construction, the space for solutions is an atomic space, where the atoms are normalized step-functions and each step solves the homogeneous wave equation. However, as pointed out by Nakanishi, there was a serious problem in the proof of the crucial bilinear $L^2_{t,x}$ estimates. Instead, Tataru abandonded the approach via $U^p$ and $V^p$ and developed the null frame spaces instead. The null frame space construction is custom-made for the application to the Wave Maps problem, for which it proved very successful, but the functional analysis is delicate and adaptations to closely related problems require new ideas \cite{Bejenaru2011,Bejenaru2016}.

Around the same time as \cite{Tataru2001} there have been significant advances on the Fourier restriction problem for the cone.
The key fact is that by passing to the bilinear setting it is possible to use both curvature and transversality properties of the cone.
Indeed, in dimension $n\g 2$, Wolff \cite{Wolff2001} proved that for every $p>p_n:=\frac{n+3}{n+1}$,
the bilinear (adjoint) Fourier restriction estimate
$$ \big\| e^{ - it |\nabla|} f e^{ - i t |\nabla|} g \big\|_{L^p_{t, x}(\RR^{1+n})} \lesa \| f \|_{L^2_x} \| g \|_{L^2_x},$$
holds true, provided that the Fourier-supports of $f$ and $g$ are angularly
separated and contained in the unit annulus. Shortly after, Tao \cite{Tao2001b} proved this estimate in the endpoint case $p=p_n$.

In the present paper, we prove Tataru's original conjecture to be true: it is possible to use $U^2$ as the only building block in the solution of division problem by using recent advances on the bilinear (adjoint) Fourier restriction estimates as the  key new ingredient.
Specifically, for $u:\RR^{1+n}\to \RR^3$, we consider the model problem
		\begin{equation}\label{eqn:wm model}
			\begin{split}
				\Box u &= u (|\nabla u|^2-|\partial_t u|^2)\\
					(u, \p_t u)(0) &= (f,g)
			\end{split}
		\end{equation}
for small initial data in $(f,g) \in \dot{B}^\frac{n}{2}_{2,1}(\RR^n)\times \dot{B}^{\frac{n}{2}}_{2,1}(\RR^n)$, for $n \g 2$, and provide a new proof of global well-posedness and scattering.

While we present our approach with a focus on this specific problem, we emphasize that it is modular. It is straight-forward to adapt the function space $U^2$ to any linear propagator. By the standard resonance analysis of \eqref{eqn:wm model}, the mapping properties
\eqref{eq:mapping} can be reduced to two independent building blocks, both of which are new. Firstly, if one of the factors has Fourier support far from the characteristic set, then the required estimates are consequences of bilinear Fourier multiplier versions of the following toy estimates:
\[
\|fg\|_{V^2}\lesa\|f\|_{L^{\infty}_{t,x}} \|g\|_{V^2}, \qquad \|fg\|_{U^2}\lesa\|f\|_{L^{\infty}_{t,x}} \|g\|_{U^2}
\]
for $g$ with high temporal frequency, see Subsection \ref{subsec:alg}. Secondly, if all factors have Fourier support close to  the characteristic set, we exploit atomic versions of bilinear (adjoint) Fourier restriction estimates, which are available for general phases under tranversality and curvature conditions, see Section \ref{sec:adapted bilinear restriction} and \cite{Candy2017b}. To put things into perspective, let us mention that since the spaces $U^p$ and $V^p$ have been introduced in the PDE context in \cite{Koch2005}, the theory of these spaces has been developed in \cite{Hadac2009,Koch2014,Koch2016}, among others. In parallel, since the seminal works of Wolff \cite{Wolff2001} and Tao \cite{Tao2001b}, there have been advances in the theory of bilinear Fourier restriction estimates by Bejenaru \cite{Bejenaru2017}, Lee-Vargas \cite{Lee2010}, the first named author \cite{Candy2017b}, among others. Here, we are able to connect these two lines of research and provide a systematic and modular solution of the division problem.

The paper is organized as follows. After introducing some notation, we describe the solution 
 to the division problem in Section \ref{sec:sol-div-prob}, see Theorem \ref{thm:div-prob}. Also, we introduce the function spaces and provide a proof of small data global well-posedness and scattering for the Wave Maps equation. In Section \ref{sec:multi}
we establish properties of the iteration space, prove product estimates in far cone regions
and the bilinear $L^2$ estimates, and give a proof of Theorem \ref{thm:div-prob}. Section \ref{sec:Up and Vp} is devoted to the basic properties of the critical function spaces $U^p$ and $V^p$, such as embedding properties, almost orthogonality and duality statements. In Section \ref{sec:characterisation} we provide characterisations of $U^p$ which are crucial for applications to PDE. In Section \ref{sec:bound} we establish results concerning convolution and multplication in $U^p$ and $V^p$ and introduce the adapted function spaces.
Finally, in Section \ref{sec:adapted bilinear restriction} we prove the bilinear restriction estimate in adapted function spaces which is used in Section \ref{sec:multi}.

\subsection{Notation}\label{subsec:not}
Let $\mathcal{S}(\RR^n)$ the Schwartz space and $\mathcal{S}'(\RR^n)$ the space of tempered distributions. Given a function $f \in L^2(\RR^n)$, we let $\widehat{f}(\xi)$ denote the Fourier transform, and if $u \in L^2_{t,x}(\RR^{1+n})$, we let $\widetilde{u}(\tau, \xi)$ denote the space-time Fourier transform. 

Let $P_\lambda$ denote a smooth (spatial) cutoff to the Fourier region $|\xi| \approx \lambda$. Similarly, we take $P^{(t)}_d$ denote the (temporal) Fourier projection to the set $|\tau| \approx d$. The Fourier multipliers $P_{\les \lambda}$ and $P^{(t)}_{\les d}$ are defined similarly to restrict to spatial frequencies $|\xi|\lesa \lambda $, and temporal frequencies $|\tau| \lesa d$ respectively. We often use the short hand $P_\lambda u = u_\lambda$. Let $C_d$ and $C^\pm_d$ restrict to the space-time Fourier regions $\big||\tau| - |\xi| \big| \approx d$ and $\big| \tau \pm |\xi| \big|\approx d$ respectively. Note that we may write $C^\pm_d = e^{\mp i |\nabla| t} P^{(t)}_d e^{\pm i |\nabla| t}$.

Define $\mc{C}_\alpha$ to be a finitely overlapping collection of caps $\kappa \subset \sph^{n-1}$ of radius $\alpha$ which cover the sphere, and take $\angle(\xi,\eta)$ denote the angle between $\xi, \eta \in \RR^n\setminus \{0\}$. Let $\mc{Q}_\mu$ be a collection of finitely overlapping cubes of diameter $\mu$ which form a cover of $\RR^n$. We denote the corresponding Fourier cutoffs to caps $\kappa \in \mc{C}_\alpha$ and cubes $q \in \mc{Q}_\mu$ as $R_\kappa$ and $P_q$ respectively.

For $s\in \RR$, $1\leq p,q \leq \infty$ and $f\in \mathcal{S}'(\RR^n)$, let
\[
\|f\|_{\dot{B}^{s}_{p,q}}=\Big(\sum_{\lambda \in 2^\ZZ}\lambda^{sq} \|f_\lambda \|_{L^p(\RR^n)}^q\Big)^{\frac{1}{q}},
\]
with the obvious modification for $q=\infty$. Further, let $\dot{B}^{s}_{p,q}$
denote the space of all $f\in \mathcal{S}'(\RR^n)$ satisfying \[f=\sum_{\lambda \in 2^\ZZ} f_\lambda  \text{ in }\mathcal{S}'(\RR^n) \text{ and }\|f\|_{\dot{B}^{\frac{n}{2}}_{p,1}}<+\infty.\] It is well-known (see \cite{Bahouri2011}) that, if either $s<n/p$ or $s=n/p$ and $q=1$, then $\dot{B}^{s}_{p,q}$ is a Banach space. We have the continuous embedding $\dot{B}^{\frac{n}{2}}_{2,1}\subset C_0 (\RR^n)$.

\section{A solution to the division problem}\label{sec:sol-div-prob}

\subsection{Definition of the spaces $U^p$ and $V^p$}\label{subsec:upvp}

Let \[\mb{P}= \{ \tau=(t_j)_{j=1}^{N} \mid N \in \NN, \,\, t_j \in \RR, \,\, t_j< t_{j+1} \}\] be the set of partitions, i.e.\ finite increasing sequences. For a partition $\tau=(t_j)_{j=1, \ldots, N}$, let \[\mc{I}_\tau=\{[t_1,t_{2}), \ldots, [t_{N-1},t_{N}), [t_N,\infty)\},\]
i.e.\ left-closed disjoint intervals associated with $\tau$. Let $1\leq p<\infty$. We say that $u$ is a \emph{$U^p$-atom} if there exists $\tau \in \mb{P}$ such that $u(t) = \sum_{I \in \mc{I}_\tau} \ind_{I}(t) f_I$ is a step function satisfying
    $$ \Big( \sum_{I \in \mc{I}_\tau} \| f_I \|_{L^2}^p \Big)^{\frac{1}{p}} = 1.$$
The atomic space $U^p$ is then defined to be
    $$ U^p = \Big\{ \sum_{j\in \NN} c_j u_j \, \Big| \,\, (c_j) \in \ell^1, \text{ $u_j$ is a $U^p$ atom } \Big\},$$
with the induced norm
    $$ \| u \|_{U^p} = \inf_{ u = \sum_{j\in \NN} c_j u_j } \sum_{j\in \NN} |c_j|.$$
Functions in $u\in U^p$ are bounded, have one-sided limits everywhere and are right-continuous with $\lim_{t\to -\infty}u(t)=0$ in $L^2(\RR^n)$.

Closely related to the atomic spaces $U^p$, are the $V^p$ spaces of finite $p$-variation. Given a function $v: \RR \rightarrow L^2(\RR^n)$, we define
        $$ |v |_{V^p} = \sup_{(t_j)_{j=1}^{N} \in \mb{P}} \Big( \sum_{j=1}^{N-1}\| v(t_{j+1}) - v(t_j) \|_{L^2}^p \Big)^\frac{1}{p}.$$
and $\| v \|_{V^p} =\|v\|_{L^\infty_t L^2_x}+|v |_{V^p}$. If $ |v |_{V^p} < \infty$, then $v$ has one-sided limits everywhere including $\pm \infty$. Let $V^p$ be the space of all right-continuous functions $v$ such that $| v |_{V^p} < \infty$ and $\lim_{t\to-\infty} v(t)=0$ in $L^2(\RR^n)$. Then, $\| v \|_{V^p}\leq 2|v|_{V^p}$ for all $v\in V^p$.

The spaces $V^p$ were introduced by Wiener \cite{Wiener1924} while a discrete version of the atomic $U^p$ spaces appeared in work of Pisier-Xu \cite{Pisier1987}. In the context of PDE, the spaces $U^p$ and $V^p$ were used in unpublished work of Tataru as a replacement for the endpoint $X^{s,\frac12}$ type spaces, and developed in more detail by Koch-Tataru \cite{Koch2005}. We leave a more complete discussion of the spaces $U^p$ and $V^p$ till Section \ref{sec:Up and Vp} below, further properties can also be found in \cite{Hadac2009, Koch2005,  Koch2016}.

\subsection{The solution space}\label{subsec:s}
The solution space $S^{\frac{n}{2}}$ for the Wave Maps equation is based on an adapted version of the atomic space $U^2$. More precisely, we define $U^2_\pm = e^{\mp i t |\nabla| } U^2$ with the obvious norm $$ \| u \|_{U^2_\pm} = \| e^{\pm i t|\nabla|} u \|_{U^2}.$$
Elements of $U^2_\pm$ should be thought of as being close to solutions to the linear half-wave equation. In fact, atoms in $U^2_\pm$ are piecewise solutions to $(-i \p_t \pm |\nabla|) u = 0$.
Let $S$ be the collection of all $u \in C_b( \RR; L^2_x)$ with $|\nabla|^{-1} \p_t u \in C_b(\RR; L^2_x)$ such that $  u \pm i |\nabla|^{-1} \p_t u \in U^2_\pm$ with the norm
		$$ \| u \|_S =  \big\| u  + i |\nabla|^{-1} \p_t u \big\|_{U^2_+} + \big\| u - i |\nabla|^{-1} \p_t u \big\|_{U^2_-}.$$
The space $S$ will contain the frequency localised pieces of the wave map $u$. To define the full solution space $S^{\frac{n}{2}}$ we take
	$$S^\frac{n}{2} = \{ u \in C_b(\RR, \dot{B}^\frac{n}{2}_{2,1}) \mid  P_\lambda u \in S \}$$
with the norm
	$$ \| u \|_{S^\frac{n}{2}} =  \sum_{\lambda \in 2^\ZZ} \lambda^\frac{n}{2} \| P_\lambda u \|_S.$$
The space $S^\frac{n}{2}$ is a Banach space, this is an immediate consequence of the fact that the subspace of continuous functions in $U^2_\pm$ is closed. Since we may write
        \begin{equation}\label{eq:u-pu}\begin{split} u &= \tfrac{1}{2}\big(u + i |\nabla|^{-1} \p_t u\big) + \tfrac{1}{2}\big(u - i |\nabla|^{-1} \p_t u\big),\\
 \qquad  \p_t u &= \tfrac{1}{2i}|\nabla| \big(u + i |\nabla|^{-1} \p_t u\big) - \tfrac{1}{2i}|\nabla|\big(u  - i |\nabla|^{-1} \p_t u\big)
\end{split}
\end{equation}
it is clear that we have bounds
\[\| u_\lambda \|_{L^\infty_t L^2_x}  + \| \p_t u_\lambda \|_{L^\infty_t \dot{H}^{-1}} \lesa \| u_\lambda \|_{S}\]
and
\[\| u \|_{L^\infty_t \dot{B}^{\frac{n}{2}}_{2, 1}}  + \| \p_t u \|_{L^\infty_t \dot{B}^{\frac{n}{2}-1}_{2, 1}} \lesa \| u \|_{S^\frac{n}{2}}.\]

 Let
\[V(t)(f,g)= \cos(t|\nabla|) f + \frac{\sin(t|\nabla|)}{|\nabla|}g\]
be the propagator for the homogeneous wave equation. Further, let $\chi(t) \in C^\infty$ with $\chi(t) = 1$ for $t\g 0$, and $\chi(t) = 0$ for $t<-1$.

\begin{lemma}\label{lem:lin-est}
For  all $f_\lambda ,g_\lambda \in L^2$, we have $\chi(t|\nabla|)V(t)(f_\lambda,g_\lambda) \in S$ and
\begin{equation}\label{eq:lin-est}
\|\chi(t|\nabla|)V(t)(f_\lambda ,g_\lambda ) \|_{S}\lesa \|f_\lambda\|_{L^2}+\lambda^{-1}\|g_\lambda\|_{L^2}.
\end{equation}
\end{lemma}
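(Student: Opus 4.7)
The plan is to diagonalize the wave propagator, compute $u \pm i|\nabla|^{-1}\p_t u$ explicitly via the product rule, and then estimate the resulting smooth $L^2$-valued paths using the elementary embedding of $C^1$ paths with $L^1_t L^2_x$ derivative into $U^1 \subset U^2$. First I would decompose
\[
V(t)(f_\lambda, g_\lambda) = \tfrac12 e^{-it|\nabla|} h_+ + \tfrac12 e^{it|\nabla|} h_-, \qquad h_\pm := f_\lambda \pm i|\nabla|^{-1} g_\lambda,
\]
so that $V \pm i|\nabla|^{-1}\p_t V = e^{\mp it|\nabla|} h_\pm$ and $\|h_\pm\|_{L^2} \lesa \|f_\lambda\|_{L^2} + \lambda^{-1}\|g_\lambda\|_{L^2}$.

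Setting $u := \chi(t|\nabla|) V$, the product rule combined with the fact that the Fourier multipliers $\chi(t|\nabla|)$, $\chi'(t|\nabla|)$ commute with $e^{\pm it|\nabla|}$ yields
\[
W_+(t) := e^{it|\nabla|}\bigl[u + i|\nabla|^{-1}\p_t u\bigr] = \bigl(\chi + \tfrac{i}{2}\chi'\bigr)(t|\nabla|)\, h_+ + \tfrac{i}{2}\chi'(t|\nabla|) e^{2it|\nabla|}\, h_-,
\]
and by definition of $U^2_+$ we have $\|u + i|\nabla|^{-1}\p_t u\|_{U^2_+} = \|W_+\|_{U^2}$. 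A fully symmetric expression $W_-$ handles the $-$ branch, and since $u, |\nabla|^{-1}\p_t u \in C_b(\RR; L^2_x)$ is immediate from the decomposition, the lemma reduces to proving $\|W_\pm\|_{U^2} \lesa \|f_\lambda\|_{L^2} + \lambda^{-1}\|g_\lambda\|_{L^2}$.

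Next I would observe that $W_+$ is $C^1(\RR; L^2_x)$ and $W_+(t) \to 0$ as $t \to -\infty$, since $\chi$ and $\chi'$ both vanish on $(-\infty, -1]$. Approximating $W_+(t) = \int_{-\infty}^t \p_s W_+(s)\, ds$ by Riemann sums of the form $\sum_j (\p_s W_+(s_j)\,\Delta s_j)\, \ind_{[s_j, \infty)}(t)$ exhibits $W_+$ as a limit of finite convex combinations of $U^1$-atoms, yielding the routine bound $\|W_+\|_{U^2} \leq \|W_+\|_{U^1} \leq \|\p_t W_+\|_{L^1_t L^2_x}$.

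Finally, the derivative estimate is a direct calculation: since $\chi'$ and $\chi''$ are supported in $[-1, 0]$, the spatial Fourier symbol of $\p_t W_+$ is pointwise bounded by $C|\xi|\, \ind_{[-1, 0]}(t|\xi|)\bigl(|\hat{h}_+(\xi)| + |\hat{h}_-(\xi)|\bigr)$. The frequency localization $\supp \hat{h}_\pm \subset \{|\xi| \sim \lambda\}$ together with Plancherel gives $\|\p_t W_+(t)\|_{L^2_x} \lesa \lambda\, \ind_{[-C\lambda^{-1}, 0]}(t)(\|h_+\|_{L^2} + \|h_-\|_{L^2})$, and integration over a time interval of length $\sim \lambda^{-1}$ produces the claimed bound. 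The only mildly delicate ingredient is verifying the $U^1$ embedding for absolutely continuous $L^2$-valued paths, but this is standard and I expect no substantive obstacle.
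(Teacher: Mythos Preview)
Your approach is essentially identical to the paper's: diagonalize $V$ into $e^{\mp it|\nabla|}h_\pm$, apply the product rule to compute $u\pm i|\nabla|^{-1}\p_t u$, and bound each piece in $U^2_\pm$ via the embedding $\|\cdot\|_{U^2}\lesa\|\p_t(\cdot)\|_{L^1_tL^2_x}$ together with the compact $t$-support of $\chi'$ and the frequency localization. The paper packages the computation slightly differently (splitting into $\chi(t|\nabla|)e^{\mp it|\nabla|}h_\pm$ and $\chi'(t|\nabla|)v$ before estimating), but the content is the same.

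There is one genuine slip: your claim $\|W_+\|_{U^1}\les\|\p_tW_+\|_{L^1_tL^2_x}$ is false. As the paper notes in Remark~\ref{rmk:u-vs-v}(iii), a function of the form $\rho(t)f$ with $\rho$ monotone on an interval is \emph{not} in $U^1$; in particular $C^\infty_0\not\subset U^1$. Your Riemann sums $W_+^{(N)}$ have uniformly bounded $U^1$ norm and converge to $W_+$ in $L^\infty_tL^2_x$, but they are not Cauchy in $U^1$. The fix is immediate: route through $V^1$ instead, since $|W_+|_{V^1}\les\|\p_tW_+\|_{L^1_tL^2_x}$ is trivial and $V^1\subset U^2$ by Theorem~\ref{thm:vu-emb}. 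This is exactly the paper's inequality \eqref{eq:w11}, so after this correction your proof and the paper's coincide.
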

\begin{proof}
Let $v=V(t)(f_\lambda ,g_\lambda )$ und $w=\chi(t|\nabla|)V(t)(f_\lambda ,g_\lambda )$. Then,
\[
v\pm i |\nabla|^{-1}\partial_t v=e^{\mp i t |\nabla|} (f_\lambda \pm i |\nabla|^{-1} g_\lambda ),
\]
and therefore
\[
w\pm i |\nabla|^{-1}\partial_t w= \chi(t|\nabla|) e^{\mp i t |\nabla|} (f_\lambda \pm i |\nabla|^{-1} g_\lambda )\pm i \chi'(t|\nabla|)v.
\]
Due to \eqref{eq:w11} we have
\begin{align*}
\|\chi(t|\nabla|) e^{\mp i t |\nabla|} (f_\lambda \pm i |\nabla|^{-1} g_\lambda)\|_{U^2_\pm}={}&\|\chi(t|\nabla|) (f_\lambda\pm i |\nabla|^{-1} g_\lambda)\|_{U^2}\\
\lesa{}& \|\chi'(t|\xi|)|\xi| (\widehat{f_\lambda}\pm i |\xi|^{-1}\widehat{g_\lambda})\|_{L^1_t L^2_\xi}\\
\lesa{}&\|f_\lambda\|_{L^2}+\lambda^{-1}\|g_\lambda\|_{L^2},
\end{align*}
similarly,
\begin{align*}
\|\chi'(t|\nabla|) v_\lambda \|_{U^2_\pm}={}&\|\chi'(t|\nabla|)  e^{\pm i t |\nabla|} v_\lambda\|_{U^2}\\
\lesa{}& \big\|\partial_t \big(\chi'(t|\xi|) e^{\pm i t |\xi|}\widehat{v_\lambda}\big)\big\|_{L^1_t L^2_\xi}\\
\lesa{}&\big\|\partial_t \big(\chi'(t|\xi|) e^{\pm i t |\xi|}\big(\cos(t|\xi|) \widehat{f_\lambda}(\xi) + \frac{\sin(t|\xi|)}{|\xi|}\widehat{g_\lambda}(\xi) \big)\big\|_{L^1_t L^2_\xi}\\
\lesa{}&\|f_\lambda\|_{L^2}+\lambda^{-1}\|g_\lambda\|_{L^2},
\end{align*}
and therefore $w\in S$ with the required bound.
\end{proof}

We prove that the space $S^{\frac{n}{2}}$ is a solution to the division problem.
More precisely, if we define
        $$ \Box^{-1} F (t)= \ind_{[0,\infty)}(t) \int_0^t \frac{\sin( (t-s)|\nabla|)}{|\nabla|} F(s) ds $$
to be the solution to the wave equation $\Box u = F$ on $[0, \infty)\times \RR^n$ with vanishing data at $t=0$, then we have the following.

\begin{theorem}\label{thm:div-prob}
Let $\lambda_0, \lambda_1, \lambda_2 \in 2^\ZZ$. If $u_{\lambda_1}, v_{\lambda_2}  \in S$,  then $ P_{\lambda_0}(u_{\lambda_1} v_{\lambda_2}) \in S$ and
    \begin{equation}\label{eqn:thm div-prob:S-alg} \lambda_0^\frac{n}{2} \| P_{\lambda_0} (u_{\lambda_1} v_{\lambda_2}) \|_{S} \lesa  (\lambda_1 \lambda_2)^\frac{n}{2}  \| u \|_{S} \| v \|_{S}.
    \end{equation}
Moreover, if in addition we have $\Box v_{\lambda_2} \in L^1_{t,loc} L^2_x$, then $\Box^{-1}P_{\lambda_0}( u_{\lambda_1} \Box v_{\lambda_2}) \in S$ and
     \begin{equation}\label{eqn:thm div-prob:S-nonlin} \lambda_0^\frac{n}{2} \| \Box^{-1}P_{\lambda_0}( u_{\lambda_1} \Box v_{\lambda_2}) \|_{S} \lesa   (\lambda_1 \lambda_2)^\frac{n}{2}  \| u\|_{S} \| v\|_{S}.
     \end{equation}
\end{theorem}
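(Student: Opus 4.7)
The plan is to reduce both estimates \eqref{eqn:thm div-prob:S-alg} and \eqref{eqn:thm div-prob:S-nonlin} to a family of trilinear forms indexed by the $\pm$ half-wave components of the three factors involved, and then to control these forms by a dichotomous analysis based on distance to the characteristic cone.

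First, using the decomposition \eqref{eq:u-pu}, I would write $u_{\lambda_1} = u_+^{\lambda_1} + u_-^{\lambda_1}$, with $u_\pm^{\lambda_1} := \tfrac12(u_{\lambda_1} \pm i|\nabla|^{-1} \p_t u_{\lambda_1}) \in U^2_\pm$, and similarly for $v_{\lambda_2}$. By the definition of the $S$-norm, it suffices to estimate the $U^2_\pm$ norms of $(1 \pm i|\nabla|^{-1}\p_t) P_{\lambda_0}(u_{\lambda_1} v_{\lambda_2})$. Using the $U^2$-$V^2$ duality (after conjugation by $e^{\pm it|\nabla|}$), this in turn reduces \eqref{eqn:thm div-prob:S-alg} to a finite family of trilinear estimates of the form
\[
\big|\big\langle P_{\lambda_0}\big(u_{\pm_1}^{\lambda_1}\, v_{\pm_2}^{\lambda_2}\big),\, w_{\pm_0}^{\lambda_0}\big\rangle_{t,x}\big| \lesa \lambda_0^{-n/2} (\lambda_1 \lambda_2)^{n/2}\, \|u_{\pm_1}^{\lambda_1}\|_{U^2_{\pm_1}}\, \|v_{\pm_2}^{\lambda_2}\|_{U^2_{\pm_2}}\, \|w_{\pm_0}^{\lambda_0}\|_{V^2_{\pm_0}},
\]
over the eight sign choices, where $w_{\pm_0}^{\lambda_0}$ is frequency-localized at $|\xi| \sim \lambda_0$ near the $\pm_0$ half-cone.

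Next, I would decompose each factor by modulation using the projections $C^{\pm_i}_{d_i}$ and exploit the null-form geometry, which, combined with the space-time convolution constraint, forces
\[
\max(d_0, d_1, d_2) \gtrsim \min(\lambda_0, \lambda_1, \lambda_2)\, \theta^2,
\]
where $\theta$ is the angle between the two input spatial frequencies (suitably reflected for opposite-sign interactions). This splits the trilinear form into a near-cone regime, where all $d_i$ are small and $\theta$ is forced to be small, and a complementary far-cone regime, where some $d_i$ is large. In the near-cone regime I would further decompose the inputs into angular caps $\kappa \in \mc{C}_\alpha$ of size $\alpha \sim \theta$ and invoke the adapted bilinear Fourier restriction estimates of Section \ref{sec:adapted bilinear restriction}, which upgrade the Wolff-Tao $L^2_{t,x}$ bilinear restriction bounds to atoms (and hence functions) in $U^2_\pm$; the test function $w_{\pm_0}^{\lambda_0}$ is paired using $V^2 \hookrightarrow L^\infty_t L^2_x$ and Cauchy-Schwarz, and the angular gain precisely balances the dyadic summation over modulations and caps. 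In the far-cone regime, the large-modulation factor is placed in $L^\infty_{t,x}$ via Bernstein combined with the improved energy-type bound $C^{\pm_i}_{d_i} U^2_{\pm_i} \hookrightarrow d_i^{-1/2} L^2_{t,x}$, and absorbed as an $L^\infty_{t,x}$ multiplier on the remaining factor using the bilinear versions of the algebra estimates of Subsection \ref{subsec:alg}, which preserve the atomic structure of $U^2_\pm$ and $V^2_\pm$. Dyadic summation over modulations then closes \eqref{eqn:thm div-prob:S-alg}.

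For the nonlinear estimate \eqref{eqn:thm div-prob:S-nonlin}, the principal obstacle is that $\Box v_{\lambda_2}$ is generically not in $L^1_t L^2_x$, so no direct energy estimate for $\Box^{-1}$ is available. The key is the factorization $\Box = -L_+ L_-$ with $L_\pm := -i\p_t \pm |\nabla|$: since $L_\pm$ annihilates the $\mp$ half-wave, writing $v_{\lambda_2} = v_+^{\lambda_2} + v_-^{\lambda_2}$ yields $\Box v_{\lambda_2} = -L_- L_+ v_-^{\lambda_2} - L_+ L_- v_+^{\lambda_2}$, where each $L_\pm v_\mp^{\lambda_2}$ is, up to conjugation by $e^{\mp it|\nabla|}$, the time derivative of a $U^2_\mp$ function, and hence sits in a $V^2_\mp$-type space at the cost of one factor of $\lambda_2$. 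Combined with an inhomogeneous energy estimate for $\Box^{-1}$ dualized against $V^2$, this reduces \eqref{eqn:thm div-prob:S-nonlin} to the same family of trilinear forms as before, with one input now in $V^2_\pm$ in place of $U^2_\pm$ and an additional weight $\lambda_2 \lambda_0^{-1}$ that must be absorbed; this weight is $\lesa 1$ except in the high-high to low interaction $\lambda_0 \ll \lambda_1 \sim \lambda_2$, where the bilinear restriction estimate produces a gain in $\lambda_0/\lambda_2$ that compensates. I expect the high-high to low near-cone case of \eqref{eqn:thm div-prob:S-nonlin} to be the main technical obstacle: the bilinear restriction is used at essentially its endpoint, Tao's sharp result is required, and the dyadic summation over modulations, angular caps, and frequencies must be balanced with care.
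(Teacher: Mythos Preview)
Your high-level strategy coincides with the paper's: reduce to trilinear forms via duality, split into near-cone and far-cone regimes, and handle the former with the adapted bilinear restriction estimates of Section~\ref{sec:adapted bilinear restriction} and the latter with the high-low product estimates of Subsection~\ref{subsec:alg}. However, two of your intermediate steps are misstated in ways that would actually fail if taken literally.

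First, the trilinear form you write down for \eqref{eqn:thm div-prob:S-alg}, namely $|\langle P_{\lambda_0}(u_{\pm_1}^{\lambda_1} v_{\pm_2}^{\lambda_2}), w_{\pm_0}^{\lambda_0}\rangle_{t,x}|$ with $w\in V^2_{\pm_0}$, is not the form that arises from $U^2$--$V^2$ duality. The dual pairing for $U^2$ is $\int\langle \p_t v,u\rangle\,dt$, and after conjugation by $e^{\pm it|\nabla|}$ and combining with the factor $(1\pm i|\nabla|^{-1}\p_t)$, the test function carries a full $|\nabla|^{-1}\Box$; this is precisely the content of Theorem~\ref{thm:chara of S} and Lemma~\ref{lem:S properties}\eqref{itm:lem S prop:dp}. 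Without that $\Box$, your near-cone trilinear form is not even globally integrable (you only have $uv\in L^2_{t,x}$ and $w\in L^\infty_tL^2_x$), and the dyadic sum over modulations $d$ diverges at $d\to 0$ in dimension $n=2$: the bilinear $L^2$ bound contributes $d^{(n-3)/4}$ and pairing with $\|C_d w\|_{L^2_{t,x}}\lesa d^{-1/2}\|w\|_{V^2}$ gives $d^{(n-5)/4}$, which is not summable. In the paper the $\Box$ supplies the missing factor $d^{1/2}\lambda$ via \eqref{eqn:trivial box bound}, which is exactly what makes Lemma~\ref{lem:tri-est} close. Your treatment of \eqref{eqn:thm div-prob:S-nonlin} has the same issue hidden in the phrase ``$L_\pm v_\mp^{\lambda_2}$ \dots\ sits in a $V^2_\mp$-type space'': it does not, since for a $U^2$ atom this is a sum of Dirac masses in time. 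The correct statement is the dual-pairing bound $|\int\langle|\nabla|^{-1}\Box\phi,\psi\rangle\,dt|\lesa\|\phi\|_{S_w}\|\psi\|_S$, which the paper proves directly and then uses by putting the product $\phi_{\lambda_0}u_{\lambda_1}$ into $S_w$ (this is why the $S_w$ product estimate \eqref{eq:high-mod1} is needed, not only the $S$ version).

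Second, in the far-cone regime the large-modulation factor is \emph{not} placed in $L^\infty_{t,x}$; Bernstein in time would cost back the $d^{1/2}$ you gain from $C_d U^2_\pm\hookrightarrow d^{-1/2}L^2_{t,x}$, leaving nothing to sum. The mechanism in Theorem~\ref{thm:high-low U2 wave} (equivalently Theorem~\ref{thm:general adapted high mod prod}) is the opposite: the high-modulation factor stays in $U^2$/$V^2$, and it is the low-temporal-frequency phase $e^{it(\pm_0|\xi+\eta|-\pm_1|\xi|-\pm_2|\eta|)}$ that is placed in $L^\infty_t$. You cite the correct subsection, but your description of what it does is inverted.
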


\begin{remark}\label{rmk:summ-prob}
The estimates can be easily summed up. However,
a summation problem arises when one aims at solving the Wave Maps equation in the critical Sobolev space $\dot{H}^{\frac{n}{2}}\times \dot{H}^{\frac{n}{2}-1}$. The $\ell^1$ summation over frequencies has to be replaced with an $\ell^2$ sum, which would require obtaining the stronger bounds and a renormalization. Such estimates are known in the null frame based solution space, see \cite[equations (21), (27)]{Tao2001} and \cite[equation (4.3)]{Tataru2005} (the proof can be found directly following (129) in \cite{Tao2001}). We do not pursue this issue here.
\end{remark}

\subsection{Small data GWP for the Wave Maps equation}\label{subsec:gwp}
Let \[Q_0(u,v)=-\partial^\alpha u \cdot \partial_\alpha v=\partial_t u \cdot \partial_t v-\partial_{x_j} u \cdot \partial_{x_j} v.\]
It is well-known that this is a null form, i.e. it satisfies
the identity
 \begin{equation}
\label{eq:nf}
2Q_0(u,v) = \Box(uv) - \Box u v - u \Box v.
\end{equation}
Then, \eqref{eq:wm-sphere} can be written as
\[
\Box u =uQ_0(u,u).
\]
Theorem \ref{thm:div-prob} together with \eqref{eq:nf}
and a standard fixed point argument can be used to construct a solution to the Wave Maps equation \eqref{eq:wm-sphere} in the space $S^{\frac{n}{2}}$ provided that the initial data $(f,g)$ are sufficiently small in $\dot{B}^{\frac{n}{2}}_{2,1}(\RR^n)\times \dot{B}^{\frac{n}{2}-1}_{2,1}(\RR^n)$. In more detail,  set
		$$S_0^{\frac{n}{2}} = \big\{ u \in S^\frac{n}{2} \,\big| \,\forall \lambda\in 2^{\ZZ} : \; \Box u_\lambda \in L^1_{t,loc}L^2_x(\RR^{1+n}) \, \}.$$
The purpose of this subset of $S^{\frac{n}{2}}$ is to ensure that all nonlinear expressions are a-priori well-defined.
Define the map $\mc{T}:S_0^\frac{n}{2} \to S_0^{\frac{n}{2}}$ by
	$$ \mc{T}[u](t) = \chi(t|\nabla|) V(t)(f,g) + \Box^{-1}\big( u Q_0(u,u) \big).$$
Theorem \ref{thm:div-prob} and summation implies that
\begin{align*}
&\| \Box^{-1}\big( u Q_0(v,w) \big)\|_{S^{\frac{n}{2}}}\\
\leq{}&  \|\Box^{-1}\big(u \Box(vw)\big)\|_{S^{\frac{n}{2}}}+\|\Box^{-1}\big(u (\Box v) w\big)\|_{S^{\frac{n}{2}}} +\| \Box^{-1}\big(u v \Box w\big) \|_{S^{\frac{n}{2}}}\\
\lesa {}& \|u\|_{S^{\frac{n}{2}}} \|vw\|_{S^{\frac{n}{2}}}+\|uw \|_{S^{\frac{n}{2}}}\|\Box v\|_{S^{\frac{n}{2}}}
+\|u v\|_{S^{\frac{n}{2}}} \| w \|_{S^{\frac{n}{2}}}\\
\lesa {}& \|u\|_{S^{\frac{n}{2}}}\|v\|_{S^{\frac{n}{2}}} \| w \|_{S^{\frac{n}{2}}},
\end{align*}
where we have used \eqref{eq:nf} in the first, \eqref{eqn:thm div-prob:S-nonlin} in the second and \eqref{eqn:thm div-prob:S-alg} in the third inequality.
By Lemma \ref{lem:lin-est} and summation we obtain
\begin{align*}
\|\mc{T}[u]\|_{S^{\frac{n}{2}}}\lesa \|(f,g)\|_{\dot{B}^{\frac{n}{2}}_{2,1}\times \dot{B}^{\frac{n}{2}-1}_{2,1}}+\|u\|_{S^{\frac{n}{2}}}^3,\\
\|\mc{T}[u]-\mc{T}[v]\|_{S^{\frac{n}{2}}}\lesa \big(\|u\|_{S^{\frac{n}{2}}}^2+\|v\|_{S^{\frac{n}{2}}}^2\big)\|u-v\|_{S^{\frac{n}{2}}}.
\end{align*}
Therefore, $\mc{T}$ is a contraction in  a small ball in $S_0^{\frac{n}{2}}$. This implies the existence of a fixed point in $S^{\frac{n}{2}}$ as the latter space is complete. Then, the restriction of the fixed point to the interval $[0,\infty)$ is a generalized solution of \eqref{eq:wm-sphere}. Clearly, if in addition the initial data are $C^\infty$, we obtain a classical solution to \eqref{eqn:wm model}. Scattering is an immediate consequence of \eqref{eq:u-pu} and the existence of one-sided limits in $U^2$. Indeed, it implies the existence of
\[\lim_{t\to \infty} e^{\pm i t |\nabla|} \big(u(t) \pm i| \nabla|^{-1} \partial_t u(t)\big)=:f_\pm \in \dot{B}^{\frac{n}{2}}_{2,1}(\RR^n).\]
Now,
\[
f_\infty:= \tfrac{1}{2}\big(f_++f_-\big) \in \dot{B}^{\frac{n}{2}}_{2,1}(\RR^n)\text{ and }g_\infty:=\tfrac{i}{2}|\nabla|\big(f_--f_+\big)\in \dot{B}^{\frac{n}{2}-1}_{2,1}(\RR^n)
\]
satisfy
\[
\lim_{t\to \infty}\|u(t)-V(t)(f_\infty,g_\infty)\|_{L^\infty_t \dot{B}^{\frac{n}{2}}_{2, 1}} +\lim_{t\to \infty}\|\partial_t u(t)-\partial_t V(t)(f_\infty,g_\infty)\|_{L^\infty_t \dot{B}^{\frac{n}{2}-1}_{2, 1}}=0.
\]
In fact, the result is slightly stronger. For instance, the embedding $U^2\subset V^2$ implies that the quadratic variation is finite. The analogous results on $(-\infty,0]$ follow from time reversability.

\section{Multilinear estimates}\label{sec:multi}

\subsection{Properties of the iteration space $S$} \label{subsec:prop-S}

In this subsection we describe the main properties of the space $S$ which are needed for the solution of the division problem. The results collected here are for the most part consequences of  properties of the $U^p$ and $V^p$ spaces. To aid the reader, we state (and for the most part give a proof of) these properties in Sections \ref{sec:Up and Vp}, \ref{sec:characterisation} and \ref{sec:bound}.

 We start by defining a weak version of the space $S$, which is based on $V^2$ rather than $U^2$. Let $S_w$ denote the collection of all right continuous functions $v$ such that 	
        $$ \| v \|_{S_w} = \| v\|_{V^2_+ + V^2_-}= \inf_{ v = v^+ + v^-\atop v^{\pm}\in V^2_{\pm}} \Big(\| v^+ \|_{V^2_+} + \| v^- \|_{V^2_-}\Big)<\infty$$
 where we define $\| \phi \|_{V^2_\pm} = \| e^{ \pm i t|\nabla|} \phi \|_{V^2}$. It is clear that
	\begin{equation}\label{eqn:S_w norm weaker} \| u \|_{S_w} +\| |\nabla|^{-1}\p_t u \|_{S_w} \les  \|u + i |\nabla|^{-1} \p_t u\|_{V^2_+} + \|u - i |\nabla|^{-1} \p_t u\|_{V^2_-}\lesa \| u \|_S,
	\end{equation}
and
    \begin{equation}\label{eqn:S contr U2}
    \| u \|_{U^2_+ + U^2_-} + \| |\nabla|^{-1} \p_t u \|_{U^2_++ U^2_-} \les \| u \|_S.
    \end{equation}
Thus the norm on  $S_w$ is weaker than that on $S$ and $S$ is slightly stronger than just have taking $u, |\nabla|^{-1} \p_t u \in U^2_+ + U^2_-$. However, for space-time frequencies localised to a fixed dyadic distance from the cone, these spaces are all closely related, see Part (ii) of Lemma \ref{lem:S properties} below. In particular, in some sense the differences in these spaces only appear when considering frequency regions of the form $\{ ||\tau| - |\xi|| \lesa \mu \}$.

The space $S$ satisfies the following properties.

\begin{lemma}\label{lem:S properties}\leavevmode
\begin{enumerate}
  \item \label{itm:lem S prop:dp}\emph{(Dual pairing)}  Let $\psi \in S$ and $\phi \in S_w$ with $|\nabla|^{-1} \Box \phi,  \p_t \phi, |\nabla|\phi   \in L^1_t L^2_x$. Then
  		$$ \Big| \int_\RR \lr{|\nabla|^{-1} \Box \phi, \psi}_{L^2} dt \Big| \lesa \| \phi \|_{S_w} \| \psi \|_S.$$

  \item \label{itm:lem S prop:Xsb}\emph{($\dot X^{0, \frac12,\infty}$ control)} Let $d, \lambda \in 2^\ZZ$. Then
            $$ \| C_d u_\lambda \|_{L^2_{t,x}} \approx  d^{-\frac{1}{2}}\frac{\lambda }{d+\lambda} \|  C_d u_\lambda  \|_{S} \approx d^{-\frac{1}{2}} \|  C_d v_\lambda  \|_{S_w}. $$
  \item \label{itm:lem S prop:sq sum}\emph{(Square sum bounds)}  Let $\epsilon>0$. For any $0<\alpha \les 1$ and $\lambda \g \mu >0$ we have
        \[\Big( \sum_{ q \in \mc{Q}_\mu} \| P_q u_\lambda \|_S^2 \Big)^\frac{1}{2}\lesa \| u_\lambda \|_{S},  \qquad  \Big( \sum_{\kappa \in \mc{C}_\alpha} \| R_\kappa u_\lambda \|_S^2 \Big)^\frac{1}{2} \lesa \| u_\lambda \|_{S}.\]
      \item \label{itm:lem S prop:dis} \emph{(Uniform disposability)} For any $d \in 2^\ZZ$ we have
\begin{align*}
\|C_{d} u \|_{S}+\|C_{\les d} u \|_{S}\lesa \|u\|_{S}, \qquad
\|C_{d} v \|_{S_w}+\|C_{\les d} v \|_{S_w}\lesa \|v\|_{S_w}.
\end{align*}
\end{enumerate}
\end{lemma}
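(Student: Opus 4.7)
The plan is to derive all four properties from the corresponding statements for the flat spaces $U^2$ and $V^2$ (developed in Sections \ref{sec:Up and Vp}--\ref{sec:bound}) by passing through the half-wave decomposition
\[ u = \tfrac{1}{2}(u^+ + u^-), \qquad u^\pm := u \pm i |\nabla|^{-1}\p_t u. \]
By construction, $u \in S$ forces $u^\pm \in U^2_\pm$ and $u \in S_w$ forces $u^\pm \in V^2_\pm$, so that the profiles $\Phi^\pm := e^{\pm i t|\nabla|} u^\pm$ lie in $U^2$ or $V^2$ respectively. The spatial projections $P_q$ and $R_\kappa$ commute with $e^{\pm it|\nabla|}$, and the modulation cutoffs factor as $C^\pm_d = e^{\mp it|\nabla|} P^{(t)}_d e^{\pm it|\nabla|}$; this is what translates (ii)--(iv) into statements about flat projections of $\Phi^\pm$.

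For (i), I would use the algebraic identity $|\nabla|^{-1}\Box\phi = -ie^{-it|\nabla|}\p_t \Phi^+ = ie^{it|\nabla|}\p_t\Phi^-$, split $\psi = \tfrac{1}{2}(\psi^++\psi^-)$, and pair $|\nabla|^{-1}\Box\phi$ against $\psi^+$ via the first expression and against $\psi^-$ via the second; each piece then reduces to an integral of the form $\int \lr{\p_t \Phi^\pm, \Psi^\pm}_{L^2}\,dt$ with $\Phi^\pm\in V^2$, $\Psi^\pm \in U^2$, which is precisely the $U^2$--$V^2$ dual pairing of Section \ref{sec:Up and Vp}. The hypotheses $|\nabla|^{-1}\Box\phi,\,\p_t\phi,\,|\nabla|\phi \in L^1_t L^2_x$ are what guarantee $\p_t\Phi^\pm \in L^1_t L^2_x$ (using $\p_t^2\phi = -|\nabla|^2\phi + \Box\phi$) and legitimise the implicit integration by parts. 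For (ii), I would write $C_d = C^+_d + C^-_d$, apply each to $u^\pm_\lambda$, and observe that the diagonal pieces $C^\pm_d u^\pm_\lambda = e^{\mp it|\nabla|}P^{(t)}_d \Phi^\pm_\lambda$ satisfy $\|\cdot\|_{L^2_{t,x}} = \|P^{(t)}_d \Phi^\pm_\lambda\|_{L^2}$ by unitarity, for which the sharp temporal-frequency bound $\|P^{(t)}_d f\|_{L^2_t}\approx d^{-\frac12}\|P^{(t)}_d f\|_{V^2}$ (Section \ref{sec:characterisation}) yields the claimed equivalence with $d^{-\frac12}\|C_d u_\lambda\|_{S_w}$; the off-diagonal pieces $C^\pm_d u^\mp_\lambda$ sit at temporal frequency $\approx 2\lambda$ in the shifted frame and contribute only when $d\approx\lambda$, where Plancherel on $|\xi|\approx\lambda$ disposes of them directly. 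The extra factor $\lambda/(d+\lambda)$ in the $S$ version reflects that $|\nabla|^{-1}\p_t$ acts as multiplication by $\approx (d+\lambda)/\lambda$ on the sector $||\tau|-|\xi||\approx d,\,|\xi|\approx\lambda$, so controlling $u^\pm_\lambda$ in $U^2_\pm$ costs one such factor relative to $V^2_\pm$ control of $u_\lambda$ alone.

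Parts (iii) and (iv) are relatively routine given the $U^2/V^2$ toolkit. For (iv), the factorisation $C^\pm_d = e^{\mp it|\nabla|} P^{(t)}_d e^{\pm it|\nabla|}$ reduces the statement to boundedness of the smooth multipliers $P^{(t)}_d$ and $P^{(t)}_{\les d}$ on $U^2$ and $V^2$, which is a convolution-multiplier argument recorded in Section \ref{sec:bound}. For (iii), commuting $P_q$ and $R_\kappa$ past $e^{\pm it|\nabla|}$ reduces the claim to a square sum bound of the form $(\sum_q \|P_q v\|_{U^2}^2)^{1/2}\lesa \|v\|_{U^2}$; I would prove this atomically by noting that a $U^2$-atom $v = \sum_I \ind_I f_I$ with $\sum_I\|f_I\|_{L^2}^2=1$ produces $P_q v = \sum_I \ind_I P_q f_I$, which is $c_q := (\sum_I \|P_q f_I\|_{L^2}^2)^{1/2}$ times a $U^2$-atom, and $\sum_q c_q^2 = \sum_I \sum_q \|P_q f_I\|_{L^2}^2 \lesa \sum_I\|f_I\|_{L^2}^2 = 1$ by the spatial almost orthogonality of $\{P_q\}$, with Minkowski extending to a general $v\in U^2$; the case of $R_\kappa$ is handled analogously. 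The main obstacle I anticipate lies in part (ii), namely the case analysis of the off-diagonal pieces $C^\pm_d u^\mp_\lambda$ and the careful tracking of the weight $\lambda/(d+\lambda)$ across the regimes $d\ll\lambda$, $d\approx\lambda$, and $d\gg\lambda$; the duality argument in (i) is conceptually clean but does require care in verifying that the $U^2$--$V^2$ pairing applies without additional smoothness beyond the $L^1_t L^2_x$ hypotheses.
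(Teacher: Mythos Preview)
Your approach to (iii) matches the paper's (which invokes Proposition~\ref{prop:orthog}, proved by exactly the atomic argument you sketch), and your plan for (ii) is in the right spirit, though the paper's case analysis for the $\gtrsim$ direction and the $d\gg\lambda$ regime is more delicate than your outline indicates.

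There is, however, a genuine gap in your treatment of (i). The claim that $u\in S_w$ forces $u^\pm := u\pm i|\nabla|^{-1}\p_t u \in V^2_\pm$ is false: the $S_w$ norm is the \emph{infimum} $\inf\{\|v_+\|_{V^2_+}+\|v_-\|_{V^2_-}:u=v_++v_-\}$ over all decompositions and carries no information about $\p_t u$. Your argument would deliver a bound by $(\|\phi^+\|_{V^2_+}+\|\phi^-\|_{V^2_-})\|\psi\|_S$, which is the middle quantity in \eqref{eqn:S_w norm weaker} and in general strictly larger than $\|\phi\|_{S_w}\|\psi\|_S$. The paper instead takes an $S_w$-optimal decomposition $\phi=\phi_++\phi_-$ with $\phi_\pm\in V^2_\pm$ and pairs each $\phi_\pm$ against $\psi\pm i|\nabla|^{-1}\p_t\psi\in U^2_\pm$ via the identity $\int\lr{|\nabla|^{-1}\Box g,\psi}\,dt=\int\lr{(-i\p_t\pm|\nabla|)g,\,\psi\pm i|\nabla|^{-1}\p_t\psi}\,dt$. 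The new difficulty is that $\phi_\pm$ need not inherit the $L^1_tL^2_x$ regularity of $\phi$, so Theorem~\ref{thm:dual pairing} cannot be applied directly; the paper resolves this with a phase-space regularisation $\mc P_{\les d}\phi = P^{(t)}_{\les d}P_{\les d}(\rho_d\phi)$ and a limiting argument as $d\to\infty$.

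A smaller gap appears in (iv): the factorisation $C^\pm_d = e^{\mp it|\nabla|}P^{(t)}_d e^{\pm it|\nabla|}$ yields disposability only of $C^\pm_d$ (restriction to $|\tau\pm|\xi||\approx d$), whereas the lemma concerns $C_d$ (restriction to $\big||\tau|-|\xi|\big|\approx d$), and conjugating $C_d$ into either $\pm$-frame does not produce a pure temporal multiplier. The paper bridges this by writing, for instance, $C_d\phi_\lambda = C^+_{\approx d}C_d\phi_\lambda + C^+_{\approx\lambda}(1-C^+_{\approx d})C_d\phi_\lambda$ and applying the Besov embedding of Theorem~\ref{thm:besov embedding} to each piece.
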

\begin{proof}
We start with the proof of \eref{itm:lem S prop:dp}, which is a consequence of an approximation argument, together with the $U^2$ and $V^2$ version
    \begin{equation}\label{eqn:lem S prop:U2 dual pairing}
        \Big| \int_\RR \lr{ \p_t v, u}_{L^2} dt \Big| \les \| v \|_{V^2} \| u \|_{U^2}
    \end{equation}
which holds provided that $\p_t v \in L^1_t L^2_x$, see Theorem \ref{thm:dual pairing} below. The definition of the space $S_w$, implies that we can write $\phi = \phi_+ + \phi_-$ with $\phi_\pm \in V^2_\pm$. However we have a slight difficulty as the functions $\phi_\pm$ do not necessarily inherit the smoothness or integrability properties of $\phi$. To address this problem, we define the phase space localisation operator $\mc{P}_{\les d} \phi = P^{(t)}_{\les d} P_{\les d} ( \rho_d \phi)$ where $\rho \in C^\infty_0$ with $\rho(t) = 1$ on $|t|\les 1$, and we take $\rho_d(t) = \rho(\frac{t}{d})$. The assumptions on $\phi$ imply that $\| |\nabla|^{-1} \Box (1-\mc{P}_{\les d})\phi \|_{L^1_t L^2_x} \to 0$ as $d \to \infty$. Thus it suffices to bound the dual pairing with $\phi$ replaced with $\mc{P}_{\les d} \phi$. To this end, as $\mc{P}_{\les d} \phi_\pm$ is now smooth and integrable, we have
    \begin{align*}
      \Big| \int_\RR \lr{ |\nabla|^{-1} \Box \mc{P}_{\les d} \phi_\pm, \psi }_{L^2} dt \Big|
                &= \Big| \int_\RR \lr{ ( -i \p_t \pm |\nabla|) \mc{P}_{\les d} \phi_\pm, \psi \pm i |\nabla|^{-1} \p_t \psi }_{L^2} dt \\
                &= \Big| \int_\RR \lr{ \p_t ( e^{\pm it|\nabla|} \mc{P}_{\les d} \phi_\pm), e^{\pm it|\nabla|} (\psi \pm i |\nabla|^{-1} \p_t \psi) }_{L^2} dt \\
                &\les \| \mc{P}_{\les d} \phi_\pm \|_{V^2_\pm} \| \psi \pm i |\nabla|^{-1} \p_t \psi \|_{U^2_\pm}
    \end{align*}
where we applied \eref{eqn:lem S prop:U2 dual pairing}. If we now observe that
    $$  e^{\pm i t|\nabla|} P^{(t)}_{\les d} \phi(t) = \int_\RR d^{-1} \chi( \tfrac{s}{d}) e^{ \pm i s |\nabla|}  e^{ \pm i (t-s) |\nabla|} \phi(t-s) ds $$
for some $\chi \in L^1$ with $\| \chi\|_{L^1(\RR)}\les 1$,  a short computation gives the bound
\[\| \mc{P}_{\les d} \phi \|_{V^2_\pm} \lesa \| \phi \|_{V^2_\pm}.\]
 Consequently \eref{itm:lem S prop:dp} follows.

To prove \eref{itm:lem S prop:Xsb}, we first observe that $\big| |\tau| - |\xi| \big| \les \big| \tau \pm |\xi| \big|$.
Now, if $C_d v_\lambda=v_++v_-$, from Theorem \ref{thm:besov embedding} and \eqref{eqn:adapted Up vs Xsb}
\[
\| C_d v_\lambda \|_{L^2_{t,x}}\lesa \| C^+_{\gtrsim d} v_+ \|_{L^2_{t,x}} + \| C^{-}_{\gtrsim d} v_- \|_{L^2_{t,x}}\lesa d^{-\frac12}\big(\|v_+\|_{V^2_+}+\|v_-\|_{V^2_-}\big),
\]
proving $\| C_d v_\lambda \|_{L^2_{t,x}} \lesa d^{-\frac{1}{2}} \|  C_d v_\lambda  \|_{S_w} $.  If $d\lesa \lambda$, this also implies
$$ \| C_d u_\lambda \|_{L^2_{t,x}} \lesa  d^{-\frac{1}{2}}\frac{\lambda }{d+\lambda} \|  C_d u_\lambda  \|_{S}.$$
On the other hand, to obtain the high modulation gain in the region $d\gg \lambda$, we use the above together with the fact that the $S$ norm controls the time derivative, to see that

\begin{align*}
\| C_d u_\lambda \|_{L^2_{t,x}} \approx{}& \frac{\lambda}{d} \| C_d \p_t |\nabla|^{-1} u_\lambda \|_{L^2_{t,x}}\lesa d^{-\frac{1}{2}}\frac{\lambda }{d+\lambda} \| \p_t |\nabla|^{-1} C_d u_\lambda  \|_{S_w}\\
\lesa{}& d^{-\frac{1}{2}}\frac{\lambda }{d+\lambda}\|  C_d u_\lambda  \|_{S}
\end{align*}
where we used \eqref{eqn:S_w norm weaker}. This completes the proof of all $\lesa$ inequalities in (ii). For the converse inequalities, let $P^{(t)}_\pm$ denote the temporal Fourier multiplier with symbol $\ind_{\{ \pm \tau \g 0\}}(\tau)$. Then the identity
$C_d u_\lambda = C^+_{\approx d} P^{(t)}_- C_d u_\lambda + C^+_{\approx d+ \lambda} P^{(t)}_+ C_d u_\lambda$
implies that
	\begin{align*}
		\| & C_d (u_\lambda + i |\nabla|^{-1} \p_t u_{\lambda}) \|_{U^2_+}\\
				&\les \big\| C^+_d P^{(t)}_- C_d\big( u_\lambda + i |\nabla|^{-1} \p_t  u_{\lambda}\big) \big\|_{U^2_+} + \big\| C^+_{d+\lambda} P^{(t)}_+ C_d \big(u_\lambda + i |\nabla|^{-1} \p_t u_{\lambda} \big)\big\|_{U^2_+} \\
				&\lesa d^{\frac{1}{2}} \big\| C^+_d P^{(t)}_- C_d \big( u_\lambda + i |\nabla|^{-1} \p_t  u_{\lambda}\big) \big\|_{L^2_{t,x}} \\
				&\qquad \qquad+ (d+\lambda)^{\frac{1}{2}} \big\| C^+_{d+\lambda} P^{(t)}_+ C_d\big(u_\lambda + i |\nabla|^{-1} \p_t  u_{\lambda} \big)\big\|_{L^2_{t,x}} \\
				&\lesa d^{\frac{1}{2}} \frac{d+\lambda}{\lambda} \big\| C_d u_\lambda \big\|_{L^2_{t,x}} + (d+\lambda)^{\frac{1}{2}} \frac{d}{\lambda} \big\|  C_d u_{\lambda} \big\|_{L^2_{t,x}} \approx d^\frac{1}{2} \frac{d+\lambda}{\lambda} \|  C_d u_\lambda \|_{L^2_{t,x}}.
	\end{align*}
Since an identical argument gives the $U^2_-$ version, we conclude that
		$$ \| C_d u_\lambda \|_{L^2_{t,x}} \gtrsim d^{-\frac{1}{2}} \frac{\lambda}{d+\lambda} \| C_d u_\lambda\|_{S}.$$
For the $S_w$ version, we observe that by definition
	\begin{align*} \| C_d v_\lambda \|_{S_w} &\les \| C_d P^{(t)}_- v_\lambda \|_{V^2_+} + \| C_d P^{(t)}_+ v_\lambda \|_{V^2_-} \\
	&\lesa \| C^+_{\approx d} C_d P^{(t)}_- v_\lambda \|_{V^2_+} + \| C^{-}_{\approx d} C_d P^{(t)}_+ v_\lambda \|_{V^2_-}
	\lesa d^{\frac{1}{2}} \| C_d v_\lambda \|_{L^2_{t,x}}
	\end{align*}
as required.

To prove \eref{itm:lem S prop:sq sum}, the square sum bounds, we observe that the $S$ case follows immediately from the square sum control in $U^2$, namely Proposition \ref{prop:orthog} below.

For \eref{itm:lem S prop:dis}, it suffices to show that
	$$ \| C_d \phi_\lambda \|_{U^2_+} + \| C_{\les d} \phi_\lambda \|_{U^2_+} \lesa \| \phi_\lambda \|_{U^2_+}, \qquad  \| C_d \phi_\lambda \|_{V^2_+} + \| C_{\les d} \phi_\lambda \|_{V^2_+} \lesa \| \phi_\lambda \|_{V^2_+}. $$
After writing $ C_{\les d}^+ = e^{-i t |\nabla|} P^{(t)}_{\les d} e^{i t|\nabla|} $ and $ C_{ d}^+ = e^{-i t |\nabla|} P^{(t)}_{d} e^{i t|\nabla|} $ where $P^{(t)}_{\les d}$ and $P^{(t)}_d$ are smooth temporal cutoffs to $|\tau| \les d$ and $|\tau| \approx d$ respectively, the fact that convolution with $L^1_t$ kernels is bounded in $U^2$ and $V^2$ (see Lemma \ref{lem:conv oper on Up} below), implies that
	\begin{equation}\label{eqn:lem S prop:Cplus dis} \| C_d^+ \phi_\lambda \|_{U^2_+} + \| C_{\les d}^+ \phi_\lambda \|_{U^2_+} \lesa \| \phi_\lambda \|_{U^2_+}, \qquad  \| C_d^+ \phi_\lambda \|_{V^2_+} + \| C_{\les d}^+ \phi_\lambda \|_{V^2_+} \lesa \| \phi_\lambda \|_{V^2_+}.
	\end{equation}
Thus our goal is to replace $C_d^+$ in \eref{eqn:lem S prop:Cplus dis} with $C_d$. We only prove the $U^2_+$ case, as the $V^2_+$ case is similar. For the $C_d$ multipliers, we observe that after decomposing
	$$ C_d \phi_\lambda = C_{\approx d}^+ C_d \phi_\lambda +  ( 1- C_{\approx d}^+ ) C_d\phi_\lambda  = C_{\approx d}^+ C_d\phi_\lambda  +  C^+_{\approx \lambda} ( 1- C_{\approx d}^+ ) C_d \phi_\lambda$$
the standard Besov embedding in Theorem \ref{thm:besov embedding} gives
	\begin{align*} \| C_d \phi_\lambda \|_{U^2_+} &\lesa d^{\frac{1}{2}} \|C_{\approx d}^+ C_d \phi_\lambda \|_{L^2_{t,x}} + \lambda^\frac{1}{2} \|C^+_{\approx \lambda} ( 1- C_{\approx d}^+ ) C_d \phi_\lambda\|_{L^2_{t,x}} \\
	&\lesa \sup_{d'} (d')^\frac{1}{2} \| C_{d'}^+ \phi_\lambda \|_{L^2_{t,x}} \lesa \| \phi_\lambda\|_{U^2_+}.
	\end{align*}
On the other hand, for the $C_{\les d}$ multipliers, we first write $C_{\les d} = C_{\ll d}^+ + C_{\gtrsim d}^+ C_{\les d}$. The first term is immediate by \eqref{eqn:lem S prop:Cplus dis}. For the second, we note that
	$$ C_{\gtrsim d}^+ C_{\les d} \phi_\lambda = C_{\approx d}^+ C_{\les d} \phi_\lambda + C_{\gg d}^+ C_{\les d}\phi_\lambda =  C_{\approx d}^+ C_{\les d} \phi_\lambda + C^+_{\approx \lambda} C_{\gg d}^+ C_{\les d}\phi_\lambda $$
and apply the reasoning used in the $C_d$ case.
\end{proof}

For later use, we note that \eref{itm:lem S prop:Xsb} in Lemma \ref{lem:S properties} implies that for any $d \lesa \lambda$ we have the bounds
    \begin{equation}\label{eqn:trivial box bound}
        \big\| \Box C_d \psi_\lambda \big\|_{L^2_{t,x}} +  \big\| \Box C_{\les d} \psi_\lambda \big\|_{L^2_{t,x}} \lesa d^\frac{1}{2} \lambda \|\psi_\lambda \|_S.
    \end{equation}
The norm $\| \cdot \|_S$ also controls the Strichartz type spaces $L^q_t L^r_x$. In fact, as the $S$ norm is based on $U^2$, essentially any estimate for the free wave equation which involves $L^p_t$ with $p \g 2$ implies a corresponding bound for functions in $S$. However, somewhat surprisingly, we make no use of Strichartz estimates in the proof of Theorem \ref{thm:div-prob}, and instead rely on bilinear $L^2_{t,x}$ estimates together with the $X^{s,b}$ type bound \eref{eqn:trivial box bound}.

The space $S$ is constructed using the atomic space $U^2_\pm$. Although this definition is convenient for proving properties of functions in $S$, it is a challenging problem to determine precisely when a general function $u \in C_b(\RR, L^2_x)$ belongs to $S$. This problem is closely related to the difficult question of characterising elements of $U^p$, which has been addressed in \cite{Koch2016}. We also give a slightly different statement of the $U^p$ characterisation result in Theorem \ref{thm:characteristion of Up}, as well as a self-contained proof closely following that given by Koch-Tataru \cite{Koch2016}. Restated in terms of the solution space $S$, the conclusion is the following.

\begin{theorem}[Characterisation of $S$] \label{thm:chara of S}
Let $(\psi,\p_t \psi)\in C_b(\RR ;  L^2_x \times \dot{H}^{-1}_x)$ with $\| (\psi(t), \p_t \psi(t))\|_{L^2 \times \dot{H}^{-1}} \to 0$ as $t \to -\infty$. If
                $$ \sup_{\substack{\phi \in C^\infty_0 \\ \| \phi \|_{S_w} \les 1}} \Big| \int_\RR \lr{ \Box \phi, |\nabla|^{-1}  \psi}_{L^2_x} dt \Big| <  \infty $$
then $\psi \in S$ and
                $$ \| \psi \|_S \lesa \sup_{\substack{\phi \in C^\infty_0 \\ \| \phi \|_{S_w} \les 1}} \Big| \int_\RR \lr{ \Box \phi, |\nabla|^{-1}  \psi}_{L^2_x} dt \Big|. $$

\end{theorem}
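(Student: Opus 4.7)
The plan is to recognise the theorem as the converse of the dual pairing bound in Lemma~\ref{lem:S properties}(i), and to derive it from the characterisation of $U^2$ in Theorem~\ref{thm:characteristion of Up} applied to the two half-wave components
\[
\psi^\pm := \psi \pm i |\nabla|^{-1}\partial_t \psi, \qquad f^\pm := e^{\pm it|\nabla|}\psi^\pm.
\]
Under the hypotheses, $\psi^\pm \in C_b(\RR; L^2_x)$ with $\|\psi^\pm(t)\|_{L^2_x} \to 0$ as $t \to -\infty$, and by definition $\|\psi\|_S = \|f^+\|_{U^2} + \|f^-\|_{U^2}$. Theorem~\ref{thm:characteristion of Up} then reduces the task to bounding, for each sign,
\[
\sup_v \Big|\int_\RR \langle \partial_t v, f^\pm \rangle_{L^2_x} dt\Big|
\]
over a suitable class of $V^2$ test functions $v$, by the supremum $M$ in the hypothesis.

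The bridge between the two suprema is the identity
\[
\int_\RR \langle \Box \phi, |\nabla|^{-1}\psi\rangle_{L^2_x} dt = \int_\RR\langle(i\partial_t + |\nabla|)\phi,\psi^+\rangle_{L^2_x} dt = \int_\RR\langle(-i\partial_t + |\nabla|)\phi,\psi^-\rangle_{L^2_x} dt,
\]
obtained by two integrations by parts in $t$ (and one in $x$) and the relations $(-i\partial_t+|\nabla|)\psi^+ = (i\partial_t+|\nabla|)\psi^- = |\nabla|^{-1}\Box\psi$. For a given test $v$, I would take $\phi = e^{it|\nabla|}v$ in the $+$ case and $\phi = e^{-it|\nabla|}v$ in the $-$ case, so that $\phi \in V^2_\mp$ with $\|\phi\|_{S_w}\leq \|\phi\|_{V^2_\mp} = \|v\|_{V^2}$. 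The computations
\[
(i\partial_t + |\nabla|)(e^{it|\nabla|}v) = i e^{it|\nabla|}\partial_t v, \qquad (-i\partial_t + |\nabla|)(e^{-it|\nabla|}v) = -i e^{-it|\nabla|}\partial_t v
\]
produce no lower-order error, and using that $e^{\pm it|\nabla|}$ is a Fourier multiplier with radial real symbol (hence self-transpose under the $L^2_x$ bilinear pairing), the identity collapses to $\int\langle \Box\phi, |\nabla|^{-1}\psi\rangle dt = \pm i\int\langle \partial_t v, f^\pm\rangle dt$. The hypothesis therefore delivers $|\int\langle \partial_t v, f^\pm\rangle dt| \leq M\|v\|_{V^2}$, and Theorem~\ref{thm:characteristion of Up} yields $\|\psi\|_S \lesa M$.

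The main technical subtlety is that $\phi = e^{\pm it|\nabla|}v$ is compactly supported in time but not in space, so $\phi \notin C^\infty_0$ strictly. Once $v$ is taken smooth and compactly supported, however, $\phi$ belongs to the natural pairing class of Lemma~\ref{lem:S properties}(i), namely $\phi \in S_w$ with $|\nabla|^{-1}\Box\phi, \partial_t\phi, |\nabla|\phi \in L^1_tL^2_x$. Since $C^\infty_0$ is dense in this class with respect to the norm $\|\phi\|_{S_w} + \||\nabla|^{-1}\Box\phi\|_{L^1_tL^2_x} + \|\partial_t\phi\|_{L^1_tL^2_x} + \||\nabla|\phi\|_{L^1_tL^2_x}$, the hypothesis extends to this broader class by continuity, making $\phi = e^{\pm it|\nabla|}v$ directly admissible. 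Verifying the density statement, in particular that an appropriate approximation (by spatial mollification and truncation of $v$) does not inflate the $V^2_\mp$ component of the $S_w$ norm, is the most delicate point, since multiplication by a spatial cutoff does not commute with the propagators $e^{\mp it|\nabla|}$.
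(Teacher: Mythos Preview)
Your approach is essentially identical to the paper's: both reduce to the $U^2$ characterisation of Theorem~\ref{thm:characteristion of Up} applied to $e^{\pm it|\nabla|}(\psi \pm i|\nabla|^{-1}\partial_t\psi)$, and both take $\phi = e^{\mp it|\nabla|}v$ as the test function to convert $\int\langle\partial_t v, f^\pm\rangle\,dt$ into $\int\langle\Box\phi, |\nabla|^{-1}\psi\rangle\,dt$. The paper's proof is in fact shorter than yours because it simply asserts the final inequality $\sup_v |\ldots| \le \sup_{\phi\in C^\infty_0,\,\|\phi\|_{S_w}\le 1}|\ldots|$ without further comment; the technical concern you raise about $\phi = e^{\mp it|\nabla|}v$ not lying in $C^\infty_0(\RR^{1+n})$ is not explicitly addressed there either. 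In the paper's setup the test class for $U^2$ is $v \in C^\infty_0(\RR;L^2)$ (smooth and compactly supported in time only), and the $C^\infty_0$ in the statement of Theorem~\ref{thm:chara of S} is evidently meant in the same sense, which makes $\phi = e^{\mp it|\nabla|}v$ directly admissible and your density argument unnecessary.
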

\begin{proof}
Theorem \ref{thm:characteristion of Up} implies that if $u \in L^\infty_t L^2_x$ with $\| u(t) \|_{L^2_x} \to 0 $ as $t \to -\infty$, and
        $$ \sup_{\substack{ v \in C^\infty_0(\RR;L^2) \\ \| v \|_{V^2}\les 1}} \Big| \int_\RR \lr{\p_t v, u }_{L^2_x} dt \Big| < \infty, $$
then $u \in U^2$. To translate this statement to $S$, we first observe that as in the proof of \eref{itm:lem S prop:dp} in Lemma \ref{lem:S properties}, we have
    \begin{align*} \sup_{\substack{ v \in C^\infty_0 \\ \| v \|_{V^2} \les 1}} \Big| \int_\RR \lr{ \p_t v, e^{\mp it |\nabla|}( \psi \pm i |\nabla| \p_t \psi) }_{L^2} dt\Big| &=  \sup_{\substack{ v \in C^\infty_0 \\ \| v \|_{V^2} \les 1}} \Big| \int_\RR \lr{ \Box ( e^{\mp it |\nabla|} v), |\nabla|^{-1} \psi }_{L^2} dt \Big| \\
                          &\les \sup_{\substack{  \phi \in C^\infty_0 \\ \| \phi \|_{S_w} \les 1}} \Big| \int_\RR \lr{ \Box \phi, |\nabla|^{-1} \psi }_{L^2} dt\Big|.
    \end{align*}
Consequently, since $ \|\psi \pm i |\nabla|^{-1} \p_t \psi \|_{L^2_x} \to 0$ as $ t \to -\infty$, we conclude from the characterisation of $U^2$ that $\psi \pm i |\nabla|^{-1} \p_t \psi \in U^2_\pm$ and moreover that the claimed bound holds.
\end{proof}

Recall that we have defined the inhomogeneous solution operator $\Box^{-1} F$ as
    $$ \Box^{-1} F = \ind_{[0, \infty)}(t) \int_0^t |\nabla|^{-1} \sin\big( ( t-s)|\nabla|\big) F(s) ds. $$
Applying Theorem \ref{thm:chara of S} to the special case $\psi = \Box^{-1} F$ gives the following.

\begin{corollary}[Energy Inequality]\label{cor:energy ineq}
Let $F \in L^1_{t,loc} \dot{H}^{-1}_x$ with
            $$ \sup_{\substack{\phi \in C^\infty_0 \\ \| \phi \|_{S_w} \les 1}} \Big| \int_0^\infty \lr{\phi, |\nabla|^{-1} F }_{L^2_x} dt \Big| < \infty. $$
Then $\Box^{-1} F \in S$ and
            $$ \| \Box^{-1} F \|_S \lesa \sup_{\substack{\phi \in C^\infty_0 \\ \| \phi \|_{S_w} \les 1}} \Big| \int_0^\infty \lr{\phi, |\nabla|^{-1} F }_{L^2_x} dt \Big|. $$
\end{corollary}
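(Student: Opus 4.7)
The plan is to apply Theorem \ref{thm:chara of S} directly to $\psi := \Box^{-1} F$. This reduces the corollary to two verifications: (i) the a priori regularity $(\psi, \partial_t \psi) \in C_b(\RR; L^2_x \times \dot{H}^{-1}_x)$ with vanishing norm as $t \to -\infty$, and (ii) the identity relating the $S_w$-dual pairing for $\psi$ to the pairing with $F$.

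For (i), the limit at $-\infty$ is immediate since $\psi \equiv 0$ for $t<0$ by the indicator prefactor in the definition of $\Box^{-1}$. For global boundedness I would first handle the case $F \in C^\infty_0$: on the support interval $[0,T]$ of $F$ the standard energy estimate gives $\psi \in C([0,T]; L^2_x)$ and $\partial_t \psi \in C([0,T]; \dot{H}^{-1}_x)$, while for $t \g T$ the function $\psi$ solves the homogeneous wave equation so the $L^2_x \times \dot{H}^{-1}_x$ norms are preserved by energy conservation. The general case $F \in L^1_{t,loc} \dot{H}^{-1}_x$ would then follow by approximating $F$ by smooth compactly supported $F_n$ and using the uniform bounds from Theorem \ref{thm:chara of S} applied to each $\psi_n$, together with the completeness of $S$.

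For (ii), let $\phi \in C^\infty_0$. Since $\phi$ has compact temporal support and $\psi(0,\cdot) = \partial_t \psi(0,\cdot) = 0$, two integrations by parts in $t$ (with all boundary terms vanishing) together with integration by parts in $x$ yield
\[
\int_\RR \langle \Box \phi, |\nabla|^{-1} \psi\rangle_{L^2_x}\, dt = \int_0^\infty \langle \phi, |\nabla|^{-1} \Box \psi\rangle_{L^2_x}\, dt = \int_0^\infty \langle \phi, |\nabla|^{-1} F\rangle_{L^2_x}\, dt,
\]
where the last equality uses $\Box \psi = F$ on $(0,\infty)$. Hence the hypothesis of the corollary coincides with the hypothesis of Theorem \ref{thm:chara of S} for $\psi$, and that theorem immediately delivers both $\psi \in S$ and the claimed norm inequality.

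The main obstacle I anticipate is step (i): the assumption $F \in L^1_{t,loc} \dot{H}^{-1}_x$ alone does not force $\psi \in L^\infty_t L^2_x$ globally, so the approximation must be arranged so that the uniform $S$-norm control from the smooth case actually transfers to the limit, rather than only bounding each smoothed approximant individually. The identity in (ii) and the appeal to Theorem \ref{thm:chara of S} are routine by comparison.
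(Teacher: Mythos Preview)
Your overall plan---apply Theorem \ref{thm:chara of S} to $\psi = \Box^{-1}F$, verify the integration-by-parts identity (ii), and check the a priori hypothesis (i)---matches the paper exactly, and your treatment of (ii) is fine.

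The gap is in (i), and you have correctly located it. Your approximation scheme does not close as written. Even granting that for $F_n \in C^\infty_0$ the corollary holds, you need two things: first, that the dual quantities $\sup_{\phi}\big|\int_0^\infty \langle \phi,|\nabla|^{-1}F_n\rangle\, dt\big|$ are bounded \emph{uniformly in $n$} by the corresponding quantity for $F$; second, that uniform boundedness of $\psi_n$ in $S$ actually yields $\psi \in S$. The first point is delicate because $F_n \to F$ only in $L^1_{t,loc}\dot H^{-1}$ while the pairing integrates over all of $[0,\infty)$, so you would have to show that time truncation acts boundedly on $S_w$ with a uniform constant. The second point is worse: completeness of $S$ gives limits of \emph{Cauchy} sequences, not of bounded ones, and to show $\psi_n$ is Cauchy in $S$ you would need $\sup_\phi\big|\int_0^\infty\langle\phi,|\nabla|^{-1}(F_n-F_m)\rangle\,dt\big|\to 0$, which again is not implied by convergence in $L^1_{t,loc}\dot H^{-1}$.

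The paper sidesteps the approximation entirely. It proves $\|\psi \pm i|\nabla|^{-1}\partial_t\psi\|_{L^\infty_t L^2_x}<\infty$ \emph{directly from the dual-pairing hypothesis}: using the Duhamel formula $\psi \pm i|\nabla|^{-1}\partial_t\psi = \ind_{[0,\infty)}\int_0^t e^{\mp i(t-s)|\nabla|}|\nabla|^{-1}F(s)\,ds$, one dualizes the $L^\infty_t L^2_x$ norm against $\phi \in C^\infty_0$ with $\|\phi\|_{L^1_t L^2_x}\les 1$, applies Fubini, and observes that $\phi_\pm(s)=e^{\pm is|\nabla|}\int_s^\infty e^{\mp it|\nabla|}\phi(t)\,dt$ satisfies $\|\phi_\pm\|_{V^2_\pm}\lesa\|\phi\|_{L^1_tL^2_x}\les 1$. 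Hence the $L^\infty_t L^2_x$ norm is bounded by the $S_w$-dual pairing itself, which is finite by hypothesis. This closes (i) without any approximation.
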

\begin{proof}
We would like to apply Theorem \ref{thm:chara of S} to $\Box^{-1} F$. We start by observing that the definition of $\Box^{-1} F$ and fact that $F \in L^1_{t, loc} \dot{H}^{-1}$ implies that for any $\phi \in C^\infty_0$ we have
    $$ \Big| \int_\RR \lr{ \Box \phi, |\nabla|^{-1} \Box^{-1} F}_{L^2} dt \Big| = \Big| \int_0^\infty  \lr{ \phi, |\nabla|^{-1} F }_{L^2} dt \Big|$$
as well as $\Box^{-1} F, |\nabla|^{-1} \p_t ( \Box^{-1} F) \in C(\RR, L^2)$ and $\Box^{-1} F(t) = |\nabla|^{-1} \p_t ( \Box^{-1} F)(t)=0$ for $t \les 0$.  In view of Theorem \ref{thm:chara of S}, the required conclusion would follow provided that $\| \Box^{-1} F \pm i |\nabla|^{-1} \p_t( \Box^{-1} F) \|_{L^\infty_t L^2_x} < \infty$. To this end, we observe that
    \begin{align*}
      \| \Box^{-1} F \pm i &|\nabla|^{-1} \p_t \Box^{-1} F \|_{L^\infty_t L^2_x} \\
      &= \Big\| \ind_{[0, \infty)}(t) \int_0^t e^{ \mp i (t-s)|\nabla|} |\nabla|^{-1} F(s) ds \Big\|_{L^\infty_t L^2_x} \\
      &= \sup_{ \substack{\phi \in C^\infty_0 \\  \| \phi \|_{L^1_t L^2_x} \les 1 }} \Big| \int_\RR \int_0^t \lr{\phi(t), e^{ \pm i(t-s)|\nabla|} |\nabla|^{-1} F(s) } ds\, dt \Big| \\
      &= \sup_{ \substack{\phi \in C^\infty_0 \\ \| \phi \|_{L^1_t L^2_x} \les 1 }} \Big| \int_0^\infty \Big\langle \int_s^\infty  e^{ \pm i (s-t)|\nabla|} \phi(t) dt , |\nabla|^{-1} F(s)\Big \rangle ds \Big|.
    \end{align*}
If we let $\phi_\pm(s) =  e^{ \pm  i s |\nabla|} \int_s^\infty  e^{ \mp i t |\nabla|} \phi(t) dt$, then the definition of the $V^2_\pm$ norm gives
    $$ \| \phi_{\pm} \|_{V^2_\pm} \lesa \| \phi \|_{L^1_t L^2_x} \les 1$$
and hence we conclude that
    \begin{align*}
\| \Box^{-1} F \pm i |\nabla|^{-1} \p_t \Box^{-1} F \|_{L^\infty_t L^2_x} \lesa{}& \sup_{ \substack{ \phi \in C^\infty_0 \\ \| \phi \|_{V^2_\pm} \les 1 }} \Big| \int_0^\infty \lr{ \phi, |\nabla|^{-1} F }_{L^2} dt \Big|\\
 \lesa {}&  \sup_{ \substack{ \phi \in C^\infty_0 \\ \| \phi \|_{S_w} \les 1 }} \Big| \int_0^\infty  \lr{ \phi, |\nabla|^{-1} F }_{L^2} dt \Big|,
    \end{align*}
    where the last line follows from the definition of $\| \cdot \|_{S_w}$. Therefore,
\[\| \Box^{-1} F \pm i |\nabla|^{-1} \p_t( \Box^{-1} F) \|_{L^\infty_t L^2_x} < \infty,\] as required.
\end{proof}

\begin{remark}\label{rmk:cutoff}
It is possible to remove the time cutoff $\ind_{[0, \infty)}(t)$ in Corollary \ref{cor:energy ineq} by using an approximation argument. More precisely, provided that $F \in L^1_{t,loc} \dot{H}^{-1}$, we have
    $$ \sup_{\substack{\phi \in C^\infty_0 \\ \| \phi \|_{S_w} \les 1}} \Big| \int_0^\infty \lr{\phi, |\nabla|^{-1} F }_{L^2_x} dt \Big| \les \sup_{\substack{\phi \in C^\infty_0 \\ \| \phi \|_{S_w} \les 1}} \Big| \int_\RR \lr{\phi, |\nabla|^{-1} F }_{L^2_x} dt \Big|.$$
This follows by writing $\phi(t) = \chi(\epsilon^{-1} t) \phi(t) + (1-\chi(\epsilon^{-1} t)) \phi(t)$ with $\chi(t) \in C^\infty$, $\chi(t) =1$ for $t \g 1$, and $\chi(t)= 0 $ for $t<\frac{1}{2}$, and noting that $\chi(\epsilon^{-1} t) \phi(t) \in C^\infty_0(0, \infty)$ and
        $$ \Big| \int_0^\infty \lr{ (1 - \chi(\epsilon^{-1} t) ) \phi(t), |\nabla|^{-1} F}_{L^2_x} dt \Big| \to 0$$
as $\epsilon \to 0$ since $F \in L^1_{t, loc} \dot{H}^{-1}$.
\end{remark}

\subsection{Product estimates in far cone regions}\label{subsec:product}

In this section we give the key estimates to control the product of two functions in the spaces $S$ and $S_w$ in the special case where one of the functions has high modulation, or in other words, is far from the cone. This estimate is a consequence of a general product high-low type product estimate in adapted versions $U^p$ and $V^p$. To state the product estimate we require, we start by defining the (spatial) bilinear  Fourier multiplier
	\begin{equation}\label{eqn:spatial bilinear Fourier multiplier}
		 \mc{M}[f, g]( x) = \int_{\RR^n} \int_{\RR^n} \widehat{f}(\xi - \eta) \widehat{g}(\eta)  m(\xi - \eta, \eta) d\eta e^{ i x\cdot \xi} d\xi
	\end{equation}
where $m: \RR^n \times \RR^n \to \CC$.

\begin{theorem}[High-low product estimate: wave case]\label{thm:high-low U2 wave}
Let $d \in 2^\ZZ$. Suppose that $m:\RR^n \times \RR^n \to \CC$ with
		$$ \big|\pm_0 |\xi+\eta| - \pm_1 |\xi| - \pm_2 |\eta| \big| \les d $$
for $(\xi, \eta) \in \supp m$. If $u \in V^2_{\pm_1}$ with $\supp \widetilde{u} \subset \{ |\tau \pm_1 |\xi| | \g 4 d\}$ and $v \in V^2_{\pm_2}$ then  $\mc{M}[u,v]  \in V^2_{\pm_0}$ with
	$$ \| \mc{M}[u,v] \|_{V^2_{\pm_0}} \lesa \| m(\xi, \eta) \|_{L^\infty_\xi L^2_\eta + L^\infty_\eta L^2_\xi} \| u \|_{V^2_{\pm_1}} \| v \|_{V^2_{\pm_2}}.$$
If in addition we have $u \in U^2_{\pm_1}$ and $v \in U^2_{\pm_2}$, then $\mc{M}[u,v] \in U^2_{\pm_0}$ and
	$$ \| \mc{M}[u,v] \|_{U^2_{\pm_0}} \lesa \| m(\xi, \eta) \|_{L^\infty_\xi L^2_\eta + L^\infty_\eta L^2_\xi} \| u \|_{U^2_{\pm_1}} \| v \|_{U^2_{\pm_2}}.$$
\end{theorem}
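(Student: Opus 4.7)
The natural first move is to transform to the wave frames. Set $U = e^{\pm_1 it|\nabla|} u$, $V = e^{\pm_2 it|\nabla|} v$, and $W = e^{\pm_0 it|\nabla|}\mc{M}[u,v]$. By the definitions of $V^2_\pm$ and $U^2_\pm$ one has $\|U\|_{V^2}=\|u\|_{V^2_{\pm_1}}$ and $\|V\|_{V^2}=\|v\|_{V^2_{\pm_2}}$, so the theorem reduces to showing
\[
\|W\|_{V^2}\lesa \|m\|_{L^\infty_\xi L^2_\eta+L^\infty_\eta L^2_\xi}\|U\|_{V^2}\|V\|_{V^2}
\]
together with its $U^2$ analogue. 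A short computation on the spatial Fourier side identifies $W$ as the time-dependent bilinear spatial multiplier
\[
W(t,x)=\int m(\xi,\eta)\,e^{it\phi(\xi,\eta)}\,\widehat{U}(t,\xi)\,\widehat{V}(t,\eta)\,e^{ix\cdot(\xi+\eta)}\,d\xi\,d\eta,
\]
where $\phi(\xi,\eta)=\pm_0|\xi+\eta|-\pm_1|\xi|-\pm_2|\eta|$. The two hypotheses translate to: $U$ has temporal Fourier support in $\{|\tau|\g 4d\}$ (since $\widetilde{U}(\tau,\xi)=\widetilde{u}(\tau-\pm_1|\xi|,\xi)$), while on $\supp m$ the factor $e^{it\phi(\xi,\eta)}$ is slowly oscillating in $t$ at scale $\lesa 1/d$.

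The plan is to invoke the general high-low product estimate for time-dependent bilinear multipliers on $U^2$ and $V^2$ developed in Section \ref{sec:bound}. The pointwise-in-$t$ bound
\[
\|\mc{B}_t[f,g]\|_{L^2_x}\lesa \|m\|_{L^\infty_\xi L^2_\eta+L^\infty_\eta L^2_\xi}\|f\|_{L^2}\|g\|_{L^2}
\]
is a standard Schur/Young estimate applied to either splitting of $m$, and this furnishes the constant in the final bound. For the $U^2$ case I would first reduce to an atomic decomposition $U=\sum c_k a_k$ into $U^2$-atoms; since the temporal projection $P^{(t)}_{\g 4d}$ is bounded on $U^2$, each atom may be replaced by its high-frequency part without loss. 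The phase $e^{it\phi}$, varying slowly at scale $1/d$ on $\supp m$, can then be absorbed into the spatial multiplier via a Fourier series expansion in the bounded direction $\phi$, yielding an atomic representation for $W$ with the claimed norm.

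For $V^2$, one argues either by duality through the $U^2$--$V^2$ pairing of Theorem \ref{thm:dual pairing}, or directly via the partition characterization of $V^2$: expand the telescoping difference $W(t_{j+1})-W(t_j)$ by the bilinear Leibniz rule into (i) a term with $U$ differenced, (ii) a term with $V$ differenced, and (iii) a symbol-difference remainder $(\mc{B}_{t_{j+1}}-\mc{B}_{t_j})[U(t_j),V(t_j)]$. Terms (i) and (ii) sum in $\ell^2$ against the $V^2$ norms of $U$ and $V$ by Cauchy--Schwarz combined with the pointwise multiplier bound. The principal obstacle is the remainder (iii): its naive bound loses a factor $|t_{j+1}-t_j|\cdot d$, which does not sum over arbitrary partitions. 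This is exactly where the high-modulation hypothesis enters decisively: a dyadic temporal decomposition $U=\sum_{d'\g 4d}P^{(t)}_{d'}U$ combined with integration by parts in $t$ against $e^{it\phi}$ trades a factor $d$ (from the phase) for $d'$ (from the temporal frequency of $U$), producing a geometric gain $d/d'\les 1/4$ that restores $\ell^2$-summability and yields the claimed bound.
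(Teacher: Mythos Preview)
Your reduction to the wave frames, identification of $W$ as a time-dependent bilinear spatial multiplier with low-frequency phase $e^{it\phi}$, and plan to invoke the general high--low product estimate from Section~\ref{sec:bound} are all correct and match the paper exactly: the paper's proof is precisely ``apply Theorem~\ref{thm:general adapted high mod prod} after rescaling $t\mapsto t/d$''.

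Where you diverge is in your sketch of \emph{how} that underlying estimate (Theorem~\ref{thm:stab with conv}) is proved. For the $V^2$ remainder term (iii) you propose a dyadic temporal decomposition of $U$ combined with ``integration by parts in $t$ against $e^{it\phi}$''. But the $V^2$ norm is a supremum over discrete partitions, not a $t$-integral, so it is not clear what operation you intend or why it produces the factor $d/d'$ you claim; there is no integral to integrate by parts in. The paper handles this term by a convolution identity: choosing $\rho\in\mc{S}$ with $\supp\widehat\rho$ at low temporal frequency, the high-modulation hypothesis on $U$ forces $\rho*_\RR\big(\phi(\cdot-b)U\big)=0$ for every shift $b$, whence
\[
\big(\phi(t_{j+1})-\phi(t_j)\big)U(t_j)=\int_\RR\rho(s)\big(\phi(t_{j+1}-s)-\phi(t_j-s)\big)\big(U(t_j)-U(t_j-s)\big)\,ds,
\]
and the $\ell^2$ sum over $j$ is then controlled by the elementary counting Lemma~\ref{lem:key ineq for stab lem}. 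For the $U^2$ case the paper does not use your atomic-plus-Fourier-series route either (on each atom interval the factor $e^{it\phi}$ is still $t$-dependent, so the output is not a step function and one would need further work); instead it argues by duality via the characterisation Theorem~\ref{thm:characteristion of Up}, reducing back to the $V^{q}$ case already established.
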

\begin{proof}
This is a direct consequence of Theorem \ref{thm:general adapted high mod prod} and a rescaling argument. Roughly the point is that, after applying Plancherel, for the $V^2_{\pm}$ estimate, by definition, we need to prove that
	\begin{align*}
	 \Big\| \int_{\RR^n} e^{ i t ( \pm_0 |\xi| - \pm_1 |\xi-\eta| - \pm_2 |\eta|)} &m(\xi - \eta, \eta) \phi(t,\xi-\eta) \psi(t,\eta) d\eta \Big\|_{V^2}\\
		&\lesa \| m(\xi, \eta) \|_{L^\infty_\xi L^2_\eta + L^\infty_\eta L^2_\xi} \| \phi\|_{V^2} \| \psi \|_{V^2}.
	\end{align*}
However, the Fourier support assumptions imply that $\mc{F}_t( \phi) \subset \{ |\tau| \g 4d \}$ (and thus $\phi$ has high temporal frequency), and $e^{ i t ( \pm_0 |\xi| - \pm_1 |\xi-\eta| - \pm_2 |\eta|)}$ has low temporal frequency. Hence, using the standard heuristic that the derivative only falls on the high frequency term, we see that the $V^2$ norm only hits the product $\phi(t) \psi(t)$, and we can simply place the exponential factor in $L^\infty_t$. This argument is made precise in Subsection \ref{subsec:alg}.
\end{proof}

The high-low product estimate in Theorem \ref{thm:high-low U2 wave} is also true for general phases, see Theorem \ref{thm:stab with conv} and Theorem \ref{thm:general adapted high mod prod} below. In particular, Theorem \ref{thm:high-low U2 wave} is a consequence of a general property of $U^p$ and $V^p$ spaces, and the precise nature of the solution operator $e^{\pm i t|\nabla|}$ plays no role. \\

Adapting Theorem \ref{thm:high-low U2 wave} to the solution spaces $S$ and $S_w$ gives the following.

\begin{theorem}\label{thm:high-mod}
For all $\lambda_0,\lambda_1,\lambda_2\in 2^\ZZ$ we have
\begin{align}
\label{eq:high-mod1}
\|P_{\lambda_0}(C_{\gg \mu} u_{\lambda_1} v_{\lambda_2})\|_{S_w}\lesa{}& \mu^{\frac{n}{2}} \Big(\frac{\min\{\lambda_1,\lambda_2\}}{\mu}\Big)^{\frac12} \|u_{\lambda_1}\|_{S_w}\|v_{\lambda_2}\|_{S_w},\\
\label{eq:high-mod2}
\|P_{\lambda_0}(C_{\gg \mu} u_{\lambda_1} v_{\lambda_2})\|_{S}\lesa{}& \mu^{\frac{n}{2}} \Big(\frac{\min\{\lambda_1,\lambda_2\}}{\mu}\Big)^{\frac32} \|u_{\lambda_1}\|_{S}\|v_{\lambda_2}\|_{S},
\end{align}
where $\mu=\min\{\lambda_0,\lambda_1,\lambda_2\}$.
\end{theorem}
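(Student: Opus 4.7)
My plan is to reduce the estimate to the adapted high-low product estimate of Theorem \ref{thm:high-low U2 wave}, isolating a ``high-high-low frequency resonance'' regime that is handled directly through the $\dot X^{0,1/2,\infty}$-type characterization of $S$ and $S_w$ combined with a bilinear $L^2_{t,x}$ bound.

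Using the splitting \eqref{eq:u-pu} and the definition of $S$ (resp.\ $S_w$), I decompose $u_{\lambda_1} = u_+ + u_-$ with $u_\pm \in U^2_\pm$ (resp.\ $V^2_\pm$), and analogously $v_{\lambda_2}$; by the triangle inequality it suffices to control each of the four sign combinations $P_{\lambda_0}C_{\gg\mu}(u_{\pm_1} v_{\pm_2})$. For each sign pair I further dyadically decompose each input in modulation relative to its own cone, $u_{\pm_1} = \sum_{d_1}C^{\pm_1}_{d_1}u_{\pm_1}$, etc., each piece having good norm control by the uniform disposability \eref{itm:lem S prop:dis} of Lemma \ref{lem:S properties}. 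The key algebraic identity
\[
 \tau_0 \pm_0 |\xi_0| = (\tau_1\pm_1|\xi_1|) + (\tau_2\pm_2|\xi_2|) + R, \qquad R := \pm_0|\xi_0| - \pm_1|\xi_1| - \pm_2|\xi_2|,
\]
shows that the output modulation on the Fourier support of such a piece is comparable to $\max(d_1,d_2,|R|)$, and so the cutoff $C_{\gg\mu}$ forces either (A) some input modulation is $\gg\mu$, or (B) both $d_1,d_2\les\mu$ while $|R|\gg \mu$. A direct sign inspection shows that (B) can only occur for $\pm_1 = \pm_2$ in the high-high-low regime $\lambda_1 \approx \lambda_2 \gg \lambda_0 = \mu$, with $|R| \approx \lambda_1$.

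In case (A), WLOG $d_1 \g d_2,\mu$, I apply Theorem \ref{thm:high-low U2 wave} with $d \approx d_1/8$ and multiplier $m(\xi,\eta) = \chi_{|\xi|\approx\lambda_1}\chi_{|\eta|\approx\lambda_2}\chi_{|\xi+\eta|\approx\lambda_0}\chi_{|R(\xi,\eta)|\les d}$. The mixed norm $\|m\|_{L^\infty_\xi L^2_\eta + L^\infty_\eta L^2_\xi}$ reduces to a volume count on the intersection of two annular frequency sets with the resonance slab, and summing over $d_1 \g \mu$ gives the claimed bound, with Theorem \ref{thm:high-low U2 wave} placing the output directly in $U^2_{\pm_0}$ (resp.\ $V^2_{\pm_0}$) and hence in $S$ (resp.\ $S_w$). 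In case (B), the output is concentrated in $|\xi_0|\les\lambda_0$, $|\tau_0|\approx 2\lambda_1$, at modulation $d_0 \approx \lambda_1$; the $\dot X^{0,1/2,\infty}$-type characterization \eref{itm:lem S prop:Xsb},
\[
 \|C_{d_0}w_{\lambda_0}\|_{S_w}\approx d_0^{1/2}\|C_{d_0}w_{\lambda_0}\|_{L^2_{t,x}},\qquad \|C_{d_0}w_{\lambda_0}\|_S \approx d_0^{1/2}\tfrac{d_0+\lambda_0}{\lambda_0}\|C_{d_0}w_{\lambda_0}\|_{L^2_{t,x}},
\]
reduces matters to a bilinear $L^2_{t,x}$ bound for $P_{\lambda_0}(u_{\pm}v_{\pm})$, whose factors have spatial Fourier supports in nearly antipodal caps of angle $\approx\lambda_0/\lambda_1$. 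This is exactly the high-high-low angularly separated regime, so the bilinear Fourier restriction estimate for the cone in its $V^2_\pm$-atomic form from Section \ref{sec:adapted bilinear restriction} supplies the required $L^2_{t,x}$ bound of order $\lambda_0^{(n-1)/2}\|u\|_{V^2_+}\|v\|_{V^2_+}$. A short bookkeeping of factors of $\lambda_1/\lambda_0$ then produces the $(\min\lambda/\mu)^{1/2}$ and $(\min\lambda/\mu)^{3/2}$ powers in \eqref{eq:high-mod1} and \eqref{eq:high-mod2}; the extra $(\min\lambda/\mu) \approx \lambda_1/\lambda_0$ in the $S$-version is precisely the weight $\tfrac{d_0+\lambda_0}{\lambda_0}$ incurred when passing from $S_w$ to $S$ at $d_0 \gg \lambda_0$.

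The hard part is case (B): neither input satisfies the high-modulation hypothesis of Theorem \ref{thm:high-low U2 wave}, yet the cone resonance forces the output far from the cone. Passing through the $L^2_{t,x}$-characterization of $S$ is essential there, and it is the sharpness of the bilinear Fourier restriction estimate for two nearly antipodal wave packets that produces exactly the correct powers in the stated bounds.
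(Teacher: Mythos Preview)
Your argument has a genuine gap in the $S$-estimate \eqref{eq:high-mod2}. In case (A) you write that Theorem~\ref{thm:high-low U2 wave} ``plac[es] the output directly in $U^2_{\pm_0}$ \dots\ and hence in $S$''. But membership in $U^2_{\pm_0}$ is not the $S$ norm: by definition $\|w\|_S = \|(1+i|\nabla|^{-1}\p_t)w\|_{U^2_+} + \|(1-i|\nabla|^{-1}\p_t)w\|_{U^2_-}$, and controlling $\|w\|_{U^2_+}$ alone says nothing about $(1-i|\nabla|^{-1}\p_t)w$. The paper handles this via the algebraic identity
\[
(1+i|\nabla|^{-1}\p_t)(u_{\lambda_1}v_{\lambda_2}) = \big((1+i|\nabla|^{-1}\p_t)u_{\lambda_1}\big)v_{\lambda_2} - i|\nabla|^{-1}\mc{M}(|\nabla|^{-1}\p_t u_{\lambda_1}, v_{\lambda_2}) + i|\nabla|^{-1}(u_{\lambda_1}\p_t v_{\lambda_2}) - i|\nabla|^{-1}(|\nabla|^{-1}\p_t u_{\lambda_1}\,|\nabla|v_{\lambda_2}),
\]
where $\mc{M}$ has symbol $|\xi+\eta|-|\xi|-|\eta|$. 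Each piece is then estimated in $U^2_+$ by a separate application of Theorem~\ref{thm:high-low U2 wave} (now with $(1+i|\nabla|^{-1}\p_t)u_{\lambda_1}\in U^2_+$ or $|\nabla|^{-1}\p_t u_{\lambda_1}\in U^2_++U^2_-$ as the high-modulation input). This Leibniz step is precisely where the extra factor $\min\{\lambda_1,\lambda_2\}/\mu$ over \eqref{eq:high-mod1} is generated; your case (B) argument recovers it only when the output happens to sit at modulation $\gg\lambda_0$, but in the regime $\lambda_1\gg\lambda_2$ the output can be close to the cone and the $\dot X^{0,1/2,\infty}$ route is unavailable.

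There is also a secondary gap in case (A). Note first that in the statement $C_{\gg\mu}$ acts on the \emph{input} $u_{\lambda_1}$, not on the product, so your case (B) as written (both inputs at modulation $\les\mu$) is vacuous. More importantly, you apply Theorem~\ref{thm:high-low U2 wave} with $d\approx d_1/8$ and a multiplier cut to $|R|\les d$; but in the high-high-low $\pm_1=\pm_2$ regime one has $|R|\approx\lambda_{\max}\approx\lambda_1$, while the hypothesis only forces $d_1\gg\mu=\lambda_0$, which may be $\ll\lambda_1$. Your multiplier is then identically zero and nothing is estimated. The paper covers this by splitting the \emph{output} modulation at $\lambda_{\max}$: for output modulation $\lesa\lambda_{\max}$ one uses the Besov embedding together with the elementary H\"older bound $\|P_{\lambda_0}(uv)\|_{L^2_{t,x}}\lesa\mu^{n/2}\|u_{\lambda_1}\|_{L^2_{t,x}}\|v_{\lambda_2}\|_{L^\infty_tL^2_x}$, placing the high-modulation input in $L^2_{t,x}$ via \eref{itm:lem S prop:Xsb} of Lemma~\ref{lem:S properties}. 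No bilinear restriction is needed here; your invocation of Section~\ref{sec:adapted bilinear restriction} for this regime is overkill.
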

\begin{proof}
Let $\lambda_{\max}=\max\{\lambda_0,\lambda_1,\lambda_2\}$. By dropping $C_{\gg \mu}$  and supposing throughout that the Fourier support of $u_{\lambda_1}$ or $v_{\lambda_2}$ is contained in $\{||\tau|-|\xi||\gg \mu\}$, we may assume that $\lambda_1\geq \lambda_2$. We claim
\begin{align}
\|P_{\lambda_0}(u_{\lambda_1} v_{\lambda_2} )\|_{V^2_{\pm_0}}\lesa{}& \mu^{\frac{n}{2}}\Big(\frac{\lambda_{\max}}{\mu}\Big)^{\frac12}\|u_{\lambda_1}\|_{V^2_+ + V^2_-}\|v_{\lambda_2}\|_{V^2_+ + V^2_-},\label{eq:pv}\\
\|P_{\lambda_0}(u_{\lambda_1} v_{\lambda_2} )\|_{U^2_{\pm_0}}\lesa{}& \mu^{\frac{n}{2}}\Big(\frac{\lambda_{\max}}{\mu}\Big)^{\frac12}\|u_{\lambda_1}\|_{U^2_+ + U^2_-}\|v_{\lambda_2}\|_{U^2_+ + U^2_-}\label{eq:pu}.
\end{align}
In addition, if $\mathcal{M}(f,g)(x)$ is defined as in \eqref{eqn:spatial bilinear Fourier multiplier} with
	$$ m(\xi, \eta) = (|\xi+\eta|-|\xi|-|\eta|)\ind_{\{|\xi|\approx \lambda_1,|\eta|\approx \lambda_2,|\xi+\eta|\approx \lambda_0\}}(\xi,\eta)$$
we claim
\begin{equation}
\|\mathcal{M}(u_{\lambda_1}, v_{\lambda_2})\|_{U^2_{\pm_0}}\lesa{} \mu^{\frac{n}{2}}\Big(\frac{\lambda_{\max}}{\mu}\Big)^{\frac12} \min\{\lambda_1,\lambda_2\}\|u_{\lambda_1}\|_{U^2_+ + U^2_-}\|v_{\lambda_2}\|_{U^2_+ + U^2_-}.
\label{eq:mu}
\end{equation}
Assuming these claims for the moment, we now give the proof of the bounds \eqref{eq:high-mod1} and \eqref{eq:high-mod2}. Concerning \eqref{eq:high-mod1}, we observe that in the case $\lambda_1\sim \lambda_2$, \eqref{eq:high-mod1} boils down to \eqref{eq:pv}. On the other hand, if $\lambda_1\gg \lambda_2$, we can directly apply Theorem \ref{thm:high-low U2 wave} with symbol  $m(\xi,\eta)=\ind_{\{|\xi+\eta|\approx \lambda_0, |\xi|\approx \lambda_1, |\eta|\approx \lambda_2\}}$ and obtain
\[
\|P_{\lambda_0}(u_{\lambda_1} v_{\lambda_2})\|_{S_w}\lesa{}\|P_{\lambda_0}(u_{\lambda_1} v_{\lambda_2} )\|_{V^2_{\pm_1}} \lesa \mu^{\frac{n}{2}} \|u_{\lambda_1}\|_{V^2_{\pm_1}}\|v_{\lambda_2}\|_{V^2_{\pm_2}},
\]
since $\big|\pm_1|\xi|\pm_2|\eta|-\pm_1|\xi+\eta|\big|\lesa \mu$ for $(\xi,\eta)\in \supp m$ and $\|m\|_{L^\infty_\xi L^2_\eta+L^\infty_\eta L^2_\xi}\lesa \mu^{\frac{n}{2}}$.

For the proof of \eqref{eq:high-mod2},  it is enough to show
\[\|(1+i|\nabla|^{-1}\partial_t) P_{\lambda_0}(u_{\lambda_1} v_{\lambda_2})\|_{U^2_+}\lesa{} \mu^{\frac{n}{2}} \Big(\frac{\min\{\lambda_1,\lambda_2\}}{\mu}\Big)^{\frac32} \|u_{\lambda_1}\|_{S}\|v_{\lambda_2}\|_{S}.\]
We decompose the left hand side into
\begin{align*}
 (1+i |\nabla|^{-1}\partial_t)(u_{\lambda_1} v_{\lambda_2})={}& \big( (1+i |\nabla|^{-1}\partial_t)  u_{\lambda_1}\big) v_{\lambda_2}-i |\nabla|^{-1}\mathcal{M}( |\nabla|^{-1}\partial_tu_{\lambda_1},v_{\lambda_2} )\\
&{}+i|\nabla|^{-1}( u_{\lambda_1} \partial_t v_{\lambda_2})-i|\nabla|^{-1}( |\nabla|^{-1} \partial_t u_{\lambda_1} |\nabla| v_{\lambda_2}).
\end{align*}
For the first term we directly apply Theorem \ref{thm:high-low U2 wave} as above and, since $\lambda_1\geq \lambda_2$, obtain
\begin{align*}
\| \big( (1+i |\nabla|^{-1}\partial_t)  u_{\lambda_1}\big) v_{\lambda_2}\|_{U^2_+}\lesa{}& \mu^{\frac{n}{2}}\|(1+i |\nabla|^{-1}\partial_t)  u_{\lambda_1}\|_{U^2_+} \|v_{\lambda_2}\|_{U^2_-+U^2_+}\\
\lesa{}& \mu^{\frac{n}{2}}\| u_{\lambda_1}\|_{S} \|v_{\lambda_2}\|_{S}.
\end{align*}
For the second term we apply \eqref{eq:mu}  and obtain
\begin{align*}
\| |\nabla|^{-1}\mathcal{M}( |\nabla|^{-1}\partial_tu_{\lambda_1},v_{\lambda_2})\|_{U^2_+}\lesa{}& \mu^{\frac{n}{2}}\Big(\frac{\lambda_{\max}}{\mu}\Big)^{\frac12} \frac{\lambda_2}{\lambda_0} \| |\nabla|^{-1}\partial_tu_{\lambda_1}\|_{U^2_+ + U^2_-} \|v_{\lambda_2}\|_{U^2_+ + U^2_-}\\
\lesa{}&\mu^{\frac{n}{2}}\Big(\frac{\lambda_2}{\mu}\Big)^{\frac32}\| u_{\lambda_1}\|_{S} \|v_{\lambda_2}\|_{S}.
\end{align*}
For the terms in the second line, we apply \eqref{eq:pu} and obtain
\begin{align*}
\||\nabla|^{-1}( u_{\lambda_1} \partial_t v_{\lambda_2})\|_{U^2_+}\lesa&{}\frac{\lambda_2}{\lambda_0} \mu^{\frac{n}{2}}\Big(\frac{\lambda_{\max}}{\mu}\Big)^{\frac12}\|u_{\lambda_1}\|_{U^2_+ + U^2_-}\||\nabla|^{-1}\partial_t v_{\lambda_2}\|_{U^2_+ + U^2_-}
\end{align*}
and similarly
\begin{align*}
\||\nabla|^{-1}( |\nabla|^{-1}  \partial_t u_{\lambda_1} |\nabla| v_{\lambda_2})\|_{U^2_+}\lesa{}&\frac{\lambda_2}{\lambda_0} \mu^{\frac{n}{2}}\Big(\frac{\lambda_{\max}}{\mu}\Big)^{\frac12}\||\nabla|^{-1}\partial_t u_{\lambda_1}\|_{U^2_+ + U^2_-}\| v_{\lambda_2}\|_{U^2_+ + U^2_-},
\end{align*}
such that
\[
\||\nabla|^{-1}( u_{\lambda_1} \partial_t v_{\lambda_2})\|_{U^2_+}+\||\nabla|^{-1}( |\nabla|^{-1}  \partial_t u_{\lambda_1} |\nabla| v_{\lambda_2})\|_{U^2_+}\lesa \mu^{\frac{n}{2}}\Big(\frac{\lambda_2}{\mu}\Big)^{\frac32}\| u_{\lambda_1}\|_{S} \|v_{\lambda_2}\|_{S},
\]
so the proof of \eqref{eq:high-mod2} is complete.

It remains to prove \eqref{eq:pv}, \eqref{eq:pu} and \eqref{eq:mu}. We start by observing that the standard $U^2$ Besov embedding (see Theorem \ref{thm:besov embedding} below) gives
\[
\|C_{\lesa \lambda_{\max}}P_{\lambda_0}(u_{\lambda_1}v_{\lambda_2})\|_{U^2_{\pm_0}}\lesa \lambda_{\max}^{\frac12}\|P_{\lambda_0}(u_{\lambda_1}v_{\lambda_2})\|_{L^2_{t,x}}.
\]
Now, if $u_{\lambda_1}$ is away from the cone, we have by \eqref{itm:lem S prop:Xsb} in Lemma \ref{lem:S properties}
\[\|P_{\lambda_0}(u_{\lambda_1}v_{\lambda_2})\|_{L^2_{t,x}}\lesa \mu^{\frac{n}{2}} \|u_{\lambda_1}\|_{L^2_{t,x}}\|v_{\lambda_2}\|_{L^\infty_t L^2_{x}}\lesa \mu^{\frac{n-1}{2}} \|u_{\lambda_1}\|_{S_w}\|v_{\lambda_2}\|_{S_w}, \]
and similarly, if $v_{\lambda_2}$ is away from the cone, we have
\[\|P_{\lambda_0}(u_{\lambda_1}v_{\lambda_2})\|_{L^2_{t,x}}\lesa \mu^{\frac{n}{2}} \|u_{\lambda_1}\|_{L^\infty_t L^2_{x}}\|v_{\lambda_1}\|_{L^2_{t,x}} \lesa \mu^{\frac{n-1}{2}} \|u_{\lambda_1}\|_{S_w}\|v_{\lambda_2}\|_{S_w}. \]
In summary, we have
\begin{equation}\label{eq:close-cone-p}
\|C_{\lesa \lambda_{\max}}P_{\lambda_0}(u_{\lambda_1}v_{\lambda_2})\|_{U^2_{\pm_0}}\lesa \lambda_{\max}^{\frac12}\mu^{\frac{n-1}{2}} \|u_{\lambda_1}\|_{S_w}\|v_{\lambda_2}\|_{S_w}.
\end{equation}
On the other hand, we note that by Theorem  \ref{thm:high-low U2 wave} and the uniform disposability from \eqref{itm:lem S prop:dis} in Lemma \ref{lem:S properties}
\begin{align*}
&\|C_{\gg \lambda_{\max}}P_{\lambda_0}(u_{\lambda_1}v_{\lambda_2})\|_{V^2_{\pm_0}}\\
 \lesa{}& \|P_{\lambda_0}(C_{\gg \lambda_{\max}} u_{\lambda_1}v_{\lambda_2})\|_{V^2_{\pm_0}} + \|P_{\lambda_0}(C_{\lesa \lambda_{\max}} u_{\lambda_1} C_{\gg \lambda_{\max}} v_{\lambda_2})\|_{V^2_{\pm_0}}\\
\lesa{}&  \mu^{\frac{n}{2}} \|u_{\lambda_1}\|_{S_w}\|v_{\lambda_2}\|_{S_w}.
\end{align*}
This, together with the estimate \eqref{eq:close-cone-p} and the embedding $U^2_{\pm_0}\subset V^2_{\pm_0}$ finishes the proof of \eqref{eq:pv}.
The proof of \eqref{eq:pu} follows from the same argument, by using the $U^2$-estimate in Theorem \ref{thm:high-low U2 wave} and the embedding $U^2_++U^2_- \subset S_w$ instead.
This argument also proves \eqref{eq:mu}, because Theorem  \ref{thm:high-low U2 wave} allows for multipliers, and due to the obvious fixed-time muliplier bound in $L^2_x$ in \eqref{eq:close-cone-p}.
\end{proof}

\subsection{Bilinear  $L^2_{t,x}$ Estimates}\label{subsec:bil}
In this section we give the second key bilinear bound that is required for the proof of Theorem \ref{thm:div-prob}. Similar to the previous section, the key bilinear input is a special case of a \emph{general} bilinear restriction estimate in $L^2_{t,x}$, which holds not just for the wave equation, but also the case of general phases. On the other hand, in contrast to the previous section, the bilinear estimate we prove here is much more delicate, as it handles the case where both functions, as well as their product, is close to the light cone.

We start with some motivation. Let $\lambda_1\g 1$ and define
    $$ \Lambda_1= \{ |\xi - e_1| < \tfrac{1}{100} \}, \qquad \qquad \Lambda_2= \{ |\xi \mp \lambda e_2| < \tfrac{1}{100} \lambda \} $$
with $e_1 = (1, 0, \dots, 0)$ and $e_2 = (0, 1, 0, \dots, 0)$. For free solutions, an application of Plancheral followed by H\"older implies that if $\supp \widehat{f} \subset \Lambda_1$ and $\supp \widehat{g} \subset \Lambda_2$, then we have the bilinear estimate
    \begin{equation}\label{eqn:bilinear L2 free solns} \| e^{ - i t|\nabla|} f e^{ \mp i t |\nabla|} g \|_{L^2_{t,x}} \lesa  \| f\|_{L^2_x} \| g \|_{L^2_x}.
    \end{equation}
The atomic definition of $U^2$ then easily implies that
    $$ \|   u v \|_{L^2_{t,x}} \lesa \| u \|_{U^2_+} \|v \|_{U^2_\pm}$$
for $\supp \widehat{u} \subset \Lambda_1$ and $\supp \widehat{v} \subset \Lambda_2$. Although this estimate is potentially useful, the fact that both functions must be placed into $U^2$, means that it is far to weak to be able to deduce the estimates required to solve the division problem in Theorem \ref{thm:div-prob}. In fact this lack of a good $V^2$ replacement for \eqref{eqn:bilinear L2 free solns}, was a key motivation in developing the \emph{null frame spaces} of Tataru, which were constructed to solve this issue (among others).

Recently however, in work of the first author \cite{Candy2017b}, it was shown that it is possible to deduce a $U^2 \times V^2$ version of \eref{eqn:bilinear L2 free solns} provided that the low frequency term is placed in $U^2$. It turns out that the bilinear estimate for free solutions given in \eref{eqn:bilinear L2 free solns} is insufficient for this purpose, essentially since it does not exploit any dispersive properties of free waves. Instead, the argument given in \cite{Candy2017b}, shows that the $U^2 \times V^2$ estimate can be reduced to a deeper property of free waves, namely the bilinear estimates satisfied by \emph{wave tables}. The wave table construction efficiently exploits both transversality and curvature, and was introduced by Tao \cite{Tao2001b} in the proof of the endpoint bilinear restriction estimate for the cone. In the case of the wave equation, the conclusion is the following.

\begin{theorem}[{\cite[Theorem 1.7 and Theorem 1.10]{Candy2017b}}]\label{thm:bilinear restriction}
Let $2 \les a \les b < n+1$. Let $0< \lambda_1 \les \lambda_2$,  $0<\alpha \les 1$, and $\kappa, \kappa' \in \mc{C}_{\alpha}$ with $\angle(\pm \kappa, \kappa') \approx \alpha$. If $u \in U^a_{+}$ and $v \in U^b_{\pm}$ then
    $$ \| R_{\kappa} u_{\lambda_1}  R_{\kappa'} v_{\lambda_2} \|_{L^2_{t,x}}  \lesa \alpha^{\frac{n-3}{2}} \lambda_1^{ \frac{n-1}{2}} \Big( \frac{\lambda_2}{\lambda_1} \Big)^{(n+1)(\frac{1}{2} - \frac{1}{a})} \| R_{\kappa} u_{\lambda_1} \|_{U^a_+} \| R_{\kappa'}v_{\lambda_2} \|_{U^b_\pm}.$$
If $\alpha \approx 1$ and $\lambda_1 \approx \lambda_2$, for cubes $q, q' \in \mc{Q}_\mu$ with $0<\mu \lesa \lambda_1$ we have stronger bound
    $$ \| R_{\kappa} P_q u_{\lambda_1}  P_{q'} R_{\kappa'} v_{\lambda_2} \|_{L^2_{t,x}}  \lesa \mu^{\frac{n-1}{2}}  \| R_{\kappa} P_q  u_{\lambda_1} \|_{U^a_+} \| R_{\kappa'}P_{q'} v_{\lambda_2} \|_{U^b_\pm}.$$
\end{theorem}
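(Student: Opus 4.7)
My plan is to reduce the statement to the free-solution bilinear restriction estimate for the cone, then lift to atomic spaces and interpolate in $a$.

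First I would rescale via $u(t,x)\mapsto u(\lambda_1 t,\lambda_1 x)$. Since the $U^a_\pm$ norm scales as $\lambda_1^{-n/2}$ while $\|\cdot\|_{L^2_{t,x}}$ scales as $\lambda_1^{-(n+1)/2}$, this reduces matters to frequencies $\approx 1$ and $N:=\lambda_2/\lambda_1\g 1$ with the same angular separation and the target
$$\|R_\kappa u\cdot R_{\kappa'}v\|_{L^2_{t,x}}\lesa\alpha^{\frac{n-3}{2}}N^{(n+1)(\frac{1}{2}-\frac{1}{a})}\|R_\kappa u\|_{U^a_+}\|R_{\kappa'}v\|_{U^b_\pm}.$$
For the endpoint $a=b=2$, the atomic definition of $U^2_\pm$ allows me to replace $u,v$ by free solutions and reduces the estimate to the classical transverse $L^2$-bilinear restriction inequality for the cone. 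This I would prove by Plancherel: the convolution of the two surface measures, localised to caps separated by $\alpha$ at the two scales, has $n$-dimensional slice mass bounded by $\alpha^{n-3}$, and Cauchy--Schwarz in the slicing variable closes the estimate. Summing over atoms and applying the embedding $U^2\hookrightarrow U^b$ then disposes of the full range $b\in[2,n+1)$ when $a=2$.

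To reach $a>2$ I would interpolate in the $U^a$ scale against a trivial Strichartz-type endpoint, in which the sharp wave-Strichartz bound at the $L^{2(n+1)/(n-1)}_{t,x}$ Knapp exponent (transferred to $U^{n+1}_\pm$ by the atomic structure) and the Bernstein gain on the higher-frequency factor jointly supply an $N$-power; tracking dimensions yields the exponent $(n+1)(\frac{1}{2}-\frac{1}{a})$. For the second assertion the geometry is simpler: $\alpha\approx 1$ and $\lambda_1\approx\lambda_2$ so there is no interpolation loss in $N$, and the surface-measure count is replaced by a direct Plancherel bound exploiting the spatial cube localisation at scale $\mu$, giving slice mass $\lesa\mu^{n-1}$ and hence the factor $\mu^{(n-1)/2}$; the atomic and interpolation steps then proceed verbatim.

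The genuine obstacle is the endpoint case $a=2$ with $b>2$: the large-frequency factor lives only in $V^2_\pm$, which is not usefully atomic, so one cannot simply reduce to free solutions. Following \cite{Candy2017b}, I would bypass this with Tao's wave-table construction~\cite{Tao2001b}: decompose $R_{\kappa'}v$ at the appropriate wave-packet scale into approximate plane-wave constituents, establish uniform $L^2_{t,x}$ bilinear bounds on the resulting cross terms using transversality together with the curvature of the cone, and recombine using the finite $p$-variation of $v$. This is the essential technical input that cannot be avoided when the high-frequency factor is only in $U^b_\pm$ for $b>2$, and is what ultimately explains the restriction $b<n+1$ inherited from the admissible range of Tao's endpoint bilinear restriction estimate.
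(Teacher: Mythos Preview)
There is a genuine gap. Your claim that ``applying the embedding $U^2\hookrightarrow U^b$ then disposes of the full range $b\in[2,n+1)$ when $a=2$'' uses the embedding in the wrong direction: $U^2\subset U^b$ gives $\|v\|_{U^b}\les\|v\|_{U^2}$, so a bound with $\|v\|_{U^2_\pm}$ on the right does \emph{not} imply the same bound with the weaker norm $\|v\|_{U^b_\pm}$. This is exactly the obstruction the paper isolates just before the theorem: atomic reduction to free solutions yields only the $U^2\times U^2$ estimate, and the free bilinear $L^2$ bound alone is insufficient to reach $U^2\times U^b$ for $b>2$ because it exploits no curvature of the cone. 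You partly acknowledge this in your final paragraph, but by then the argument has already claimed the $b>2$ case on false grounds, and the Strichartz--Bernstein interpolation you propose for $a>2$ is decoupled from the wave tables, so it cannot produce the correct joint $(a,b)$-dependence.

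The paper's route is different in kind. The wave table construction is not a patch for one endpoint but the engine of the whole proof: it is applied directly to atomic $\pm$-waves to produce a localised bilinear estimate in $\ell^a L^2\times\ell^b L^2$ (Theorem~\ref{thm-main bilinear estimate Up case}), where the $a,b$-dependence arises by interpolating, at the level of the quilts $[u^{(\cdot)}]$, between the wave-table bilinear bound (which decays in the spatial scale $R$) and a trivial H\"older bound (which loses $R^{1/2}$ but gains $\ell^{a_0}$-summability via the energy inequality). This localised estimate is then bootstrapped globally by an \emph{induction on scales} (Proposition~\ref{prop:induction bounds}), a step entirely absent from your plan. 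Finally, the small-angle case $\alpha\ll1$ is not handled by an isotropic Plancherel count but by rescaling to an anisotropic phase and rerunning the same machinery (Remark~\ref{rmk:general-alpha}).
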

\begin{proof}
In the special case $\alpha \approx 1$, the bounds are a consequence of the wave table construction introduced by Tao in \cite{Tao2001b}, together with an induction on scales argument. In Section \ref{sec:adapted bilinear restriction}, we give the details of this argument by following \cite{Candy2017b} in the case of the cone. In the case $0<\alpha \lesa 1$, the proof requires a slightly more general wave table decomposition, see the proof of \cite[Theorem 1.7 and Theorem 1.10]{Candy2017b} and Remark \ref{rmk:general-alpha}.
\end{proof}

\begin{remark}\label{rem:general bilinear restriction}
The argument developed in \cite{Candy2017b} in fact shows that the bound in Theorem \ref{thm:bilinear restriction} can be generalised to the full bilinear range, and moreover holds for general phases under suitable curvature and transversality assumptions. See Section \ref{sec:adapted bilinear restriction} for further discussion.
\end{remark}

After using the standard embedding $V^2 \subset U^b$, we see that
	$$ \| R_{\kappa} u_\mu  R_{\kappa'} v_\lambda \|_{L^2_{t,x}}  \lesa \alpha^{\frac{n-3}{2}} \mu^{\frac{n-1}{2}}  \| R_{\kappa} u_{\mu} \|_{U^2_\pm} \| R_{\kappa'}v_{\lambda} \|_{V^2_+}	. $$
In particular, we have a bilinear $L^2_{t,x}$ estimate with the high frequency term in $V^2$, \emph{without} any high frequency loss.

If we combine Theorem \ref{thm:bilinear restriction} with an analysis of the resonant set, we obtain the following key bilinear estimate.

\begin{theorem}[Main Bilinear $L^2_{t,x}$ bound]\label{thm:bilinear L2 bound}
Let $d, \lambda_0, \lambda_1, \lambda_2 \in 2^\ZZ$ and $\epsilon>0$. If $\lambda_1 \les \lambda_2$, $\mu = \min\{\lambda_0, \lambda_1\}$, and $d \lesssim \mu$ we have the bilinear estimates
	\begin{align}
	 \big\| C_d P_{\lambda_0} \big( C_{\ll d } u_{\lambda_1} C_{\ll d} v_{\lambda_2} \big) \big\|_{L^2_{t,x}}
	  		&\lesa d^{\frac{n-3}{4}} \mu^{\frac{n-1}{4}} \lambda_1^\frac{1}{2} \| u_{\lambda_1} \|_{S} \| v_{\lambda_2} \|_{S} \label{eqn:thm bilinear L2:U2U2}\\
	 \big\| C_d P_{\lambda_0} \big( C_{\ll d } u_{\lambda_1} C_{\ll d} v_{\lambda_2} \big) \big\|_{L^2_{t,x}}
	  		&\lesa d^{\frac{n-3}{4}} \mu^{\frac{n-1}{4}} \lambda_1^\frac{1}{2} \Big( \frac{\lambda_1^2}{ d \mu}\Big)^\epsilon \| u_{\lambda_1} \|_{S} \| v_{\lambda_2} \|_{S_w} \label{eqn:thm bilinear L2:U2V2}\\
	\big\| C_d P_{\lambda_0} \big( C_{\ll d } u_{\lambda_1} C_{\ll d} v_{\lambda_2} \big) \big\|_{L^2_{t,x}} 	
			&\lesa  d^{\frac{n-3}{4}} \mu^{\frac{n-1}{4}} \lambda_1^\frac{1}{2} \Big( \frac{\lambda_1 \lambda_2}{ d \mu}\Big)^\epsilon \| u_{\lambda_1} \|_{S_w} \| v_{\lambda_2} \|_{S_w}. \label{eqn:thm bilinear L2:V2V2}
	\end{align}
On the other hand, if $d \gg \mu$, we have
    \begin{equation}\label{eqn:thm bilinear L2:V2V2-high}
\big\| C_d P_{\lambda_0} \big( C_{\ll d } u_{\lambda_1} C_{\ll d} v_{\lambda_2} \big) \big\|_{L^2_{t,x}}\lesa \mu^{\frac{n-1}{2}} \Big(\frac{\lambda_1}{\mu}  \Big)^\epsilon \| u_{\lambda_1} \|_{S_w} \| v_{\lambda_2} \|_{S_w}.
\end{equation}
\end{theorem}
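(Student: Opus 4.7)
The proof splits into the on-cone case $d\lesa \mu$, where Theorem \ref{thm:bilinear restriction} is the essential input, and the off-cone case $d\gg \mu$, which reduces to a Bernstein/$\dot X^{s,b}$ argument. Throughout I decompose each factor into its characteristic halves (via \eqref{eq:u-pu} for the $S$-norm, and $v=v^++v^-$ with $v^\pm\in V^2_\pm$ for $S_w$), so that each sign combination $(\pm_1,\pm_2)$ can be treated separately. On the support of $C_{\ll d}u_{\lambda_1}\,C_{\ll d}v_{\lambda_2}$, writing $\tau_i=\pm_i|\xi_i|+\sigma_i$ with $|\sigma_i|\ll d$, the standard wave resonance identity reduces $\big||\tau|-|\xi|\big|$ up to $O(d)$ to the on-cone expression in $|\xi_1|,|\xi_2|,|\xi_1+\xi_2|$, and the output constraint $\big||\tau|-|\xi|\big|\approx d$ then forces the angle $\theta=\angle(\pm_1\xi_1,\pm_2\xi_2)$ to satisfy $\lambda_1\lambda_2\theta^2/\lambda_{\max}\approx d$.

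In the case $d\lesa \mu$ and the High-Low subregime $\mu=\lambda_1\les \lambda_2\approx \lambda_0$ this forces $\theta\approx \alpha:=\sqrt{d/\mu}\lesa 1$, so I decompose $u_{\lambda_1}=\sum_{\kappa\in\mc{C}_\alpha}R_\kappa u_{\lambda_1}$ and $v_{\lambda_2}=\sum_{\kappa'\in\mc{C}_\alpha}R_{\kappa'}v_{\lambda_2}$ and retain only the pairs with $\angle(\pm_1\kappa,\pm_2\kappa')\approx \alpha$. Almost-orthogonality of the output spatial supports $\pm_1\kappa+\pm_2\kappa'$ combined with Theorem \ref{thm:bilinear restriction} applied to each pair with $a=b=2$ gives
\[ \|R_\kappa u_{\lambda_1}\, R_{\kappa'} v_{\lambda_2}\|_{L^2_{t,x}} \lesa \alpha^{\frac{n-3}{2}}\lambda_1^{\frac{n-1}{2}} \|R_\kappa u_{\lambda_1}\|_{U^2_{\pm_1}}\|R_{\kappa'}v_{\lambda_2}\|_{U^2_{\pm_2}}, \]
and square-summing via Lemma \ref{lem:S properties}\eref{itm:lem S prop:sq sum} together with $\alpha=\sqrt{d/\mu}$ produces exactly the target factor $d^{(n-3)/4}\mu^{(n-1)/4}\lambda_1^{1/2}$ of \eqref{eqn:thm bilinear L2:U2U2}. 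The High-High-Low subregime $\mu=\lambda_0\ll \lambda_1\approx \lambda_2$ is handled in the same spirit with a spatial cube decomposition at scale $\mu$ using the second part of Theorem \ref{thm:bilinear restriction}, giving the stronger per-pair bound $\mu^{(n-1)/2}$ which dominates the target in this range.

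For \eqref{eqn:thm bilinear L2:U2V2} and \eqref{eqn:thm bilinear L2:V2V2} the square-sum over angular caps is no longer directly available for $V^2_\pm$, and I upgrade the $V^2$-factors to $U^b$ via the embedding $V^2\hookrightarrow U^b$ valid for any $b>2$. Applying Theorem \ref{thm:bilinear restriction} with an appropriate choice of $a\les b$ close to $2$ and tracking the resulting $(\lambda_2/\lambda_1)^{(n+1)(\frac12-\frac{1}{a})}$ loss together with the $\alpha\approx\sqrt{d/\mu}$-dependence produces, after re-expanding $\alpha$, the claimed residual factors $(\lambda_1^2/(d\mu))^\epsilon$ for one upgraded factor and $(\lambda_1\lambda_2/(d\mu))^\epsilon$ for two. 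The off-cone case $d\gg \mu$ is the easiest: the on-cone contribution to the output modulation is $\lesa \mu\ll d$, so $C_d P_{\lambda_0}(C_{\lesa\mu}u\cdot C_{\lesa\mu}v)=0$; after splitting $C_{\ll d}u_{\lambda_1}=C_{\lesa\mu}u_{\lambda_1}+\sum_{\mu\lesa d'\ll d}C_{d'}u_{\lambda_1}$ (and similarly for $v$), the surviving high-modulation pieces are controlled by Bernstein $\|P_{\lambda_0}(fg)\|_{L^2_{t,x}}\lesa \mu^{n/2}\|f\|_{L^\infty_t L^2_x}\|g\|_{L^2_{t,x}}$ combined with the $\dot X^{0,\frac12,\infty}$-bound $\|C_{d'}w\|_{L^2_{t,x}}\lesa (d')^{-1/2}\|w\|_{S_w}$ from Lemma \ref{lem:S properties}\eref{itm:lem S prop:Xsb}; summing over $d'\in[\mu,d)$ yields the claimed bound with a logarithmic factor $\log(d/\mu)$ that is absorbed into $(\lambda_1/\mu)^\epsilon$.

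The main obstacle is the $V^2\times V^2$ estimate \eqref{eqn:thm bilinear L2:V2V2}: without the atomic step-function structure of $U^2$ on either factor, both the angular square-sum and the bilinear restriction hit their weakest form, and keeping the loss from the $V^2\hookrightarrow U^b$ embedding balanced against the angular gain from Theorem \ref{thm:bilinear restriction} so that the residual loss factors cleanly as a free power $(\lambda_1\lambda_2/(d\mu))^\epsilon$, with the $\epsilon$ uniform in all parameters, is the technical heart of the argument.
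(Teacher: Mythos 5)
The heart of the gap is the off-cone case $d \gg \mu$. You claim that ``the on-cone contribution to the output modulation is $\lesa \mu \ll d$, so $C_d P_{\lambda_0}(C_{\lesa\mu}u_{\lambda_1}\cdot C_{\lesa\mu}v_{\lambda_2})=0$,'' and you then estimate only the remaining pieces whose input modulation lies in $[\mu, d)$ via Bernstein and the $\dot X^{0,\frac12,\infty}$ bound. This vanishing claim is false. Lemma \ref{lem:resonance bound full cone} shows that when $d \gg \mu = \min\{\lambda_0,\lambda_1,\lambda_2\}$ the resonant configuration forces $\lambda_0 \ll \lambda_1 \approx \lambda_2$, $\sgn(\tau)=\sgn(\tau')$, and $d \approx \lambda_1$; in this regime the output modulation of a product of two \emph{free} waves is already $\big| |\xi_1|+|\xi_2|-|\xi_1+\xi_2| \big| \approx \lambda_1$, which is $\approx d \gg \mu$. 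So the on-cone pieces are not annihilated by $C_d$ — they are in fact the dominant contribution, and your argument never touches them. This is precisely the parallel High$\times$High$\to$Low interaction that the paper handles by reducing (via Lemma \ref{lem:resonance bound full cone}) to the identity $C_d P_{\lambda_0}(C_{\ll d} u_{\lambda_1} C_{\ll d} v_{\lambda_2}) = C_d P_{\lambda_0}(C^+_{\ll d} u_{\lambda_1} C^+_{\ll d} v_{\lambda_2}) + C_d P_{\lambda_0}(C^-_{\ll d} u_{\lambda_1} C^-_{\ll d} v_{\lambda_2})$, applying Theorem \ref{thm:bilinear restriction} with $\alpha\approx 1$ and cubes in $\mc{Q}_\mu$ to the matching-sign term, and disposing of the other sign combinations by H\"older since a wrong-sign cutoff forces that factor to modulation $\approx\lambda_j$.

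For the on-cone case $d\lesa\mu$ your strategy is essentially the paper's: cap decomposition at the resonance-dictated angular scale, Theorem \ref{thm:bilinear restriction} for the per-pair bound, square summing via Lemma \ref{lem:S properties}, and for the $V^2$ factors an upgrade $V^2\hookrightarrow U^b$ with $b$ close to $2$ giving the $\epsilon$-loss. Two technical points you should spell out, since they are not automatic: (i) you must first dispose of the contributions where a $U^2_\pm$-piece of $u$ or $v$ is hit by the opposite-sign cutoff $C^\mp_{\ll d}$ (the paper does this by H\"older, since such a piece has modulation $\approx\lambda_j$ and so is $L^2_{t,x}$-small); and (ii) when replacing $U^2$ by $U^b$ for the $V^2$ factors, the natural summability over caps degrades to $\ell^b$, which requires the additional H\"older step and the careful separation of the cases $\lambda_1\approx\lambda_2$ versus $\lambda_1\ll\lambda_2$ that the paper carries out.
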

 The first bound in the previous theorem, \eqref{eqn:thm bilinear L2:U2U2}, is a direct consequence of the corresponding estimate for free solutions. In particular, it is sharp. On the other hand, due to the fact that we now only have $V^2_\pm$ control over $v_{\lambda_2}$, the bounds \eqref{eqn:thm bilinear L2:U2V2}, \eqref{eqn:thm bilinear L2:V2V2} and \eqref{eqn:thm bilinear L2:V2V2-high} require the bilinear restriction estimates contained in Theorem \ref{thm:bilinear restriction}. The key point is that  we have no high-frequency $\lambda_2$ loss, provided that we place the low frequency term $u_{\lambda_1}$ into $U^2_{\pm}$ (i.e. the $S$ norm). The only loss \eqref{eqn:thm bilinear L2:U2V2} appears when $\lambda_1 \approx \lambda_2 \gg \lambda_0$, which is in general an easier case to deal with. On the other hand, placing both $u_{\lambda_1}$ and $v_{\lambda_2}$ into $V^2$ causes an $\epsilon$ loss in the high frequency $\lambda_2$, and thus is only useful in certain special cases.

To reduce Theorem \ref{thm:bilinear L2 bound} to the bilinear restriction estimates in Theorem \ref{thm:bilinear restriction}, we need to show that the waves $u$ and $v$ are transverse. This is a consequence of the following.

\begin{lemma}[Resonance bound for full cone]\label{lem:resonance bound full cone}
Let $d, \lambda_0, \lambda_1, \lambda_2 \in 2^\ZZ$. Assume that $(\tau, \xi), (\tau', \eta) \in \RR^{1+n}$ satisfy
$|\xi| \approx \lambda_1$, $|\eta| \approx \lambda_2$, $|\xi+\eta| \approx \lambda_0$
and
$$
\big| |\tau| - |\xi| \big| \ll d, \qquad \big| |\tau'| - |\eta|\big| \ll d, \qquad
 \big| |\tau+\tau'| - |\xi+\eta| \big| \approx d. $$
If $d \lesa \min\{\lambda_0, \lambda_1, \lambda_2\}$, then
        \begin{align*}
\angle\big( \sgn(\tau) \xi, \sgn(\tau') \eta\big) \approx \Big( \frac{d \lambda_0}{\lambda_1 \lambda_2}\Big)^\frac{1}{2},\\
\angle\big( \sgn(\tau+\tau') (\xi+\eta), \sgn(\tau) \xi\big) \lesa \Big( \frac{d \lambda_2}{\lambda_0 \lambda_1} \Big)^\frac{1}{2},\\
\angle\big( \sgn(\tau+\tau') (\xi+\eta), \sgn(\tau') \eta \big) \lesa \Big( \frac{d \lambda_1}{\lambda_0 \lambda_2} \Big)^\frac{1}{2}.
        \end{align*}
        On the other hand, if $d \gg \min\{\lambda_0, \lambda_1, \lambda_2\}$ then in fact $\sgn(\tau) = \sgn(\tau')$ and
       $$ \angle\big(  \xi, \eta \big) \approx 1, \qquad d \approx \max\{\lambda_0, \lambda_1, \lambda_2\}, \qquad \lambda_0 \ll \lambda_1 \approx \lambda_2. $$
\begin{proof}
We first observe that since
    $$ \big| |\tau + \tau'| - | \sgn(\tau)|\xi| + \sgn(\tau') |\eta| | \big| \les \big| |\tau| - |\xi| \big| + \big| |\tau'|  -|\eta| \big| \ll d$$
and
    $$ \big| ( \sgn(\tau) |\xi| + \sgn(\tau')|\eta|)^2 - |\xi + \eta|^2 \big| \approx \lambda_1 \lambda_2 \angle\big( \sgn(\tau) \xi, \sgn(\tau')\xi'\big)$$
we have
    \begin{equation}\label{eqn:proof of lem resonance bound:main ident}
      d \approx \big| \big| \sgn(\tau) |\xi| + \sgn(\tau')|\eta|\big| - |\xi + \eta| \big|
        \approx \frac{\lambda_1 \lambda_2 \angle^2\big( \sgn(\tau) \xi, \sgn(\tau')\eta\big)}{\big| | \sgn(\tau) |\xi| + \sgn(\tau')|\eta|| + |\xi + \eta| \big|}.
    \end{equation}
If $\lambda_0 \approx \max\{\lambda_1, \lambda_2\}$, then \eref{eqn:proof of lem resonance bound:main ident} already gives $ d\lesa \min\{ \lambda_0, \lambda_1, \lambda_2\}$ and the claimed orthogonality bound, so it remains to consider the case $\lambda_0 \ll \lambda_1 \approx \lambda_2$. We first consider the interactions where $\sgn(\tau) = -\sgn(\tau')$ which implies that
    $$ \big| | \sgn(\tau) |\xi| + \sgn(\tau')|\eta|| + |\xi + \eta| \big| \approx \lambda_0, \qquad  \lambda_1 \lambda_2 \angle^2(\xi, -\eta) \approx |\xi + \eta|^2 - \big| |\xi| - |\eta| \big|^2 \lesa \lambda_0^2.$$
Consequently the claimed bounds again follow immediately from \eref{eqn:proof of lem resonance bound:main ident}. On the other hand, if $\sgn(\tau) = \sgn(\tau')$ then as $\angle(\xi, - \eta) \lesa \frac{\lambda_0}{\lambda_1} \ll 1$ we must have $\angle(\xi, \eta) \approx 1$ and hence \eref{eqn:proof of lem resonance bound:main ident} implies that
    $$ d \approx \frac{\lambda_1 \lambda_2}{\lambda_1} \angle^2(\xi, \eta) \approx \lambda_1$$
as required. It only remains to control the angle between $\xi+\xi'$ and $\xi$. To this end, an analogous computation to that used to deduce \eref{eqn:proof of lem resonance bound:main ident} gives
    $$ \lambda_0 \lambda_1 \angle^2\big( \sgn(\tau + \tau') (\xi+\eta), \xi\big) \lesa d \big( \big|  \sgn(\tau + \tau') |\xi + \eta| - \sgn(\tau) |\xi|\big| + |\eta| \big) $$
which suffices unless we have $\lambda_0\approx \lambda_1 \gg \lambda_2$ and $\sgn(\tau + \tau') = - \sgn(\tau)$. But this implies that
    $$ d \approx \big| |\tau + \tau'| - |\xi+\eta|\big| \approx \big| \sgn(\tau) |\xi| +\sgn(\tau') |\eta| - \sgn(\tau + \tau')|\xi + \eta| \big| \approx \lambda_0 $$
which contradicts the previous computation which showed that we must have $d \lesa \lambda_2$. Hence the case $\lambda_0\approx \lambda_1 \gg \lambda_2$ and $\sgn(\tau + \tau') = - \sgn(\tau)$ cannot occur, and consequently we deduce the correct angle bound between $\xi+\eta$ and $\eta$.
\end{proof}
\end{lemma}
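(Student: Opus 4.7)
The argument turns on a single algebraic identity relating modulation to angle. Combining the triangle inequality with $\big||\tau|-|\xi|\big|,\big||\tau'|-|\eta|\big|\ll d$ and $\big||\tau+\tau'|-|\xi+\eta|\big|\approx d$ gives
\begin{equation*}
d \approx \Big| \big|s_1|\xi|+s_2|\eta|\big| - |\xi+\eta|\Big|,
\end{equation*}
where $s_1=\sgn(\tau)$, $s_2=\sgn(\tau')$. Expanding via $(s_1|\xi|+s_2|\eta|)^2-|\xi+\eta|^2=2|\xi||\eta|(s_1s_2-\cos\angle(\xi,\eta))$ and recognising the right-hand side as $\pm 2|\xi||\eta|(1-\cos\theta)$ with $\theta:=\angle(s_1\xi,s_2\eta)$, division by the sum yields the fundamental resonance formula
\begin{equation*}
d \approx \frac{|\xi||\eta|\,\angle^2(s_1\xi,s_2\eta)}{\big|s_1|\xi|+s_2|\eta|\big|+|\xi+\eta|},
\end{equation*}
using $1-\cos\theta\approx\theta^2$ up to constants on $[0,\pi]$. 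This drives the entire analysis.

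\textbf{Main dichotomy.} I would then distinguish two regimes. When $\lambda_0\approx\max\{\lambda_1,\lambda_2\}$ the denominator is trivially $\approx\lambda_0$ (each summand is $\lesa\lambda_0$ while $|\xi+\eta|\approx\lambda_0$ gives the lower bound), and the formula immediately delivers $\angle^2(s_1\xi,s_2\eta)\approx d\lambda_0/(\lambda_1\lambda_2)$. The only remaining possibility is $\lambda_0\ll\max\{\lambda_1,\lambda_2\}$, which by $\big||\xi|-|\eta|\big|\les|\xi+\eta|\approx\lambda_0$ forces $\lambda_1\approx\lambda_2$. Here I would split on $s_1s_2$. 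If $s_1s_2=-1$, the inequality $\big||\xi|-|\eta|\big|\les|\xi+\eta|$ (a consequence of $(|\xi|-|\eta|)^2\les|\xi+\eta|^2$ via the law of cosines) again makes the denominator $\approx\lambda_0$ and reproduces the same angle formula. If instead $s_1s_2=+1$ the denominator is $\approx\lambda_1$, and $|\xi+\eta|\ll|\xi|\approx|\eta|$ forces $\xi$ and $\eta$ to be nearly anti-parallel, so $\angle(\xi,\eta)$ is bounded below by a positive constant and the formula yields $d\approx\lambda_1$. This last configuration is incompatible with $d\lesa\lambda_0\ll\lambda_1$ and hence excluded under the hypothesis $d\lesa\min$; conversely it is the \emph{only} scenario compatible with $d\gg\min$, immediately giving $s_1=s_2$, $d\approx\max=\lambda_1\approx\lambda_2\gg\lambda_0$ and $\angle(\xi,\eta)\g 1$, which is the second half of the statement.

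\textbf{Secondary angles.} For the bound on $\angle(s_0(\xi+\eta),s_1\xi)$ with $s_0:=\sgn(\tau+\tau')$, I would apply the same manipulation to the shifted triple $(-\tau,-\xi),(\tau+\tau',\xi+\eta)$ whose sum is $(\tau',\eta)$: the ``new modulation'' $\big||\tau'|-|\eta|\big|$ is now $\ll d$, so the resonance identity becomes the one-sided estimate
\begin{equation*}
\lambda_0\lambda_1\,\angle^2\big(s_0(\xi+\eta),s_1\xi\big)\lesa d\Big(\big|s_0|\xi+\eta|-s_1|\xi|\big|+|\eta|\Big).
\end{equation*}
One checks that the bracketed factor is $\lesa\lambda_2$ in every admissible configuration from the case analysis above, producing $\angle(s_0(\xi+\eta),s_1\xi)\lesa(d\lambda_2/(\lambda_0\lambda_1))^{1/2}$; the third bound then follows by swapping $\xi$ and $\eta$. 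The delicate point, and in my view the main obstacle, is the corner case $\lambda_0\approx\lambda_1\gg\lambda_2$ with $s_0\neq s_1$, where the bracketed factor is $\approx\lambda_0$ and the direct estimate is insufficient. This case must be ruled out independently using the refined sum-to-zero identity $d\approx\big|s_1|\xi|+s_2|\eta|-s_0|\xi+\eta|\big|$ (obtained from $s_1|\tau|+s_2|\tau'|-s_0|\tau+\tau'|=0$), which under the stated sign constraint forces $d\approx\lambda_0$, contradicting the bound $d\lesa\lambda_2$ already established.
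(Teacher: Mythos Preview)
Your proposal is correct and follows essentially the same approach as the paper: the same core resonance identity, the same dichotomy on whether $\lambda_0$ is comparable to the maximal frequency, the same sign split in the high--high case, and the identical treatment of the secondary angles including the exclusion of the problematic corner case $\lambda_0\approx\lambda_1\gg\lambda_2$, $s_0=-s_1$ via the refined identity $d\approx|s_1|\xi|+s_2|\eta|-s_0|\xi+\eta||$.
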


\begin{lemma}[Lower bound on resonance]\label{lem:lb-resonance}
Let $d, \lambda_0, \lambda_1, \lambda_2 \in 2^\ZZ$. Assume that $(\tau, \xi), (\tau', \eta) \in \RR^{1+n}$ satisfy
    $$ |\xi| \approx \lambda_1, \qquad |\eta| \approx \lambda_2, \qquad |\xi+\eta| \approx \lambda_0$$
and
    $$ \big| |\tau| - |\xi| \big| \lesa d, \qquad \big| |\tau'| - |\eta|\big| \lesa d, \qquad \big| |\tau+\tau'| - |\xi+\eta| \big| \lesa d. $$
Then,
\begin{align*}
&\angle\big( \sgn(\tau) \xi, \sgn(\tau') \eta\big)+\angle\big( \sgn(\tau) \xi, \sgn(\tau+\tau') (\xi+\eta)\big)\\
{}&\qquad  +\angle\big( \sgn(\tau') \eta, \sgn(\tau+\tau') (\xi+\eta)\big)\lesa{} \Big( \frac{d}{\min\{\lambda_0, \lambda_1, \lambda_2\}}\Big)^{\frac{1}{2}}
\end{align*}
\end{lemma}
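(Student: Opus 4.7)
The key observation is that this lemma only asks for an upper bound on the angles, which is significantly easier than the two-sided estimates of Lemma 3.6; in fact the proof reuses the same algebraic identity that underlies the previous lemma, but only one direction of the comparison. My plan is to establish the bound first for $\angle(\sgn(\tau)\xi, \sgn(\tau')\eta)$, and then deduce the other two angle bounds by symmetry/relabeling.

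For the main angle, I would set $A = \sgn(\tau)|\xi|+\sgn(\tau')|\eta|$ and $B = |\xi+\eta|$, and start from the algebraic identity
\[
A^{2} - B^{2} \;=\; 2\,\sgn(\tau)\sgn(\tau')\,|\xi|\,|\eta|\bigl(1 - \cos\theta\bigr),
\qquad \theta := \angle(\sgn(\tau)\xi,\sgn(\tau')\eta),
\]
which follows from expanding $|\xi+\eta|^{2}$ and writing $\xi\cdot\eta$ in terms of $\sgn(\tau)\xi$ and $\sgn(\tau')\eta$. Combining this with $|A^{2}-B^{2}| = ||A|-B|\,(|A|+B)$ and the elementary inequality $1-\cos\theta \gtrsim \theta^{2}$ on $[0,\pi]$ yields
\[
\bigl||A|-B\bigr| \;\gtrsim\; \frac{\lambda_{1}\lambda_{2}\,\theta^{2}}{|A|+B}.
\]
Next, applying the triangle inequality twice using the hypotheses $||\tau|-|\xi||,\,||\tau'|-|\eta||,\,||\tau+\tau'|-|\xi+\eta||\lesa d$, I would obtain
\[
\bigl||A|-B\bigr| \;\les\; \bigl||A|-|\tau+\tau'|\bigr| + \bigl||\tau+\tau'|-B\bigr| \;\les\; |A-(\tau+\tau')| + d \;\lesa\; d,
\]
so that $\theta^{2}\lesa d\,(|A|+B)/(\lambda_{1}\lambda_{2})$. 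Since $|A|+B \lesa |\xi|+|\eta|+|\xi+\eta|\lesa \max\{\lambda_{1},\lambda_{2}\}$, this gives
\[
\theta^{2} \;\lesa\; \frac{d\,\max\{\lambda_{1},\lambda_{2}\}}{\lambda_{1}\lambda_{2}} \;=\; \frac{d}{\min\{\lambda_{1},\lambda_{2}\}} \;\les\; \frac{d}{\min\{\lambda_{0},\lambda_{1},\lambda_{2}\}},
\]
which is the claimed bound for the first of the three angles.

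For the remaining two angles I would exploit the built-in symmetry of the problem. Writing $\xi = (\xi+\eta) + (-\eta)$ and $\tau = (\tau+\tau')+(-\tau')$, the triple $(\xi+\eta,-\eta,\xi)$ with frequencies $(\lambda_{0},\lambda_{2},\lambda_{1})$ and modulation constraints of the same size $d$ satisfies the hypotheses of the first step, so the already-proved bound applied in this configuration (using $\sgn(-\tau')(-\eta)=\sgn(\tau')\eta$) yields
\[
\angle\bigl(\sgn(\tau+\tau')(\xi+\eta),\sgn(\tau')\eta\bigr) \;\lesa\; \Bigl(\frac{d}{\min\{\lambda_{0},\lambda_{2}\}}\Bigr)^{1/2}.
\]
The analogous substitution $\eta = (\xi+\eta)+(-\xi)$ handles the third angle. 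Summing the three bounds gives the stated estimate.

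There is no real obstacle in this argument: unlike Lemma 3.6, no case distinction between $d\lesa \mu$ and $d\gg\mu$ is needed, since we only claim an upper bound and the estimate becomes trivial (because angles are bounded by $\pi$) precisely in the regime where $d/\min\{\lambda_{0},\lambda_{1},\lambda_{2}\}\gtrsim 1$. The only mild point of attention is to use $1-\cos\theta \gtrsim \theta^{2}$ uniformly on $[0,\pi]$ (rather than just for $\theta$ small), but this follows from $\sin(\theta/2)\g \theta/\pi$ on $[0,\pi/2]$, so the estimate requires no fine resonance analysis and proceeds directly from the algebraic identity and the triangle inequality.
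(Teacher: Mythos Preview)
Your proposal is correct and follows essentially the same route as the paper: both derive $\big|\,|\sgn(\tau)|\xi|+\sgn(\tau')|\eta||-|\xi+\eta|\,\big|\lesa d$ from the hypotheses, combine this with the algebraic identity giving $\big||A|-B\big|\approx \lambda_1\lambda_2\theta^2/(|A|+B)$, and then handle the remaining two angles by the obvious relabeling (the paper just writes ``similarly''). Your additional remarks on the trivial regime $d\gtrsim\min\{\lambda_j\}$ and on the uniform validity of $1-\cos\theta\gtrsim\theta^2$ are correct and merely make explicit what the paper leaves implicit.
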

\begin{proof}
We start with the observation
\[
\big|  \sgn(\tau) |\xi| + \sgn(\tau')|\eta| - \sgn(\tau+\tau')|\xi + \eta| \big|\lesa d.
\]
The left hand side is bounded below by
\[
\big| | \sgn(\tau) |\xi| + \sgn(\tau')|\eta|| - |\xi + \eta| \big|\approx \frac{|\xi||\eta| \angle^2\big( \sgn(\tau) \xi, \sgn(\tau') \eta\big)}{| \sgn(\tau) |\xi| + \sgn(\tau')|\eta|| + |\xi + \eta|},
\]
which implies the bound on the first summand. The bounds on the other two summands follow similarly.
\end{proof}

We now give the proof of Theorem \ref{thm:bilinear L2 bound}.

\begin{proof}[Proof of Theorem \ref{thm:bilinear L2 bound}]
Let $\lambda_0, \lambda_1, \lambda_2 \in 2^\ZZ$ with $\lambda_1 \les \lambda_2$, and take $\mu = \min\{ \lambda_0, \lambda_1, \lambda_2\}$. It is enough to consider the case $u_{\lambda_1} \in U^2_+$ (or $V^2_+$), and $v_{\lambda_2}\in U^2_{\pm}$ (or $V^2_{\pm}$). Suppose that $d \lesa \mu$ and note that we can write
    $$C_{\ll d} u_{\lambda_1} = C_{\ll d}^+ u_{\lambda_1} + C_{\ll d}^- u_{\lambda_1} = C_{\ll d}^+ u_{\lambda_1} + C_{\ll d}^- C^+_{\approx \lambda_1} u_{\lambda_1}.$$
Applying H\"{o}lder's inequality we deduce that
     \begin{align}
        \big\| C_d P_{\lambda_0} \big( C_{\ll d }^- u_{\lambda_1} C_{\ll d} v_{\lambda_2} \big) \big\|_{L^2_{t,x}} &\lesa \mu^{\frac{n}{2}} \| C^+_{\approx \lambda_1} u_{\lambda_1} \|_{L^2_{t,x}} \| v_{\lambda_2} \|_{L^\infty_t L^2_x} \notag \\
            &\lesa \mu^{\frac{n}{2}} \lambda_1^{-\frac{1}{2}} \| u_{\lambda_1} \|_{U^4_+} \| v_{\lambda_2} \|_{U^4_\pm} \label{eqn:proof thm bilinear L2:far cone I}
     \end{align}
which suffices if $n=2, 3$ (clearly we can choose an exponent larger than $4$ if necessary). To obtain a slightly sharper bound, we can decompose into caps/cubes before applying H\"{o}lder, namely, letting $\alpha = (\frac{d \lambda_0}{\lambda_1 \lambda_2})^\frac{1}{2}$ and applying Lemma \ref{lem:resonance bound full cone}, we have
    \begin{align*}
        &\big\| C_d P_{\lambda_0} \big( C_{\ll d }^- u_{\lambda_1} C_{\ll d} v_{\lambda_2} \big) \big\|_{L^2_{t,x}} \\
\lesa{}& \Big( \sum_{\substack{\kappa, \kappa' \in \mc{C}_\alpha }}  \sum_{q, q'\in \mc{Q}_\mu} \big\| C_d P_{\lambda_0}\big( R_{\kappa} P_q C^-_{\ll d}u_{\lambda_1} R_{\kappa'} P_{q'} C_{\ll d} v_{\lambda_2} \big) \big\|_{L^2_{t,x}}^2 \Big)^\frac{1}{2} \\
       \lesa{}& \mu^{\frac{1}{2}} (\mu d)^{\frac{n-1}{4}} \Big( \sum_{\substack{\kappa, \kappa' \in \mc{C}_\alpha }}  \sum_{q, q'\in \mc{Q}_\mu} \| R_{\kappa} P_q C^+_{\approx \lambda_1} u_{\lambda_1} \|_{L^2_{t,x}}^2 \| R_{\kappa'} P_{q'} v_{\lambda_2} \|_{L^\infty_t L^2_x}^2 \Big)^\frac{1}{2} \\
            \lesa{}& \mu^{\frac{1}{2}} (\mu d)^{\frac{n-1}{4}} \lambda_1^{-\frac{1}{2}} \| u_{\lambda_1} \|_{U^2_+} \| v_{\lambda_2} \|_{U^2_\pm}
    \end{align*}
where we used the fact that since $\lambda_1 \les \lambda_2$ we have $\lambda_1 \alpha = ( d \mu)^{\frac{1}{2}}$. Interpolating with \eref{eqn:proof thm bilinear L2:far cone I}, we obtain an estimate which clearly suffices in higher dimensions as well. After noting the identity
        $$ C_{\ll d} v_{\lambda_2} = C_{\ll d}^\pm v_{\lambda_2} + C_{\approx \lambda_2}^{\pm} C_{\ll d}^{\mp} v_{\lambda_2} $$
a similar argument to the above reduces the problem to proving the bounds
        \begin{equation}\label{eqn:proof of thm bilinear L2 bound:initial reduction}
            \begin{split}
        \big\| C_d P_{\lambda_0} \big( C^+_{\ll d } u_{\lambda_1} C^\pm_{\ll d} v_{\lambda_2} \big) \big\|_{L^2_{t,x}} &\lesa d^{\frac{n-3}{4}} \mu^{\frac{n-1}{4}} \lambda_1^\frac{1}{2} \| u_{\lambda_1} \|_{U^2_+} \| v_{\lambda_2} \|_{U^2_{\pm}},\\
        \big\| C_d P_{\lambda_0} \big( C^+_{\ll d } u_{\lambda_1} C^\pm_{\ll d} v_{\lambda_2} \big) \big\|_{L^2_{t,x}} &\lesa d^{\frac{n-3}{4}} \mu^{\frac{n-1}{4}} \lambda_1^\frac{1}{2} \Big( \frac{\lambda_1^2}{\mu d }\Big)^\epsilon \| u_{\lambda_1} \|_{U^2_+} \| v_{\lambda_2} \|_{V^2_{\pm}}\\
        \big\| C_d P_{\lambda_0} \big( C^+_{\ll d } u_{\lambda_1} C^\pm_{\ll d} v_{\lambda_2} \big) \big\|_{L^2_{t,x}} &\lesa d^{\frac{n-3}{4}} \mu^{\frac{n-1}{4}} \lambda_1^\frac{1}{2} \Big( \frac{\lambda_1 \lambda_2}{\mu d }\Big)^\epsilon \| u_{\lambda_1} \|_{V^2_+} \| v_{\lambda_2} \|_{V^2_{\pm}}
            \end{split}
      \end{equation}
We now exploit Lemma \ref{lem:resonance bound full cone} and orthogonality. Let $\alpha = ( \frac{d \lambda_0}{\lambda_1 \lambda_2} )^\frac{1}{2}$, and $\beta = (\frac{d \lambda_1}{\lambda_0 \lambda_2})^\frac{1}{2}$. Note that since we assume $\lambda_1 \les \lambda_2$, we have $\lambda_1 \alpha = (d \mu)^\frac{1}{2}$ and $\lambda_0 \beta = (d \lambda_0)^\frac{1}{2}$. An application of Lemma \ref{lem:resonance bound full cone} and  orthogonality implies that after decomposing into caps, we have
   \begin{equation}\label{eqn:proof of bilinear L2 thm:L2 bdd by sqr sum}
        \begin{split}& \big\| C_d P_{\lambda_0} \big( C^+_{\ll d } u_{\lambda_1} C^\pm_{\ll d} v_{\lambda_2} \big) \big\|_{L^2_{t,x}}^2 \\
            \lesa{}& \sum_{\kappa'' \in \mc{C}_\beta} \sum_{q, q' \in \mc{Q}_\mu} \Big( \sum_{\substack{\kappa, \kappa'\in \mc{C}_\alpha \\ |\kappa \mp \kappa'| \approx \alpha}} \big\| C_d P_{\lambda_0} \big( C^+_{\ll d }R_{\kappa} P_q u_{\lambda_1} C^\pm_{\ll d} R_{\kappa'} R_{\kappa''} P_{q'} v_{\lambda_2} \big) \big\|_{L^2_{t,x}}\Big)^2.
        \end{split}
   \end{equation}
The standard bilinear $L^2_{t,x}$ bound for free solutions, together with the disposability of the $C_{\ll d}$ multipliers,  gives for any cubes $q, q' \in \mc{Q}_\mu$ and caps $\kappa, \kappa' \in \mc{C}_\alpha$ with $|\kappa \mp \kappa'| \approx \alpha$ the estimate
   \begin{align*}
        &\big\| C_d P_{\lambda_0} \big( C^+_{\ll d }R_{\kappa} P_q u_{\lambda_1} C^\pm_{\ll d} R_{\kappa'} R_{\kappa''} P_{q'} v_{\lambda_2} \big) \big\|_{L^2_{t,x}}\\
            \lesa{}& d^{\frac{n-3}{4}} \mu^{\frac{n-1}{4}} \lambda_1^\frac{1}{2}  \| R_{\kappa} P_q u_{\lambda_1} \|_{U^2_+} \| R_{\kappa'} R_{\kappa''} P_{q'} v_{\lambda_2} \|_{U^2_{\pm}}.
   \end{align*}
   Therefore, from \eref{eqn:proof of bilinear L2 thm:L2 bdd by sqr sum} and the $U^2$ square sum bound, we deduce that
	\begin{equation}\label{eqn:proof thm bilinear L2:bdd by U2}
		\big\| C_d P_{\lambda_0} \big( C^+_{\ll d } u_{\lambda_1} C^\pm_{\ll d} v_{\lambda_2} \big) \big\|_{L^2_{t,x}}
			\lesa d^{\frac{n-3}{4}} \mu^{\frac{n-1}{4}} \lambda_1^\frac{1}{2}  \| u_{\lambda_1} \|_{U^2_+} \| v_{\lambda_2} \|_{U^2_{\pm}}.
	\end{equation}
To replace $U^2$ with $V^2$, we apply Theorem \ref{thm:bilinear restriction}. We first suppose that $\lambda_1 \approx \lambda_2$. After applying Lemma \ref{lem:resonance bound full cone} and decomposing into caps of size $\alpha$, Theorem \ref{thm:bilinear restriction} implies that for any $2 \les a \les b <n+1$
    \begin{align}
        &\big\| C_d P_{\lambda_0} \big( C^+_{\ll d } u_{\lambda_1} C^\pm_{\ll d} v_{\lambda_2} \big) \big\|_{L^2_{t,x}}  \notag\\
\lesa{}& \sum_{\substack{\kappa, \kappa' \in \mc{C}_\alpha \\ |\kappa - \kappa'| \approx \alpha }} \big\|  C^+_{\ll d } R_\kappa u_{\lambda_1} C^\pm_{\ll d} R_{\kappa'} v_{\lambda_2} \big\|_{L^2_{t,x}} \notag \\
        \lesa{}& \alpha^{\frac{n-3}{2}} \lambda_1^{\frac{n-1}{2}} \sum_{\substack{\kappa, \kappa' \in \mc{C}_\alpha \\ |\kappa - \kappa'| \approx \alpha }} \big\| R_\kappa u_{\lambda_1}\|_{U^a_+} \| R_{\kappa'} v_{\lambda_2} \big\|_{U^b_{\pm}} \notag \\
        \lesa{}& \alpha^{\frac{n-3}{2} - (n-1)(1 - \frac{1}{a}-\frac{1}{b})} \lambda_1^{\frac{n-1}{2}} \Big( \sum_{\kappa  \in \mc{C}_\alpha} \| R_\kappa u_{\lambda_1}\|_{U^a_+}^a\Big)^\frac{1}{a} \Big( \sum_{\kappa' \in \mc{C}_\alpha}  \| R_{\kappa'} v_{\lambda_2} \|_{U^b_{\pm}}^b\Big)^\frac{1}{b} \notag \\
        \lesa{}& d^{\frac{n-3}{4}} \mu^{\frac{n-1}{4}} \lambda_1^\frac{1}{2} \Big(\frac{\lambda_1}{\mu}\Big)^\frac{1}{2} \Big( \frac{\lambda_1^2}{ d \mu} \Big)^{(n-1)(1-\frac{1}{a} - \frac{1}{b})}  \|u_{\lambda_1}\|_{U^a_+} \| v_{\lambda_2} \|_{U^a_\pm}
        \label{eqn:proof thm bilinear L2:high-high Ua}.
    \end{align}
Together \eref{eqn:proof thm bilinear L2:bdd by U2} and \eref{eqn:proof thm bilinear L2:high-high Ua} together with the standard $V^2$ interpolation argument give \eref{eqn:proof of lem resonance bound:main ident} in the case $\lambda_1 \approx \lambda_2$. On the other hand, if $\lambda_1 \ll \lambda_2$, we decompose into caps of size $\beta = (\frac{d}{\lambda_1})^\frac{1}{2}$ and again apply Lemma \ref{lem:resonance bound full cone} and Theorem \ref{thm:bilinear restriction} to deduce that for any $2\les a \les b < n+1$
     \begin{align*}
&        \big\| C_d P_{\lambda_0} \big( C^+_{\ll d } u_{\lambda_1} C^\pm_{\ll d} v_{\lambda_2} \big) \big\|_{L^2_{t,x}}\\
 \lesa{}& \Big( \sum_{\kappa \in \mc{C}_\beta} \sup_{\substack{\kappa' \in \mc{C}_\beta \\ |\kappa - \kappa'| \approx \beta}} \big\|  C^+_{\ll d } R_\kappa u_{\lambda_1} C^\pm_{\ll d} R_{\kappa'} v_{\lambda_2} \big\|_{L^2_{t,x}}^2 \Big)^{\frac{1}{2}}  \\
            \lesa{}& \beta^{\frac{n-3}{2}} \lambda_1^{\frac{n-1}{2}} \Big( \frac{\lambda_2}{\lambda_1}\Big)^{(n+1)(\frac{1}{2}-\frac{1}{a})} \Big( \sum_{\kappa \in \mc{C}_\beta} \| R_{\kappa} u_{\lambda_1} \|_{U^a_+}^a\Big)^\frac{1}{a} \| v_{\lambda_2} \|_{V^b_\pm}\\
            \lesa{}& \beta^{\frac{n-3}{2}- (n-1)(\frac{1}{2}-\frac{1}{a})} \lambda_1^{\frac{n-1}{2}} \Big( \frac{\lambda_2}{\lambda_1}\Big)^{(n+1)(\frac{1}{2}-\frac{1}{a})} \| u_{\lambda_1} \|_{U^a_+} \| v_{\lambda_2} \|_{V^2_\pm}.
     \end{align*}
Choosing $a=2$, we get the $U^2 \times V^2$ estimate. Taking $a$ sufficiently close to 2 gives the $V^2 \times V^2$ estimate.

It remains to consider the case $d \gg \mu$. In light of Lemma \ref{lem:resonance bound full cone}, the left hand side is only nonzero if $\lambda_0 \ll \lambda_1 \approx \lambda_2$ and $d \approx \lambda_1$, and moreover, we have the identity
	\begin{equation}\label{eq:dec-mod} \begin{split} &C_d P_{\lambda_0}( C_{\ll d} u_{\lambda_1} C_{\ll d } v_{\lambda_2} ) \\
={}& C_d P_{\lambda_0}( C^+_{\ll d} u_{\lambda_1} C^+_{\ll d } v_{\lambda_2} )  + C_d P_{\lambda_0}( C^-_{\ll d} u_{\lambda_1} C^-_{\ll d } v_{\lambda_2} ).
          \end{split}
        \end{equation}
        To estimate the $L^2_{t,x}$ norm of the first term in \eqref{eq:dec-mod}, if $\pm=+$, we decompose into caps and apply Theorem \ref{thm:bilinear restriction} which gives
	\begin{align*}
		\| C_d P_{\lambda_0}( C^+_{\ll d} u_{\lambda_1} C^+_{\ll d } v_{\lambda_2} ) \|_{L^2_{t,x}}
				&\lesa \sum_{\substack{ \kappa, \kappa' \in \mc{C}_\frac{1}{100} \\ \angle(\kappa, \kappa') \approx 1}}
		\sum_{\substack{ q,q' \in \mc{Q}_\mu \\ |q-q'|\approx \mu}} \| R_{\kappa} P_q u_{\lambda_1} R_{\kappa'} P_{q'} v_{\lambda_2} \|_{L^2_{t,x}} \\
				&\lesa \mu^{\frac{n-1}{2}} \sum_{\substack{ q,q' \in \mc{Q}_\mu \\ |q-q'|\approx \mu}} \| P_q u_{\lambda_1}\|_{U^a_+} \| P_{q'} v_{\lambda_2} \|_{U^b_+} \\
				&\lesa \mu^{\frac{n-1}{2}} \Big( \frac{\lambda_1}{\mu}\Big)^{n(1-\frac{1}{a} - \frac{1}{b})} \| u_{\lambda_1} \|_{U^a_+} \| v_{\lambda_2} \|_{U^b_+}.
	\end{align*}
If $\pm=-$, we have $C^+_{\ll d } v_{\lambda_2}=C^-_{\approx \lambda_2} C^+_{\ll d } v_{\lambda_2}$ and
\begin{align*}
\| C_d P_{\lambda_0}( C^+_{\ll d} u_{\lambda_1} C^+_{\ll d } v_{\lambda_2} ) \|_{L^2_{t,x}}
				&\lesa \mu^{\frac{n}{2}}\|u_{\lambda_1}\|_{L^\infty_t L^2_x}\|C^-_{\approx \lambda_2}  v_{\lambda_2}\|_{L^2_{t,x}}\\
&\lesa \mu^{\frac{n-1}{2}} \Big( \frac{\mu}{\lambda_2}\Big)^{\frac12}\|u_{\lambda_1}\|_{V^2_+}\| v_{\lambda_2}\|_{V^2_-}.
\end{align*}
To estimate the $L^2_{t,x}$ norm of the second term in \eqref{eq:dec-mod}, we use $C^-_{\ll d } u_{\lambda_1}=C^+_{\approx \lambda_1} C^-_{\ll d } u_{\lambda_1}$ and obtain
\begin{align*}
\| C_d P_{\lambda_0}( C^-_{\ll d} u_{\lambda_1} C^-_{\ll d } v_{\lambda_2} ) \|_{L^2_{t,x}}
				&\lesa \mu^{\frac{n}{2}}\|C^+_{\approx \lambda_1} u_{\lambda_1}\|_{L^2_{t,x}}\|  v_{\lambda_2}\|_{L^\infty_t L^2_x}\\
&\lesa \mu^{\frac{n-1}{2}} \Big( \frac{\mu}{\lambda_1}\Big)^{\frac12}\|u_{\lambda_1}\|_{V^2_+}\| v_{\lambda_2}\|_{V^2_\pm},
\end{align*}
which implies the claimed estimate as $\lambda_1\approx \lambda_2$ in this case.
\end{proof}

\subsection{Proof of Theorem \ref{thm:div-prob}}\label{subsec:non}

We now combine the high-low product estimates in Theorem \ref{thm:high-mod}, together with the bilinear $L^2_{t,x}$ estimate in Theorem \ref{thm:bilinear L2 bound}, and show that the space $S$ solves the division problem. To simplify the proof, we start by giving the following consequence of the bilinear $L^2_{t,x}$ bound, which is used to control the close cone interactions in Theorem \ref{thm:div-prob}.

\begin{lemma}\label{lem:tri-est}
Let $\epsilon>0$ and $\lambda_0, \lambda_1, \lambda_2 \in 2^\ZZ$ with $\mu = \min\{\lambda_0, \lambda_1, \lambda_2\}$. If $v_{\lambda_0}, u_{\lambda_1} \in S$ and $w_{\lambda_2} \in S_w$ then
\begin{equation}
\label{eq:tri-est1}\begin{split}
&
\Big| \int_{\RR^{1+n}}  C_{\lesa \mu} v_{\lambda_0} C_{\lesa \mu} u_{\lambda_1}\Box C_{\lesa \mu} w_{\lambda_2}dx\, dt \Big| \\
\lesa{}& \mu^{\frac{n-1}{2}}\lambda_2 (\min\{\lambda_0,\lambda_1\})^{\frac{1}{2}}  \| v_{\lambda_0} \|_{S} \| u_{\lambda_1} \|_S \| w_{\lambda_2} \|_{S_w}.
\end{split}
\end{equation}
Similarly, if $u_{\lambda_1} \in S$ and $v_{\lambda_0}, w_{\lambda_2} \in S_w$, then
\begin{equation}
\label{eq:tri-est2}
\begin{split}
&\Big| \int_{\RR^{1+n}}  C_{\lesa \mu} v_{\lambda_0}C_{\lesa \mu} u_{\lambda_1} \Box C_{\lesa \mu} w_{\lambda_2}
dx\, dt \Big| \\
\lesa{}& \mu^{\frac{n-1}{2}}\lambda_2 (\min\{\lambda_0,\lambda_1\})^{\frac{1}{2}} \Big( \frac{ \lambda_1 \min\{\lambda_0, \lambda_1\}}{ \mu^2 } \Big)^\epsilon \| v_{\lambda_0} \|_{S_w} \| u_{\lambda_1} \|_S \| w_{\lambda_2} \|_{S_w}.
\end{split}
\end{equation}
\end{lemma}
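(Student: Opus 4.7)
The plan is to dyadically decompose the output modulation of $\Box w_{\lambda_2}$, thereby reducing the trilinear integral to a pairing of a bilinear $L^2_{t,x}$ estimate for $v_{\lambda_0} u_{\lambda_1}$ with an $X^{0,1/2,\infty}$-type bound for $\Box w_{\lambda_2}$. Write $\Box C_{\lesa \mu} w_{\lambda_2} = \sum_{d \lesa \mu} \Box C_d w_{\lambda_2}$, and note that by Plancherel only the piece of $v_{\lambda_0} u_{\lambda_1}$ with modulation $\approx d$ contributes to the $d$-th summand (since the outer integral pairs against a function of modulation $\approx d$ and spatial frequency $\approx \lambda_2$). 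Combined with Cauchy--Schwarz this yields
\begin{equation*}
\Big| \int v_{\lambda_0} u_{\lambda_1} \Box w_{\lambda_2}\, dx\, dt \Big| \lesa \sum_{d \lesa \mu} \|C_{\approx d} P_{\lambda_2}(v_{\lambda_0} u_{\lambda_1})\|_{L^2_{t,x}} \cdot \|\Box C_d w_{\lambda_2}\|_{L^2_{t,x}}.
\end{equation*}
On the support of $C_d w_{\lambda_2}$ the symbol $\tau^2-|\xi|^2$ has size $\approx d\lambda_2$, so Lemma \ref{lem:S properties} \eqref{itm:lem S prop:Xsb} immediately gives $\|\Box C_d w_{\lambda_2}\|_{L^2_{t,x}} \lesa d^{1/2} \lambda_2 \|w_{\lambda_2}\|_{S_w}$.

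For the bilinear factor I split $v_{\lambda_0}, u_{\lambda_1}$ by modulation into a \emph{close-to-cone} part, where both are $C_{\ll d}$, and \emph{far-from-cone} parts, where at least one has modulation $d' \in [d,\mu]$. In the close-to-cone part, Theorem \ref{thm:bilinear L2 bound} applies directly. For \eqref{eq:tri-est1} I use the $S \times S$ bound \eqref{eqn:thm bilinear L2:U2U2} to obtain
\begin{equation*}
\|C_{\approx d} P_{\lambda_2}(C_{\ll d} v_{\lambda_0}\, C_{\ll d} u_{\lambda_1})\|_{L^2_{t,x}} \lesa d^{\frac{n-3}{4}} \mu^{\frac{n-1}{4}} (\min\{\lambda_0, \lambda_1\})^{\frac12} \|v_{\lambda_0}\|_S \|u_{\lambda_1}\|_S,
\end{equation*}
whereas for \eqref{eq:tri-est2} I invoke the $S \times S_w$ bound \eqref{eqn:thm bilinear L2:U2V2} (with $v_{\lambda_0}$ in $S_w$ playing the role of the $V^2$ factor), which inserts the $(\lambda_1 \min\{\lambda_0,\lambda_1\}/\mu^2)^\epsilon$ loss. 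The far-from-cone contributions are cheap: given a summand $C_{d'} v_{\lambda_0} \cdot u_{\lambda_1}$ with $d' \gtrsim d$, H\"older combined with Bernstein on the output spatial frequency $\approx \lambda_2$ and Lemma \ref{lem:S properties} \eqref{itm:lem S prop:Xsb} give a bound of order $(\min\{\lambda_0,\lambda_1\})^{n/2} (d')^{-1/2} \|v_{\lambda_0}\|_S \|u_{\lambda_1}\|_S$, which dominates the close-to-cone bound and is geometrically summable in $d' \in [d,\mu]$; the symmetric contribution from $C_{d'} u_{\lambda_1}$ is handled identically, and the $\epsilon$-factor in \eqref{eq:tri-est2} is absorbed trivially here.

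Collecting and summing over $d \lesa \mu$, the powers of $d$ from the bilinear bound and the $\Box w$ bound combine to $d^{(n-3)/4 + 1/2} = d^{(n-1)/4}$, which sums to $\mu^{(n-1)/4}$ since $n \g 2$; multiplied by the $\mu^{(n-1)/4}$ from the bilinear estimate and $\lambda_2$ from the $\Box$ bound, this produces the claimed $\mu^{(n-1)/2} \lambda_2 (\min\{\lambda_0,\lambda_1\})^{1/2}$. The main technical point is the bookkeeping in the far-from-cone regime, to verify that the $d'$-summation does not pollute the close-to-cone gain; this is routine once one uses the sharp modulation gain of Lemma \ref{lem:S properties} \eqref{itm:lem S prop:Xsb}, and no ingredient beyond Theorem \ref{thm:bilinear L2 bound}, this modulation gain, and Bernstein's inequality is needed.
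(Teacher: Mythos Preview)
Your overall architecture is sound and matches the paper's proof of the ``close-to-cone'' contribution \eqref{eq:claim3}: decompose $\Box C_{\lesa\mu}w_{\lambda_2}$ dyadically in modulation, pair it by Cauchy--Schwarz against $C_{\approx d}P_{\lambda_2}(v_{\lambda_0}u_{\lambda_1})$, and invoke Theorem~\ref{thm:bilinear L2 bound} for the $C_{\ll d}\times C_{\ll d}$ part. The problem is your treatment of the ``far-from-cone'' remainder.

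Your far-cone bound $\|C_{\approx d}P_{\lambda_2}(C_{d'}v_{\lambda_0}\,u_{\lambda_1})\|_{L^2_{t,x}}\lesa \mu^{n/2}(d')^{-1/2}$ (the correct Bernstein constant is $\mu^{n/2}$, not $(\min\{\lambda_0,\lambda_1\})^{n/2}$, but that is not the main issue) is geometrically summable in $d'\ge d$ to $\mu^{n/2}d^{-1/2}$. After multiplying by $\|\Box C_d w_{\lambda_2}\|_{L^2}\lesa d^{1/2}\lambda_2\|w\|_{S_w}$ you get a contribution $\mu^{n/2}\lambda_2$ that is \emph{constant in $d$}, so the outer sum $\sum_{d\lesa\mu}$ diverges logarithmically. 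Equivalently, swapping the sums gives $\sum_{d'\lesa\mu}\mu^{n/2}(d')^{-1/2}\lambda_2\sum_{d\lesa d'}d^{1/2}\approx \sum_{d'\lesa\mu}\mu^{n/2}\lambda_2$, which again fails to converge. H\"older plus Bernstein alone is too crude here; this is not merely bookkeeping.

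The missing ingredient is the angular localisation from Lemma~\ref{lem:lb-resonance}. The paper organises the decomposition by which of the three factors carries the largest modulation $d$; when it is $v_{\lambda_0}$ or $u_{\lambda_1}$ (claims \eqref{eq:claim1}--\eqref{eq:claim2}), Lemma~\ref{lem:lb-resonance} forces all three caps to lie within angle $\beta=(d/\mu)^{1/2}$, so after cap decomposition the Bernstein factor improves from $\mu^{n/2}$ to $\beta^{(n-1)/2}\mu^{n/2}=d^{(n-1)/4}\mu^{(n+1)/4}$. This extra $d^{(n-1)/4}$ is exactly what restores summability in $d$. Without it your far-cone argument cannot close.
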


\begin{proof}
We start by proving the bounds
    \begin{align}\label{eq:claim1}
    \sum_{d \lesa \mu } \Big| \int_{\RR^{1+n}}  C_d v_{\lambda_0} C_{\lesa d}u_{\lambda_1} \Box C_{\lesa d}w_{\lambda_2} dx dt \Big|
    &\lesa  \mu^{\frac{n}{2}} \lambda_2  \| v_{\lambda_0} \|_{S_w} \| u_{\lambda_1} \|_S \| w_{\lambda_2} \|_{S_w}\\
    \label{eq:claim2}
    \sum_{d \lesa \mu } \Big| \int_{\RR^{1+n}} C_{\lesa d} v_{\lambda_0} C_d u_{\lambda_1} \Box C_{\lesa d} w_{\lambda_2}  dx dt \Big|
    &\lesa \mu^{\frac{n}{2}} \lambda_2 \| v_{\lambda_0} \|_{S_w} \| u_{\lambda_1} \|_S \| w_{\lambda_2} \|_{S_w}
    \end{align}
and, for every $\epsilon>0$,
    \begin{equation}\label{eq:claim3}
    \begin{split}
   &\sum_{d \lesa \mu } \Big| \int_{\RR^{1+n}} C_{\ll d} v_{\lambda_0} C_{\ll d}u_{\lambda_1} \Box C_d w_{\lambda_2}  dx dt \Big|\\
    \lesa{}& \mu^{\frac{n}{2}}\lambda_2 \Big(\frac{\min\{\lambda_0,\lambda_1\}}{\mu}\Big)^{\frac{1}{2}} \| v_{\lambda_0} \|_{S} \| u_{\lambda_1} \|_S \| w_{\lambda_2} \|_{S_w}, \\
    &\sum_{d \lesa \mu } \Big| \int_{\RR^{1+n}} C_{\ll d} v_{\lambda_0} C_{\ll d}u_{\lambda_1} \Box C_d w_{\lambda_2}  dx dt \Big|\\
    \lesa{}& \mu^{\frac{n}{2}}\lambda_2    \Big(\frac{\min\{\lambda_0,\lambda_1\}}{\mu}\Big)^{\frac{1}{2}+\epsilon} \Big( \frac{ \lambda_1 }{ \mu } \Big)^\epsilon \| v_{\lambda_0} \|_{S_w} \| u_{\lambda_1} \|_S \| w_{\lambda_2} \|_{S_w}.
    \end{split}
    \end{equation}
Let $\beta = (\frac{d}{\mu})^\frac{1}{2}$. The bound \eref{eq:claim1} follows by decomposing into caps of radius $\beta$, applying Lemma \ref{lem:lb-resonance} together with  Lemma \ref{lem:S properties}, and observing that
    \begin{align*}
  &    \int_{\RR^{1+n}}C_{d} v_{\lambda_0} C_{\lesa d} u_{\lambda_1} \Box C_{\lesa d} w_{\lambda_2}dx dt \\
\lesa{}&\beta^{\frac{n-1}{2}} \mu^{\frac{n}{2}} \sum_{\substack{\kappa, \kappa', \kappa'' \in \mc{C}_\beta \\ \angle_\pm (\kappa, \kappa'), \angle_\pm (\kappa ,\kappa'')\lesa \beta}} \| R_{\kappa} C_d v_{\lambda_0} \|_{ L^2_{t,x}} \| R_{\kappa'} C_{\lesa d} u_{\lambda_1} \|_{L^\infty_t L^2_{x}} \| \Box C_{\lesa d} R_{\kappa''} w_{\lambda_2} \|_{L^2_{t,x}} \\
      \lesa{}& \beta^{\frac{n-1}{2}} \mu^{\frac{n}{2}}  \|  C_d v_{\lambda_0} \|_{L^2_{t,x}} \sup_{\kappa' \in \mc{C}_\beta}  \| R_{\kappa'} C_{\lesa d}u_{\lambda_1} \|_{L^\infty_t L^2_x} \| \Box C_{\lesa d} w_{\lambda_2} \|_{L^2_{t,x}} \\
     \lesa{}& d^{\frac{n-1}{4}} \mu^{\frac{n+1}{4}} \lambda_2 \| v_{\lambda_0} \|_{S_w} \| u_{\lambda_1} \|_S \| w_{\lambda_2} \|_{S_w}.
    \end{align*}
Consequently summing up over modulation $d \lesa \mu $, this gives \eqref{eq:claim1}. The proof of \eref{eq:claim2} is similar. More precisely, again decomposing into caps of radius $\beta$  gives
    \begin{align*}
  &    \int_{\RR^{1+n}}C_{\lesa d} v_{\lambda_0} C_d u_{\lambda_1} \Box C_{\lesa d} w_{\lambda_2}dx dt \\
\lesa{}&\beta^{\frac{n-1}{2}} \mu^{\frac{n}{2}} \sum_{\substack{\kappa, \kappa', \kappa'' \in \mc{C}_\beta \\ \angle_\pm (\kappa, \kappa'), \angle_\pm (\kappa ,\kappa'')\lesa \beta}} \| R_{\kappa} v_{\lambda_0} \|_{L^\infty_t L^2_x} \| R_{\kappa'} C_d u_{\lambda_1} \|_{L^2_{t,x}} \| \Box C_{\lesa d} R_{\kappa''} w_{\lambda_2} \|_{L^2_{t,x}} \\
      \lesa{}& \beta^{\frac{n-1}{2}} \mu^{\frac{n}{2}} \sup_{\kappa \in \mc{C}_\beta}  \| R_\kappa  v_{\lambda_0} \|_{L^\infty_t L^2_x} \|  C_d u_{\lambda_1} \|_{L^2_{t,x}} \| \Box C_{\lesa d} w_{\lambda_2} \|_{L^2_{t,x}} \\
     \lesa{}& d^{\frac{n-1}{4}} \mu^{\frac{n+1}{4}} \lambda_2 \| v_{\lambda_0} \|_{S_w} \| u_{\lambda_1} \|_S \| w_{\lambda_2} \|_{S_w}.
    \end{align*}
Consequently summing up over modulation $d \lesa \mu $, we have \eqref{eq:claim2}. To prove \eref{eq:claim3}, we first observe that Theorem \ref{thm:bilinear L2 bound} implies that for every $\epsilon>0$
	\begin{align*} \big\| P_{\lambda_2} C_d \big( C_{\ll d} &v_{\lambda_0}   C_{\ll d} u_{\lambda_1} \big) \big\|_{L^2_{t,x}}\\
	& \lesa d^{\frac{n-3}{4}} \mu^{\frac{n-1}{4}} (\min\{\lambda_0, \lambda_1\})^\frac{1}{2} \Big( \frac{ \lambda_1 \min\{\lambda_0, \lambda_1\}}{ d \mu } \Big)^\epsilon \| v_{\lambda_0}\|_{S_w} \| u_{\lambda_1} \|_{S},
	\end{align*}
where we can put $\epsilon=0$ if $\| v_{\lambda_0} \|_{S_w} $ is replaced by $\| v_{\lambda_0} \|_{S}$. Hence an application of H\"older's inequality gives
	\begin{align*}
		&\int_{\RR^{1+3}} C_{\ll d} v_{\lambda_0} C_{\ll d}u_{\lambda_1} \Box C_d w_{\lambda_2}  dx dt \\
\lesa{}& \| C_{d} P_{\lambda_2} \big( C_{\ll d} v_{\lambda_0} C_{\ll d} u_{\lambda_1}\big) \|_{L^2_{t,x}} \| \Box C_d w_{\lambda_2} \|_{L^2_{t,x}} \\
		\lesa{}& d^{\frac{n-1}{4}} \mu^{\frac{n-1}{4}} \lambda_2 (\min\{\lambda_0, \lambda_1\})^\frac{1}{2} \Big( \frac{ \lambda_1 \min\{\lambda_0, \lambda_1\}}{ d \mu } \Big)^\epsilon \| v_{\lambda_0}\|_{S_w} \| u_{\lambda_1} \|_{S} \| w_{\lambda_2} \|_{S_w},
	\end{align*}
where, again, we can put $\epsilon=0$ if $\| v_{\lambda_0} \|_{S_w} $ is replaced by $\| v_{\lambda_0} \|_{S}$.
Summing up over modulation $d \lesa \mu $, this gives \eqref{eq:claim3}.

In order to finally prove \eqref{eq:tri-est1} and \eqref{eq:tri-est2}, we decompose the product into
  \begin{align*}
&  C_{\lesa \mu } v_{\lambda_0} C_{\lesa \mu }u_{\lambda_1} \Box C_{\lesa \mu }w_{\lambda_2} =
\sum_{d\lesa \mu } C_d v_{\lambda_0} C_{\leq d}u_{\lambda_1} \Box C_{\leq d}w_{\lambda_2} \\ & {}\qquad \qquad + \sum_{d\lesa \mu }C_{< d} v_{\lambda_0} C_d u_{\lambda_1} \Box C_{< d} w_{\lambda_2}+ \sum_{d\lesa \mu } C_{< d} v_{\lambda_0} C_{< d}u_{\lambda_1} \Box C_d w_{\lambda_2}.
  \end{align*}
 Estimate \eqref{eq:claim1} takes care of the first term and estimate \eqref{eq:claim2} yields the required bound for the second term. Further, we write
\begin{align*}
&\sum_{d\lesa \mu } C_{< d} v_{\lambda_0} C_{< d}u_{\lambda_1} \Box C_d w_{\lambda_2}
=\sum_{d\lesa \mu }C_{\ll d} v_{\lambda_0} C_{\ll d}u_{\lambda_1} \Box C_d w_{\lambda_2}\\
&{}\qquad \qquad+\sum_{d\lesa \mu }C_{< d}C_{\sim d} v_{\lambda_0} C_{<d}u_{\lambda_1} \Box C_d w_{\lambda_2}+\sum_{d\lesa \mu } C_{\ll d} v_{\lambda_0} C_{< d}C_{\sim d}u_{\lambda_1} \Box C_d w_{\lambda_2},
\end{align*}
and now estimate \eqref{eq:claim3} gives an acceptable bound for the first term. For the second and the third term we again use estimate \eqref{eq:claim1} and \eqref{eq:claim2}, in addition to the uniform disposablity of the modulation projections.
     \end{proof}

We now come to the proof of Theorem \ref{thm:div-prob}.

\begin{proof}[Proof of Theorem \ref{thm:div-prob}]
We start with the proof of the algebra property, \eref{eqn:thm div-prob:S-alg}. Let $\mu = \min\{\lambda_0, \lambda_1, \lambda_2\}$ and decompose the product into
   \begin{equation}\label{eqn:thm div-prob:alg est decomp} \begin{split}u_{\lambda_1} v_{\lambda_2} ={}& \big( u_{\lambda_1} v_{\lambda_2} - C_{\lesa \mu} u_{\lambda_1} C_{\lesa \mu} v_{\lambda_2}\big) \\
&{}+ C_{\gg \mu}\big( C_{\lesa \mu} u_{\lambda_1} C_{\lesa \mu } v_{\lambda_2} \big) + C_{\lesa \mu} \big( C_{\lesa \mu} u_{\lambda_1} C_{\lesa \mu} v_{\lambda_2}\big).
     \end{split}
   \end{equation}
For the first term, at least one of $u_{\lambda_1}$ or $v_{\lambda_2}$ must have modulation at distance $\gg \mu$ from the cone, hence an application of Theorem \ref{thm:high-mod} gives
   \begin{align*}
\big\| P_{\lambda_0} \big( u_{\lambda_1} v_{\lambda_2} - C_{\lesa \mu} u_{\lambda_1} C_{\lesa \mu} v_{\lambda_2}\big) \big\|_S \lesa{}& \mu^{\frac{n}{2}} \Big( \frac{\min\{\lambda_1, \lambda_2\}}{\mu}\Big)^\frac{3}{2} \| u_{\lambda_1} \|_S \| v_{\lambda_2} \|_S\\
 \lesa{}& \Big( \frac{\lambda_1 \lambda_2}{\lambda_0} \Big)^{\frac{n}{2}} \|u_{\lambda_1} \|_S \| v_{\lambda_2}\|_S.
   \end{align*}
   For the second term in \eref{eqn:thm div-prob:alg est decomp}, Lemma \ref{lem:resonance bound full cone} implies that we have the identity
    $$C_{\gg \mu}P_{\lambda_0}\big( C_{\lesa \mu} u_{\lambda_1} C_{\lesa \mu } v_{\lambda_2} \big) = C_{\approx \lambda_{max}}P_{\lambda_0}\big( C_{\lesa \mu} u_{\lambda_1} C_{\lesa \mu } v_{\lambda_2} \big) $$
where $\lambda_{max} = \max\{\lambda_0, \lambda_1, \lambda_2\}$. Hence Lemma \ref{lem:S properties} and Theorem \ref{thm:bilinear L2 bound} give
    \begin{align*}
      \big\| C_{\gg \mu}P_{\lambda_0}\big( C_{\lesa \mu} u_{\lambda_1} C_{\lesa \mu } v_{\lambda_2} \big) \big\|_S &\lesa \lambda_{max}^\frac{1}{2} \frac{\lambda_{max}}{\lambda_0} \big\| C_{\gg \mu}P_{\lambda_0}\big( C_{\lesa \mu} u_{\lambda_1} C_{\lesa \mu } v_{\lambda_2} \big)  \big\|_{L^2_{t,x}} \\
      &\lesa  \lambda_{max}^\frac{1}{2} \frac{\lambda_{max}}{\lambda_0} \mu^{\frac{n-1}{2}} \Big( \frac{\min\{\lambda_1, \lambda_2\}}{\mu} \Big)^\epsilon \| u_{\lambda_1} \|_S \| v_{\lambda_2} \|_S \\
      &\lesa \Big( \frac{\lambda_1 \lambda_2}{\lambda_0} \Big)^\frac{n}{2} \| u_{\lambda_1} \|_S \| v_{\lambda_2} \|_S
    \end{align*}
provided we take $\epsilon>0$ sufficiently small, and $n \g 2$. For the last term in \eref{eqn:thm div-prob:alg est decomp}, we apply \eref{eq:tri-est1} in Lemma \ref{lem:tri-est} together with the characterisation of $S$ in Lemma \ref{lem:S properties} to deduce that
    \begin{align*}
      \big\|  C_{\lesa \mu} \big( C_{\lesa \mu} u_{\lambda_1} C_{\lesa \mu} v_{\lambda_2}\big)\big\|_S &\lesa \sup_{\substack{\phi \in C^\infty_0 \\ \| \phi\|_{S_w} \les 1}} \Big| \int_\RR \lr{ |\nabla| \Box C_{\lesa \mu} \phi_{\lambda_0}, C_{\lesa \mu} u_{\lambda_1} C_{\lesa \mu} v_{\lambda_2} }_{L^2_x} dt \Big| \\
      &\lesa \mu^{\frac{n-1}{2}} (\min\{\lambda_1, \lambda_2\})^\frac{1}{2} \| u_{\lambda_1} \|_S \| v_{\lambda_2} \|_S.
    \end{align*}
Therefore \eref{eqn:thm div-prob:S-alg} follows.

We now turn to the proof of \eref{eqn:thm div-prob:S-nonlin}. The argument is in some a sense a dual version of the argument used to prove the algebra property \eref{eqn:thm div-prob:S-alg}. An application of the characterisation of $S$ in \eref{lem:S properties}, together with the invariance under complex conjugation of $S$ and $S_w$, reduces the problem to proving that
    \begin{equation}\label{eqn:thm div-prob:dual}
        \Big| \int_\RR \lr{ \phi_{\lambda_0} u_{\lambda_1}, \Box v_{\lambda_2}}_{L^2_x} dt \Big| \lesa  \Big( \frac{\lambda_1 \lambda_2}{\lambda_0} \Big)^\frac{n}{2} \lambda_0 \| \phi_{\lambda_0} \|_{S_w} \| u_{\lambda_1} \|_S \| v_{\lambda_2} \|_S
    \end{equation}
for all $\phi \in C^\infty_0$. The first step in the proof of \eref{eqn:thm div-prob:dual} is to decompose into the far cone and close cone regions
    \begin{equation}\label{eqn:thm div-prob:dual decomp}
\begin{split}
   \phi_{\lambda_0} u_{\lambda_1}=& \big( \phi_{\lambda_0} u_{\lambda_1} - C_{\lesa \mu} \phi_{\lambda_0}  C_{\lesa \mu} u_{\lambda_1}\big) + C_{\gg \mu}\big( C_{\lesa \mu} \phi_{\lambda_0} C_{\lesa \mu } v_{\lambda_2} \big) \\
&{}\quad + C_{\lesa \mu} \big( C_{\lesa \mu} \phi_{\lambda_0} C_{\lesa \mu} u_{\lambda_1} \big).
 \end{split}
\end{equation}
For the first term in \eref{eqn:thm div-prob:dual decomp}, at least one of $\phi_{\lambda_0}$ or $u_{\lambda_1}$ must have modulation $\gg \mu$. Hence Theorem \ref{thm:high-mod} and Lemma \ref{lem:S properties} gives
    \begin{align*}
      &\Big| \int_\RR \lr{\big( \phi_{\lambda_0} u_{\lambda_1} - C_{\lesa \mu} \phi_{\lambda_0}  C_{\lesa \mu} u_{\lambda_1}\big), \Box v_{\lambda_2}}_{L^2_x} dt \Big|\\
            \lesa{}& \lambda_2 \big\| P_{\lambda_2} \big( \phi_{\lambda_0} u_{\lambda_1} - C_{\lesa \mu} \phi_{\lambda_0}  C_{\lesa \mu} u_{\lambda_1}\big)\big\|_{S_w} \| v_{\lambda_2} \|_S \\
            \lesa{}& \mu^{\frac{n}{2}} \Big( \frac{\min\{\lambda_0, \lambda_1\}}{\mu}\Big)^\frac{1}{2} \lambda_2 \| \phi_{\lambda_0} \|_{S_w} \|u_{\lambda_1} \|_S \| v_{\lambda_2} \|_S \\
            \lesa{}& \Big( \frac{\lambda_1 \lambda_2}{\lambda_0} \Big)^\frac{n}{2} \lambda_0  \| \phi_{\lambda_0} \|_{S_w} \|u_{\lambda_1} \|_S \| v_{\lambda_2} \|_S.
    \end{align*}
On the other hand, to bound the second term in \eref{eqn:thm div-prob:dual decomp}, we note that Lemma \ref{lem:resonance bound full cone} gives the identity
     $$C_{\gg \mu}P_{\lambda_2}\big( C_{\lesa \mu} \phi_{\lambda_0} C_{\lesa \mu} u_{\lambda_1} \big) = C_{\approx \lambda_{max}}P_{\lambda_2}\big( C_{\lesa \mu} \phi_{\lambda_0} C_{\lesa \mu} u_{\lambda_1}\big) $$
and hence Theorem \ref{thm:bilinear L2 bound} together with Lemma \ref{lem:S properties} gives
     \begin{align*}
      &\Big| \int_\RR \lr{C_{\gg \mu}P_{\lambda_0}\big( C_{\lesa \mu} \phi_{\lambda_0} C_{\lesa \mu} u_{\lambda_1} \big), \Box v_{\lambda_2}}_{L^2_x} dt \Big|\\
            \lesa{}& \lambda_2 \lambda_{max}^{\frac{1}{2}} \big\| C_{\approx \lambda_{max}}P_{\lambda_2}\big( C_{\lesa \mu} \phi_{\lambda_0} C_{\lesa \mu} u_{\lambda_1}\big)\big\|_{S_w} \| v_{\lambda_2} \|_S \\
            \lesa{}& \mu^{\frac{n-1}{2}} \Big( \frac{\min\{\lambda_0, \lambda_1\}}{\mu} \Big)^\epsilon \lambda_2 \lambda_{\max}^\frac{1}{2} \| \phi_{\lambda_0} \|_{S_w} \|u_{\lambda_1} \|_S \| v_{\lambda_2} \|_S \\
            \lesa{}& \Big( \frac{\lambda_1 \lambda_2}{\lambda_0} \Big)^\frac{n}{2} \lambda_0  \| \phi_{\lambda_0} \|_{S_w} \|u_{\lambda_1} \|_S \| v_{\lambda_2} \|_S.
    \end{align*}
Finally, to bound the last term in \eref{eqn:thm div-prob:dual decomp}, we apply the close cone bound in Lemma \ref{lem:tri-est} and conclude that
    \begin{align*}
      &\Big| \int_\RR \lr{C_{\gg \mu}P_{\lambda_0}\big( C_{\lesa \mu} \phi_{\lambda_0} C_{\lesa \mu} u_{\lambda_1} \big), \Box v_{\lambda_2}}_{L^2_x} dt \Big| \\
\lesa{}& \mu^{\frac{n-1}{2}} \lambda_2 \Big( \frac{\min\{\lambda_0, \lambda_1\}}{\mu}\Big)^{\frac{1}{2} + \epsilon} \Big( \frac{\lambda_1}{\mu}\Big)^\epsilon \| \phi_{\lambda_0} \|_{S_w} \| u_{\lambda_1} \|_S \| v_{\lambda_2}\|_S \\
      \lesa{}& \Big( \frac{\lambda_1 \lambda_2}{\lambda_0} \Big)^\frac{n}{2} \lambda_0 \| \phi_{\lambda_0} \|_{S_w}  \| u_{\lambda_1} \|_S \| v_{\lambda_2} \|_S.
    \end{align*}
Therefore we obtain \eref{eqn:thm div-prob:dual}.
\end{proof}

\section{The spaces $U^p$ and $V^p$}\label{sec:Up and Vp}
In this section we briefly review some of the fundamental properties of the  spaces $U^p$ and $V^p$, in particular we discuss embedding properties, almost orthogonality principles, and the dual pairing. The material we present in this section can essentially be found in \cite{Hadac2009, Koch2005,Koch2016} and thus we shall be somewhat brief but self-contained (up to the proof of Theorem \ref{thm:vu-emb}).
Using the notation introduced in Subsection \ref{subsec:upvp}, a function $u:\RR \rightarrow L^2(\RR^n)$ is a \emph{step function with partition $\tau\in \mb{P}$} if for each $I \in \mc{I}_\tau$ there exists  $f_I \in L^2(\RR^n)$  such that
    $$ u(t) =  \sum_{I \in \mc{I}_\tau} \ind_{I}(t) f_I.$$
By definition, all step functions vanish for sufficiently negative $t$. Define $\mathfrak{S}$ to be the collection of all step functions with partitions $\tau\in\mb{P}$, and let $\mathfrak{S}_0$ denote those elements of $\mathfrak{S}$ with compact support. In other words, step functions in $\mathfrak{S}_0$ vanish on the final interval $[t_N, \infty)$.

The normalisation condition at $t = - \infty$ implies that $\mathfrak{S} \subset V^p$. However step functions are \emph{not} dense in $V^p$,
 this follows by considering an example of Young \cite{Young1936} that shows that $V^p \not \subset U^p$ \cite[Theorem B.22]{Koch2016}. On the other hand the set of step functions, $\mathfrak{S}$, is dense in $U^p$, this follows by noting that $U^p$ atoms are step functions, and if $u = \sum c_j u_j$ is an atomic decomposition of $u \in U^p$, then letting $\phi_N = \sum_{j\les N } c_j u_j$ we get a sequence of step functions such that
    $$ \| u - \phi_N \|_{U^p} \les \sum_{j\g N} |c_j|  \rightarrow 0$$
as $N \to \infty$.

Recall that
     $$ |v|_{V^p} = \sup_{(t_j)_{j=1}^{N} \in \mb{P}} \Big( \sum_{j=1}^{N-1} \| v(t_{j+1}) - v(t_j) \|_{L^2}^p \Big)^{\frac{1}{p}}.$$
A computation shows that for any $v:\RR \to L^2$ and $t_0 \in \RR$ we have
        $$ 2^{-1}(\| v(t_0)\|_{L^2}^p + |v|_{V^p}^p)^{\frac{1}{p}}\les \| v\|_{V^p} \les 2(\| v(t_0)\|_{L^2}^p + |v|_{V^p}^p)^{\frac{1}{p}}.$$
In particular, if $v \in V^p$, then letting $t_0\rightarrow -\infty$, we see that $|\cdot|_{V^p}$ and $\|\cdot \|_{V^p}$ are equivalent norms on $V^p$.
The definition of the spaces $U^p$ and $V^p$ implies that they are both Banach spaces and convergence with respect to $\| \cdot \|_{U^p}$ or $\| \cdot \|_{V^p}$ implies uniform convergence (i.e. in $\| \cdot\|_{L^\infty_t L^2_x}$).

\subsection{Embedding properties}\label{subsec:funda}
For $1 \les  q < r < \infty$ we have the continuous  embeddings  $U^q \subset V^q \subset V^r $. An example due to Young \cite{Young1936} (see \cite[Lemma 4.15]{Koch2014}) shows that $V^q \not \subset U^q$. On the other hand, for $p<q$ we do have $V^p \subset U^q$.

\begin{theorem}\label{thm:vu-emb}
Let $1\les p < q < \infty$ and $v \in V^p$. There exists a decomposition $v = \sum_{j=1}^\infty v_j$ such that $v_j \in U^q$ with $ \| v_j \|_{U^q} \lesa 2^{j( \frac{p}{q}-1)} \|v\|_{V^p}$. In particular, the embedding $V^p \subset U^q$ holds.
\end{theorem}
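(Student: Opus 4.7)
\emph{Overview.} After rescaling we may assume $\|v\|_{V^p} = 1$. The plan is a greedy stopping-time construction at dyadic scales: build a sequence of nested partitions $\tau_0 \subset \tau_1 \subset \tau_2 \subset \dots$ and associated step-function approximations $v^{(j)}$ of $v$ with $L^\infty_t L^2_x$-precision $\lesa 2^{-j}$, then set $v_j := v^{(j)} - v^{(j-1)}$ (with $v^{(0)} := 0$, $\tau_0 := \emptyset$) to obtain the desired telescoping decomposition.

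\emph{Construction of $\tau_j$.} Given $\tau_{j-1}$, I would refine each interval $[a,b) \in \mc{I}_{\tau_{j-1}}$ (treating the leftmost region as $(-\infty, t^{j-1}_1)$ with $a = -\infty$ and $v(a) := 0$) by iterating the greedy rule
$$ t_\ast := \inf\bigl\{\, t \in (a,b) : \| v(t) - v(a) \|_{L^2} > 2^{-j} \,\bigr\}, $$
adding $t_\ast$ to $\tau_j$ whenever it is finite and restarting from $t_\ast$. Right-continuity of $v$ and the infimum definition yield the jump bound $\|v(t_\ast) - v(a)\|_{L^2} \g 2^{-j}$ at each new stopping time; combined with
$$ \sum_i \| v(t^j_{i+1}) - v(t^j_{i}) \|_{L^2}^p \les |v|_{V^p}^p \les 1, $$
this both rules out accumulation at any finite time (which would produce infinite $p$-variation there) and gives the cardinality bound $|\tau_j| \lesa 2^{jp}$. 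Setting $v^{(j)}(t) := \sum_i \ind_{[t^j_i,\, t^j_{i+1})}(t)\, v(t^j_i)$ the same stopping rule gives $\| v - v^{(j)} \|_{L^\infty_t L^2_x} \les 2^{-j}$.

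\emph{Atom estimate and conclusion.} Because $\tau_{j-1} \subset \tau_j$, the difference $v_j$ is a step function on $\tau_j$. On each interval $I = [t^j_i, t^j_{i+1}) \in \mc{I}_{\tau_j}$ it takes the value $v(t^j_i) - v(\tilde t)$, where $\tilde t$ is the left endpoint of the $\tau_{j-1}$-interval containing $t^j_i$ (with the convention $v(\tilde t) = 0$ when $t^j_i$ lies in $(-\infty, t^{j-1}_1)$). In either case the $\tau_{j-1}$-stopping rule, together with the general bound $\|v(t)\|_{L^2}\les |v|_{V^p}$ obtained by letting the left endpoint of a two-point partition tend to $-\infty$, yields $\| v_j|_I \|_{L^2} \lesa 2^{-j}$. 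Combining with $|\tau_j|\lesa 2^{jp}$ gives
$$ \Big( \sum_{I \in \mc{I}_{\tau_j}} \| v_j|_I \|_{L^2}^q \Big)^{1/q} \lesa 2^{-j} \cdot 2^{jp/q} = 2^{j(p/q - 1)}, $$
so $v_j$ is, up to a scalar factor of size $\lesa 2^{j(p/q-1)}$, a single $U^q$-atom, and hence $\| v_j \|_{U^q} \lesa 2^{j(p/q-1)} \|v\|_{V^p}$. Since $p<q$, the series $\sum_j 2^{j(p/q-1)}$ converges geometrically, so $\sum_j v_j$ converges in the Banach space $U^q$; the continuous embedding $U^q \hookrightarrow L^\infty_t L^2_x$ then forces the $U^q$-limit to coincide with the uniform limit $v$, yielding $v \in U^q$ with $\|v\|_{U^q} \lesa \|v\|_{V^p}$. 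The main technical obstacle is the careful measure-theoretic setup of the stopping times — specifically, using right-continuity and the existence of one-sided limits of $V^p$-functions to secure both the lower jump bound $\|v(t_\ast) - v(a)\|_{L^2} \g 2^{-j}$ and the absence of accumulation at any finite time.
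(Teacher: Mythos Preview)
Your proposal is correct and is precisely the standard greedy stopping-time argument that the paper defers to by citing \cite{Koch2005,Hadac2009,Koch2016}; the paper gives no independent proof. One small point worth tightening: the cardinality bound $|\tau_j|\lesa 2^{jp}$ does not follow from a jump bound at \emph{every} consecutive pair in $\tau_j$ (pairs ending at an inherited $\tau_{j-1}$-point need not satisfy it), but from the fact that each \emph{new} stopping time has incoming jump $\g 2^{-j}$, so the number of new points at level $j$ is $\les 2^{jp}$ and induction on $j$ gives the claim.
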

\begin{proof}
The proof can be found in \cite[pp. 255--256]{Koch2005}, \cite[p. 923]{Hadac2009} or \cite[Proof of Theorem B.18]{Koch2016}.
\end{proof}

\begin{remark}\label{rmk:u-vs-v}
To gain some intuition into the $U^p$ and $V^p$ spaces, we note the following properties:
\begin{enumerate}
	\item ($C^\infty_0 \subset U^p, V^p$) Clearly we have $C^\infty_0 \subset V^1$. Consequently an application of Theorem \ref{thm:vu-emb}  implies that for $p>1$ we have $C^\infty_0 \subset U^p$. More generally, if $\partial_t u \in L^1_t L^2_x$ and $\lim_{t\to -\infty} u(t)=0$, then $u\in V^1\subset U^p$ and
\begin{equation}\label{eq:w11}
\|u\|_{U^p}\lesa\|u\|_{V^1}\lesa \|\partial_t u\|_{ L^1_t L^2_x}
\end{equation}
by Theorem \ref{thm:vu-emb}.
	\item (Approximation of $C^\infty_0$ functions in $V^p$, $p>1$) The example of Young shows that $\mathfrak{S}$ is not dense in $V^p$. On the other hand, $\mathfrak{S}$ is dense in the closure of $V^p \cap C^\infty_0$ for $p>1$. This follows by observing that given $\tau \in \mb{P}$ and defining $u_\tau = \sum_{j=1}^N \ind_{[t_j, t_{j+1})} u(t_j)$ (with $t_{N+1} = \infty$) we have
		$$ |u-u_\tau|_{V^p}^p \lesa \Big(\sup_j \int_{t_j}^{t_{j+1}} \| \p_t u \|_{L^2}\Big)^{p-1} \int_\RR \| \p_t u \|_{L^2_x} dt. $$
	
	\item (Counterexample for $p=1$) If we suppose that $u(t) = \rho(t) f $ with $f\in L^2$ and $\rho$ monotone on an interval $(a,b)$, then $u$ cannot be approximated by step functions using the $V^1$ norm. As a consequence $u \not \in U^1$. In particular, if
			$$ \rho(t) = \begin{cases} t \qquad & t\in [0,1) \\
								0  & t \not \in [0,1)\end{cases}$$
then $u \in U^p$ but \emph{not} in $U^1$. In fact, a similar example shows that $C^\infty_0 \not \subset U^1$.	
\end{enumerate}
\end{remark}

\subsection{Almost orthogonality}\label{subsec:ao}
The definition of the spaces $U^p$ and $V^p$ implies that they satisfy a one sided version of the standard almost orthogonality property.

\begin{proposition}[Almost orthogonality in $U^p$ and $V^p$] \label{prop:orthog}
Let $M_1, M_2 \in [0, \infty]$. For $k\in \NN$, let $T_k:L^2(\RR^n)\to L^2(\RR^n)$ be a linear and bounded operator (acting spatially) such that
\[ M_1 \| f \|_{L^2} \les	\Big(\sum_{k \in \NN} \|T_k f\|_{L^2}^2\Big)^{\frac12}\les M_2 \|f\|_{L^2} \qquad  \text{ for all }f\in L^2.\]
If $1\leq p\leq 2$, then for all $u \in U^p$ we have the bound
        $$ \Big( \sum_{k\in \NN} \| T_k u \|_{U^p}^2 \Big)^\frac{1}{2} \les M_2 \| u \|_{U^p}.$$
On the other hand, if $p\geq 2$, then for all $v \in V^p$ we have the bound
        $$ \Big\|\sum_{k \in \NN} T_kv\Big\|_{V^p}\les M_1 \Big( \sum_{k\in \NN} \| T_k v \|_{V^p}^2 \Big)^\frac{1}{2}.
$$
\end{proposition}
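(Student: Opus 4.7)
The plan is to treat the two cases in parallel, reducing each to the hypothesised $L^2$-frame bound for $T_k$ applied to "elementary" pieces, then swapping the order of summation by Minkowski's inequality. The exchange is permitted precisely by the stated range of $p$: for $U^p$ one exploits $p\leq 2$ to get $2/p\geq 1$ (atomic decomposition side), while for $V^p$ one exploits $p\geq 2$ to get $p/2\geq 1$ (variation-sum side).

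For the $U^p$ bound I would first observe that $\phi \mapsto (\sum_k\|T_k\phi\|_{U^p}^2)^{1/2}$ is subadditive, being a norm on the image sequence. Writing $u=\sum_j c_j a_j$ as a sum of $U^p$-atoms with $\sum_j |c_j|$ arbitrarily close to $\|u\|_{U^p}$, it therefore suffices to prove the bound for a single atom. For an atom $a(t)=\sum_{I\in\mc{I}_\tau}\ind_I(t)f_I$ with $\sum_I\|f_I\|_{L^2}^p=1$, the spatial action of $T_k$ gives that $T_k a(t)=\sum_I \ind_I(t)\,T_k f_I$ is again a step function adapted to $\tau$, so $\|T_k a\|_{U^p}^p \leq \sum_I \|T_k f_I\|_{L^2}^p$. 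Because $2/p\geq 1$, Minkowski's inequality on $\ell^{2/p}$ then yields
\[
\Bigl(\sum_k\Bigl(\sum_I\|T_k f_I\|^p\Bigr)^{2/p}\Bigr)^{p/2}\leq \sum_I\Bigl(\sum_k\|T_k f_I\|^2\Bigr)^{p/2}\leq M_2^p\sum_I\|f_I\|^p = M_2^p,
\]
where the second inequality invokes the upper frame bound on each $f_I$. Taking $p$-th roots closes the atomic estimate, and the atomic decomposition closes the full bound.

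For the $V^p$ bound I would argue directly from the variation seminorm. Fix a partition $(t_j)_{j=1}^N$ and write $\Delta_j = v(t_{j+1})-v(t_j)$. The key pointwise input is the $L^2$ inequality $\|\sum_k T_k\Delta_j\|_{L^2}\leq M_1(\sum_k\|T_k\Delta_j\|_{L^2}^2)^{1/2}$, which is the manifestation of the hypothesis relevant here (and reduces to an identity with constant $1$ in the main applications, where the $T_k$ are projections onto a disjoint cover and $T_l^*T_k=0$ for $k\neq l$). Raising to the $p$-th power, summing in $j$, and applying Minkowski's inequality on $\ell^{p/2}$ to exchange the $j$ and $k$ sums gives
\[
\sum_j\Bigl\|\sum_k T_k\Delta_j\Bigr\|_{L^2}^p \leq M_1^p\Bigl(\sum_k\Bigl(\sum_j\|T_k\Delta_j\|_{L^2}^p\Bigr)^{2/p}\Bigr)^{p/2}\leq M_1^p\Bigl(\sum_k \|T_k v\|_{V^p}^2\Bigr)^{p/2},
\]
where the last step uses the definition $\sum_j\|T_k\Delta_j\|^p\leq \|T_k v\|_{V^p}^p$. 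Taking $p$-th roots and the supremum over partitions gives the variation-seminorm part, and the $L^\infty_t L^2_x$ component of the $V^p$ norm is obtained by applying the same pointwise $L^2$ inequality in $t$.

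The main obstacle is the pointwise step in the $V^p$ argument: while the frame hypothesis directly controls $(\sum_k\|T_k f\|^2)^{1/2}$ by $\|f\|$, controlling $\|\sum_k T_k f\|$ by $(\sum_k\|T_k f\|^2)^{1/2}$ requires some orthogonality of the $T_k$. In the applications needed for Lemma \ref{lem:S properties}(iii), namely finitely overlapping spatial cube or cap Fourier projections, this orthogonality is automatic; outside this setting the inequality must be checked using Cotlar--Stein or a direct computation with $T_l^*T_k$.
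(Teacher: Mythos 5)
Your proof matches the paper's in both directions. For $U^p$: reduce to a single atom via subadditivity of $\phi\mapsto(\sum_k\|T_k\phi\|_{U^p}^2)^{1/2}$, note that $T_k a$ is a step function on the same partition so $\|T_k a\|_{U^p}\les(\sum_I\|T_kf_I\|_{L^2}^p)^{1/p}$, then swap the $k$ and $I$ sums by Minkowski on $\ell^{2/p}$ (this is where $p\les 2$ enters). For $V^p$: manipulate the variation seminorm directly, apply a pointwise $L^2$ estimate to each increment $\Delta_j$, and swap $j$ and $k$ by Minkowski on $\ell^{p/2}$ (this is where $p\g 2$ enters); the $L^\infty_tL^2_x$ part is automatic since $\|v\|_{V^p}\les 2|v|_{V^p}$ for normalised $v$.

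The caveat you raise at the end is sharp and worth recording. The pointwise step $\|\sum_k T_k\Delta_j\|_{L^2}\les M_1\bigl(\sum_k\|T_k\Delta_j\|_{L^2}^2\bigr)^{1/2}$ is precisely the second inequality in the paper's displayed chain for the $V^p$ case, and it is \emph{not} a formal consequence of the stated lower frame bound $M_1\|f\|_{L^2}\les(\sum_k\|T_kf\|_{L^2}^2)^{1/2}$ for arbitrary bounded $T_k$: take $T_k = k^{-1}\,\mathrm{Id}$ for $1\les k\les N$ and $T_k=0$ otherwise, so that $M_1=M_2=(\sum_{k\les N}k^{-2})^{1/2}$ stays bounded while $\|\sum_kT_kg\|_{L^2}=(\sum_{k\les N}k^{-1})\|g\|_{L^2}$ grows like $\log N$. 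So the $V^p$ conclusion implicitly uses, on top of the frame inequality, exactly the near-orthogonality $T_k^*T_l\approx 0$ for $k\ne l$ that you point to, which holds for the finitely overlapping cube and cap cutoffs $P_q$, $R_\kappa$ in Lemma~\ref{lem:S properties}\eref{itm:lem S prop:sq sum}. As you also note, only the $U^2$ direction of the proposition is actually invoked in the paper.
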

\begin{proof}
We start with the $U^p$ bound. It is enough to consider the case where $u = \sum_{1\les m\les N} \ind_{[t_m, t_{m+1})}(t) f_k$ is a $U^p$-atom. Then by definition of the $U^p$ norm, together with the assumption $1\les p\les 2$, we have
    \begin{align*}
       \Big( \sum_{k\in \NN} \| T_k u \|_{U^p}^2 \Big)^\frac{1}{2} &\les \Big( \sum_{k\in \NN} \Big( \sum_{1\les m\les N} \|T_kf_m \|_{L^2}^p \Big)^\frac{2}{p} \Big)^\frac{1}{2}\\ &\les  \Big(\sum_{1\les m\les N}  \Big( \sum_{k\in \NN} \|T_kf_m\|^2 \Big)^\frac{p}{2} \Big)^\frac{1}{p}\les M_2,
    \end{align*}
as required.

Concerning the $V^p$ bound, we let $\tau=(t_j)_{j=1}^N\in \mathbf{P}$ be any partition. We compute
\begin{align*}
&\Big( \sum_{j=1}^{N-1}\Big\|\sum_{k \in \NN}T_kv(t_{j+1}) -\sum_{k \in \NN}T_kv(t_{j})\Big\|_{L^2}^p \Big)^\frac{1}{p}\\
\les{}& \Big(\sum_{j=1}^{N-1}\Big\|\sum_{k \in \NN}T_k(v(t_{j+1}) -v(t_{j}))\Big\|_{L^2}^p \Big)^\frac{1}{p} \\
 \les{}& M_1 \Big(\sum_{j=1}^{N-1}\Big(\sum_{k \in \NN}\Big\|  T_k(v(t_{j+1}) -v(t_{j}))\Big\|_{L^2}^2 \Big)^\frac{p}{2} \Big)^{\frac1p}\\
\les{}& M_1 \Big(\sum_{k \in \NN}\big\|  T_kv\big\|_{V^p}^2 \Big)^\frac{1}{2},
\end{align*}
as required.
\end{proof}

\subsection{The dual pairing}\label{subsec:dual}
Due to the atomic definition of $U^p$, it can be very difficult to estimate $\| u \|_{U^p}$ for $u\in U^p$. For instance, even the question of estimating the $U^p$ norm of a step function is a nontrivial problem. However, there is a duality argument that can reduce this problem to estimating a certain bilinear pairing between elements of $U^p$ and $V^p$. In the following we give two possible versions of this pairing, a discrete version and a continuous version. More precisely,  given a step function $w \in  \mathfrak{S}$ with partition $(t_j)_{j=1}^{N} \in \mb{P}$, and a function $u: \RR \to L^2$, we define the dual pairing
            $$ B(w, u) =  \lr{w(t_1), u(t_1)}_{L^2} + \sum_{j=2}^N \lr{w(t_j) - w(t_{j-1}), u(t_j)}_{L^2}.$$
The bilinear pairing $B$ is well defined, and sesquilinear in $w\in \mathfrak{S}$ and $u$. The linearity in $u$ is immediate, while the remaining properties follow by observing that if $w \in \mathfrak{S}$ is a step function with partition $\tau=(t_j) \in \mb{P}$, that is \emph{also} a step function with respect to another partition $\tau'=(t'_j) \in \mb{P}$, then a computation using the fact that $w(t) =0$ for $t<\max\{t_1, t_1'\}$ gives
        \begin{align*} \lr{w(t_1), u(t_1)}_{L^2} + &\sum_{j=2}^N \lr{w(t_j) - w(t_{j-1}), u(t_j)}_{L^2} \\
        &= \lr{w(t'_1), u(t'_1)}_{L^2} + \sum_{k=2}^{N'} \lr{w(t_j') - w(t_{j-1}'), u(t_j')}_{L^2}.
        \end{align*}
It is not so difficult to show that if $w \in \mf{S}$ and $u \in U^p$ then \[|B(w, u)| \les |v|_{V^q} \| u \|_{U^p},\] we give the details of this computation in Theorem \ref{thm:dual pairing} below. Thus $B(w,u)$ gives a way to pair (step) functions in $V^q$ with $U^p$ functions.
Alternative pairings are also possible, for instance we can also use the continuous pairing
    \begin{equation}\label{eqn:cts pairing} \int_\RR \lr{\p_t \phi, u}_{L^2} dt \end{equation}
for $\p_t \phi \in L^1_t L^2_x$ and $u \in U^p$. Clearly \eqref{eqn:cts pairing} and $B$ are closely related, since if $w\in \mf{S}$ and $u \in C^1$ we have
    $$B(w, u) =  \int_\RR \lr{v, \p_t u} dt$$
where the step function \[v(t) =  \ind_{(-\infty, t_1)}(t) w(t_N) - \sum_{j=1}^{N-1} \ind_{[t_j, t_{j+1})}(t) [w(t_{N}) - w(t_{j-1})].\] If we had $v \in C^1$, then integrating by parts would give \eqref{eqn:cts pairing}. More general pairings are also possible, this relates to the more general \emph{Stieltjes} integral, and using a limiting argument, the definition of $B(w,u)$ can be extended from $w\in \mf{S}$ to elements $w\in V^p$, see \cite{Hadac2009}. However for our purposes, it suffices to work with the pairings $B$ and \eqref{eqn:cts pairing} as defined above. The pairings are useful due to the following.

\begin{theorem}[Dual pairing for $U^p$]\label{thm:dual pairing}
Let $1< p, q < \infty$, $\frac{1}{p} + \frac{1}{q} = 1$. If  $u \in U^p$ and $\p_t \phi \in L^1_t L^2_x$ we have
         $$ \Big| \int_\RR \lr{\p_t \phi, u}_{L^2} dt \Big| \les |\phi|_{V^q} \| u \|_{U^p}$$
and
        $$  \| u \|_{U^p} =  \sup_{ \substack{w \in \mathfrak{S} \\ | w |_{V^q} \les 1} } |B(w, u)| = \sup_{\substack{\p_t \phi \in C^\infty_0 \\ |\phi|_{V^q} \les 1}} \Big| \int_\RR \lr{\p_t \phi, u}_{L^2} dt\Big|. $$

\end{theorem}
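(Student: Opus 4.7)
The proof proceeds in three stages: the elementary upper bounds on the pairings, the reverse inequality via duality, and the equivalence of the two suprema. For the upper bound on the continuous pairing, bilinearity and the atomic definition of $U^p$ reduce matters to a single atom $u=\sum_{k=1}^N\ind_{[s_k,s_{k+1})}f_k$ with $s_{N+1}=\infty$ and $\sum_k\|f_k\|_{L^2}^p=1$. Since $\p_t\phi\in L^1_tL^2_x$, the limit $\phi(+\infty)$ exists in $L^2$, and integrating on each constant interval yields
$$
\int_\RR\lr{\p_t\phi,u}_{L^2}dt=\sum_{k=1}^N\lr{\phi(s_{k+1})-\phi(s_k),f_k}_{L^2}.
$$
H\"older's inequality with exponents $(p,q)$ bounds this by $\bigl(\sum_k\|\phi(s_{k+1})-\phi(s_k)\|^q\bigr)^{1/q}$, which in turn is at most $|\phi|_{V^q}$ upon considering the partition $(s_1,\dots,s_N,T)$ and letting $T\to\infty$. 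The discrete analogue $|B(w,u)|\le|w|_{V^q}$ follows from the telescoping identity
$$
B(w,u)=\sum_{k=1}^N\lr{w(s_{k+1}-)-w(s_k-),f_k}_{L^2},
$$
derived by evaluating $B$ on a common refinement of the partitions of $w$ and $u$ and collapsing the sum on each $[s_k,s_{k+1})$. Sampling $w$ just to the left of each $s_k$ and once at some $T>s_N$ produces a partition realizing this $q$-sum as a lower bound for $|w|_{V^q}^q$. This gives the $\le$ direction in both characterization identities.

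For the reverse inequality $\|u\|_{U^p}\le\sup_{|w|_{V^q}\le 1}|B(w,u)|$, which is the heart of the theorem, the plan is to use a Hahn--Banach duality argument. Density of $\mf{S}$ in $U^p$ reduces to step functions $u$, so I fix $u=\sum_{k=1}^N\ind_{[s_k,s_{k+1})}f_k$ and define $L(\alpha u)=\alpha\|u\|_{U^p}$ on $\CC\cdot u$, then extend by Hahn--Banach to a linear functional on $\mf{S}$ satisfying $|L(v)|\le\|v\|_{U^p}$. The crucial step is to realize $L$ in the form $v\mapsto B(w,v)$ for some $w\in\mf{S}$ with $|w|_{V^q}\le 1$. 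Restricting $L$ to the finite-dimensional subspace of step functions with partition $(s_1,\dots,s_N)$, Riesz representation produces $h_1,\dots,h_N\in L^2$ with $L(v)=\sum_k\lr{h_k,v_k}$ whenever $v=\sum_k\ind_{[s_k,s_{k+1})}v_k$; setting $g_k=h_1+\dots+h_k$ and $w=\sum_k\ind_{[s_k,s_{k+1})}g_k$ gives $B(w,u)=L(u)=\|u\|_{U^p}$. The principal obstacle is verifying $|w|_{V^q}\le 1$: testing $L$ against step functions whose values are aligned with the normalized directions $h_k/\|h_k\|$ and weighted by a H\"older-extremal sequence produces the $\ell^q$-bound $(\sum_k\|h_k\|^q)^{1/q}\le 1$; however, the $V^q$-norm of a general step function can exceed the $\ell^q$-norm of its jumps (already two coaligned jumps $e_1,e_1$ give $V^2$-norm $2$ rather than $\sqrt{2}$), so an additional refinement of the choice of $w$ (or an enlargement of the test class to the closure of step functions in $V^q$) is needed to ensure that no partition skipping some $s_k$ inflates the $q$-variation above $1$.

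Finally, the equivalence of the discrete and continuous suprema follows from mutual approximation. Convolving $w\in\mf{S}$ in time with a smooth bump of width $\epsilon$ yields $\phi_\epsilon$ with $\p_t\phi_\epsilon\in C^\infty_0$, $|\phi_\epsilon|_{V^q}\to|w|_{V^q}$ as $\epsilon\to 0^+$, and $\int\lr{\p_t\phi_\epsilon,u}dt\to B(w,u)$ by dominated convergence. Conversely, a smooth $\phi$ with $\p_t\phi\in C^\infty_0$ is approximated by samples $w_\delta\in\mf{S}$ on a fine enough partition, with $|w_\delta|_{V^q}\le|\phi|_{V^q}$ and the pairing $B(w_\delta,u)$ converging to $\int\lr{\p_t\phi,u}dt$. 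Both approximations are routine; the substantive technical content of the theorem is concentrated in the $V^q$-variation estimate for the Hahn--Banach-produced test function in the reverse-inequality step, where the task of converting a bounded linear functional into a step function with controlled $q$-variation demands a careful analysis of the interplay between $U^p$-atomic decompositions and $V^q$-variation on finite partitions.
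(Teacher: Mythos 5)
Your upper-bound arguments (both discrete and continuous) are correct and follow essentially the same route as the paper's. The equivalence of the two suprema via mollification/sampling is also fine, and the paper establishes the same chain of inequalities, though it does so by rewriting $\int_{-\infty}^T \lr{\p_t\phi,u}dt$ directly as $B(w,u)$ for an explicit $w\in\mf{S}$ built from the increments of $\phi$, then letting $T\to\infty$ using $\p_t\phi\in L^1_tL^2_x$.

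The genuine gap is exactly where you flag it, and you should not leave it unresolved: restricting the Hahn--Banach functional $L$ to step functions on a \emph{fixed} finite partition $(s_1,\dots,s_N)$ produces $h_1,\dots,h_N$ whose $\ell^q$-norm you can bound by $1$, but the $V^q$-norm of the corresponding step function $w$ is a supremum over \emph{all} partitions, including ones that skip some $s_k$, and as you correctly observe this supremum can be strictly larger than the $\ell^q$-sum of the jumps. The paper closes this by never fixing a partition in the Riesz step: since $\ind_{[t,\infty)}f\in U^p$ with $\|\ind_{[t,\infty)}f\|_{U^p}\le\|f\|_{L^2}$ for \emph{every} $t\in\RR$, one can apply Riesz to $f\mapsto L(\ind_{[t,\infty)}f)$ to obtain a globally defined $\psi:\RR\to L^2$, set $w(t)=\psi(t_j)$ on $[t_j,t_{j+1})$, and then dominate
$$|w|_{V^q}\le \sup_{(s_j)_{j=1}^{N'}\in\mb{P}}\Big(\sum_{j=1}^{N'-1}\|\psi(s_{j+1})-\psi(s_j)\|_{L^2}^q + \|\psi(s_{N'})\|_{L^2}^q\Big)^{1/q}.$$
For each \emph{arbitrary} partition $(s_j)$ on the right, one builds the explicit normalized $U^p$-atom
$$v(t)=\ind_{[s_{N'},\infty)}(t)\frac{\alpha\,\psi(s_{N'})}{\|\psi(s_{N'})\|_{L^2}^{2-q}}+\sum_{j=1}^{N'-1}\ind_{[s_j,s_{j+1})}(t)\frac{\alpha[\psi(s_j)-\psi(s_{j+1})]}{\|\psi(s_{j+1})-\psi(s_j)\|_{L^2}^{2-q}},$$
with $\alpha=\big(\|\psi(s_{N'})\|^q+\sum_j\|\psi(s_{j+1})-\psi(s_j)\|^q\big)^{1/q-1}$, and checks that $L(v)$ equals the $q$-sum on the right; since $\|v\|_{U^p}\le1$ and $\|L\|_{(U^p)^*}\le1$, this forces the whole supremum to be $\le1$. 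Thus the partition-skipping inflation is controlled uniformly because each such partition yields its own testing atom for $L$, not because any single $\ell^q$-bound on finitely many jumps suffices. Without this move from a finite partition to a globally defined $\psi$, the argument does not close.
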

\begin{proof}
We start by proving that for every $u \in U^p$ we have
	\begin{equation}\label{eqn:thm dual pairing:upper bounds}
		 \sup_{\substack{\p_t \phi \in L^1_t L^2_x \\ |\phi|_{V^q} \les 1}} \Big| \int_\RR \lr{\p_t \phi, u}_{L^2} dt\Big| \les \sup_{ \substack{w \in \mathfrak{S} \\ | w |_{V^q} \les 1} } |B(w, u)| \les \| u \|_{U^p}.
	\end{equation}
By definition of the $U^p$ norm, it is enough to consider the case where $u \in \mathfrak{S}$ is a $U^p$-atom. The second inequality in \eqref{eqn:thm dual pairing:upper bounds} follows by observing that if $w \in \mf{S}$ with partition $\tau= (t_j)_{j=1}^N \in \mb{P}$, and we let $ E = \{ t_j \in \tau \mid u(t_j) = u(t_{j+1})\}$ and $\tau^* = (t_k^*)_{k=1}^{N^*} = \tau \setminus E$ (i.e. we remove points from $\tau$ where the step function $u$ is constant) then  by definition of the bilinear pairing we have
	\begin{align*}
		|B(w, u)| &=\Big| \lr{ w(t_1), u(t_1) }_{L^2} + \sum_{j=2}^N \lr{w(t_j) - w(t_{j-1}), u(t_j)}_{L^2 }\Big|\\
				&= \Big|\lr{ w(t_1^*), u(t_1^*) }_{L^2} + \sum_{j=2}^{N^*} \lr{w(t_j^*) - w(t_{j-1}^*), u(t_j^*)}_{L^2 }\Big|\\
				&\les \Big( \|w(t_1^*)\|_{L^2}^q + \sum_{j=2}^{N^*} \| w(t_j^*) - w(t_{j-1}^*)\|_{L^2}^q \Big)^{\frac{1}{q}} \Big( \sum_{j=1}^{N^*}\| u(t_j^*)\|_{L^2}^p\Big)^\frac{1}{p} \les |w|_{V^q}.
	\end{align*}
where the last line follows by noting that $w(t) = 0$ for $t<t_1^*$ and $u$ is a $U^p$ atom together with the definition of the partition $\tau^*$. On the other hand, to prove the first inequality in \eqref{eqn:thm dual pairing:upper bounds}, we observe that if $u = \ind_{[s_{N'},\infty)}(t) u(s_{N'}) + \sum_{k=1}^{N'-1} \ind_{[s_k, s_{k+1})}(s) u(s_k)$ and $T>s_{N'}$, we have
	\begin{align*}
		\int_{-\infty}^T \lr{\p_t \phi, u}_{L^2} dt &= \lr{\phi(T) - \phi(s_{N'}), u(s_{N'})}_{L^2} + \sum_{j=1}^{N'} \lr{ \phi(s_{j+1}) - \phi(s_j), u(s_j)}_{L^2} \\
							&= 	\lr{w(s_1), u(t_1)}_{L^2} + \sum_{j=2}^{N'} \lr{w(s_{j}) - w(s_{j-1}), u(t_j)}_{L^2}
	\end{align*}
where we define $w\in \mf{S}$ as
	$$w(t) = \sum_{j=1}^{N'-1} \ind_{[s_j, s_{j+1})}(t) \big( \phi(s_{j+1}) - \phi(s_1)\big) + \ind_{[s_{N'}, \infty)}(t) \big( \phi(T) - \phi(s_1)\big).$$
Since $|w|_{V^q} \les |\phi|_{V^q}$ we conclude that
	$$ \Big| \int_\RR \lr{ \p_t \phi, u}_{L^2} dt \Big| \les  \int_T^\infty \| \p_t \phi(t) \|_{L^2} dt \| u \|_{L^\infty_t L^2_x} + |\phi|_{V^q} \sup_{ \substack{w \in \mf{S} \\ |w|_{V^q}\les 1}} |B(w,u)|. $$
Hence letting $T \to \infty$ and using the assumption $\p_t \phi \in L^1_t L^2_x$ the bound \eqref{eqn:thm dual pairing:upper bounds} follows.

It remains to show that for $u \in U^p$ we have the bound
		\begin{equation}\label{eqn:thm dual pairing:Up bound}
			\| u \|_{U^p} \les  \sup_{\substack{\p_t \phi \in L^1_t L^2_x \\ |\phi|_{V^q} \les 1}} \Big| \int_\RR \lr{\p_t \phi, u}_{L^2} dt\Big|.
		\end{equation}
Since the set of step functions $\mf{S}\subset U^p$ is dense, it suffices to consider the case $u = \ind_{[t_N, \infty)}(t) f_N + \sum_{j=1}^{N-1} \ind_{[t_j, t_{j+1})}(t) f_j\in \mf{S}$. An application of the Hahn-Banach Theorem implies that there exists $L \in (U^p)^*$ such that
        \begin{equation}\label{eqn:thm dual pairing:prop of L} \| u \|_{U^p} = L(u), \qquad \qquad \sup_{\|h \|_{U^p}\les 1} |L(h)| = 1.\end{equation}
Note that given $f \in L^2$ and fixed $t \in \RR$, we have $\ind_{[t, \infty)} f \in U^p$ and $\| \ind_{[t, \infty)} f \|_{U^p} \les \| f \|_{L^2}$. In particular, the map $f \mapsto L(\ind_{[t, \infty)}f)$ is a linear functional on $L^2$. Consequently, by the Riesz Representation Theorem, there exists a function $\psi: \RR \to L^2$ such that for every $t\in \RR$ and $f \in L^2$ we have
    $$ L\big( \ind_{[t, \infty)} f\big) =  \lr{ \psi(t), f}_{L^2}. $$
By construction, we see that
	\begin{align*}
		\| u \|_{U^p} = T(u) &= \sum_{j=1}^{N-1} L( \ind_{[t_j, t_{j+1})} f_j ) + L(\ind_{[t_N, \infty)} f_N) \\
					&= \sum_{j=1}^{N-1} \lr{\psi(t_j) - \psi(t_{j+1}), f_j}_{L^2} + \lr{\psi(t_N), f_N}.
	\end{align*}
Let $\rho \in C^\infty_0(-1, 1)$ with $\int_\RR \rho = 1$, and define
	$$ \phi_\epsilon(t) =  - \int_\RR \frac{1}{\epsilon} \rho\Big( \frac{t-s}{\epsilon} + 1\Big) w(t) dt $$
with
	$$ w(t) = \ind_{(-\infty, t_1)}(t) \psi(t_1) + \sum_{j=1}^{N-1} \ind_{[t_j, t_{j+1})}(t) \psi(t_j). $$
Then provided we choose $\epsilon>0$ sufficiently small (depending on $(t_j) \in \mb{P}$), we have $\phi_\epsilon(t_j) = \psi(t_j)$ for $j=1, ..., N$, and $\phi_\epsilon(t) = 0$ for $t \g t_N + 2 \epsilon$ and consequently
	\begin{align*}
		 \| u \|_{U^p} &= \sum_{j=1}^{N-1} \lr{\psi(t_j) - \psi(t_{j+1}), f_j}_{L^2} + \lr{\psi(t_N), f_N} \\
		 			   &= \int_\RR \lr{ \p_t \phi_\epsilon, u}_{L^2} dt .
	\end{align*}
Therefore, since $\p_t \phi_\epsilon \in C^\infty_0$ and $|\phi_\epsilon|_{V^q} \les |w|_{V^q}$, it only remains to show that $ |w|_{V^q} \les 1$. To this end, we start by observing that
	\begin{equation}\label{eqn:thm dual pairing:w bound} |w|_{V^q} \les \sup_{(s_j) \in \mb{P}} \Big( \sum_{j=1}^{N'-1} \| \psi(s_{j+1}) - \psi(s_j) \|_{L^2}^q + \| \psi(s_{N'})\|_{L^2}^q \Big)^\frac{1}{q}.
	\end{equation}
Fix $(s_j)_{j=1}^{N'} \in \mb{P}$ and define $v \in \mf{S}$ as
	$$ v(t) = \ind_{[s_{N'}, \infty)}(t) \frac{ \alpha  \psi(s_{N'})}{\| \psi(s_{N'})\|_{L^2}^{2-q}} +  \sum_{j=1}^{N'-1}
	            \ind_{[s_j, s_{j+1})}(t) \frac{\alpha [\psi(s_{j}) - \psi(s_{j+1})]}{ \| \psi(s_{j+1}) - \psi(s_j)\|_{L^2}^{2-q}}$$
with
	$$ \alpha = \Big( \| \psi(s_{N'}) \|_{L^2}^q + \sum_{1\les j \les N'-1} \| \psi(s_{j+1}) - \psi(s_j) \|_{L^2}^q\Big)^{\frac{1}{q}-1}.$$
Then $v$ is a $U^p$ atom, and by construction, we have
	\begin{align*} L(v) &= \lr{ \psi(s_{N'}), v(s_{N'})}_{L^2} + \sum_{j=1}^{N'-1} \lr{ \psi(s_j) - \psi(s_{j+1}), v(s_j)}_{L^2} \\
	&= \Big( \| \psi(s_{N'}) \|_{L^2}^q + \sum_{j=1}^{N'-1} \| \psi(s_{j+1}) - \psi(s_j)\|_{L^2}^q \Big)^\frac{1}{q}.
    \end{align*}
Therefore, from \eqref{eqn:thm dual pairing:prop of L} and \eqref{eqn:thm dual pairing:w bound} we conclude that
	$$ |w|_{V^q} \les \sup_{ \substack{ v \in \mf{S} \\ \| v \|_{U^p} \les 1}} |L(v)| \les 1$$
as required.
\end{proof}

\begin{remark}\label{rem:compact support}
It is possible to replace the conditions $w \in \mf{S}$ and $\p_t \phi \in C^\infty_0$ in Theorem \ref{thm:dual pairing} with the compactly supported functions $w \in \mf{S}_0$ and $\phi \in C^\infty_0$. More precisely, provided that $u(t) \to 0$ as $t \to -\infty$, we have
	\begin{equation}\label{eqn:rem on comp:dis} \sup_{\substack{ w \in \mf{S} \\ |w|_{V^q}\les 1}} |B(w,u)| \les 2 \sup_{\substack{ w \in \mf{S}_0 \\ |w|_{V^q}\les 1}} |B(w,u)|
	\end{equation}
and
	\begin{equation}\label{eqn:rem on comp:cts}
		 \sup_{\substack{\p_t \phi \in C^\infty_0 \\ |\phi|_{V^q} \les 1}} \Big| \int_\RR \lr{ \p_t \phi, u}_{L^2} dt\Big|
		 	\les 2 \sup_{\substack{\phi \in C^\infty_0 \\ |\phi|_{V^q} \les 1}} \Big| \int_\RR \lr{ \p_t \phi, u}_{L^2} dt\Big|.
	\end{equation}
The factor of 2 arises as we potentially add another jump in the step function $w$ at $t=+\infty$ by imposing the compact support condition. To prove the discrete bound \eqref{eqn:rem on comp:dis}, we note that if $w = \ind_{[t_1, t_2)} f_1 + \dots +\ind_{[t_N, \infty)} f_N$ and we take $w_T =  - \ind_{[T, t_1)} f_N + \ind_{[t_1, t_2)} (f_1 - f_N) + \dots + \ind_{[t_{N-1}, t_N)} (f_{N-1} - f_N)$ then for any $u:\RR \to L^2$ we have
	$$ B(w, u) = \lr{f_N, u(T)}_{L^2} + B(w_T, u) .$$
Since $w_T \in \mf{S}_0$, and $|w_T|_{V^q} \les 2 |w|_{V^q}$, we conclude that if $u(t) \to 0$ as $t \to -\infty$, then
	$$ \sup_{\substack{ w \in \mf{S} \\ |w|_{V^q}\les 1}} |B(w, u)| \les 2 \sup_{ \substack{w \in \mf{G}_0 \\ |w|_{V^q} \les 1}} |B(w, u)| $$
which implies  \eqref{eqn:rem on comp:dis}. To prove \eqref{eqn:rem on comp:cts}, we first take a map $\rho:\RR \to \RR$ such that $\int_\RR |\p_t \rho| dt \les 1$, $\rho(t) = 1$ for $t>1$, and $\rho(t) = 0$ for $t<-1$, and let $\rho_T(t) = \rho(t-T)$. Given $\p_t \phi \in C^\infty_0$, we take
		$$ \phi_T(t)  = \rho_T(t) ( \phi(t) - \phi_\infty) $$
where we take $\phi_{\pm\infty} = \lim_{t\to \pm \infty} \phi(t)$, note that $\phi(\pm t) = \phi_{\pm\infty}$ for $t>0$ sufficiently large, since  $\p_t \phi \in C^\infty_0$. Then $\phi_T \in C^\infty_0$, $\p_t  \phi_T = \p_t \phi $ for $t> T+1$, and for all $T<0$ sufficiently negative we have
	\begin{align*}
	|\phi_T|_{V^q} &\les | (1 - \rho_T)( \phi - \phi_{\infty}) |_{V^q} + | \phi - \phi_\infty |_{V^q} \\
	&\les | (1 - \rho_T) (\phi_{-\infty}-\phi_\infty) |_{V^q} + | \phi |_{V^q} \les \| \phi_{-\infty}- \phi_\infty\|_{L^2} + |\phi|_{V^q}\les 2 |\phi|_{V^q}
	\end{align*}
where we used the bound
	$$ \Big( \sum_j | \rho_T(t_{j+1}) - \rho_T(t_j)|^q \Big)^\frac{1}{q} \les \int_\RR |\p_t \rho| dt = 1.$$
Therefore, provided that $\p_t \phi \in C^\infty_0$ and $|\phi|_{V^q} \les 1$, we have
	$$ \Big| \int_\RR \lr{\p_t \phi, u} dt \Big| \les \int_{-\infty}^{T+1} \| \p_t \phi - \p_t \phi_T\|_{L^2} \| u(t) \|_{L^2} dt + 2 \sup_{\substack{ \psi \in C^\infty_0 \\ |\psi|_{V^q } \les 1}} \Big| \int_\RR \lr{\p_t \psi, u}_{L^2} dt \Big| .$$
Consequently, since $u(t) \to 0 $ as $t \to -\infty$, \eqref{eqn:rem on comp:cts} follows by letting $T\to -\infty$.
\end{remark}

\section{Two Characterisations of $U^p$}\label{sec:characterisation}

In this section we consider the problem of determining if a general function $u:\RR \to L^2$ belongs to $U^p$. If we apply the definition of $U^p$, this requires finding an atomic decomposition of $u$, which is in general a highly non-trivial problem. An alternative approach is suggested by Theorem \ref{thm:dual pairing}. More precisely, since the two norms defined by the dual pairings in Theorem \ref{thm:dual pairing} are well-defined for general functions $u:\RR \to L^2$, we can try to use the finiteness of these quantities to characterise $U^p$. Recent work of Koch-Tataru \cite{Koch2016} shows that this is possible by using the discrete pairing $B(w,u)$. In this section we adapt the argument used in \cite{Koch2016}, and show that it is also possible to characterise $U^p$ using the continuous pairing $\int_\RR \lr{\p_t \phi, u}_{L^2} dt$.

Following \cite{Koch2016}, let $u: \RR \to L^2$ and define the semi-norms
    $$ \| u \|_{\widetilde{U}^p_{dis}} = \sup_{ \substack{ v \in \mathfrak{S} \\ | v |_{V^p} \les 1}} |B(v, u)|\in [0,\infty]$$
and its continuous counterpart
    $$ \| u \|_{\widetilde{U}^p_{cts}} = \sup_{\substack{ \p_t \phi \in C^\infty_0 \\ |\phi|_{V^p} \les 1}} \Big| \int_\RR \lr{ \p_t \phi, u}_{L^2} dt \Big| \in [0, \infty] .$$
Note that both $\|\cdot\|_{\widetilde{U}^p_{dis}}$ and $\| \cdot \|_{\widetilde{U}^p_{cts}}$ are only norms after imposing the normalisation $u(t)\to 0$ as $t \to -\infty$.
Our goal is to show that both $\| \cdot \|_{\widetilde{U}^p_{dis}}$ and $\| \cdot \|_{\widetilde{U}^p_{cts}}$ can be used to characterise $U^p$. To make this claim more precise, let $\widetilde{U}^p_{dis}$ be the collection of all right continuous functions $u:\RR \to L^2$ satisfying the normalising condition $u(t) \to 0$ (in $L^2_x$) as $t \to -\infty$, and the bound $\| u \|_{\widetilde{U}^p_{dis}}<\infty$. Similarly, we take $\widetilde{U}^p_{cts}$ to be the collection of all right continuous functions such that $u(t) \to 0$ as $t \to -\infty$ and $\| u \|_{\widetilde{U}^p_{cts}}<\infty$.

\begin{theorem}[Characterisation of $U^p$]\label{thm:characteristion of Up}
Let $1<p<\infty$. Then $U^p = \widetilde{U}^p_{dis} = \widetilde{U}^p_{cts}$.
\end{theorem}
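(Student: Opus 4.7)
The forward inclusions $U^p \subseteq \widetilde{U}^p_{dis} \cap \widetilde{U}^p_{cts}$ with equality of norms are immediate from Theorem~\ref{thm:dual pairing}; the normalization $u(t) \to 0$ as $t \to -\infty$ is automatic for every $u \in U^p$, since atoms vanish on their initial rays and the $U^p$-norm dominates $\|\cdot\|_{L^\infty_t L^2_x}$. Writing $q$ for the conjugate exponent to $p$, the equivalence $\widetilde{U}^p_{dis} = \widetilde{U}^p_{cts}$ (as sets with equivalent norms) is a mutual-approximation argument: given $\phi \in C^\infty_0$ with $|\phi|_{V^q}\les 1$, discretize at scale $\epsilon$ to the step function $v_\epsilon(t) = \phi(s_{k+1})$ on $[s_k, s_{k+1})$ with $s_k = k\epsilon$; then $|v_\epsilon|_{V^q} \les |\phi|_{V^q}$ and right-continuity of $u$ yields $B(v_\epsilon, u) \to \int_\RR \lr{\p_t \phi, u}_{L^2} dt$ as $\epsilon \to 0$. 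Conversely, mollifying $v \in \mf{S}_0$ against an approximation of the identity produces $\phi_\epsilon \in C^\infty_0$ with $|\phi_\epsilon|_{V^q} \les |v|_{V^q}$ whose continuous pairing with $u$ tends to $B(v, u)$.

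The heart of the proof is the inclusion $\widetilde{U}^p_{dis} \subseteq U^p$, which I would establish via the stopping-time construction of Koch--Tataru \cite{Koch2016}. Let $u$ be right-continuous with $u(t) \to 0$ as $t \to -\infty$ and $\|u\|_{\widetilde{U}^p_{dis}} = M < \infty$; testing against $v = \ind_{[t, \infty)} g$ for $\|g\|_{L^2} = 1$ immediately yields $\|u\|_{L^\infty_t L^2_x} \les M$. For each $N \in \NN$ construct stopping times recursively by setting $t_0^N = -\infty$, $u(t_0^N) := 0$, and
\begin{equation*}
t_{j+1}^N = \inf\{t > t_j^N : \|u(t) - u(t_j^N)\|_{L^2} \g 2^{-N}\},
\end{equation*}
modifying the construction so that the $(N+1)$-partition contains every $t_j^N$, so that the partitions are nested. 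The step-function approximation $u_N = \sum_j \ind_{[t_j^N, t_{j+1}^N)} u(t_j^N)$ then satisfies $\|u - u_N\|_{L^\infty_t L^2_x} \les 2^{-N}$ by the stopping condition and right-continuity of $u$.

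The uniform bound $\|u_N\|_{U^p} \les M$ follows from Theorem~\ref{thm:dual pairing} applied to the step function $u_N \in \mf{S}$, combined with a projection argument: for any test $v \in \mf{S}$ with $|v|_{V^q}\les 1$ whose partition refines $\{t_j^N\}$, the coarsening $\tilde v$ defined by $\tilde v(t) = v(t_{k(j)})$ on $[t_j^N, t_{j+1}^N)$ (with $t_{k(j)}$ the largest partition point of $v$ strictly less than $t_{j+1}^N$) is a step function on the $\{t_j^N\}$-grid that satisfies $|\tilde v|_{V^q} \les |v|_{V^q}$ and $B(v, u_N) = B(\tilde v, u) \les M$.

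The main obstacle is upgrading the uniform convergence $u_N \to u$ to convergence in $U^p$: the estimate $\|u_N\|_{U^p} \les M$ alone does not force $u \in U^p$, as the example $V^p \not\subset U^p$ shows. Here the nested-partition structure is essential: $\Delta_N := u_{N+1} - u_N$ is a step function on the $(N+1)$-partition that vanishes at every $N$-stopping time and whose values have $L^2$-norm strictly less than $2^{-N}$ on each $N$-interval $[t_j^N, t_{j+1}^N)$. Exploiting this localization together with the $\widetilde{U}^p_{dis}$-bound applied to $u - u(t_j^N)$ restricted to each $N$-interval, I would build local atoms $a_{N, j}$ supported on individual $N$-intervals and obtain coefficients $c_{N,j}$ whose total sum is dominated by $M$ via a H\"older-type summation. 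The resulting atomic decomposition $u = \sum_{N, j} c_{N, j} a_{N, j}$ places $u$ in $U^p$ with $\|u\|_{U^p} \lesa M$, and the inclusion $\widetilde{U}^p_{cts} \subseteq U^p$ then follows from the established equivalence $\widetilde{U}^p_{cts} = \widetilde{U}^p_{dis}$.
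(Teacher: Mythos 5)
Your proposal is correct in its opening moves: the inclusion $U^p \subseteq \widetilde{U}^p_{dis} \cap \widetilde{U}^p_{cts}$ does follow from Theorem~\ref{thm:dual pairing}, and your argument that $\|u_N\|_{U^p} \les M$ via the coarsening $\tilde v$ is sound. However, the heart of your proof — the inclusion $\widetilde{U}^p_{dis} \subseteq U^p$ via nested stopping times — has a genuine gap that the final ``H\"older-type summation'' cannot close.

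The problem is that the stopping-time construction controls $\|\Delta_N\|_{L^\infty_t L^2_x} < 2^{-N}$, but this does \emph{not} give decaying control of $\|\Delta_N\|_{U^p}$, and the atomic coefficients you wish to sum over $N$ behave like $\|\Delta_N\|_{U^p}$, not like $\|\Delta_N\|_{L^\infty_t L^2_x}$. To see the obstruction concretely: $\Delta_N|_{I_j^N}$ is a step function on the $(N{+}1)$-partition with $K_{N,j}$ steps, each of $L^2$-size $< 2^{-N}$, so the natural coefficient is $c_{N,j} \les 2^{-N}K_{N,j}^{1/p}$. The number of $(N{+}1)$-stopping times in $I_j^N$ is controlled only through the $V^p$-variation (each new $(N{+}1)$-stopping time costs $\gtrsim 2^{-(N+1)p}$ of variation), giving $\sum_j K_{N,j} \lesa 2^{(N+1)p}M^p$ and $J_N \lesa 2^{Np}M^p$ intervals. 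H\"older then yields $\sum_j c_{N,j} \lesa 2^{-N} J_N^{1-1/p}(\sum_j K_{N,j})^{1/p} \approx 2^{N(p-1)}M^p$, which grows with $N$ for $p>1$, so $\sum_{N,j}c_{N,j}$ diverges. The local $\widetilde{U}^p_{dis}$-bounds you invoke cannot rescue this: to convert those into local $U^p$-atomic control one already needs the very equivalence being proved, making the argument circular. This is precisely the $V^p \not\subset U^p$ obstruction you flagged, and the stopping-time construction does not circumvent it.

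The paper takes a fundamentally different and more delicate route (following Koch--Tataru \cite{Koch2016}). It does not construct an atomic decomposition directly; instead it proves that $\mf{S}$ is dense in $\widetilde{U}^p_{\mf{B}}$ by a \emph{contradiction} argument (Theorem~\ref{thm:gen approx by step func}). If $\|u-u_\tau\|_{\widetilde{U}^p_{\mf{B}}} \g \epsilon_0$ for every partition $\tau$, one inductively stacks test functions $w_N \in X$ with $|w_N|_{V^q} \les (2N)^{1/q}$ yet $\mf{B}(w_N,u) \gtrsim N\epsilon_0$, which is incompatible with $\|u\|_{\widetilde{U}^p_{\mf{B}}} < \infty$ since $N \gg N^{1/q}$. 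The crucial technical ingredient making the stacking possible is Lemma~\ref{lem:gen approx construction of w}: one can always produce a test function $w'$ with $|w'|_{V^q}\les 1$, $\mf{B}(w',u)\gtrsim \epsilon_0$, and arbitrarily small $\|w'\|_{L^\infty L^2}$ (using Lemma~\ref{lem:gen approx smallness}). The small $L^\infty$-norm of $w'$ — not of the function $u$ or of its step differences — is what keeps the $V^q$-norm from growing linearly when $w'$ is added to $w_N$ at the jump points of $w_N$. This transfer of the ``smallness at small scales'' from the side of $u$ to the side of the test functions is the idea your sketch is missing; once density of $\mf{S}$ is established, the theorem follows since the three norms agree on $\mf{S}$ by Theorem~\ref{thm:dual pairing}. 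Finally, on your claimed direct equivalence $\widetilde{U}^p_{dis}=\widetilde{U}^p_{cts}$: the mollification/discretization argument is plausible but requires the kind of careful error estimates that appear in Lemma~\ref{lem:local to global} (for condition $\mb{(A1)}$ in both pairings); the paper sidesteps a separate equivalence proof by simply establishing density of $\mf{S}$ in each space.
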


We give the proof of Theorem \ref{thm:characteristion of Up} in Subsection \ref{subsec:proof of charac} below. Roughly, since Theorem \ref{thm:dual pairing} already shows that the norms are equivalent for step functions, and $\mf{S}$ is dense in $U^p$, it is enough to show that $\mf{S}$ is also dense in the spaces $\widetilde{U}^p_{cts}$ and $\widetilde{U}^p_{dis}$. The key step is a density argument which we give for a general bilinear pairing satisfying certain assumptions, this argument closely follows that given in \cite[Appendix B]{Koch2016} for the special case of the discrete pairing.

\subsection{A general density result}\label{subsec:gen-ver}

Let $X \subset V^q$ be a subspace and $B_{rc}\subset L^\infty_t L^2_x$ denote the set of bounded right-continuous ($L^2_x$ valued) functions $u:\RR \to L^2$. Let $\mathfrak{B}(v,u): X \times B_{rc} \to \CC$ be a sesquilinear form. For $u \in B_{rc}$,  define
	$$ \| u \|_{\widetilde{U}^p_{\mathfrak{B}}} = \sup_{\substack{ v \in X \\ | v |_{V^q}\les 1}} | \mathfrak{B}(v,u)| \in [0, \infty]$$
and for $-\infty \les a<b\les \infty$
	$$ \| u \|_{\widetilde{U}^p_{\mathfrak{B}}(a,b)} = \sup_{\substack{ v \in X \\ \supp v \subset (a,b) \\ | v |_{V^q}\les 1}} | \mathfrak{B}(v,u)| \in [0, \infty].$$
Note that, despite the suggestive notation, these quantities may not necessarily be norms, but, since $0 \in X$ they are always well defined. We now take
$\widetilde{U}^p_{\mathfrak{B}}$ to the collection of all $u \in B_{rc}$ such that $u(t) \to 0$ (in $L^2$) as $t \to -\infty$, and $\| u \|_{\widetilde{U}^p_{\mathfrak{B}}}<\infty$. We assume that we have the following properties:

\begin{itemize}
   \item[$\mb{(A1)}$] There exists $C>0$ such that for all $\tau = (t_j)_{j=1}^N \in \mb{P}$ and $u\in \widetilde{U}^p_{\mathfrak{B}}$, we have
	\begin{equation}\label{eqn:decomp prop for general norm}
		\| u - u_\tau \|_{\widetilde{U}^p_{\mathfrak{B}}} \les C \Big( \sum_{j=0}^N \| u \|_{\widetilde{U}^p_{\mathfrak{B}}(t_j, t_{j+1})}^p \Big)^\frac{1}{p},
	\end{equation}
where we take $t_0 = -\infty$, $t_{N+1} = \infty$, and define the step function $u_\tau = \sum_{j=1}^{N} \ind_{[t_j, t_{j+1})}(t) u(t_j) \in \mathfrak{S}$.

    \item[$\mb{(A2)}$] If $v \in X$ and $\epsilon>0$, there exists a step function $w \in \mathfrak{S}$ such that $| v - w |_{V^q} < \epsilon$. \\
\end{itemize}
Under the above assumptions, the set of step functions $\mf{S}$ is dense in $\widetilde{U}^p_{\mf{B}}$.

\begin{theorem}\label{thm:gen approx by step func}
Let $1<p<\infty$ and assume that $\mb{(A1)}$ and $\mb{(A2)}$ hold. Then for every $u \in \widetilde{U}^p_{\mathfrak{B}}$ and $\epsilon>0$, there exists $\tau \in \mb{P}$ such that
    $$ \| u - u_\tau \|_{\widetilde{U}^p_{\mathfrak{B}}}<\epsilon. $$
\end{theorem}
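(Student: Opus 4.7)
The plan is to construct the partition $\tau$ so that the bound from $\mathbf{(A1)}$ becomes arbitrarily small. The strategy follows the approach of Koch--Tataru \cite{Koch2016}, adapted from the discrete pairing to the general bilinear setting.

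First, I would establish a \emph{superadditivity} principle: for any finite partition $s_0 < s_1 < \dots < s_N$,
\[
\sum_{j=0}^N \| u \|_{\widetilde{U}^p_{\mathfrak{B}}(s_j, s_{j+1})}^p \lesa \| u \|_{\widetilde{U}^p_{\mathfrak{B}}}^p.
\]
Given near-extremal $v_j \in X$ with $\supp v_j \subset (s_j, s_{j+1})$ and $|v_j|_{V^q} \les 1$, I would form the combination $v = \sum_j c_j v_j$ with $(c_j)$ normalized in $\ell^p$. Disjoint supports make the $V^q$ norm of $v$ essentially split as an $\ell^q$-sum of the individual $|v_j|_{V^q}$, up to finitely many transition-point jumps that are handled using assumption $\mathbf{(A2)}$ to first approximate each $v_j$ by a step function in $\mathfrak{S}$. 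Pairing against $u$ via the sesquilinearity of $\mathfrak{B}$ and maximizing over $(c_j)$ yields the claim.

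Superadditivity then implies that for each $\delta > 0$ the set of ``large-jump'' locations
\[
J_\delta := \Big\{ s \in \overline{\RR} : \| u \|_{\widetilde{U}^p_{\mathfrak{B}}(s-\eta,\, s+\eta)} \g \delta \text{ for all } \eta > 0 \Big\}
\]
is finite with $|J_\delta| \lesa \|u\|_{\widetilde{U}^p_{\mathfrak{B}}}^p / \delta^p$. I would include every point of $J_\delta$ in $\tau$, isolating each inside its own narrow sub-interval, and add two distant endpoints $\pm T$ to absorb the contributions near $\pm\infty$ (using the normalization $u(t)\to 0$ at $-\infty$ and the global norm bound near $+\infty$). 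On the remaining finite union of compact intervals, by definition of $J_\delta$ every point has a neighborhood where the local norm is below $\delta$, and a covering/compactness argument refines the partition further so that each sub-interval has local norm at most some smaller threshold $\delta' \ll \delta$.

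The principal obstacle is upgrading this pointwise control $a_j \les \delta'$ (with $a_j$ the local norm on $(t_j,t_{j+1})$) to smallness of $\sum_j a_j^p$, since superadditivity alone only furnishes a fixed upper bound for that sum. The resolution exploits the gain from $p > 1$: atomically decomposing the $V^q$ test functions via the embedding in Theorem \ref{thm:vu-emb} and reshuffling the pairing produces an interpolation-type improvement in which the aggregate bound on $\sum_j a_j^p$ becomes a negative power of $\delta'$ times a quantity still controlled by $\|u\|_{\widetilde{U}^p_{\mathfrak{B}}}$. Combining the $|J_\delta|$ jump contributions with this refined continuous contribution, and balancing the parameters $\delta, \delta' \to 0$ at appropriate rates, produces a partition $\tau$ with $\sum_j \|u\|_{\widetilde{U}^p_{\mathfrak{B}}(t_j, t_{j+1})}^p < (\epsilon/C)^p$. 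Invoking $\mathbf{(A1)}$ then completes the proof.
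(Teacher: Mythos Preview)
Your preliminary steps---superadditivity and the existence of a partition with all local norms below $\delta'$---are correct and essentially reproduce the paper's Lemma~\ref{lem:gen approx smallness}. However, the step you flag as the ``principal obstacle'' is a genuine gap, and your proposed resolution via Theorem~\ref{thm:vu-emb} does not work. Having $a_j \les \delta'$ together with the superadditivity bound $\sum_j a_j^p \lesa \|u\|_{\widetilde{U}^p_{\mathfrak{B}}}^p$ gives no mechanism to force $\sum_j a_j^p \to 0$: the number of intervals may grow without bound as $\delta' \to 0$ and nothing prevents the $p$-mass from simply redistributing across them. The atomic decomposition of $V^q$ into $U^{q'}$ pieces (for $q' > q$) does not interact with the local norms $a_j$ in any way that would yield the claimed improvement---and a bound that is a \emph{negative} power of $\delta'$ would anyway worsen as $\delta' \to 0$, not improve.

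The paper avoids this obstacle entirely by arguing by contradiction. Assuming $\|u - u_\tau\|_{\widetilde{U}^p_{\mathfrak{B}}} \g \epsilon_0$ for all $\tau$, one combines Lemma~\ref{lem:gen approx smallness} with a construction (Lemma~\ref{lem:gen approx construction of w}) producing, for any $\delta > 0$, some $w' \in X$ with $|w'|_{V^q} \les 1$, $\|w'\|_{L^\infty_t L^2_x} \les \delta$, and $\mathfrak{B}(w', u) \gtrsim \epsilon_0$. The $L^\infty$-smallness is the key idea your proposal lacks: it allows one to inductively stack such $w'$ onto a running sum $w_N$ (first approximated via $\mb{(A2)}$ by a step function) without inflating the $V^q$ norm, because the new increment is negligible compared to every existing jump of $w_N$. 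One obtains $|w_N|_{V^q} \lesa N^{1/q}$ while $\mathfrak{B}(w_N, u) \gtrsim N \epsilon_0$, contradicting $\|u\|_{\widetilde{U}^p_{\mathfrak{B}}} < \infty$ as $N \to \infty$.
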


We start by proving two preliminary results.

\begin{lemma}\label{lem:gen approx smallness}
Let $1<p<\infty$ and $\| u \|_{\widetilde{U}^p_{\mathfrak{B}}} <\infty$. For every $\epsilon>0$ there exists a partition $ (t_k)_{k=1}^N \in \mb{P}$ such that
        $$ \sup_{0 \les k \les N} \| u \|_{\widetilde{U}^p_{\mathfrak{B}}(t_k, t_{k+1})} < \epsilon $$
where $t_0 = -\infty$ and $t_{N+1} = \infty$.
\end{lemma}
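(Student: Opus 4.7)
The plan is to construct the partition greedily from left to right, using a sub-additivity estimate for the local norms to force the process to terminate after finitely many steps.

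The first step is to establish the following sub-additivity property: for any partition $-\infty = s_0 < s_1 < \cdots < s_{N+1} = \infty$,
\[
\sum_{k=0}^{N} \| u \|_{\widetilde{U}^p_{\mathfrak{B}}(s_k, s_{k+1})}^p \lesssim \| u \|_{\widetilde{U}^p_{\mathfrak{B}}}^p .
\]
To see this, for each $k$ pick $v_k \in X$ with $\supp v_k \subset (s_k, s_{k+1})$ and $|v_k|_{V^q} \les 1$ nearly saturating the local norm, and for any scalars $(c_k)$ form $v = \sum_k c_k v_k \in X$. Because the supports of the $v_k$ are disjoint, the $V^q$ semi-norm is controlled by $|v|_{V^q} \lesssim \big(\sum_k |c_k|^q\big)^{1/q}$, while sesquilinearity gives $\mathfrak{B}(v,u) = \sum_k c_k \mathfrak{B}(v_k,u)$. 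Optimising over $(c_k) \in \ell^q$ via the duality $(\ell^q)^* = \ell^p$ then yields the claimed $\ell^p$ sub-additivity. An immediate consequence is that any collection of disjoint intervals on each of which the local norm exceeds $\epsilon$ has cardinality at most $(C \| u \|_{\widetilde{U}^p_{\mathfrak{B}}} / \epsilon)^p$.

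Armed with this, the partition is built greedily: set $t_0 = -\infty$ and recursively define
\[
t_{k+1} = \sup\{\, t > t_k \,:\, \| u \|_{\widetilde{U}^p_{\mathfrak{B}}(t_k, t)} \les \epsilon \,\},
\]
with the convention $t_{k+1} = \infty$ if the set is unbounded. If the process did not stop in finitely many steps, then for each $k$ one could choose $t_k^* \in (t_{k+1}, t_{k+2})$ such that the mutually disjoint intervals $(t_k, t_k^*)$ all satisfy $\| u \|_{\widetilde{U}^p_{\mathfrak{B}}(t_k, t_k^*)} > \epsilon$, contradicting the cardinality bound from the first step. Hence the procedure terminates, producing the desired partition.

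The main obstacle is the sub-additivity estimate, since the $V^q$ norm is sensitive to jumps and the disjoint-support splitting must control boundary contributions at the partition points: here the support condition $\supp v_k \subset (s_k, s_{k+1})$ forces $v_k$ to vanish outside the interval (no jump from a neighbouring $v_{k\pm 1}$ at $s_k$), so any endpoint contributions are already encoded in $|v_k|_{V^q}$. A secondary technical point is that the supremum defining $t_{k+1}$ need not be attained; this is sidestepped either by invoking left-continuity of $b \mapsto \| u \|_{\widetilde{U}^p_{\mathfrak{B}}(a,b)}$ (which follows by approximating a test function supported in $(a,b)$ by one supported in $(a, b-\delta)$) or, more simply, by replacing $\epsilon$ with $\epsilon/2$ at intermediate stages so that strict inequalities suffice.
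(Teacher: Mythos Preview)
Your approach and the paper's share the same engine: disjointly supported $v_k \in X$ satisfy $|\sum_k c_k v_k|_{V^q}^q \lesa \sum_k |c_k|^q$, which via $\ell^p$--$\ell^q$ duality caps the number of disjoint intervals carrying local norm above $\epsilon$. You package this as a global sub-additivity estimate and feed it into a greedy algorithm; the paper instead proves directly that every point $t^*$ admits a one-sided neighbourhood with local norm below $\epsilon$ (by contradiction: stack $N$ disjoint test functions accumulating at $t^*$ to force $N\epsilon/2 \lesa N^{1/q}\|u\|_{\widetilde U^p_{\mathfrak B}}$), and then covers $\RR$ by compactness.

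There is, however, a gap in your greedy step: you have not verified that the set $\{t>t_k : \|u\|_{\widetilde U^p_{\mathfrak B}(t_k,t)}\les\epsilon\}$ is nonempty, i.e.\ that the local norm tends to zero as $t\downarrow t_k$. Your sub-additivity does not yield this, since it controls \emph{disjoint} collections whereas the intervals $(t_k,t)$ for varying $t$ are nested. Filling this gap is precisely the paper's one-sided argument: extract $v_1$ supported in some $(s_1,t^{(1)})\subset(t_k,\infty)$ with $s_1>t_k$, then repeat on $(t_k,s_1)$, producing infinitely many disjoint pieces each contributing at least $\epsilon/2$ --- which tacitly uses that test functions in $X$ have compact support (true in the paper's applications $X=\mf{S}_0$ and $X=C^\infty_0$, and implicitly used in the paper's own proof as well). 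A smaller slip: your intervals $(t_k,t_k^*)$ with $t_k^*\in(t_{k+1},t_{k+2})$ are not mutually disjoint, since consecutive ones overlap on $(t_{k+1},t_k^*)$; restricting to even $k$ repairs this at no cost.
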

\begin{proof}
Let $\epsilon>0$. Since $\| \cdot \|_{\widetilde{U}^p_{\mathfrak{B}}(a, b)}$ decreases as $a \nearrow b$,
the compactness of closed and bounded intervals implies that is enough to prove that for every $-\infty < t^* \les \infty$ and $-\infty \les t_* < \infty $ we can find $t_1 < t^*$ and $t_2>t_*$ such that
            \begin{equation}\label{eqn:gen approx smallness:key bound} \| u \|_{\widetilde{U}_{\mathfrak{B}}^p(t_1, t^*)} + \| u \|_{\widetilde{U}_{\mathfrak{B}}^p(t_*, t_2)} \les \epsilon.\end{equation}
We only prove the first bound, as the second one is similar. Suppose that \eref{eqn:gen approx smallness:key bound} fails, then there exists $-\infty<t^* \les \infty$ and $\epsilon>0$ such that for every $t_1<t^*$ we have
            \begin{equation}\label{eqn:gen approx smallness:for contra}
                \| u \|_{\widetilde{U}_{\mathfrak{B}}^p(t_1, t^*)} \g \epsilon.
            \end{equation}
Let $T_1<t^*$. By definition,  \eref{eqn:gen approx smallness:for contra} together with the fact that $X$ is a \emph{subspace} of $V^q$, implies that there exists $w_1 \in X$ such that $\supp w_1 \subset (T_1, t^*)$, $|w_1|_{V^q}\les 1$ (with $\frac{1}{p}+ \frac{1}{q}=1$) and
        $$ \mathfrak{B}(w_1, u) \g \frac{\epsilon}{2}.$$
Let $T_1<T_2< t^*$ such that $\supp w_1 \subset (T_1, T_2)$. Another application of \eref{eqn:gen approx smallness:key bound} gives $w_2 \in X$ such that $\supp w_2 \subset (T_2, t^*)$, $|w_2|_{V^q} \les 1$, and
        $$ \mathfrak{B}(w_2, u) \g \frac{\epsilon}{2}.$$
Continuing in this manner, for every $N \in \NN$ we obtain a sequence $T_1<T_2< \dots < T_{N+1}<t^*$ and functions $w_j \in X$ such that
    $$ |w_j|_{V^q} \les 1, \qquad \supp w_j \subset (T_j, T_{j+1}), \qquad \mathfrak{B}(w_j, u) \g \frac{\epsilon}{2}. $$
If we now let $w = \sum_{j=1}^N w_j$, then using the disjointness of the supports of the $w_j$ we have $|w|_{V^q} \lesa (\sum_{1\les j \les N} |w_j|_{V^q}^q)^{\frac{1}{q}} \lesa N^\frac{1}{q}$ and hence
    $$ \frac{\epsilon}{2} N \les \sum_{j=1}^N \mathfrak{B}(w_j, u) = \mathfrak{B}(w, u) \les |w|_{V^q} \| u \|_{\widetilde{U}^p_{\mathfrak{B}}} \lesa N^{\frac{1}{q}} \| u \|_{\widetilde{U}^p_{\mathfrak{B}}}.$$
Letting $N\to \infty$ we obtain a contradiction. Therefore \eref{eqn:gen approx smallness:key bound} follows.
\end{proof}

The second result gives the existence of functions $w \in X$ satisfying a number of crucial properties.

\begin{lemma}\label{lem:gen approx construction of w}
Let $\frac{1}{p} + \frac{1}{q}=1$ with $1<p<\infty$ and assume that $\mb{(A1)}$ holds. Let $u \in \widetilde{U}^p_{\mathfrak{B}}$ and $\tau = (s_k)_{k=1}^N \in \mb{P}$ with $\| u - u_\tau\|_{\widetilde{U}^p_{\mathfrak{B}}}>0$. There exists $w \in X$ such that $|w|_{V^q} \les 1$, we have the $L^\infty$ bound
        $$ \| w \|_{L^\infty L^2} \lesa  \| u - u_\tau \|_{\widetilde{U}^p_{\mathfrak{B}}}^{1-p} \sup_{0\les k \les N} \| u \|_{\widetilde{U}^p_{\mathfrak{B}}(s_k, s_{k+1})}^{p-1},$$
and the lower bound
        $$ \mathfrak{B}(w, u) \gtrsim   \| u - u_\tau \|_{\widetilde{U}^p_{\mathfrak{B}}}. $$
\end{lemma}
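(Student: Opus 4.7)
The natural strategy is to build $w$ by concatenating near-optimizers of the supremum defining $\|u\|_{\widetilde{U}^p_{\mathfrak{B}}(s_k, s_{k+1})}$ on each subinterval of the partition, and then to balance them with coefficients chosen via the H\"{o}lder/dual extremizer. Concretely, set $A_k = \|u\|_{\widetilde{U}^p_{\mathfrak{B}}(s_k, s_{k+1})}$ for $k=0,\dots,N$ (with $s_0 = -\infty$ and $s_{N+1}=\infty$). For each $k$ with $A_k>0$ the definition yields $w_k \in X$ with $\supp w_k \subset (s_k, s_{k+1})$, $|w_k|_{V^q} \les 1$, and $\mathfrak{B}(w_k, u) \g \tfrac{1}{2} A_k$; for $A_k = 0$ take $w_k = 0$.

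The candidate is
\[
w = \sum_{k=0}^{N} c_k w_k, \qquad c_k = \frac{A_k^{p-1}}{\big(\sum_{j} A_j^p\big)^{1/q}},
\]
which are the coefficients saturating H\"{o}lder's inequality between the $\ell^q$ sequence $(c_k)$ and $(A_k)$. A direct computation using $q(p-1) = p$ gives $\sum_k c_k^q = 1$ and $\sum_k c_k A_k = \big(\sum_j A_j^p\big)^{1/p}$. By sesquilinearity and assumption $\mb{(A1)}$,
\[
\mathfrak{B}(w,u) \g \tfrac{1}{2}\sum_k c_k A_k = \tfrac{1}{2}\Big(\sum_j A_j^p\Big)^{1/p} \gtrsim \|u - u_\tau\|_{\widetilde{U}^p_{\mathfrak{B}}},
\]
which is the claimed lower bound.

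For the $V^q$ control, the key point is that the $w_k$ have pairwise disjoint open supports $(s_k, s_{k+1})$, so for any partition $(t_j) \in \mathbf{P}$ the jumps can be split according to the subintervals in which consecutive points $t_j, t_{j+1}$ lie; only jumps contained in a single $[s_k, s_{k+1}]$ contribute nontrivially, and transitions between adjacent subintervals are absorbed since each $w_k$ is supported strictly inside $(s_k, s_{k+1})$. (If the exact support convention causes issues, one may first replace each $w_k$ by a slight interior truncation, at the cost of absolute constants.) Consequently
\[
|w|_{V^q}^q \les \sum_k c_k^q |w_k|_{V^q}^q \les \sum_k c_k^q = 1.
\]
Finally, because $\supp w_k \subset (s_k, s_{k+1})$ the functions $c_k w_k$ have disjoint supports, so
\[
\|w\|_{L^\infty_t L^2_x} = \max_k c_k \|w_k\|_{L^\infty_t L^2_x} \lesa \max_k c_k,
\]
using $\|w_k\|_{L^\infty_t L^2_x} \les |w_k|_{V^q} \les 1$ (as $w_k$ vanishes for $t<s_k$). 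Substituting $c_k = A_k^{p-1}/\big(\sum_j A_j^p\big)^{1/q}$ and invoking $\mb{(A1)}$ in the form $\big(\sum_j A_j^p\big)^{1/p} \gtrsim \|u - u_\tau\|_{\widetilde{U}^p_{\mathfrak{B}}}$ yields
\[
\max_k c_k \lesa \frac{\sup_k A_k^{p-1}}{\|u - u_\tau\|_{\widetilde{U}^p_{\mathfrak{B}}}^{p-1}},
\]
which is exactly the desired $L^\infty$ bound. The main subtlety is the bookkeeping for the $V^q$ seminorm with disjointly supported pieces; everything else is a direct application of the dual extremizer construction together with $\mb{(A1)}$.
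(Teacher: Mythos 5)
Your construction is essentially identical to the paper's: the same near-optimizers $w_k$ on each subinterval, the same H\"older-extremizer weights $c_k \propto A_k^{p-1}$, and the same use of $\mb{(A1)}$ to relate $\sum_k A_k^p$ to $\|u - u_\tau\|_{\widetilde{U}^p_{\mathfrak{B}}}$. One small caveat: the claim $|w|_{V^q}^q \les \sum_k c_k^q |w_k|_{V^q}^q$ is not quite right as stated, because a partition point in $(s_k, s_{k+1})$ followed by one in $(s_{k+1}, s_{k+2})$ produces a nonzero jump $c_{k+1}w_{k+1}(t_{j+1}) - c_k w_k(t_j)$ that is \emph{not} absorbed; it must be estimated via convexity, giving $|w|_{V^q}^q \les 2^{q-1}\sum_k c_k^q |w_k|_{V^q}^q$. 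You gesture at this with ``at the cost of absolute constants,'' and the paper handles it cleanly by folding a factor of $2^{-1/p}$ into the normalization $\alpha$, so the issue is cosmetic and easily repaired by renormalizing $w$.
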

\begin{proof} We begin by observing that by $\textbf{(A1)}$ and the assumption $\| u - u_\tau \|_{\widetilde{U}^p_{\mathfrak{B}}}>0$, we have
        \begin{equation}\label{eqn:gen approx construction of w:sum nonzero} \sum_{k=0}^N \| u \|_{\widetilde{U}^p_{\mathfrak{B}}(s_k, s_{k+1})}^p >0. \end{equation}
In particular, at least one of the terms $\| u \|_{\widetilde{U}^p_{\mathfrak{B}}(s_k, s_{k+1})} \not = 0$. For $k=0, \dots, N$, we define functions $w_k \in X$ as follows. If $\| u \|_{\widetilde{U}^p_{\mathfrak{B}}(s_k, s_{k+1})} = 0$, we take $w_k=0$. On the other hand, if $\| u \|_{\widetilde{U}^p_{\mathfrak{B}}(s_k, s_{k+1})}>0$, we take $w_k \in X$ such that $\supp w_k \subset (s_k, s_{k+1})$, $|w_k|_{V^q} \les 1$, and
        $$ \mathfrak{B}(w_k, u) \g \frac{1}{2} \| u \|_{\widetilde{U}^p_{\mathfrak{B}}(s_k, s_{k+1})}.$$
Such a function $w_k \in X$ exists by definition of $\|\cdot \|_{\widetilde{U}^p_{\mathfrak{B}}(s_k, s_{k+1})}$. We now let $w = \sum_{k=0}^N \alpha \| u \|_{\widetilde{U}^q(s_k, s_{k+1})}^{p-1} w_k$ with
    $$ \alpha = 2^{-\frac{1}{p}} \Big( \sum_{k=0}^N \| u \|_{\widetilde{U}^p_{\mathfrak{B}}(s_k, s_{k+1})}^p \Big)^{\frac{1}{p}-1}.$$
Note that \eref{eqn:gen approx construction of w:sum nonzero} implies that $\alpha <\infty$, and at least one of the functions $w_k$ is non-zero, thus $w \in X \setminus \{0\}$. By construction, the step functions $w_k$ have separated supports, and hence we have the $V^q$ bound
    $$ |w|_{V^q}^q \les  2^{q-1} \alpha^q \sum_{k=0}^N \| u \|_{\widetilde{U}^p_{\mathfrak{B}}(s_k, s_{k+1})}^{q(p-1)} |w_k|_{V^q}^q \les 2^{q-1} \alpha^q \sum_{k=0}^N \| u \|_{\widetilde{U}_{\mathfrak{B}}^p(s_k, s_{k+1})}^p = 1.$$
Moreover, via $\textbf{(A1)}$, we have the $L^\infty_t$ bound
    $$ \| w\|_{L^\infty L^2} \les \alpha \sup_{0\les k \les N} \| u \|_{\widetilde{U}_{\mathfrak{B}}^p(s_k, s_{k+1})}^{p-1} |w_k|_{V^q} \lesa \| u - u_\tau\|_{\widetilde{U}^p_{\mathfrak{B}}}^{1-p} \sup_{0\les k \les N} \| u \|_{\widetilde{U}^p_{\mathfrak{B}}(s_k, s_{k+1})}^{p-1} $$
and the lower bound
    \begin{align*} \mathfrak{B}(w, u) &= \sum_{k=0}^N \alpha \| u \|_{\widetilde{U}_{\mathfrak{B}}^{p}(s_k, s_{k+1})}^{p-1} \mathfrak{B}(w_k, u) \\
    &\g \frac{\alpha}{2}  \sum_{k=0}^N \| u \|_{\widetilde{U}_{\mathfrak{B}}^p(s_k, s_{k+1})}^p
    = 2^{-\frac{1+p}{p}}\Big( \sum_{k=0}^N \| u \|_{\widetilde{U}_{\mathfrak{B}}^p(s_k, s_{k+1})}^p\Big)^\frac{1}{p} \gtrsim \| u - u_\tau\|_{\widetilde{U}_{\mathfrak{B}}^p}.
    \end{align*}
\end{proof}

We now come to the proof of Theorem \ref{thm:gen approx by step func}.

\begin{proof}[Proof of Theorem \ref{thm:gen approx by step func}]
We argue by contradiction. Let $u\in \widetilde{U}^p_{\mathfrak{B}}$ and suppose there exists  $\epsilon_0>0$ such that for every $\tau \in \mb{P}$ we have
    \begin{equation}
        \label{eqn:gen approx by step func:contra assump} \| u - u_\tau \|_{\widetilde{U}^p_{\mathfrak{B}}} \g  \epsilon_0.
    \end{equation}
We claim that \eref{eqn:gen approx by step func:contra assump} implies that for each $N \in \NN$ there exists $w_N \in X$ such that
    \begin{equation}\label{eqn:gen approx by step func:properties of w_N}
        |w_N|_{V^q} \les (2N)^\frac{1}{q}, \qquad \mathfrak{B}(w_N, u) \gtrsim \frac{1}{4} N \epsilon_0 .
    \end{equation}
But, similar to Lemma \ref{lem:gen approx smallness}, this gives a contradiction as $N \to \infty$ since \eref{eqn:gen approx by step func:properties of w_N} implies that
    $$ \frac{1}{4} N \epsilon_0 \lesa \mathfrak{B}(w_N, u) \les |w_N|_{V^q} \| u \|_{\widetilde{U}^p_{\mathfrak{B}}} \les (2N)^\frac{1}{q} \| u \|_{\widetilde{U}^p_{\mathfrak{B}}}.$$
Thus it only remains to show that \eref{eqn:gen approx by step func:contra assump} implies \eref{eqn:gen approx by step func:properties of w_N}. To this end, note that \eref{eqn:gen approx by step func:contra assump} together with $\mb{(A1)}$, Lemma \ref{lem:gen approx smallness}, and Lemma \ref{lem:gen approx construction of w}, implies that for every $\delta >0$ there exists $w' \in X$ such that
    \begin{equation}\label{eqn:gen approx by step func:conseq of lem}
        |w'|_{V^q} \les 1, \qquad \| w'\|_{L^\infty L^2} \les \delta, \qquad B(w', u) \gtrsim  \epsilon_0.
    \end{equation}
Taking $\delta = 1$ gives $w_1$ satisfying \eref{eqn:gen approx by step func:properties of w_N}. Suppose we have $w_N$ satisfying \eref{eqn:gen approx by step func:properties of w_N}. Let $\epsilon^*>0$. By $\mb{(A2)}$, we have a step function $w_N' \in \mathfrak{S}$ such that $|w_N - w_N'|_{V^q} < \epsilon^*$. Choose $w' \in X$ such that \eref{eqn:gen approx by step func:conseq of lem} holds with
    $$ \delta < \epsilon^* \min_{\substack{ t_1<t_2 \\ w_N'(t_1) \not = w_N'(t_2)}} \| w_N'(t_1) - w_N'(t_2) \|_{L^2} $$
(this quantity is non-zero since $w'_N$ is a step function) and define $w_{N+1} = w_N + w' \in X$. To check the required $V^q$ bound, suppose that $\tau = (t_j) \in \mb{P}$ and take
    $$ \tau' = \{ t_j \in \tau \mid w_N'(t_j) = w_N'(t_{j+1}) \}, \qquad \tau'' = \{ t_j \in \tau \mid w_N'(t_j) \not = w_N'(t_{j+1} )\}.$$
Then
    \begin{align*}
      \sum_{j=1}^{N-1} &\| (w_N' + w')(t_{j+1}) - (w_N' + w')(t_j) \|_{L^2}^q \\
        &\les \sum_{t_j \in \tau'} \| w'(t_{j+1}) - w'(t_j) \|_{L^2}^q + (1+2\epsilon^*)^q \sum_{t_j \in \tau''} \| w_N'(t_{j+1}) - w_N'(t_j)\|_{L^2}^q \\
      &\les 1 + (1 + 2 \epsilon^*)^q 2N
    \end{align*}
and consequently, by choosing $\epsilon^*>0$ sufficiently small we have
    $$ | w_{N+1} |_{V^q} \les |w_{N} - w_N'|_{V^q} + |w_N' + w'|_{V^q} \les \epsilon^* + \big( 1 + 2 N (1 + 2 \epsilon^*)\big)^\frac{1}{q} \les \big(2 (N+1)\big)^\frac{1}{q}. $$
On the other hand, we have
    $$ \mf{B}(w_{N+1}, u) = \mf{B}(w_N, u) + \mf{B}(w', u) \gtrsim N \epsilon_0 + \frac{1}{4} \epsilon_0.$$
Consequently $w_{N+1}$ satisfies \eref{eqn:gen approx by step func:properties of w_N} as required.
\end{proof}

\subsection{Proof of Theorem \ref{thm:characteristion of Up}}\label{subsec:proof of charac} Let $1<p<\infty$. An application of Theorem \ref{thm:dual pairing} implies that for every $u\in \mf{S} \subset U^p$ we have
		$$ \| u \|_{U^p} = \| u \|_{\widetilde{U}^p_{dis}} = \| u \|_{\widetilde{U}^p_{cts}}.$$
Since $\mf{S}$ is a dense subset of $U^p$, to prove Theorem \ref{thm:characteristion of Up}, it suffices to show that the set of step functions $\mf{S}$ is also dense in $\widetilde{U}^p_{dis}$ and $\widetilde{U}^p_{cts}$. In view of Theorem \ref{thm:gen approx by step func}, the density of $\mf{S}$ in $\widetilde{U}^p_{dis}$ and $\widetilde{U}^p_{cts}$ would follow provided that the conditions $\mb{(A1)}$ and $\mb{(A2)}$ hold true. It is clear that $\mb{(A2)}$ holds in the discrete case by definition, in the continuous case we use Remark \ref{rmk:u-vs-v}. On the other hand, the proof of the localisation condition $\mb{(A1)}$ is more involved. Let $-\infty\les a < b \les \infty$ and define local versions of the norms $\| \cdot \|_{\widetilde{U}^p_{dis}}$ and $\| \cdot \|_{\widetilde{U}^p_{cts}}$ by taking
   $$ \| u \|_{\widetilde{U}^p_{dis}(a,b)} = \sup_{ \substack{ v \in \mathfrak{S}_0 \\ \supp v \subset (a,b)\\ | v |_{V^p} \les 1}} |B(v, u)|$$
and
    $$ \| u \|_{\widetilde{U}^p_{cts}(a,b)} = \sup_{\substack{ v \in C^\infty_0\\ \supp v \subset (a,b) \\ |v|_{V^p} \les 1}} \Big| \int_\RR \lr{ \p_t v, u}_{L^2} dt \Big|.$$
We have reduced the proof of Theorem \ref{thm:characteristion of Up} to the following.

\begin{lemma}\label{lem:local to global}
Let $1<p<\infty$ and $\tau \in \mb{P}$. If $u \in L^\infty_t L^2_x$ is right continuous with $u(t) \to 0$ as $t\to - \infty$, and we define $u_\tau = \sum_{j=1}^N \ind_{[t_j, t_{j+1})} u(t_j)$ and $t_0 = -\infty$, $t_{N+1} = \infty$, then
    \begin{equation}\label{eqn:lem local to global:dis bound} \| u - u_\tau \|_{\widetilde{U}^p_{dis}} \les 4 \Big( \sum_{j=0}^{N} \| u \|_{\widetilde{U}^p_{dis}(t_j, t_{j+1})}^p \Big)^\frac{1}{p},
    \end{equation}
and
	\begin{equation}\label{eqn:lem local to global:cts bound} \| u - u_\tau \|_{\widetilde{U}^p_{cts}} \les 4 \Big( \sum_{j=0}^N \| u \|_{\widetilde{U}^p_{cts}(t_j, t_{j+1})}^p \Big)^\frac{1}{p}.
	\end{equation}
\end{lemma}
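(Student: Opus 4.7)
The plan is to prove the discrete estimate \eqref{eqn:lem local to global:dis bound} in detail; the continuous version \eqref{eqn:lem local to global:cts bound} will follow by the same strategy with smooth cutoffs replacing the characteristic functions below, or alternatively by combining the discrete bound with the equivalence of the two pairings given in Theorem \ref{thm:dual pairing}.

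First, by Remark \ref{rem:compact support} I restrict the supremum defining $\|u - u_\tau\|_{\widetilde{U}^p_{dis}}$ to test functions $v \in \mathfrak{S}_0$ with compact support and $|v|_{V^q} \les 1$, where $\tfrac{1}{p} + \tfrac{1}{q} = 1$, at the cost of a factor of $2$. Given such a $v$, I refine its partition to include every $t_j$, and for each $j$ write $r_1^{(j)} = t_j < r_2^{(j)} < \dots < r_{N_j}^{(j)}$ for the refined partition points in $[t_j, t_{j+1})$. The next step is to split
\[
v = v_\tau + \sum_{j=0}^N \tilde v_j, \quad v_\tau(t) = \sum_{j=1}^N v(t_j)\ind_{[t_j, t_{j+1})}(t), \quad \tilde v_j(t) = (v(t) - v(t_j))\ind_{[t_j, t_{j+1})}(t).
\]
Since $(u - u_\tau)(t_j) = 0$ for every $j$, a direct evaluation of $B$ on the partition $\tau$ gives $B(v_\tau, u - u_\tau) = 0$, so $B(v, u - u_\tau) = \sum_{j} B(\tilde v_j, u - u_\tau)$. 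Each $\tilde v_j$ lies in $\mathfrak{S}_0$, vanishes on $[t_j, r_2^{(j)})$ and on $[t_{j+1}, \infty)$, and will therefore be admissible in the local norm $\|u\|_{\widetilde{U}^p_{dis}(t_j, t_{j+1})}$.

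Telescoping the definition of $B$ produces $B(\tilde v_j, u - u_\tau) = B(\tilde v_j, u) - \mathcal{E}_j$ with boundary correction $\mathcal{E}_j = \langle v(t_{j+1}^-) - v(t_j),\, u(t_j) - u(t_{j+1})\rangle$. The main term $\sum_j|B(\tilde v_j, u)|$ will be controlled by H\"older's inequality in $j$ together with the key combinatorial bound
\[
\sum_j |\tilde v_j|_{V^q}^q \les |v|_{V^q}^q + \sum_j \|v(t_{j+1}^-) - v(t_j)\|^q \les 2|v|_{V^q}^q,
\]
where the first summand collects the internal jumps of $v$ inside each $(t_j, t_{j+1})$ and the second is obtained by evaluating $|v|_{V^q}^q$ on the interlaced partition $\{t_1, r_{N_1}^{(1)}, t_2, r_{N_2}^{(2)}, \ldots\} \in \mathbf{P}$. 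This yields $\sum_j |B(\tilde v_j, u)| \les 2\bigl(\sum_j \|u\|_{\widetilde{U}^p_{dis}(t_j, t_{j+1})}^p\bigr)^{1/p}$.

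The hardest part will be absorbing the boundary corrections $\sum_j|\mathcal{E}_j|$ into the same bound. By Cauchy--Schwarz and the same combinatorial control on $\sum_j \|v(t_{j+1}^-) - v(t_j)\|^q$, this reduces to estimating $\sum_j\|u(t_j) - u(t_{j+1})\|^p$ by the sum of local norms. Each individual jump will be extracted from the corresponding local norm by pairing $u$ against a rank-one two-jump step function $e_j \ind_{[t_j + \delta, t_{j+1} - \delta)}$ supported inside $(t_j, t_{j+1})$ and carefully passing $\delta \to 0$, using the right-continuity of $u$ to handle the limit at $t_j$ and absorbing the jump of $u$ at $t_{j+1}$ into the neighboring interval $(t_{j+1}, t_{j+2})$. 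Collecting all the estimates and tracking the factor of $2$ from Remark \ref{rem:compact support} will yield the constant $4$ in \eqref{eqn:lem local to global:dis bound}.
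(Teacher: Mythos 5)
Your overall strategy is recognisable as the same as the paper's: decompose the test function $v$ into a coarse piece that pairs to zero with $u - u_\tau$, plus localized pieces that feed into the local norms. But your choice of decomposition introduces a genuine gap that is not a mere technicality.

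The problem is that the pieces $\tilde v_j = (v - v(t_j))\ind_{[t_j, t_{j+1})}$ are \emph{not} admissible test functions for the local norm $\|u\|_{\widetilde{U}^p_{dis}(t_j, t_{j+1})}$. Since $\tilde v_j$ jumps down to $0$ at $t = t_{j+1}$, the pairing $B(\tilde v_j, u)$ contains the term $-\langle v(t_{j+1}^-) - v(t_j),\, u(t_{j+1})\rangle$, so it depends on the value $u(t_{j+1})$ at the right endpoint. The local norm over the open interval $(t_j, t_{j+1})$ is defined via test functions whose support is compactly contained in the open interval and is therefore blind to $u(t_{j+1})$. Consequently the ``main term'' bound $|B(\tilde v_j, u)| \les |\tilde v_j|_{V^q}\|u\|_{\widetilde{U}^p_{dis}(t_j, t_{j+1})}$ is false in general. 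A counterexample already shows this: take $u$ to be a step function with $u = u_\tau$ (so $u - u_\tau \equiv 0$, and every local norm vanishes because $u$ is constant on each $(t_j, t_{j+1})$), while $B(\tilde v_j, u) = \langle v(t_{j+1}^-) - v(t_j),\, u(t_j) - u(t_{j+1})\rangle$ is in general nonzero. The same example shows that your intermediate estimate $\sum_j \|u(t_j) - u(t_{j+1})\|^p \lesa \sum_j \|u\|_{\widetilde{U}^p_{dis}(t_j, t_{j+1})}^p$, needed to absorb $\sum_j |\mathcal{E}_j|$, is also false. And the proposed fix---extracting the jump via $e_j\ind_{[t_j+\delta, t_{j+1}-\delta)}$ and pushing $\delta\to 0$---cannot work because $u$ is only assumed right-continuous, so $u(t_{j+1}-\delta)$ has no limit, and ``absorbing the jump of $u$ at $t_{j+1}$'' into $(t_{j+1}, t_{j+2})$ presupposes a left limit that need not exist.

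The paper's decomposition avoids both difficulties by two small but essential modifications: the localized pieces are $v_j = \ind_{[t_j+\epsilon, t_{j+1}-\epsilon)}\bigl(v - v(t_{j+1}-\epsilon)\bigr)$, with an $\epsilon$-inset at both ends (so that $v_j$ is genuinely compactly supported in $(t_j, t_{j+1})$) and, crucially, centering is done around the \emph{right}-hand value $v(t_{j+1}-\epsilon)$ rather than $v(t_j)$. With this choice the leftover $\ind_{[t_j, t_{j+1})}v - v_j$ is a two-step function whose transition sits at $t_j + \epsilon$, so its pairing against $u - u_\tau$ produces a single term $\langle v(t_{j+1}-\epsilon) - v(t_j),\, u(t_j+\epsilon) - u(t_j)\rangle$, which vanishes as $\epsilon\to 0$ by the right-continuity of $u$. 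Your centering around $v(t_j)$ puts that transition at the other endpoint, where right-continuity does not help. If you want to rescue your computation, you must both inset the $\tilde v_j$ away from $t_{j+1}$ and switch the reference value from $v(t_j)$ to the right-end value.
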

\begin{proof} We start by proving something similar in the $V^q$ case. Suppose that $w \in V^q$ and $\tau=(t_j)_{j=1}^N \in \mb{P}$. Let $t_0 = -\infty$ and $t_{N+1}=\infty$. For $j=0, \dots, N$ let $I_j \subset (t_j, t_{j+1})$ be  a left closed and right open interval, and define $w_j = \ind_{I_j} ( w - w(a_j))$ where $a_j \in (t_j, t_{j+1}]$. We claim that
	\begin{equation}\label{eqn:lem local to global:Vq bound}
		\sum_{j=0}^N |w_j|_{V^q}^q \les 2^{q+1}  |w|_{V^q}.
	\end{equation}
To prove the claim, we observe that for every $\epsilon>0$ there exists $(s^{(j)}_k)_{k=1}^{N_j} \in \mb{P}$ such that
$ |w_j|_{V^q} \les \epsilon + \sum_{k=1}^{N_j-1} \| w_j(s^{(j)}_{k+1}) - w_j(s^{(j)}_k)\|_{L^2}^q$. Without loss of generality, we may assume that $s^{(j)}_1=t_j$, $s^{(j)}_{N_j} = t_{j+1}$ for $1\les j\les N-1$, and $s^{(0)}_{N_0} = t_1$, $s^{(N)}_1 = t_N$. The definition of $w_j$ then gives
	\begin{align*}
		\sum_{k=1}^{N_j-1} \| &w_j(s^{(j)}_{k+1}) - w_j(s^{(j)}_k) \|_{L^2}^q \\
		&\les 2^{q-1}  \sum_{k=1}^{N_j-1} \big\|\big( \ind_{I_j}(s^{(j)}_{k+1}) - \ind_{I_j}(s^{(j)}_k) \big)\big(w(s^{(j)}_k) - w(a_j)\big) \big\|_{L^2}^q\\
		&\qquad + 2^{q-1} \sum_{k=1}^{N_j-1} \| w(s^{(j)}_{k+1}) - w(s^{(j)}_k) \|_{L^2}^q\\
		&\les 2^{q-1} \Big( \| w(s^{(j)}_{k,min}) - w(a_j) \|_{L^2}^q +\| w(s^{(j)}_{k, max}) - w(a_j) \|_{L^2}^q\Big)\\
		&\qquad + 2^{q-1}\sum_{k=1}^{N_j-1} \| w(s^{(j)}_{k+1}) - w(s^{(j)}_k) \|_{L^2}^q
	\end{align*}
for some $t_j \les s^{(j)}_{k, min} < s^{(j)}_{k, max}\les t_{j+1}$. Summing up over $j$, and choosing $\epsilon>0$ sufficiently small, we then obtain \eqref{eqn:lem local to global:Vq bound}.

We now turn the proof of \eqref{eqn:lem local to global:dis bound}. Let $\tau = (t_j)_{j=1}^N \in \mb{P}$ and $v\in \mathfrak{S}$, $\epsilon>0$, and for $j=1, \dots, N-1$ define
	$$v_j(t) = \ind_{[t_j+\epsilon, t_{j+1}-\epsilon)}(t)\big( v(t) - v(t_{j+1}-\epsilon)\big),$$
and
	$$ v_0(t) = \ind_{[-\epsilon^{-1}, t_1)}(t)\big( v(t) - v(t_{1}-\epsilon)\big), \qquad v_N(t) = \ind_{[t_N+\epsilon, \infty)}(t) \big( v(t) - v(\infty)\big)$$
where we let $v(\infty) = \lim_{t\to \infty} v(t)$, note that $v(t) = v(\infty)$ for all sufficiently large $t$ since $ v\in \mf{S}$. By construction and the bound \eqref{eqn:lem local to global:Vq bound}, we have $v_j \in \mathfrak{S}$, $\supp v_j \subset (t_j, t_{j+1})$, and  $ \sum_{j=0}^N |v_j|_{V^q}^q \les 2^{q+1} |v|_{V^q}^q$. Suppose that we can show that for all $\delta>0$, by choosing $\epsilon= \epsilon(u,v,\delta, \tau)>0$ sufficiently small we have for all $j=0, \dots, N$
	\begin{equation}\label{eqn:lem local to global:dis error bound}
		|B( \ind_{[t_j, t_{j+1})} v- v_j, u - u_\tau) | \les \delta.
	\end{equation}
Then, by linearity and definition of the bilinear pairing $B$, we have $B(v_j, u_\tau)=0$ and hence
	\begin{align*}
		 |B(v,u-u_\tau)| &\les  (N+1) \sup_j \big| B\big( \ind_{[t_j, t_{j+1})} v - v_j, u-u_\tau\big)\big|+ \sum_{j=0}^N | B(v_j, u)|   \\
		 	&\les (N+1) \delta + \sum_{j=0}^{N} |v_j|_{V^q} \| u \|_{\widetilde{U}^p_{dis}(t_j, t_{j+1})} \\
		 	&\les (N+1) \delta + 4 |v|_{V^q} \Big( \sum_{j=0}^N \| u \|_{\widetilde{U}^p_{dis}(t_j, t_{j+1})}^p \Big)^\frac{1}{p}.
	\end{align*}
Since this holds for every $\delta>0$, \eqref{eqn:lem local to global:dis bound} follows. Thus it remains to prove \eqref{eqn:lem local to global:dis error bound}. Since $v \in \mf{S}$ is a step function with $v(-\infty) = 0$, provided that we choose $\epsilon>0$ sufficiently small, a computation gives for $j=1, \dots, N$ the identities
	$$ \ind_{[t_j, t_{j+1})}(t) v(t) - v_j(t) = \ind_{[t_j, t_j + \epsilon)}(t) v(t_j) + \ind_{[t_j + \epsilon, t_{j+1})}(t) v(t_{j+1} - \epsilon), $$
and
	$$ \ind_{(-\infty, t_1)}(t) v(t) - v_0(t) = \ind_{[-\epsilon^{-1}, t_1 )}(t) v(t_1 - \epsilon).$$
Hence by definition we have for $j=1, \dots, N$
	$$ B( \ind_{[t_j, t_{j+1})} v- v_j, u - u_\tau) = \lr{v(t_{j+1}-\epsilon) - v(t_j), u(t_j+\epsilon)-u(t_j)},$$
and
	$$B(\ind_{(-\infty, t_1)}v - v_0, u-u_\tau) = \lr{v(t_1 -\epsilon), u(-\epsilon^{-1})}.$$
Therefore \eqref{eqn:lem local to global:dis error bound} follows from the right continuity of $u$ and the normalisation condition on $u$ at $-\infty$. This completes the proof of \eqref{eqn:lem local to global:dis bound}.

We now turn to the proof of \eqref{eqn:lem local to global:cts bound}. Let $\p_t \phi \in C^\infty_0$, $\epsilon>0$, and define
	$$ \phi_j(t) = \int_\RR \frac{1}{\epsilon} \rho\Big(\frac{s}{\epsilon}\Big) \ind_{I_j}(t-s)ds \big( \phi(t) - \phi(t_{j+1}) \big)$$
with $I_j = [t_j + \epsilon, t_{j+1} - \epsilon)$ for $j=1, \dots, N-1$, $I_0 = [-\epsilon^{-1}, t_1-\epsilon)$, $I_N = [t_N + \epsilon, \epsilon^{-1})$, and we take $\rho \in C^\infty_0(-1, 1)$ with $\int_\RR \rho =1$. Then clearly $\phi_j \in C^\infty_0$, $\supp \phi_j \subset (t_j, t_{j+1})$, and a short computation using the bound \eqref{eqn:lem local to global:Vq bound} gives $\sum_j |\phi_j|_{V^q}^q \les 2^{q+1} |\phi|_{V^q}^q$. Consequently, similar to the discrete case  above, it is enough to show that for every $\delta>0$ we can find an $\epsilon = \epsilon(\phi, u, \tau, \delta) >0$ such that
	\begin{equation}\label{eqn:lem local to global:cts error bound}
		\Big| \int_{t_j}^{t_{j+1}} \lr{ \p_t \phi - \p_t \phi_j, u-u_\tau} dt \Big| \les \delta.
	\end{equation}	
This is a consequence of the right continuity of $u$. More precisely, if $j=1, \dots, N-1$, after writing
	\begin{align*} \p_t \phi(t) - \p_t \phi_j(t) &= \Big(1-\int_\RR\frac{1}{ \epsilon} \rho\Big(\frac{s}{\epsilon}\Big) \ind_{I_j}(t-s)ds\Big) \p_t \phi(t) \\
	&\qquad - \frac{\phi(t) - \phi(t_{j+1})}{\epsilon} \int_\RR \frac{1}{ \epsilon} \p_s\rho\Big(\frac{s}{\epsilon}\Big) \ind_{I_j}(t-s)ds
	\end{align*}
we see that
	\begin{align*}
		\Big| \int_{t_j}^{t_{j+1}} &\lr{ \p_t \phi - \p_t \phi_j, u-u_\tau} dt \Big|\\
			&\lesa \epsilon \| \p_t \phi \|_{L^\infty_t L^2_x} \| u \|_{L^\infty_t L^2_x} + \epsilon^{-1} \| \phi \|_{L^\infty_t L^2_x} \int_{t_j}^{t_{j}+3\epsilon}    \| u(t) - u(t_j)\|_{L^2} dt \\
			&\qquad \qquad \qquad \qquad \qquad \qquad + \epsilon^{-1} \| u \|_{L^\infty_t L^2_x} \int_{t_{j+1}-3\epsilon}^{t_{j+1}}    \| \phi(t) - \phi(t_j)\|_{L^2}  dt
	\end{align*}
and hence \eqref{eqn:lem local to global:cts error bound} follows by the right continuity of $u$ provided we choose $\epsilon$ sufficiently small. A similar argument proves the cases $j=0$ and $j=N$.
\end{proof}

\section{Convolution, multiplication, and the adapted function spaces}\label{sec:bound}

In this section we record a number of key properties of the $U^p$  and $V^p$ spaces that have been used frequently throughout this article. Namely, in Subsection \ref{subsec:conv} we prove a basic convolution estimate together with the standard Besov embedding \[\dot{B}^\frac{1}{p}_{p,1} \subset V^p \subset U^p \subset \dot{B}^{\frac{1}{p}}_{p, \infty},\] up to normalisation at $t=-\infty$. This is well-known, see \cite{Peetre1976,Koch2005,Koch2014}. In Subsection \ref{subsec:alg} we prove an important high-low product type estimate in $U^p$ and $V^p$, which gives as a special case the crucial product estimates used in Section \ref{sec:multi}. Finally, in Subsection \ref{subsec:adapted}, we consider the adapted functions spaces $U^p_\Phi$ and $V^p_\Phi$.

\subsection{Convolution and the Besov Embedding}\label{subsec:conv}
We use the notation
	$$ f *_\RR g(t) = \int_\RR f(s) g(t-s) ds $$
to signify the convolution in the $t$ variable.

\begin{lemma}\label{lem:conv oper on Up}
Let $1<p<\infty$ and $\phi(t) \in L^1_t(\RR)$. For all $u \in U^p$ and $v \in V^p$ we have $\phi \ast_\RR u \in U^p$, $\phi *_\RR v \in V^p$, and the bounds
	$$ \Big\| \phi *_\RR u \Big\|_{U^p} \les 2\| \phi\|_{L^1_t} \| u \|_{U^p}, \qquad \Big\| \phi *_\RR v \Big\|_{V^p} \les \| \phi\|_{L^1_t} \| v \|_{V^p}.$$
\end{lemma}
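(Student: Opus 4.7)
The $V^p$ estimate is direct. For $v \in V^p$, write $w = \phi *_\RR v$. For any partition $(t_j)_{j=1}^N \in \mb{P}$, Minkowski's integral inequality gives
\[ \| w(t_{j+1}) - w(t_j) \|_{L^2_x} \les \int_\RR |\phi(s)| \, \| v(t_{j+1}-s) - v(t_j-s) \|_{L^2_x}\, ds,\]
and a second application of Minkowski (in $\ell^p_j$) together with translation invariance of $|\cdot|_{V^p}$ yields
\[ \Big(\sum_{j=1}^{N-1}\| w(t_{j+1}) - w(t_j) \|_{L^2_x}^p\Big)^{1/p} \les \int_\RR |\phi(s)| \, |v(\cdot-s)|_{V^p}\, ds = \|\phi\|_{L^1_t} |v|_{V^p}.\]
Young's inequality controls $\|w\|_{L^\infty_t L^2_x} \les \|\phi\|_{L^1_t}\|v\|_{L^\infty_t L^2_x}$, so $\|w\|_{V^p} \les \|\phi\|_{L^1_t}\|v\|_{V^p}$. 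Right-continuity of $w$ and the vanishing $w(t)\to 0$ in $L^2_x$ as $t\to-\infty$ both follow from dominated convergence, using that $v$ is right-continuous, bounded in $L^2_x$, and vanishes at $-\infty$.

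For the $U^p$ estimate I plan to use the characterisation of $U^p$ via the continuous dual pairing, i.e.\ Theorems~\ref{thm:dual pairing} and \ref{thm:characteristion of Up}. Set $w = \phi *_\RR u$ for $u \in U^p$. As above, dominated convergence (applied with $u \in L^\infty_t L^2_x$ right-continuous and $u(t)\to 0$ as $t\to-\infty$) shows $w$ is continuous in $L^2_x$ and $w(t)\to 0$ as $t\to-\infty$. The key computation is: for any $\psi \in C^\infty_0$ with $|\psi|_{V^q}\les 1$, where $\frac{1}{p}+\frac{1}{q}=1$, Fubini's theorem (justified by $\int\!\!\int |\phi(s)| \|\p_t\psi(t+s)\|_{L^2_x}\|u(t)\|_{L^2_x}\,dt\,ds \les \|\phi\|_{L^1_t}\|\p_t\psi\|_{L^1_t L^2_x}\|u\|_{L^\infty_t L^2_x} < \infty$) gives
\[ \int_\RR \lr{\p_t \psi(t), w(t)}_{L^2_x} dt = \int_\RR \phi(s) \int_\RR \lr{(\p_t\psi)(t+s), u(t)}_{L^2_x}\, dt\, ds.\]
Since translation preserves the $V^q$ seminorm, $|\psi(\cdot+s)|_{V^q} = |\psi|_{V^q}\les 1$, so the inner integral is bounded by $|\psi|_{V^q}\|u\|_{U^p} \les \|u\|_{U^p}$ by Theorem~\ref{thm:dual pairing}. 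Integrating in $s$ and taking the supremum over admissible $\psi$ yields $\|w\|_{\widetilde{U}^p_{cts}}\les \|\phi\|_{L^1_t}\|u\|_{U^p}$, and Theorem~\ref{thm:characteristion of Up} then places $w \in U^p$ with the same bound, which is in particular stronger than the asserted factor-$2$ estimate.

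The only real obstacle is verifying the measurability/integrability hypotheses needed to apply Fubini and dominated convergence; the substantive content of the argument is the translation invariance of the $V^q$ seminorm combined with the duality between $U^p$ and $V^q$. A direct atomic approach is less natural here because $\phi *_\RR u$ is not a step function even when $u$ is a $U^p$-atom, and the map $s \mapsto u(\cdot-s)$ need not be continuous as a $U^p$-valued function, so a Bochner-integral argument is awkward.
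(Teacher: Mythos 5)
Your proof is correct and takes essentially the same route as the paper: in both cases the $V^p$ bound comes from Minkowski's integral inequality, and the $U^p$ bound is obtained by Fubini, translating the pairing to fall on $\psi$, invoking Theorem~\ref{thm:dual pairing}, and closing via the characterisation in Theorem~\ref{thm:characteristion of Up}. (The paper writes the Fubini step by first forming the averaged function $s\mapsto\int\psi(t+s)\phi(t)\,dt$ and bounding it in $V^q$; you fix $s$ first and use translation invariance of $|\cdot|_{V^q}$, then integrate in $s$ — these are equivalent via Minkowski.)

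One small quantifier caveat: you take the supremum over $\psi\in C^\infty_0$ with $|\psi|_{V^q}\les 1$, which is a strictly smaller class than the one defining $\|\cdot\|_{\widetilde{U}^p_{cts}}$ (namely $\p_t\psi\in C^\infty_0$). To conclude the claimed bound on $\|w\|_{\widetilde{U}^p_{cts}}$ directly, you should observe that every estimate you wrote already only uses $\p_t\psi\in L^1_tL^2_x$, so the argument applies verbatim to the broader class; alternatively one can restrict to $\psi\in C^\infty_0$ as you wrote and invoke Remark~\ref{rem:compact support}, at the cost of the factor $2$ that appears in the statement of the lemma. This is precisely the origin of the paper's factor $2$ — the paper's proof uses $\psi\in C^\infty_0$ and (implicitly) the remark — so your observation that running the argument for $\p_t\psi\in C^\infty_0$ removes the factor is correct.
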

\begin{proof}
We first observe that since $u$ and $v$ are right continuous and decay to zero as  $t \to - \infty$, the convolutions $\phi*_\RR u$ and $\phi *_\RR v$ also satisfy these conditions. The proof of the $V^p$ bound is immediate, and hence $\phi *_\RR v \in V^p$. To show that $\phi *_\RR u \in U^p$, we apply Theorem \ref{thm:characteristion of Up} and observe that for any $\psi \in C^\infty_0$, we have
	\begin{align*}
		\Big| \int_\RR \lr{ \p_t \psi(t), \phi *_\RR u(t) }_{L^2_x} dt \Big| &= \Big| \int_\RR \Big\langle \int_\RR \p_t \psi(t+s) \phi(t) dt , u(s) \Big\rangle_{L^2_x} ds \Big| \\
		&\les \Big\| \int_\RR \psi(t+s) \phi(t) dt \Big\|_{V^p} \| u \|_{U^p} \\
		&\les \| \psi \|_{V^q} \| \phi \|_{L^1_t} \| u \|_{U^p}.
	\end{align*}
\end{proof}

A similar argument shows that the space-time convolution with an $L^1_{t,x}(\RR^{1+n})$ kernel is also bounded on $U^p$ and $V^p$.

The spaces $V^p$ and $Up$ are closely related.  In fact, for functions which have temporal Fourier support in an annulus centered at the origin, the $U^p$ and $V^p$ norms are equivalent.

\begin{theorem}\label{thm:besov embedding}
Let $1<p<\infty$ and $d>0$. If $u \in L^p_t L^2_x$ with $\supp \mc{F}_t u \subset \{ \frac{d}{100} \les |\tau| \les 100d\}$ then $u \in U^p$ and
	$$  \| u \|_{V^p} \approx \| u \|_{U^p} \approx d^\frac{1}{p} \| u \|_{L^p_t L^2_x}.$$
Conversely, if $u \in U^p$ and $\supp \mc{F}_t u \subset \{ \frac{d}{100} \les |\tau| \les 100d\}$, then $u \in L^p_t L^2_x$.
\end{theorem}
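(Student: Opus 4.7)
Plan:

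The first step is to reduce to $d=1$ by the substitution $t\mapsto t/d$, which multiplies $\|u\|_{L^p_tL^2_x}$ by $d^{-1/p}$ and leaves $\|u\|_{V^p}$, $\|u\|_{U^p}$ invariant by the dilation-covariance of their definitions. With $d=1$ I pick a Schwartz reproducing kernel $\widetilde K\in\mc S(\RR)$ with $\widehat{\widetilde K}$ supported in $\{1/200\leq|\tau|\leq 200\}$ and equal to $1$ on $\{1/100\leq|\tau|\leq 100\}$, so that $u=\widetilde K*_\RR u$ and the cancellation $\int\widetilde K=0$ holds. Standard temporal Bernstein inequalities then give $\|u\|_{L^\infty_tL^2_x}+\|\p_tu\|_{L^p_tL^2_x}\lesa \|u\|_{L^p_tL^2_x}$, and combined with $L^p$-integrability force $\|u(t)\|_{L^2_x}\to 0$ as $|t|\to\infty$, which secures the $V^p$-normalisation at $t=-\infty$ automatically.

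Since $U^p\hookrightarrow V^p$ is already known from Subsection \ref{subsec:funda}, the theorem reduces to (i) $\|u\|_{V^p}\lesa\|u\|_{L^p_tL^2_x}$, (ii) $\|u\|_{L^p_tL^2_x}\lesa\|u\|_{V^p}$, and (iii) $\|u\|_{U^p}\lesa\|u\|_{L^p_tL^2_x}$. For (i), given any $(t_j)\in\mb P$ I split the partition into dense (consecutive gaps $\les 1$) and sparse (gaps $>1$) parts: the dense part is handled by writing $u(t_{j+1})-u(t_j)=\int_{t_j}^{t_{j+1}}\p_tu\,ds$ and applying H\"older plus Bernstein, while the sparse part is handled by a Plancherel-P\'olya sampling bound $\sum_j\|u(t_j)\|^p\lesa \|u\|^p_{L^p_tL^2_x}$ valid for frequency-localised $u$ at separation $\gtrsim 1$. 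For (ii), I use $\int\widetilde K=0$ to rewrite $u(t)=\int\widetilde K(t-s)(u(s)-u(t))\,ds$ and obtain, via H\"older in $s$ and Fubini, $\|u\|_{L^p_tL^2_x}^p\lesa\int|\widetilde K(r)|\int\|u(t+r)-u(t)\|^p_{L^2_x}\,dt\,dr$; the inner integral is bounded by $|r|\,|u|^p_{V^p}$ for $|r|\lesa 1$ and has an adequate tail for $|r|\gg 1$ via Schwartz decay of $\widetilde K$, yielding $\|u\|_{L^p_tL^2_x}\lesa|u|_{V^p}$.

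For (iii) I invoke Theorem \ref{thm:characteristion of Up}: it suffices to bound $\bigl|\int\lr{\p_t\phi,u}_{L^2}\,dt\bigr|$ uniformly over smooth $\phi$ with $\p_t\phi\in C^\infty_0$ and $|\phi|_{V^q}\les 1$, where $1/p+1/q=1$. Using $u=\widetilde Pu$ and integrating by parts (valid since $u(\pm\infty)=0$), this equals $-\int\lr{\widetilde P\phi,\p_tu}_{L^2}\,dt$, which by H\"older is at most $\|\widetilde P\phi\|_{L^q_tL^2_x}\|\p_tu\|_{L^p_tL^2_x}$. Bernstein handles the second factor, while $\|\widetilde P\phi\|_{L^q_tL^2_x}\lesa|\phi|_{V^q}$ follows by running the argument of (ii) with $\phi$ in place of $u$ and exponent $q$. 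The converse claim, that $u\in U^p$ with the frequency localisation implies $u\in L^p_tL^2_x$, follows from $U^p\hookrightarrow V^p$ combined with (ii). The chief obstacle is the increment estimate $\int\|\psi(t+r)-\psi(t)\|_{L^2_x}^p\,dt\lesa|r|\,|\psi|_{V^p}^p$ for $|r|\lesa 1$ underlying both (ii) and (iii); this is handled by approximating $\psi\in V^p$ by step functions, for which the increment integral collapses to a controlled sum over partition points swept by the shift, combined with careful use of the pointwise bound $\|\psi(s)-\psi(t)\|_{L^2_x}\les|\psi|_{V^p}$ to dispose of the contributions at large $|r|$.
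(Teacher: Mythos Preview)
Your proposal is correct and follows essentially the same strategy as the paper: rescale to $d=1$, exploit a reproducing-kernel cancellation identity to prove $\|u\|_{L^p_tL^2_x}\lesa|u|_{V^p}$, and then invoke the characterisation Theorem~\ref{thm:characteristion of Up} together with the dual $V^q$-to-$L^q$ bound to obtain $\|u\|_{U^p}\lesa\|u\|_{L^p_tL^2_x}$. (The paper chooses a low-pass kernel $\rho$ with $\widehat\rho(0)=1$ and $\supp\widehat\rho\subset\{|\tau|<\tfrac{1}{100}\}$ rather than your band-pass $\widetilde K$, but the resulting identity $u(t)=\int\rho(s)[u(t)-u(t-s)]\,ds$ is the same in spirit.)

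Two remarks. First, your step (i) is redundant: once (iii) holds, $\|u\|_{V^p}\lesa\|u\|_{U^p}\lesa\|u\|_{L^p_tL^2_x}$ follows from the embedding $U^p\subset V^p$, so the dense/sparse splitting is not needed. Second, your sketched proof of the increment estimate $\int\|\psi(t+r)-\psi(t)\|^p\,dt\lesa|r|\,|\psi|^p_{V^p}$ by ``approximating $\psi\in V^p$ by step functions'' is shaky as written, because step functions are \emph{not} dense in $V^p$ (see the discussion after Theorem~\ref{thm:vu-emb}). The claim is true and can be rescued---for instance via Fatou with the pointwise convergence $\psi_\tau\to\psi$ and $|\psi_\tau|_{V^p}\le|\psi|_{V^p}$, or by directly partitioning $\RR$ into intervals of length $|r|$ and bounding by $p$-summed oscillations---but this should be made explicit. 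The paper sidesteps the issue by proving only the weaker (and sufficient) bound $\int\|u(t)-u(t-s)\|_{L^2_x}^p\,dt\lesa(1+|s|)|u|^p_{V^p}$ directly via Lemma~\ref{lem:key ineq for stab lem}: one samples at unit-spaced points $t_j\in[j,j+1)$ and controls $\sum_j\|u(t_j)-u(t_j-s)\|^p$ by splitting into parity classes in $\lfloor t_j/s\rfloor$.
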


\begin{remark}\label{rmk:besov embedding}
Theorem \ref{thm:besov embedding} can also be stated in terms of the Besov spaces $\dot{B}^\frac{1}{p}_{p, \infty}$ and $\dot{B}^\frac{1}{p}_{p, 1}$. More precisely, let $\rho \in C^\infty_0( \{2^{-1} < \tau < 2\})$ such that $ \sum_{d \in 2^\ZZ} \rho(\tfrac{\tau}{d}) = 1 $
for $\tau > 0$ and define $P^{(t)}_d = \rho( \frac{ |- i \p_t|}{d})$. If $1<p<\infty$ and $v \in V^p$ then
            $$ \sup_{d \in 2^\ZZ} d^\frac{1}{p} \| P^{(t)}_d v \|_{L^p_t L^2_x} \lesa  | v |_{V^p}.$$
On the other hand, if $ u(t) \to 0$ in $L^2_x$ as $t \to -\infty$ and
    $$\sum_{d \in 2^\ZZ} d^\frac{1}{p} \| P^{(t)}_d u \|_{L^p_t L^2_x} <\infty,$$
then $u \in U^p$ and
    $$ \| u \|_{U^p} \lesa \sum_{d \in 2^\ZZ} d^{\frac{1}{p}} \| P^{(t)}_d u \|_{L^p_t L^2_x}. $$
The first claim follows directly from Theorem \ref{thm:besov embedding} and the disposability of the multipliers $P^{(t)}_d$ which is a  consequence of Lemma \ref{lem:conv oper on Up}. To prove the second claim, in view of Theorem \ref{thm:besov embedding}, the sum converges in the Banach space $U^p$, and so we have $\sum_d P^{(t)}_d u \in U^p$. Since $u(t)$ and $\sum_d P^{(t)}_d u(t)$ can only differ by a polynomial, and both vanish at $-\infty$, we conclude that $u(t) = \sum_d P^{(t)}_d u(t) \in U^p$ and the claimed bound.
\end{remark}

The proof of Theorem \ref{thm:besov embedding}, as well as the proof of the product estimates contained in the next section, requires the following observation.

\begin{lemma}\label{lem:key ineq for stab lem}
For any $s \in \RR$, $(t_j)_{j=1}^N \in \mb{P}$, and $g \in V^p$ we have
        $$ \sum_{j=1}^{N} m_j^p \big\| g(t_j) -  g(t_j-s)\big\|_{L^2}^p \les 2 (1+|s|) | g |_{V^p}^p$$
where $m_j = \min\{ t_{j+1} - t_j, 1\}$.
\end{lemma}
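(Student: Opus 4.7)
Without loss of generality assume $s>0$ (the case $s<0$ follows by reflection and $s=0$ is trivial). The proof reduces the weighted sum to three integrals over the disjoint family $I_j := [t_j,\, t_j + m_j) \subset [t_j, t_{j+1})$, whose translates $I_j - s$ are also disjoint. The starting point is the telescoping identity, valid for every $u \in I_j$,
$$ g(t_j) - g(t_j-s) = \bigl[g(t_j) - g(u)\bigr] + \bigl[g(u) - g(u-s)\bigr] + \bigl[g(u-s) - g(t_j-s)\bigr]. $$
Applying the Jensen-type inequality $\|a+b+c\|^p \les 3^{p-1}(\|a\|^p + \|b\|^p + \|c\|^p)$, averaging in $u \in I_j$, multiplying by $m_j^p$, and exploiting $m_j^{p-1} \les 1$ (which is precisely where the cap $m_j \les 1$ in the definition is used) produces
$$ m_j^p \|g(t_j) - g(t_j-s)\|^p \les 3^{p-1} \int_{I_j}\!\bigl(\|g(t_j)-g(u)\|^p + \|g(u)-g(u-s)\|^p + \|g(u-s)-g(t_j-s)\|^p\bigr)\,du. $$

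Summing over $j$, the three pieces are bounded separately. For the first piece, the key point is that for any selection $u_j \in I_j$ the ordered sequence $\{t_1, u_1, t_2, u_2, \ldots, t_N, u_N\}$ is strictly increasing, hence a valid partition, so by definition of the $V^p$-seminorm $\sum_j \|g(u_j)-g(t_j)\|^p \les |g|_{V^p}^p$. Taking the supremum over $(u_j) \in \prod_j I_j$ inside the sum (permissible since the terms are nonnegative and the $u_j$'s are independent) and then using $m_j \les 1$ gives
$$ \sum_j \int_{I_j} \|g(t_j)-g(u)\|^p\,du \;\les\; \sum_j m_j \sup_{u \in I_j}\|g(t_j)-g(u)\|^p \;\les\; |g|_{V^p}^p. $$
The third piece is estimated identically after the substitution $v = u - s$, using that $(I_j - s)$ is also a disjoint family.

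For the middle piece, disjointness of the $I_j$ reduces matters to proving the global estimate
$$ \int_\RR \|g(u+s) - g(u)\|^p\,du \;\les\; s\,|g|_{V^p}^p, $$
which I obtain by shift-averaging: for fixed $u_0 \in \RR$, $M \in \NN$ and each $\theta \in [0,s]$, the grid $\{u_0 + \theta + ks\}_{k=0}^M$ is a valid partition, so $\sum_{k=0}^{M-1} \|g(u_0 + (k+1)s + \theta) - g(u_0 + ks + \theta)\|^p \les |g|_{V^p}^p$. Integrating $\theta \in [0,s]$, applying Fubini, and letting $u_0 \to -\infty$, $M \to \infty$ converts the left-hand side into $\int_\RR \|g(v+s)-g(v)\|^p \, dv$. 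Combining the three contributions gives $3^{p-1}(2+s)|g|_{V^p}^p \les (1+|s|)|g|_{V^p}^p$ (up to a numerical constant).

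The main obstacle is the middle piece: unlike the outer two pieces, one cannot bound a single difference $\|g(u)-g(u-s)\|$ by $|g|_{V^p}$ without losing the integrability in $u$; only the shift-averaging argument produces a bound linear in $s$ on the right-hand side. The outer two pieces, by contrast, contribute only the constant ``$1$'' in ``$1 + |s|$'' via the disjointness of $(I_j)$ and $(I_j - s)$ together with the cap $m_j \les 1$, which absorbs the $m_j^{p-1}$ prefactor that would otherwise leak through.
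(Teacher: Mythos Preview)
Your argument is correct and proves the lemma with the constant $2\cdot 3^{p-1}$ in place of $2$; since every application in the paper absorbs a $p$-dependent constant into a $\lesa$, this suffices. The approach, however, is genuinely different from the paper's. You telescope through an auxiliary point $u\in I_j=[t_j,t_j+m_j)$ and average, reducing the sum to three integrals: two ``boundary'' pieces controlled by the disjointness of $(I_j)$ and $(I_j-s)$ together with the interleaved-partition trick, and a ``bulk'' piece $\int_\RR\|g(u+s)-g(u)\|^p\,du\les s|g|_{V^p}^p$ obtained by tiling $\RR$ into $s$-grids. The paper instead groups indices by which interval $[ks,(k+1)s)$ contains $t_j$, observes that $\sum_{j\in J_k}m_j\les 1+s$ (disjoint subintervals of an interval of length $\les s+1$), replaces the $j$-sum over each group by its maximiser $t_{j_k}$, and then splits over even and odd $k$ so that the points $t_{j_k}, t_{j_k}-s$ interlace into two valid partitions. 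The paper's combinatorial grouping avoids the $3^{p-1}$ loss from the triangle inequality and yields the sharp constant $2$; your averaging argument is perhaps more transparent and would adapt with no change to other shift operators, at the price of the $p$-dependent constant. One small remark: when you say ``strictly increasing'' for $\{t_1,u_1,t_2,u_2,\ldots\}$, note that $u_j=t_j$ is allowed, but then the corresponding term vanishes and can be dropped before invoking the $V^p$-seminorm.
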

\begin{proof}
  It is enough to consider $s>0$. Let \[J_k=\big\{j \in \{1,\ldots N\}: t_j \in [sk,(k+1)s)\big\}.\]
Then,
\[\sum_{j=1}^{N} m_j^p \|g(t_{j})-g(t_j-s)\|_{L^2_x}^p =\sum_{\substack{k \in \ZZ \\ J_k \not = \varnothing}}\sum_{j \in J_k } m_j^p \|g(t_{j})-g(t_j-s)\|_{L^2_x}^p\]
and, as $m_j\les 1$,
\begin{align*}
\sum_{j \in J_k } m_j^p \|g(t_{j})-g(t_j-s)\|_{L^2_x}^p &\les \Big(\sum_{j \in J_k } m_j \Big) \max_{j \in J_k}\|g(t_{j})-g(t_j-s)\|_{L^2_x}^p\\
    &\les (1+s) \|g(t_{j_k})-g(t_{j_k}-s)\|_{L^2_x}^p
\end{align*}
where $t_{j_k}\in J_k$ is chosen such that \[\|g(t_{j_k})-g(t_{j_k}-s)\|_{L^2_x}=\max_{j \in J_k}\|g(t_{j})-g(t_j-s)\|_{L^2_x}.\]
Now,
\begin{align*}
&\sum_{\substack{k \in \ZZ \\ J_k \not = \varnothing}}\|g(t_{j_k})-g(t_{j_k}-s)\|_{L^2_x}^p\\
\les{}& \sum_{\substack{ k \text{ even} \\ J_k \not = \varnothing}}\|g(t_{j_k})-g(t_{j_k}-s)\|_{L^2_x}^p
 +\sum_{\substack{ k \text{ odd} \\ J_k \not = \varnothing}}\|g(t_{j_k})-g(t_{j_k}-s)\|_{L^2_x}^p
\les{} 2|v|_{V^p}^p,
\end{align*}
because for $k=2m$ even we have
\[
t_{j_{2m}}<t_{j_{2(m+1)}}-s<t_{j_{2(m+1)}}\; ,
\]
hence the above points form a partition, and a  similar argument applies to $k=2m+1$ odd. In summary, we have
\[
\sum_{j=1}^{N-1} m_j^p \|g(t_{j})-g(t_j-s)\|_{L^2_x}^p \les 2 (1+s) |g|_{V^p}^p.
\]
\end{proof}

We now come to the proof of Theorem \ref{thm:besov embedding}.

\begin{proof}[Proof of Theorem \ref{thm:besov embedding}]
After rescaling, it is enough to consider the case $d=1$. Let $\rho \in C^\infty(\RR)$ with $\supp \widehat{\rho} \subset \{ |\tau| \les \frac{1}{100}\}$, $\| (1+|s|) \rho(s)\|_{L^1_s} \lesa 1$ and $\mc{F}_t (\rho) (0) = 1$. The temporal Fourier support assumption implies that
    $$u(t) = \int_\RR \rho(s) [u(t) - u(t-s)]ds$$
and hence an application of Lemma \ref{lem:key ineq for stab lem} gives
	\begin{align*}
		\| u \|_{L^p_t L^2_x} &\les \int_\RR |\rho(s)| \| u(t-s) - u(t) \|_{L^p_t L^2_x} ds \\
							&\les 2 \int_\RR |\rho(s)| \Big( \sum_{j\in \NN} \| u(t_j-s) - u(t_j)\|_{L^2_x}^p \Big)^\frac{1}{p} ds \\
							&\les 4 \| (1+|s|) \rho(s)\|_{L^1_s} \|u\|_{V^p} \lesa \|u\|_{V^p}
	\end{align*}
where we choose $t_j=t_j(s) \in [j, j+1)$ such that \[\sup_{t\in [j, j+1)} \|u(t-s) - u(t) \|_{ L^2_x} \les 2 \| u(t_j-s) - u(t_j)\|_{L^2_x}.\]

It remains to show that $ u \in U^p$ and the bound $\| u \|_{U^p} \lesa \| u \|_{L^p_t L^2_x}$. The assumptions on $u$ imply that $u$ is right continuous and $\| u(t)\|_{L^2_x} \to 0$ as $t \to -\infty$. Thus we apply Theorem \ref{thm:characteristion of Up} and observe that, using the $V^p$ case proved above, we have for $\phi \in C^\infty_0$
    \begin{align*}
      \Big| \int_\RR \lr{ \p_t \phi, u }_{L^2} dt \Big| &\les \| \ind_{\{ 100^{-1}\les \tau \les 100\}}(-i \p_t) \p_t \phi \|_{L^q_t L^2_x} \| u \|_{L^p_t L^2_x} \\
      &\lesa \| \ind_{\{ 100^{-1}\les \tau \les 100\}}(-i \p_t)  \phi \|_{L^q_t L^2_x} \| u \|_{L^p_t L^2_x} \lesa |\phi|_{V^q} \| u \|_{L^p_t L^2_x}
    \end{align*}
where $\frac{1}{p} + \frac{1}{q} =1 $. Hence $u\in U^p$ and the claimed bounds follow.
\end{proof}

\subsection{Stability under multiplication}\label{subsec:alg}

In the following, given $u, v: \RR \to L^2_x$, we let $\widehat{u}(t)$ denote the Fourier transform in $x$ (i.e. with respect to the $L^2$ variable), and $\mc{F}_t u(\tau)$ be the Fourier transform in the $t$ variable. Our goal is to find conditions under which the product $uv$ belongs to either $U^p$ or $V^p$. One possibility is the following.

\begin{lemma}\label{lem:stab}
Let $1\leq p<\infty$. Let $f,g:\RR \to L^2_x$ be bounded and satisfy $\supp(\mc{F}_tf)\subset (-1,1)$ and $\supp(\mc{F}_t g)\subset \RR \setminus (-4,4)$. If $g \in V^p$, then $fg \in V^p$ and
\[
\|fg\|_{V^p}\lesa \|f\|_{L^\infty_{t,x}}\|g\|_{V^p}.
\]
On the other hand, if $g \in U^p$, then $fg \in U^p$ and
\[
\|fg\|_{U^p}\lesa \|f\|_{L^\infty_{t,x}}\|g\|_{U^p}.
\]

\end{lemma}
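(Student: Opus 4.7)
The plan is to exploit the Fourier separation hypothesis to derive a single representation formula for $fg$, and then bound each norm by a direct computation. Since $\mc{F}_t(fg) = \mc{F}_tf \ast_\RR \mc{F}_tg$ is supported in $\supp\mc{F}_tf + \supp\mc{F}_tg \subset \RR\setminus(-3,3)$, I would fix a Schwartz function $\psi$ with $\mc{F}_t\psi \equiv 1$ on $(-1,1)$ and $\supp\mc{F}_t\psi\subset(-3,3)$. By disjointness of Fourier supports we have $\psi\ast_\RR(fg)=0$, and expanding this trivial identity yields
\[
fg(t) = \int_\RR \psi(s)\,f(t-s)\,\bigl[g(t)-g(t-s)\bigr]\,ds.
\]
I would also record that, since $\mc{F}_tf\subset(-1,1)$, writing $f=\tilde\psi\ast_\RR f$ for Schwartz $\tilde\psi$ gives $\|\p_tf\|_{L^\infty_{t,x}}\lesa\|f\|_{L^\infty_{t,x}}$, so that $\|f(t)-f(t')\|_{L^\infty_x}\lesa\min(1,|t-t'|)\|f\|_{L^\infty_{t,x}}$.

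For the $V^p$ bound, I would fix a partition $(t_j)\in\mb{P}$ and expand $(fg)(t_{j+1})-(fg)(t_j)$ using the representation. For each $s$, the difference of integrands splits by the product rule into three pieces: (i) $f(t_{j+1}-s)(g(t_{j+1})-g(t_j))$, (ii) $-f(t_{j+1}-s)(g(t_{j+1}-s)-g(t_j-s))$, and (iii) $(f(t_{j+1}-s)-f(t_j-s))(g(t_j)-g(t_j-s))$. Pieces (i) and (ii) contribute $\|f\|_{L^\infty_{t,x}}|g|_{V^p}$ after taking $\ell^p_j$, using that $(t_j-s)_j$ is also a partition. For (iii) the bandlimited Lipschitz bound gives the gain $m_j=\min(1,t_{j+1}-t_j)$, after which Lemma \ref{lem:key ineq for stab lem} produces a factor $(1+|s|)^{1/p}\|f\|_{L^\infty_{t,x}}|g|_{V^p}$. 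Minkowski's inequality in $s$, together with the Schwartz decay of $\psi$, then gives $\|fg\|_{V^p}\lesa\|f\|_{L^\infty_{t,x}}\|g\|_{V^p}$.

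For the $U^p$ bound, I would reduce via Minkowski to showing
\[
\|H_s\|_{U^p}\lesa (1+|s|)^{1/p}\|f\|_{L^\infty_{t,x}}\|g\|_{U^p}\qquad\text{where }H_s(t)=f(t-s)[g(t)-g(t-s)].
\]
Taking an atomic decomposition $g=\sum_k c_k u_k$, the difference $u_k(\cdot)-u_k(\cdot-s)$ is a step function with $U^p$-norm $\les 2$, so it suffices to atomically decompose $f(\cdot-s)$ times a step function. On each supporting subinterval $I=[a,b)$ with $b-a\les|s|$, I would split $f(t-s)=f(a-s)+(f(t-s)-f(a-s))$: the constant piece stays a step function (contributing $\|f\|_{L^\infty_{t,x}}$ times the atom structure), while the second piece has $L^\infty_x$-size $\les \min(|s|,b-a)\|f\|_{L^\infty_{t,x}}$ and is handled by a dyadic subdivision of $I$ together with the Lipschitz bound on $f$. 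The resulting atomic coefficients sum geometrically, giving the required $U^p$ bound.

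The main obstacle is Step~4: unlike the $V^p$ case, where differences over partitions plug in directly, multiplying a step function by the bandlimited but continuous $f(\cdot-s)$ destroys the step-function structure, so one has to build a controlled atomic decomposition of the product. A cleaner alternative would be to invoke the characterisation of $U^p$ in Theorem \ref{thm:characteristion of Up} and reduce the $U^p$ estimate to a dual $V^q$-type product bound, using the Fourier support of $g$ to justify a high-frequency truncation of the test multiplier $\bar f\,\p_t\varphi$; this sidesteps the atomic construction at the cost of having to track a modified test function inheriting $V^q$ control from $\varphi$.
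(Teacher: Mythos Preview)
Your $V^p$ argument is correct and is essentially the paper's proof, just organized slightly differently: the paper splits $(fg)(t_{j+1})-(fg)(t_j)$ directly as $f(t_{j+1})\bigl(g(t_{j+1})-g(t_j)\bigr)+\bigl(f(t_{j+1})-f(t_j)\bigr)g(t_j)$, dispatches the first term trivially, and invokes the Fourier--disjointness identity only for the cross term. Your pieces (i)--(ii) are this easy term (routed through the representation unnecessarily), and your piece (iii) is exactly the paper's cross term, handled the same way via the bandlimited Lipschitz bound and Lemma~\ref{lem:key ineq for stab lem}.

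The direct atomic approach for $U^p$ has a genuine gap. The ``constant piece'' you form is the step function $\sum_I f(a_I-s)\,c_I\,\ind_I$, obtained from $w=u_k-u_k(\cdot-s)$ by multiplying each value $c_I$ by a different scalar $f(a_I-s)$. You assert this ``stays a step function (contributing $\|f\|_{L^\infty_{t,x}}$ times the atom structure)'', but multiplication by a bounded piecewise-constant multiplier on the same partition does \emph{not} preserve $U^p$ (or $V^p$) norms in general: a single-step function multiplied by alternating signs becomes an $N$-step oscillation with norm $\approx N^{1/p}$. The only control you have is the obvious bound by $\bigl(\sum_I\|c_I\|_{L^2}^p\bigr)^{1/p}$, and for $w=u_k-u_k(\cdot-s)$ this quantity need not be $\lesa 1$ when the partitions $\{t_j\}$ and $\{t_j+s\}$ interleave heavily. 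The Lipschitz structure of $I\mapsto f(a_I-s)$ might rescue this, but you do not say how, and it is not immediate. The dyadic subdivision for the Lipschitz remainder has the same issue when summed across many intervals.

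Your suggested alternative is exactly what the paper does: it invokes Theorem~\ref{thm:characteristion of Up} and, using that $fg$ has temporal Fourier support in $\{|\tau|\g 3\}$, replaces the test function $\phi$ by $P^{(t)}_{\g 3}\phi$. Writing $(\p_t\phi)\,\overline{f}=\p_t(\phi\overline{f})-\phi\,\p_t\overline{f}$ and pairing the second term against $\p_t^{-1}g$, both contributions reduce to $\|P^{(t)}_{\g 3}\phi\cdot\overline{f}\|_{V^q}$ and $\|P^{(t)}_{\g 3}\phi\cdot\p_t\overline{f}\|_{V^q}$, to which the already-proved $V^q$ product estimate applies (this is why the paper first proves the $V^p$ case under the weaker hypothesis $\supp\mc{F}_tg\subset\RR\setminus(-3,3)$). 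This cleanly recycles the $V^p$ case and avoids the atomic bookkeeping entirely; you should take this route.
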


Under the support assumptions of the previous lemma, it is clear that for $s>0$ we have the Sobolev product inequality $\| fg \|_{\dot{H}^s}\lesa \|f \|_{L^\infty} \| g\|_{\dot{H}^s}$. In particular, the previous lemma should be thought of as the $U^p$ and $V^p$ version of the standard heuristic that derivatives can essentially always be taken to fall on the high frequency term.

We now turn to the proof of Lemma \ref{lem:stab}.

\begin{proof}[Proof of Lemma \ref{lem:stab}]
 We start with the $V^p$ bound under the weaker assumption that we only have $\supp \mc{F}_t g \subset \RR \setminus (-3, 3)$.  Clearly, it is enough to prove the bound for $|fg|_{V^p}$. Let $\tau=(t_j)_{j=1}^N\in \mb{P}$ be any partition. Then
\begin{align*}
\Big(\sum_{j=1}^{N-1}\|fg(t_{j+1})-fg(t_j)\|_{L^2_x}^p\Big)^{\frac1p}&\les \Big(\sum_{j=1}^{N-1}\| f(t_{j+1})(g(t_{j+1})-g(t_j))\|_{L^2_x}^p\Big)^{\frac1p} \\
    &\qquad \qquad+\Big(\sum_{j=1}^{N-1}\|(f(t_{j+1})-f(t_j))g(t_j)\|_{L^2_x}^p\Big)^{\frac1p},
\end{align*}
and due to
\[
\Big(\sum_{j=1}^{N-1}\| f(t_{j+1})(g(t_{j+1})-g(t_j))\|_{L^2_x}^p\Big)^{\frac1p}\les \|f\|_{L^\infty_{t,x}}\|g\|_{V^p}
\]
it is enough to prove
\begin{equation}\label{eqn:lem stab:reduced prob}\Big(\sum_{j=1}^{N-1}\|(f(t_{j+1})-f(t_j))g(t_j)\|_{L^2_x}^p\Big)^{\frac1p} \lesa \|f\|_{L^\infty_{t,x}}\|g\|_{V^p}.\end{equation}
The hypothesis on the Fourier-supports of $f$ and $g$ implies that there exists $\rho\in \mc{S}(\RR)$ such that
\[
f=\rho\ast f \; \text{ and }\; \rho\ast (f(\cdot-b)g)=0 \text{ for all }b \in \RR.
\]
Consequently, we have the identity
\[(f(t_{j+1})-f(t_j))g(t_j)=\int_\RR \rho(s) \big(f(t_{j+1}-s)-f(t_j-s)\big)\big(g(t_{j})-g(t_j-s)\big)ds.\]
Now, since
    \begin{align*} \|f(a)-f(b)\|_{L^\infty_{x}}&=\Big\|\int_{\RR} \big(\rho(a-s)-\rho(b-s)\big)f(s)ds\Big\|_{L^\infty_{x}}\\
    &=\Big\|\int_a^b \int_{\RR} \rho'(t-s)f(s)ds dt\Big\|_{L^\infty_{x}}\les |a-b|\|\rho'\|_{L^1}\|f\|_{L^\infty_{t,x}},
    \end{align*}
if we let $m_j = \min\{ t_{j+1} - t_j, 1\}$, an application of Lemma \ref{lem:key ineq for stab lem} gives
    \begin{align*}
   \Big(\sum_{j=1}^{N-1}\|&(f(t_{j+1})-f(t_j))g(t_j)\|_{L^2_x}^p\Big)^{\frac1p}\\
                        &\lesa \| f \|_{L^\infty_{t,x}} \int_\RR |\rho(s)| \Big(\sum_{j=1}^{N-1} m_j^p \|g(t_j) - g(t_j -s)\|_{L^2_x}^p\Big)^{\frac1p} ds \\
                        &\lesa \| f \|_{L^\infty_{t,x}} \| g\|_{V^p}  \int_\RR |\rho(s)| (1+|s|)^{\frac{1}{p}} ds \\
                        &\lesa \| f \|_{L^\infty_{t,x}} \| g \|_{V^p}
    \end{align*}
and hence \eref{eqn:lem stab:reduced prob} follows.

To prove the $U^p$ version, we observe that since $g \in U^p$ and $f$ is smooth and bounded, the product $fg$ is right continuous and satisfies the normalising condition $(fg)(t) \to 0$ in $L^2_x$ as $t \to -\infty$. Consequently, by Theorem \ref{thm:characteristion of Up}, it suffices to show that
    \begin{equation}\label{eqn:lem stab:Up temp} \Big| \int_\RR \lr{ \p_t \phi, fg}_{L^2_x} dt \Big| \lesa \| \phi\|_{V^q} \|f \|_{L^\infty} \| g \|_{U^p}
   	\end{equation}
where $\p_t \phi \in C^\infty_0$ and $ \frac{1}{q} + \frac{1}{p} = 1$. Applying the Fourier support assumption, together with the $V^p$ case of the product estimate proved above, we see that after writing $\p_t \phi f = \p_t (\phi f) - \p_t^{-1} \p_t( \phi \p_t f)$,
	\begin{align*}
		\Big| \int_\RR \lr{\p_t \phi, fg} dt \Big| &= \Big| \int_\RR \lr{\p_t P^{(t)}_{\g 3} \phi, fg} dt \Big|\\
				&\les \|P^{(t)}_{\g 3} \phi \overline{f} \|_{V^q} \| g\|_{U^p} + \|P^{(t)}_{\g 3} \phi \p_t \overline{f} \|_{V^q} \| \p_t^{-1} g\|_{U^p} \\
				&\lesa \| \phi\|_{V^q} \big( \| f\|_{L^\infty} \| g \|_{U^p} + \| \p_t f \|_{L^\infty} \| \p_t^{-1} g \|_{U^p}\big)
	\end{align*}
where $P^{(t)}_{\g 3}$ is a temporal Fourier projection to the set $\{|\tau| \g 3\}$, and we used Theorem \ref{thm:dual pairing}. The required bound \eqref{eqn:lem stab:Up temp} then follows by the boundedness of convolution operators on $U^p$ and $V^p$ together with the Fourier support assumptions on $f$ and $g$.
\end{proof}

In applications to PDE, in particular to the wave maps equation, we require a more general version of the previous lemma which includes a spatial multiplier. To this end, for $\phi: \RR \to L^\infty_x L^2_y + L^\infty_y L^2_x$ and $u, v \in L^\infty_t L^2_x$, we define
	$$ \mc{T}_\phi[u,v](t,x) = \int_{\RR^n}  \phi(t,x-y,y) u(t,x-y) v(t,y) dy. $$
An application of Fubini and H\"older shows that $\mc{T}_\phi(t,x,y): L^2 \times L^2 \to L^2$  with the fixed time bound
	$$ \| \mc{T}_\phi[u,v] \|_{L^\infty_t L^2_x} \les \Big(\sup_{t\in \RR} \| \phi(t,x,y) \|_{L^\infty_x L^2_y + L^\infty_y L^2_x}\Big) \| u \|_{L^\infty_t L^2_x} \| v \|_{L^\infty_t L^2_x}.$$
Adapting the proof of Lemma \ref{lem:stab}, under certain temporal Fourier support conditions on $\phi$ and  functions $u,v \in V^p$ (or $U^p$), we can show that $\mc{T}_\phi[u,v] \in V^p$ (or $U^p$).

\begin{theorem}\label{thm:stab with conv}
  Let $1<p<\infty$. Let $\phi(t,x,y): \RR \to L^\infty_x L^2_y + L^\infty_y L^2_x$ continuous and bounded with $\supp \mc{F}_t \phi \subset (-1, 1)$. If $u,v \in V^p$ and $\supp \mc{F}_t u \subset \RR \setminus (-4, 4)$, then we have
  	$$ \| \mc{T}_\phi[u,v] \|_{V^p} \lesa  \Big(\sup_{t\in \RR} \| \phi(t,x,y) \|_{L^\infty_x L^2_y + L^\infty_y L^2_x}\Big) \| u \|_{V^p} \| v \|_{V^p}.$$
 Moreover, if in addition we have $u,v \in U^p$, then $\mc{T}_\phi[u,v] \in U^p$ and
 		$$ \| \mc{T}_\phi[u,v] \|_{U^p} \lesa  \Big(\sup_{t\in \RR} \| \phi(t,x,y) \|_{L^\infty_x L^2_y + L^\infty_y L^2_x}\Big) \| u \|_{U^p} \| v \|_{U^p}.$$
\end{theorem}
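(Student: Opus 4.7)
The plan is to follow closely the strategy of Lemma \ref{lem:stab}, extended to the trilinear structure of $\mc{T}_\phi$. For the $V^p$ bound, I will telescope $\mc{T}_\phi[u,v](t_{j+1}) - \mc{T}_\phi[u,v](t_j) = A_j + B_j + C_j$ into the three contributions arising from replacing $\phi$, $u$, or $v$ at time $t_j$ by its value at $t_{j+1}$. The pieces $B_j$ and $C_j$ are handled at once by the fixed-time bilinear bound $\|\mc{T}_\psi[f,g]\|_{L^2}\leq \|\psi\|_{L^\infty_x L^2_y + L^\infty_y L^2_x}\|f\|_{L^2}\|g\|_{L^2}$ together with summation in $\ell^p$, contributing $\lesa M\|u\|_{V^p}\|v\|_{V^p}$.

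The decisive piece is $A_j$. I will pick $\rho\in\mathcal{S}$ with $\hat\rho\equiv 1$ on $(-1,1)$ and $\supp\hat\rho\subset(-3,3)$, so $\phi = \rho*_t\phi$ and, by Fourier analysis in $s$, $\int_\RR\rho(s)\phi(t-s,z,y)u(t'-s,z)\,ds = 0$ for any $t,t'$ (the $s$-Fourier transform of the integrand is supported outside $(-3,3)$). A direct computation then yields the identity
\[
A_j = \int_\RR \rho(s)\,\mc{T}_{\phi(t_{j+1}-s)-\phi(t_j-s)}\big[u(t_j)-u(t_j-s),\,v(t_j)\big]\,ds.
\]
Combining the estimate $\|\phi(a)-\phi(b)\|_{L^\infty_x L^2_y + L^\infty_y L^2_x}\lesa M\min(|a-b|,1)$ (which follows from $\phi = \rho'*_t\phi$) with the fixed-time bilinear bound, Minkowski's inequality, and Lemma \ref{lem:key ineq for stab lem} applied with weight $m_j = \min(t_{j+1}-t_j, 1)$ yields $\bigl(\sum_j\|A_j\|_{L^2}^p\bigr)^{1/p}\lesa M\|u\|_{V^p}\|v\|_{L^\infty_t L^2_x}$, completing the $V^p$ bound.

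For the $U^p$ estimate, I will reduce to a single-variable assertion using the atomic structure of $v$. By bilinearity it suffices to take $v$ a $U^p$-atom $v=\sum_l g_l\ind_{I_l}$ with $(\sum\|g_l\|^p)^{1/p}=1$; then $\mc{T}_\phi[u,v]=\sum_l \ind_{I_l}(t)W_l$ with $W_l(t,x)=\int\phi(t,x-y,y)u(t,x-y)g_l(y)\,dy$. Since the $\ind_{I_l}W_l$ have pairwise disjoint temporal supports, a local variant of Theorem \ref{thm:dual pairing} (bounding the pairing with $\p_t\psi$ by the $V^q$-variation of $\psi$ restricted to $I_l$) together with H\"older's inequality in $l$ gives $\|\sum_l \ind_{I_l}W_l\|_{U^p}^p \lesa \sum_l \|W_l\|_{U^p}^p$. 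In view of the atom normalization it thus suffices to prove $\|W\|_{U^p}\lesa M\|g\|_{L^2}\|u\|_{U^p}$ for any $g\in L^2$ regarded as a $t$-independent function, where $W(t,x) = \int\phi(t,x-y,y)u(t,x-y)g(y)\,dy$.

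To prove this last estimate I invoke Theorem \ref{thm:characteristion of Up} and bound $\big|\int\lr{\p_t\psi,W}\,dt\big|$ for $\psi\in C^\infty_0$ with $|\psi|_{V^q}\leq 1$, $\frac{1}{p}+\frac{1}{q}=1$. Setting $z=x-y$ and pairing against $u$ rewrites this as $\int\lr{u,K}_{L^2_z}\,dt$ with $K(t,z)=\int\p_t\psi(t,z+y)\phi(t,z,y)g(y)\,dy$; Leibniz $\p_t\psi\cdot\phi=\p_t(\psi\phi)-\psi\p_t\phi$ decomposes $K = \p_t\tilde K - E$ with $\tilde K = \int\psi\phi g\,dy$ and $E = \int\psi\p_t\phi g\,dy$. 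Theorem \ref{thm:dual pairing} gives $|\int\lr{u,\p_t\tilde K}|\leq|\tilde K|_{V^q(L^2_z)}\|u\|_{U^p}$; crucially, $u = P^{(t)}_{\geq 4}u$ and self-adjointness of $P^{(t)}_{\geq 4}$ let me replace $\tilde K$ by $P^{(t)}_{\geq 4}\tilde K = P^{(t)}_{\geq 4}\int\tilde\psi\phi g\,dy$, where $\tilde\psi = P^{(t)}_{\geq 3}\psi$, since the contribution from $P^{(t)}_{<3}\psi\cdot\phi$ has $t$-Fourier support in $(-4,4)$ and is annihilated. Because $\tilde\psi$ now has favourable Fourier support $|\tau|\geq 3$, the telescoping-plus-$\rho$-identity argument of the $V^p$ case applies with $\tilde\psi$ in the role of $u$ and $g$ contributing zero $t$-variation, giving $|\tilde K|_{V^q(L^2_z)}\lesa M|\psi|_{V^q}\|g\|_{L^2}$. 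The error term $E$ is handled identically, with $\p_t\phi$ in place of $\phi$ (still $t$-low-frequency) and $\p_t$ inverted on $P^{(t)}_{\geq 4}E$ using its Fourier support. The principal obstacle throughout is that neither $\psi$ nor $v$ carries a Fourier-support hypothesis, which obstructs any direct adaptation of Lemma \ref{lem:stab} to the trilinear form; transferring the Fourier support of $u$ onto the test side via self-adjointness of $P^{(t)}_{\geq 4}$ is the device that unlocks the argument.
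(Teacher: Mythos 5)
Your $V^p$ argument is essentially the paper's: the same telescoping, the same reduction to the case where the difference falls on $\phi$, the same use of the $\rho$-identity and Lemma~\ref{lem:key ineq for stab lem}. The $U^p$ argument, however, takes a genuinely different route. The paper first splits off the low temporal-frequency part $P^{(t)}_{\les 4}\mc{T}_\phi[u,v]$ and handles it by the Besov embedding of Remark~\ref{rmk:besov embedding}, then applies the characterisation of $U^p$ and, after the Leibniz step, reduces \emph{both} $u$ and $v$ to atoms and computes the discrete pairing directly. You instead atomize only $v$, exploit the fact that $W_l(t,x)=\int\phi(t,x-y,y)u(t,x-y)g_l(y)\,dy$ has temporal Fourier support already outside $(-3,3)$ (so no low-frequency step is needed), and reduce the whole proof to a bilinear ``$\phi$ times $u$'' estimate against a $t$-independent $g$. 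That reduction is clean and avoids dealing with two atomic partitions at once.

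Two steps that you assert in passing deserve explicit proof, though both are in fact correct. First, the temporal orthogonality $\|\sum_l \ind_{I_l}W_l\|_{U^p}^p\lesa\sum_l\|W_l\|_{U^p}^p$: this is not a standard fact quoted anywhere in the paper. It can be proved via the discrete pairing by writing $B(w,\ind_{I_l}W_l)=B(\pi_l w,W_l)$ with $\pi_l w(t)=\ind_{[s_{j_l^-},\infty)}(t)\big[w(\min(t,s_{j_l^+}))-w(s_{j_l^--1})\big]$, noting $\sum_l|\pi_l w|_{V^q}^q\les |w|_{V^q}^q$, and then applying H\"older; this is closely analogous to the $V^q$ localization bound in the proof of Lemma~\ref{lem:local to global}, but is not literally the same statement (which concerns $u-u_\tau$ rather than $u$ directly), so it merits its own paragraph. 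Second, after the substitution $z=x-y$ the kernel $K(t,z)=\int\p_t\psi(t,z+y)\phi(t,z,y)g(y)\,dy$ is the \emph{adjoint} of $\mc{T}_\phi[\cdot,g]$ in the first slot, not $\mc{T}_\phi$ itself; the fixed-time bound $\|\int a(z+y)\phi(z,y)g(y)\,dy\|_{L^2_z}\les\|\phi\|_{L^\infty_xL^2_y+L^\infty_yL^2_x}\|a\|_{L^2}\|g\|_{L^2}$ still holds (by Cauchy--Schwarz in $y$ for the $L^\infty_xL^2_y$ piece, and in $w=z+y$ for the $L^\infty_yL^2_x$ piece), but this needs to be checked since it is not the same kernel form. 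With those two points spelled out, your argument is complete and is arguably more modular than the paper's, at the cost of introducing the temporal orthogonality lemma.
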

\begin{proof}
Let $S:=\sup_{t\in \RR} \| \phi(t,x,y) \|_{L^\infty_x L^2_y + L^\infty_y L^2_x}$.
We start with the $V^p$ case. As in the proof of Lemma \ref{lem:stab}, we can reduce to considering the case when the difference falls on $\phi$, thus our goal is to show that for $(t_j)_1^N \in \mb{P}$, we have
	\begin{equation}\label{eqn:thm stab with conv:reduced prob I}
    \begin{split}
		\sum_{j=1}^{N-1}\Big\| \int_{\RR^n} \big(\phi(t_{j+1},x-y,y)& - \phi(t_j, x-y, y) \big) u(t_j,x-y) v(t_j,y)dy \Big\|_{L^2_x(\RR^n)}^p\\
                &\lesa S^p \| u \|_{V^p}^p\| v \|_{V^p}^p.
    \end{split}
	\end{equation}
Following the argument used in Lemma \ref{lem:stab}, we see that the temporal Fourier support assumption implies the identity
	\begin{align*}
&\big(\phi(t_{j+1},z,y) - \phi(t_j, z, y) \big) u(t_j, z) \\
={}& \int_\RR \rho(s) \big(\phi(t_{j+1}-s,z,y) - \phi(t_j-s, z, y) \big) \big( u(t_j, z) - u(t_j-s,z) \big) ds\\
={}& \int_\RR \int_\RR \rho(s) \big( \rho(t_{j+1}-s-s') - \rho(t_j-s-s')\big) \phi(s',z,y) \big( u(t_j, z) - u(t_j-s,z) \big) ds\, ds'
        \end{align*}
        for some $\rho \in \mc{S}(\RR)$ with $\mc{F}_t \rho = 1$ on $\supp \mc{F}_t \phi$. Consequently we see that
	\begin{align*}
		&\Big\| \int_{\RR^n} \big(\phi(t_{j+1},x-y,y) -  \phi(t_j, x-y, y) \big) u(t_j,x-y) v(t_j,y)dy \Big\|_{L^2_x}\\
		    \les{}& S \int_\RR \int_\RR  |\rho(s)| |\rho(t_{j+1} -s') - \rho(t_j -s')| \| u(t_j)- u(t_j-s)\|_{L^2}\|v(t_j)\|_{L^2}ds\, ds'\\
			\lesa{}& S \int_\RR |\rho(s)|  m_j \| u(t_j) - u(t_j-s) \|_{L^2}ds\, \|v\|_{L^\infty_t L^2_x}
	\end{align*}
with $m_j = \min\{ t_{j+1} - t_j, 1\}$. Hence \eqref{eqn:thm stab with conv:reduced prob I} follows from Minkowski's inequality and Lemma \ref{lem:key ineq for stab lem}.

We now turn to the proof of the $U^p$ case. Since $u,v\in U^p$ and $\phi$ is continuous, $\mathcal{T}_\phi(u,v)$ is right continuous and converges to zero as $t\to -\infty$. Consequently, applying the Besov embedding in Remark \ref{rmk:besov embedding}, we have
\begin{align*}
\|P^{(t)}_{\les 4} \mathcal{T}_\phi(u,v)\|_{U^p} &\lesa \sum_{d \lesa 4} d^{\frac{1}{p}} \|P^{(t)}_{d} \mathcal{T}_\phi(u,v)\|_{U^p} \\
&\lesa{} \|\mathcal{T}_\phi(u,v)\|_{L^p_tL^2_x} \lesa{}S \|u\|_{L^p_t L^2_x}\|v\|_{L^\infty_t L^2_x}\lesa{}S \|u\|_{U^p}\|v\|_{U^p} .
\end{align*}
For the remaining term, we note that an application of Theorem \ref{thm:characteristion of Up}, reduces the problem to proving the bound
    $$\Big| \int_\RR \lr{ \p_t \psi, \mc{T}_\phi[u,v]}_{L^2} dt \Big| \lesa  S \| \psi \|_{V^{p'}} \| u \|_{U^p} \| v\|_{U^p} $$
where $\psi, \mc{F}_t \psi \in C^\infty$ and $\supp \mc{F}_t \psi \subset \RR \setminus (-4, 4)$. We write the left hand side as
    \begin{align} \Big| \int_\RR \lr{ &\p_t \psi,  \mc{T}_\phi[u,v]}_{L^2} dt \Big|\notag \\
     &= \Big| \int_\RR \big\langle \p_t  \psi(t,x), \phi(t,x-y,y) u(t,x-y) v(t,y) \big\rangle_{L^2_{x,y}} dt \Big| \notag \\
    &\les \Big| \int_\RR \big\langle   \psi(t,x), \p_t \phi(t,x-y,y) u(t,x-y) v(t,y) \big\rangle_{L^2_{x,y}} dt \Big|\notag \\
    &\qquad \qquad
    +\Big| \int_\RR \big\langle \p_t \big( \psi(t,x) \overline{\phi}(t,x-y,y)\big), u(t,x-y) v(t,y) \big\rangle_{L^2_{x,y}} dt \Big|
    \label{eqn:thm stab with conv:decomp}
    \end{align}
To bound the first term in \eref{eqn:thm stab with conv:decomp}, we use the temporal support assumption on $\phi$ to write $\p_t \phi(t) = \int_\RR \p_t \rho(s) \phi(t-s) ds$ with $\| \p_t \rho \|_{L^1_s} \lesa 1$, hence again applying the Besov embedding in Remark \ref{rmk:besov embedding} we obtain
    \begin{align*}
      \Big| \int_\RR \big\langle  &\psi(t,x), \p_t \phi(t,x-y,y) u(t,x-y) v(t,y) \big\rangle_{L^2_{x,y}} dt \Big|\\
                &\les \| \psi \|_{L^{p'}_t L^2_x} \int_\RR |\p_s \rho(s)| \Big\| \int_{\RR^n}  \phi(t-s,x-y,y) u(t,x-y) v(t,y) dy \Big\|_{L^p_t L^2_x} ds \\
                &\lesa S \|  \psi \|_{V^{p'}} \|u\|_{L^p_t L^2_x} \|v\|_{L^\infty_t L^2_x} \lesa  S \|  \psi \|_{V^{p'}} \|u\|_{U^p} \|v\|_{U^p}
    \end{align*}
where we used the temporal Fourier support assumptions on $\psi$ and $u$.

On the other hand, to bound the second term in \eref{eqn:thm stab with conv:decomp}, we first observe that it is enough to consider the case where $u$ and $v$ are $U^p$ atoms with partitions $\tau$ and $\tau'$. Let $(t_j)_{j=1}^N = \tau \cup \tau'$. Computing the integral in time, we deduce that
    \begin{align*}  &\int_\RR \big\langle \p_t \big( \psi(t,x) \overline{\phi}(t,x-y,y)\big), u(t,x-y) v(t,y) \big\rangle_{L^2_{x,y}} dt  \\
    &= \sum_{j=1}^{N-1}  \big\langle   \psi(t_{j+1},x) \overline{\phi}(t_{j+1},x-y,y) - \psi(t_{j},x) \overline{\phi}(t_{j},x-y,y) , u(t_j,x-y) v(t_j,y) \big\rangle_{L^2_{x,y}} \\
    &\qquad \qquad \qquad -  \big\langle  \psi(t_{N}) , \mc{T}_\phi[u, v](t_N) \big\rangle_{L^2_{x}}  \\
    &=  \sum_{j=1}^{N-1}  \big\langle   \psi(t_{j+1},x) \big(\overline{\phi}(t_{j+1},x-y,y) - \overline{\phi}(t_{j},x-y,y)\big) , u(t_j,x-y) v(t_j,y) \big\rangle_{L^2_{x,y}}  \\
    &\qquad \qquad \qquad + \sum_{j=1}^{N-1}  \big\langle   \psi(t_{j+1}) - \psi(t_{j}), \mc{T}_\phi[u, v](t_j) \big\rangle_{L^2_{x,y}} -   \big\langle  \psi(t_{N}) , \mc{T}_\phi[u, v](t_N) \big\rangle_{L^2_{x}}.
    \end{align*}
Applying H\"{o}lder's inequality and the fixed time convolution bound, we have
    \begin{align*}
      \sum_{j=1}^{N-1} \big| \big\langle &  \psi(t_{j+1}) - \psi(t_{j}), \mc{T}_\phi[u, v](t_j) \big\rangle_{L^2_{x,y}}\big| + \big|   \big\langle  \psi(t_{N}) , \mc{T}_\phi[u, v](t_N) \big\rangle_{L^2_{x}} \big|\\
            &\lesa S \Big( \sum_{j=1}^{N-1} \| \psi(t_{j+1}) - \psi(t_j) \|_{L^2}^{p'} \Big)^{\frac{1}{p'}} \Big( \sum_{j=1}^{N-1} \| u(t_j)\|_{L^2}^p \| v(t_j) \|_{L^2}^p\Big)^\frac{1}{p} \\
            &\qquad \qquad +  S \| \psi \|_{L^\infty_t L^2_x} \| u \|_{L^\infty_t L^2_x} \| v \|_{L^\infty_t L^2_x} \\
            &\lesa S  \| \psi \|_{V^{p'}}
    \end{align*}
where we used the fact that $u$ and $v$ are $U^p$ atoms and $(t_j)_{j=1}^{N} = \tau \cup \tau'$. Consequently, it only remains to prove that
    \begin{equation}\label{eqn:thm stab with conv:final Up bound}
        \begin{split}
      \sum_{j=1}^{N-1} \big| \big\langle   \psi(t_{j+1},x) \big(\overline{\phi}(t_{j+1},x-y,y) - \overline{\phi}(t_{j},x-y,y)\big)& , u(t_j,x-y) v(t_j,y) \big\rangle_{L^2_{x,y}}\big|\\
            &\lesa S \| \psi \|_{V^{p'}}.
        \end{split}
    \end{equation}
This follows by adapting the proof of the $V^p$ case above, namely, we first use the temporal Fourier support assumption to obtain the identity
    \begin{align*}
&\psi(t_{j+1}, x)\big(\overline{\phi}(t_{j+1},z,y) - \overline{\phi}(t_j, z, y) \big) \\
={}& \int_\RR \rho(s) \big(\overline{\phi}(t_{j+1}-s,z,y) - \overline{\phi}(t_j-s, z, y) \big) \big( \psi(t_{j+1}, x) - \psi(t_{j+1}-s, x) \big) ds\\
={}& \int_\RR \int_\RR \rho(s) \big( \rho(t_{j+1}-s') - \rho(t_j-s')\big) \overline{\phi}(s'-s,z,y) \big( \psi(t_{j+1}, x) - \psi(t_{j+1}-s, x)\big) ds\, ds'
        \end{align*}
and hence
    \begin{align*}
      \big| \big\langle  & \psi(t_{j+1},x) \big(\overline{\phi}(t_{j+1},x-y,y) - \overline{\phi}(t_{j},x-y,y)\big) , u(t_j,x-y) v(t_j,y) \big\rangle_{L^2_{x,y}}\big| \\
      &\lesa S \| u(t_j) \|_{L^2} \| v(t_j) \|_{L^2} \int_\RR |\rho(s)| m_j \| \psi(t_j) - \psi(t_j - s) \|_{L^2} ds.
    \end{align*}
Summing up, applying H\"older's inequality together with Lemma \ref{lem:key ineq for stab lem}, we then deduce \eref{eqn:thm stab with conv:final Up bound}.
\end{proof}

\subsection{Adapted $U^p$ and $V^p$}\label{subsec:adapted}

Given a phase $\Phi: \RR^n \to \RR$ (measurable and of moderate growth), we define the adapted function spaces $U^p_\Phi$ and $V^p_\Phi$ as
    $$ U^p_\Phi = \big\{ u \, \big| \,\,e^{i t \Phi(-i\nabla)} u(t) \in U^p\,\big\}, \qquad \qquad V^p_\Phi = \big\{ v \, \big| \, \, e^{ i t \Phi(-i\nabla)} v(t) \in V^p \big\},$$
as in \cite{Koch2005,Hadac2009}.
As in the case of $U^p$ and $V^p$, elements of $U^p_\Phi$ and $V^p_\Phi$ are right continuous, and approach zero as $t\to -\infty$. With the norms
\[
\|u\|_{U^p_\Phi}=\|t \mapsto e^{i t \Phi(-i\nabla)}u(t)\|_{U^p}, \text{ resp. }\|v\|_{V^p_\Phi}=\|t \mapsto e^{i t \Phi(-i\nabla)}v(t)\|_{V^p}
\]
these spaces $U^p_\Phi$ and $V^p_\Phi$ are Banach spaces.
They are constructed to contain perturbations of solutions to the linear PDE
        $$ -i \p_t u + \Phi(-i\nabla) u = 0.$$
In particular, for any $T \in \RR$ and $f \in L^2_x$, we have $\ind_{[T, \infty)}(t) e^{-i t \Phi(-i\nabla)} f \in U^p_\Phi$ (and $V^p_\Phi$). Note that the cutoff $\ind_{[T, \infty)}$ is essential here due to the normalisation at $t=-\infty$ (of course one could also choose a smooth cutoff $\rho(t) \in C^\infty$ instead).

The results obtained above for the $U^p$ and $V^p$ spaces can all be translated to the setting of the adapted function spaces $U^p_{\Phi}$ and $V^p_{\Phi}$. For instance, left and right limits always exist in $U^p$ and $V^p$, an application of Theorem \ref{thm:vu-emb} implies that for $p<q$ we have the embedding $V^p_{\Phi} \subset U^q_{\Phi}$, and Theorem \ref{thm:besov embedding} gives the  embedding
        \begin{equation}\label{eqn:adapted Up vs Xsb}
        		\| u \|_{U^p_{\Phi}} \approx \| u \|_{V^p_{\Phi}} \approx d^\frac{1}{p} \| u \|_{L^p_t L^2_x}
        \end{equation}
for all $u \in L^p_t L^2_x$ with $\supp \widetilde{u} \subset \{ (\tau, \xi) \in \RR^{1+n}|\, | \tau + \Phi(\xi)| \approx d\}$. In particular, we have $u \in U^p_{\Phi}$. Similarly, Theorem \ref{thm:dual pairing} and Theorem \ref{thm:characteristion of Up}, imply that
	$$ \| u \|_{U^p_{\Phi}} =  \sup_{ \substack{ -i\p_t v + \Phi(-i\nabla) v \in L^1_t L^2_x \\ |v|_{V^q_{\Phi}}\les 1}} \Big| \int_\RR \lr{ - i \p_t v + \Phi(-i\nabla)v, u}_{L^2} dt \Big| $$
and if $ -i \p_t \psi + \Phi(-i\nabla) \psi = F$, then
	$$ \| \ind_{[0, \infty)}(t) \psi \|_{U^p_{\Phi}} \approx \| \psi(0) \|_{L^2_x} + \sup_{ \substack{ v \in L^1_t L^2_x \\ |v|_{V^q_{\Phi}}\les 1}} \Big| \int_0^\infty \lr{ v, F}_{L^2} dt \Big| $$
in the sense that if the right-hand side is finite, and $u$ is right continuous with $u(t) \to 0$ as $t \to -\infty$, then $u, \psi \in U^p_\Phi$. Clearly, in view of Remark \ref{rem:compact support}, for reasonable phases $\Phi$ we can take the sup over $v\in C^\infty_0$ with $|v|_{V^q_\Phi} \les 1$ instead.  These properties show why the adapted function spaces are well adapted to studying the PDE $ - i \p_t \psi + \Phi(-i\nabla) \psi = F$. Further properties of the adapted function spaces are also known, see for instance the interpolation estimates in \cite{Hadac2009, Koch2005,  Koch2016}, and the vector valued transference type arguments in \cite{Candy2017b, Candy2016}.

We now give two applications of the product estimates in Subsection \ref{subsec:alg}. The first is an  application of Lemma \ref{lem:stab}  to remove the solution operators $e^{-it \Phi(-i\nabla)}$ in certain high modulation regimes.

\begin{proposition} Let $1<p<\infty$, $\Omega \subset \RR^n$ measurable and $\Phi: \Omega \to \RR$ such that $|\Phi(\xi)| \les 1$ for $\xi \in \Omega$ and assume that $\supp \widetilde{u} \subset \{ |\tau| \g 5\} \times \Omega$. If $u \in V^p$ then
		$$ \| u \|_{V^p_{\Phi}} \approx \| u \|_{V^p}. $$
Similarly, if $u \in U^p$ we have
		$$\| u \|_{U^p_{\Phi}} \approx  \| u \|_{U^p}. $$

\end{proposition}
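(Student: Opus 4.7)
The plan is to reduce both inequalities to Lemma \ref{lem:stab} by passing to the spatial Fourier side. Writing $F(t,\xi) = e^{it\Phi(\xi)}$ (extended by zero outside $\Omega$) and applying Plancherel pointwise in $t$, we have $\|e^{it\Phi(-i\nabla)}u(t)\|_{L^2_x} = \|F(t,\cdot)\widehat u(t,\cdot)\|_{L^2_\xi}$; since the $V^p$ and $U^p$ norms only see $L^2$-norms of differences evaluated at partition points, this upgrades to $\|u\|_{V^p_\Phi} = \|F\widehat u\|_{V^p_\xi}$ and $\|u\|_{V^p} = \|\widehat u\|_{V^p_\xi}$ (and analogously in $U^p$), where $V^p_\xi$ denotes the $V^p$-space built on the Hilbert space $L^2_\xi$ in place of $L^2_x$. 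Since Lemma \ref{lem:stab} and its proof only use the Hilbert-space structure of the target, they apply verbatim to $L^2_\xi$-valued functions.

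Next I will verify the temporal Fourier support hypotheses. The assumption $|\Phi(\xi)| \les 1$ gives $\supp \mc{F}_t F \subset [-1,1]\times\RR^n$, while the hypothesis on $\widetilde u$ yields $\supp \mc{F}_t \widehat u \subset \{|\tau|\g 5\}\subset \RR\setminus(-4,4)$. A direct calculation also shows $\mc{F}_t(F\widehat u)(\tau,\xi) = \widetilde u(\tau - \Phi(\xi), \xi)$, hence $\supp \mc{F}_t(F\widehat u) \subset \{|\tau|\g 4\}\subset \RR\setminus(-4,4)$ as well. Lemma \ref{lem:stab} applied with $f = F$ and $g = \widehat u$ then produces
\[
\|u\|_{V^p_\Phi} = \|F\widehat u\|_{V^p_\xi} \lesa \|F\|_{L^\infty_{t,\xi}}\|\widehat u\|_{V^p_\xi} = \|u\|_{V^p},
\]
using $\|F\|_{L^\infty} = 1$. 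For the reverse inequality I will write $\widehat u = \overline F\cdot (F\widehat u)$ and apply Lemma \ref{lem:stab} a second time, now with $f = \overline F$ (same Fourier support and $L^\infty$-norm as $F$) and $g = F\widehat u$ (whose temporal Fourier support in $\{|\tau|\g 4\}$ was just verified); this yields $\|u\|_{V^p} \lesa \|u\|_{V^p_\Phi}$. The $U^p$ inequality is obtained by exactly the same two applications, invoking the $U^p$ conclusion of Lemma \ref{lem:stab} in place of the $V^p$ conclusion.

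The only mild obstacle I foresee is that Lemma \ref{lem:stab} is stated with the open interval $(-1,1)$ for $\supp \mc{F}_t f$, whereas $F$ is Fourier-supported in the closed interval $[-1,1]$. Inspection of the proof shows that the actual requirement is the existence of $\rho\in\mc{S}(\RR)$ with $\widehat\rho\equiv 1$ on $\supp \mc{F}_t f$ and $\widehat\rho\equiv 0$ on $\supp \mc{F}_t f + \supp \mc{F}_t g$. In our setting this amounts to $\widehat\rho\equiv 1$ on $[-1,1]$ and $\widehat\rho\equiv 0$ on $\{|\tau|\g 3\}$, which is clearly achievable by a Schwartz bump; alternatively, one may replace $\Phi$ by $(1-\varepsilon)\Phi$, apply the lemma verbatim, and pass to the limit $\varepsilon\to 0^+$.
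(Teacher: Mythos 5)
Your proof is correct and follows the same route as the paper's: both pass to the spatial Fourier side via Plancherel and then invoke Lemma \ref{lem:stab} with $f(t,\xi) = e^{it\Phi(\xi)}\ind_\Omega(\xi)$, whose temporal Fourier support lies in $\{|\tau|\les 1\}$. You fill in two details the paper leaves implicit — the reverse inequality via $\widehat u = \overline{F}\,(F\widehat u)$ (noting that $F\widehat u$ still has temporal Fourier support in $\{|\tau|\g 4\}$), and the open-versus-closed support interval technicality — but neither changes the argument in substance.
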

\begin{proof}
In the $V^p$ case, an application of Plancherel shows that it suffices to prove the inequality
    $\| e^{  i t\Phi(\xi)} \widehat{u} \|_{V^p} \lesa \| \widehat{u} \|_{V^p}$.
The support assumption on $\widehat{u}$, implies that we may write $e^{it \Phi(\xi)} \widehat{u}(t,\xi) = e^{it\Phi(\xi)}\ind_{\Omega}(\xi) \widehat{u}(t,\xi)$. Since $|\Phi(\xi)|\les 1$ for $\xi \in \Omega$, we conclude that $\supp \mc{F}_t[ e^{it \Phi(\xi)} \ind_{\Omega}(\xi)] \subset \{ |\tau| \les 1\}$. Thus an application of  Lemma \ref{lem:stab} gives
    $$ \| e^{  i t\Phi(\xi)} \widehat{u} \|_{V^p} = \| e^{i t \Phi(\xi)} \ind_\Omega \widehat{u} \|_{V^p} \lesa \| e^{it\Phi(\xi)} \ind_\Omega \|_{L^\infty_{t,\xi}} \| \widehat{u} \|_{V^p} \lesa \| \widehat{u} \|_{V^p}$$
as required. An identical argument gives the $U^p$ case.
\end{proof}

The second is a reformulation of Theorem \ref{thm:stab with conv}. Define the spatial bilinear Fourier multiplier
        $$ \mc{M}[u,v](x) = \int_{\RR^n} \int_{\RR^n} m(\xi-\eta, \eta) \widehat{u}(\xi-\eta) \widehat{v}(\eta) e^{i x \cdot \xi} d\eta d \xi.$$

\begin{theorem}\label{thm:general adapted high mod prod}
Let $1<p<\infty$. Let $m:\RR^n \times \RR^n \to \CC$ and $\Phi_j$, $j=0, 1, 2$ be real-valued phases such that
        $$| \Phi_0(\xi+\eta) - \Phi_1(\xi) - \Phi_2(\eta)| \les 1$$
for all $(\xi, \eta) \in \supp m$. If $u \in V^p_{\Phi_1}$ with $\supp \widetilde{u} \subset \{ |\tau + \Phi_1(\xi)| \g 4 \}$ and $v \in V^p_{\Phi_2}$, then $M[u,v]\in V^p_{\Phi_0}$ and
        $$ \| \mc{M}[u,v] \|_{V^p_{\Phi_0}} \lesa \| m(\xi, \eta)\|_{L^\infty_\xi L^2_\eta + L^\infty_\eta L^2_\xi} \| u \|_{V^p_{\Phi_1}} \| v \|_{V^p_{\Phi_2}}. $$
If in addition $u \in U^p_{\Phi_1}$ and $v \in U^p_{\Phi_2}$, then $\mc{M}[u,v] \in U^p_{\Phi_0}$ and 
       $$ \| \mc{M}[u,v] \|_{U^p_{\Phi_0}} \lesa \| m(\xi, \eta)\|_{L^\infty_\xi L^2_\eta + L^\infty_\eta L^2_\xi} \| u \|_{U^p_{\Phi_1}} \| v \|_{U^p_{\Phi_2}}. $$
\end{theorem}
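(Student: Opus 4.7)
The plan is to reduce the estimate to Theorem \ref{thm:stab with conv} by conjugating the propagators onto the multiplier symbol and working entirely on the Fourier side. Set $U(t) = e^{it\Phi_1(-i\nabla)}u(t)$ and $V(t) = e^{it\Phi_2(-i\nabla)}v(t)$, so that by definition $\|u\|_{V^p_{\Phi_1}} = \|U\|_{V^p}$ and $\|v\|_{V^p_{\Phi_2}} = \|V\|_{V^p}$ (and analogously in the $U^p$ setting). Since Plancherel is an isometry $L^2_x \to L^2_\xi$, the desired bound is equivalent to estimating $\widehat{e^{it\Phi_0(-i\nabla)}\mathcal{M}[u,v]}(t, \cdot)$ in $V^p(L^2_\xi)$ (respectively $U^p(L^2_\xi)$). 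A direct computation gives
\[
\widehat{e^{it\Phi_0(-i\nabla)}\mathcal{M}[u,v]}(t,\xi) = \int_{\RR^n} m(\xi-\eta, \eta)\, e^{it\Psi(\xi-\eta, \eta)}\, \widehat{U}(t,\xi-\eta)\, \widehat{V}(t,\eta)\, d\eta,
\]
where $\Psi(z,y) := \Phi_0(z+y) - \Phi_1(z) - \Phi_2(y)$. Setting $\phi(t, z, y) := m(z, y) e^{it\Psi(z,y)}$, the right-hand side is precisely $\mathcal{T}_\phi[\widehat{U}, \widehat{V}](t, \xi)$ in the notation of Theorem \ref{thm:stab with conv}, with $(z, y)$ playing the role of the spatial variables $(x-y, y)$.

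It remains to verify the hypotheses of Theorem \ref{thm:stab with conv} for this choice of $\phi$, $\widehat{U}$, $\widehat{V}$. Since $|e^{it\Psi}| = 1$ pointwise and $\phi$ is supported in $\supp m$, the uniform symbol bound
\[
\sup_{t \in \RR} \|\phi(t, \cdot, \cdot)\|_{L^\infty_z L^2_y + L^\infty_y L^2_z} \les \|m\|_{L^\infty_\xi L^2_\eta + L^\infty_\eta L^2_\xi}
\]
is immediate. The phase-matching hypothesis $|\Phi_0(\xi+\eta) - \Phi_1(\xi) - \Phi_2(\eta)| \les 1$ on $\supp m$ says precisely that $|\Psi(z, y)| \les 1$ for $(z, y) \in \supp m$; hence for each such $(z, y)$ the map $t \mapsto \phi(t, z, y)$ is a single Fourier mode at frequency $-\Psi(z, y) \in [-1, 1]$, so $\supp \mathcal{F}_t \phi \subset [-1, 1]$ (a minor loosening of constants reduces this to the strict inclusion $(-1, 1)$ required by Theorem \ref{thm:stab with conv}). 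Finally, using the identity $\widetilde{U}(\tau, \xi) = \widetilde{u}(\tau - \Phi_1(\xi), \xi)$, the assumption $\supp \widetilde{u} \subset \{|\tau + \Phi_1(\xi)| \g 4\}$ translates into $\supp \mathcal{F}_t \widehat{U} \subset \{|\tau| \g 4\} \subset \RR \setminus (-4,4)$, which is the required high-modulation condition on the first factor.

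The one technical point will be the continuity requirement on $\phi$ in Theorem \ref{thm:stab with conv}, since $m$ and the phases $\Phi_j$ are only assumed measurable. I would handle this by the standard density argument: approximate $m$ (and, if necessary, each $\Phi_j$) by mollified symbols producing continuous $\phi^{(\varepsilon)}$ that obey the same support constraints and uniform $L^\infty_z L^2_y + L^\infty_y L^2_z$ bounds, apply Theorem \ref{thm:stab with conv} at each level $\varepsilon$, and pass to the limit. Once this is in place, the $V^p_{\Phi_0}$ estimate follows from the $V^p$ part of Theorem \ref{thm:stab with conv} and the $U^p_{\Phi_0}$ estimate from the $U^p$ part, by entirely parallel arguments. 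The hard analytic content is already contained in Theorem \ref{thm:stab with conv}; the role of the adapted spaces here is purely to reveal the underlying high-low product structure after passing to a co-moving frame.
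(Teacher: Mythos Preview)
Your argument is essentially identical to the paper's: conjugate out the free propagators, pass to the Fourier side via Plancherel, identify the resulting expression as $\mc{T}_\phi[\widehat{U},\widehat{V}]$ with $\phi(t,z,y)=m(z,y)e^{it\Psi(z,y)}$, verify the temporal support constraints, and invoke Theorem~\ref{thm:stab with conv}. One small simplification: the continuity hypothesis in Theorem~\ref{thm:stab with conv} is continuity of $t\mapsto\phi(t,\cdot,\cdot)$ into $L^\infty_zL^2_y+L^\infty_yL^2_z$, and since $|\Psi|\les 1$ on $\supp m$ gives $\|\phi(t)-\phi(s)\|\les|t-s|\,\|m\|$, this is automatic with no need to mollify $m$ or the phases.
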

\begin{proof}
We begin by observing that $\| \psi \|_{V^p} = \| \widehat{\psi} \|_{V^p}$ and, via Theorem \ref{thm:characteristion of Up}, $\| \psi \|_{U^p} \approx \|\widehat{\psi} \|_{U^p}$. Let $u_1(t,\xi) = e^{ -i t \Phi_1(\xi)} \widehat{u}(t,\xi)$ and $v_2(t,\xi) = e^{-it\Phi_2(\xi)} \widehat{v}(t,\xi)$, then $\supp \mc{F}_t u_1 \subset \{ |\tau| \g 4\}$, and it suffices to prove that
        \begin{align*}
         \Big\| \int_{\RR^n} e^{ i t( \Phi_0(\xi) - \Phi_1(\xi-\eta) - \Phi_2(\eta))}&m(\xi-\eta, \eta) u_1(\xi-\eta) v_2(\eta) d\eta \Big\|_{V^p} \\
         & \lesa \| m \|_{L^\infty_\xi L^2_\eta + L^\infty_\eta L^2_\xi} \| u_1 \|_{V^p} \| v_2 \|_{V^p}.
        \end{align*}
But this is a consequence of Theorem \ref{thm:stab with conv} after noting that
    $$\int_{\RR^n} e^{ i t( \Phi_0(\xi) - \Phi_1(\xi-\eta) - \Phi_2(\eta))}m(\xi-\eta, \eta) u_1(\xi-\eta) v_2(\eta) d\eta = \mc{T}_\phi[u_1, v_2]$$
with $\phi(t,\xi, \eta) = e^{ i t( \Phi_0(\xi+\eta) - \Phi_1(\xi) - \Phi_2(\eta))}m(\xi, \eta)$, and $\supp \mc{F}_t \phi \subset \{ |\tau| \les 1\}$.  The proof of the $U^p$ bound follows from an identical application of Theorem \ref{thm:stab with conv}.
\end{proof}

Typically the multiplier $m(\xi, \eta) = \ind_{\Omega_0}(\xi + \eta) \ind_{\Omega_1}(\xi) \ind_{\Omega_2}(\eta)$ is a cutoff to some frequency region, in which case we have
		$$ \| m \|_{L^\infty_\xi L^2_\eta + L^\infty_\eta L^2_\xi} \lesa (\min\{ |\Omega_0|, |\Omega_1|, |\Omega_2|\}\big)^\frac{1}{2}.$$
Clearly more involved examples are also possible.

\section{The bilinear restriction estimate in adapted function spaces}\label{sec:adapted bilinear restriction}

 Let $\lambda_1\g 1$ and define
    $$ \Lambda_1= \{ |\xi - e_1| < \tfrac{1}{100} \}, \qquad \qquad \Lambda_2= \{ |\xi \mp \lambda e_2| < \tfrac{1}{100} \lambda \} $$
with $e_1 = (1, 0, \dots, 0)$ and $e_2 = (0, 1, 0, \dots, 0)$. In this section, we give the proof of the following theorem.

\begin{theorem}\label{thm:L2 bilinear restriction}
Let $\frac{1}{n+1} < \frac{1}{b}\les \frac{1}{a} \les \frac{1}{2}$, $\frac{1}{a} + \frac{1}{b} \g \frac{1}{2}$ and $\lambda \g 1$. Assume that $u \in U^a_+$ and $v \in U^b_{\pm}$ with $ \supp \widehat{u} \subset \Lambda_1$ and $\supp \widehat{v} \subset \Lambda_2$. Then
    $$ \| u v\|_{L^2_{t,x}} \lesa \lambda^{(n+1)(\frac{1}{2} - \frac{1}{a})} \| u \|_{U^a_+} \| v \|_{U^b_{\pm}}. $$
\end{theorem}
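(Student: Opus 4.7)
The strategy is to adapt Tao's induction-on-scales proof of the endpoint bilinear adjoint restriction for the cone \cite{Tao2001b} to the atomic function spaces, following the programme of \cite{Candy2017b}. I would define $A(R)$ as the best constant such that
$$\|uv\|_{L^2_{t,x}(Q_R)} \le A(R) \, \lambda^{(n+1)(\frac{1}{2} - \frac{1}{a})} \|u\|_{U^a_+} \|v\|_{U^b_\pm}$$
holds for all admissible $u,v$ and all spacetime balls $Q_R$ of radius $R$, with the goal of showing $A(R) \lesa 1$ uniformly in $R$.

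The base case $a=b=2$ reduces, via atomic decomposition, to the classical bilinear $L^2_{t,x}$ adjoint restriction for transversal free waves, which has no $\lambda$-loss. Concretely, for $U^2_\pm$-atoms $u = \sum_I \ind_I(t) e^{-it|\nabla|} f_I$ and $v = \sum_J \ind_J(t) e^{\mp it|\nabla|} g_J$, the cells $I \cap J$ form a disjoint cover of $\RR$, so $L^2_t$-orthogonality gives
$$\|uv\|_{L^2_{t,x}}^2 = \sum_{I,J} \|e^{-it|\nabla|} f_I \, e^{\mp it|\nabla|} g_J\|_{L^2_{t,x}(I\cap J)}^2 \lesa \Big(\sum_I \|f_I\|_{L^2}^2\Big)\Big(\sum_J \|g_J\|_{L^2}^2\Big) \lesa 1,$$
and the general case follows by the atomic definition of the $U^2_\pm$ norms.

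For general $a,b \ge 2$, I would deploy Tao's \emph{wave table} decomposition: at an intermediate scale $R^{1-\varepsilon}$, partition $Q_R$ into smaller cubes, and on each cube decompose each free wave into essentially orthogonal wave packets localised to characteristic tubes through the cube. Transversality of $\Lambda_1$ and $\Lambda_2$ controls the number of interacting tube pairs at the target scale $\lambda$, while the curvature of the cone provides $L^2$-orthogonality of tubes within a single wave table; together these generate the factor $\lambda^{(n+1)(\frac{1}{2}-\frac{1}{a})}$ through a wave-packet counting argument that closes precisely under the arithmetic constraint $\frac{1}{a} + \frac{1}{b} \ge \frac{1}{2}$. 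Summing the contributions cube-by-cube and applying the inductive hypothesis inside each intermediate cube produces a recursion of the form $A(R) \le C A(R^{1-\varepsilon}) + C$, which iterates to $A(R) \lesa 1$.

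The principal obstacle is propagating Tao's wave table construction, which is formulated for single free waves, through the step-function structure of the $U^a_+$ and $U^b_\pm$ atoms. I would handle this using the square-sum orthogonality principle of Proposition \ref{prop:orthog} to lift an $L^2$-orthogonal wave-packet decomposition to a controlled decomposition in the atomic spaces. The jumps between atomic intervals contribute a high-modulation correction term, absorbed by the $X^{s,\frac12}_\infty$-type embedding $\|C^+_d u\|_{L^2_{t,x}} \lesa d^{-\frac12}\|u\|_{U^2_+}$ applied together with the inductive hypothesis at a finer scale. The subtle point, which is the essential technical innovation of Tao's endpoint argument and its atomic adaptation in \cite{Candy2017b}, is ensuring that the sacrificial $R^\varepsilon$ losses from the quilting can be absorbed into the $\lambda$-factor without producing logarithmic blow-up in $R$.
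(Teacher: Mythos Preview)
Your outline captures the broad strategy --- induction on scales built on Tao's wave tables, adapted to atoms --- but two concrete points diverge from what actually makes the argument close, and as written they would not work.

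First, the recursion shape. You write $A(R)\les C\,A(R^{1-\varepsilon})+C$; that is the Wolff-style step, and with a fixed multiplicative constant $C>1$ it iterates $\sim\varepsilon^{-1}\log\log R$ times and produces exactly the logarithmic loss you flag at the end. The paper (following Tao's endpoint argument) instead proves a step of the form
\[
A(2R)\les (1+C\epsilon)\,A(R)+C\epsilon^{-C}\lambda^{(n+1)(\frac12-\frac1a)}\Big(\tfrac{R}{\lambda}\Big)^{-\delta},
\]
with $\epsilon=\epsilon(R)\to 0$ chosen along the dyadic scale. The near-unit multiplicative constant and the power decay of the additive error are what make the telescoping sum converge; the $R\mapsto R^{1-\varepsilon}$ geometry plays no role.

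Second, and more substantively, the mechanism you propose for propagating through the atomic structure --- square-sum orthogonality from Proposition~\ref{prop:orthog} plus a high-modulation correction from the jumps --- is not how the $\ell^a\times\ell^b$ gain is produced, and it is not clear it can be made to yield the sharp $\lambda^{(n+1)(\frac12-\frac1a)}$ factor. The paper never isolates a modulation correction. Instead it packages the atomic pieces $(u_I)_I$ and $(v_J)_J$ as \emph{vector-valued} free waves $U,V$ and applies the wave table construction directly to those; the resulting wave tables $(u^{(B)})_B$ are then controlled in $\ell^a L^2$ by interpolating a bilinear $\ell^2\times\ell^2$ estimate (which decays in $R$ via Theorem~\ref{thm:wave tables}) against a crude H\"older bound that only sees $\ell^{a_0}\times\ell^\infty$ but loses $R^{1/2}$. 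This interpolation is exactly where the exponent $(n+1)(\frac12-\frac1a)$ appears, and where the constraint $\frac1a+\frac1b\ge\frac12$ is used (to pass from the $\ell^2$ square sum over cubes to $\ell^a\times\ell^b$). Your base case $a=b=2$ is correct but is not used as an inductive input; the initial bound in the paper is a trivial $A(R)\lesa (R/\lambda)^{10}$.
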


This is the case $\alpha\approx 1$ of Theorem \ref{thm:bilinear restriction}. In fact, the general case can essentially be reduced to this case.

\begin{remark}[Proof of the general case of Theorem \ref{thm:bilinear restriction}]\label{rmk:general-alpha}
To obtain the $L^2_{t,x}$ estimate stated in Theorem \ref{thm:bilinear restriction}, we need a small angle version of Theorem \ref{thm:L2 bilinear restriction}. More precisely, we need to show that if $\kappa, \kappa' \in \mc{C}_\alpha$ with $\angle(\kappa, \pm \kappa') \approx \alpha$, then provided that $\supp \widehat{u} \subset \{ |\xi| \approx 1, \frac{\xi}{|\xi|} \in \kappa \} $ and $\supp \widehat{v} \subset \{ |\xi| \approx \lambda, \frac{\xi}{|\xi|}  \in \kappa' \}$ we have
        $$ \| u v \|_{L^2_{t,x}} \lesa \alpha^{\frac{n-3}{2}} \lambda^{(n-1)(\frac{1}{2} - \frac{1}{a})} \| u \|_{U^a_+} \| v \|_{U^b_\pm}.$$
As in  \cite[Section 2.3]{Candy2017b}, after rotating and rescaling, this bound is equivalent to showing that
        $$ \| u v \|_{L^2_{t,x}} \lesa \lambda^{(n-1)(\frac{1}{2} - \frac{1}{a})} \| u \|_{U^a_\Phi} \| v \|_{U^b_{\pm \Phi}}$$
where the phase becomes $\Phi(\xi) = \alpha^{-2} ( \xi_1^2 + \alpha^2 |\xi'|)^\frac{1}{2}$, and the functions $u$ and $v$ now have Fourier support in the rectangles $\{ \xi_1 \approx 1, \xi_2 \approx 1, |\xi''| \ll 1\}$ and $\{ \pm \xi_1 \approx \lambda, |\xi'| \ll \lambda \}$. It is easy to check that the phase $\Phi$ behaves essentially the same as $|\xi|$. In particular, an analogue of the wave table construction Tao, Theorem \ref{thm:wave tables}, holds with $|\nabla|$ replaced with $\Phi$, which together with the argument given below, gives the small angle case of the bilinear $L^2_{t,x}$ estimate. See \cite{Candy2017b} for the details.
\end{remark}

\begin{remark}[The general bilinear restriction estimate]
Theorem \ref{thm:L2 bilinear restriction} is a special case of a  bilinear restriction type estimate for general phases. More precisely, suppose that $1\les q, r \les 2$ and $a,b \g 2$ with
	$$\frac{1}{q}  + \frac{n+1}{2r}< \frac{n+1}{2}, \qquad \frac{2}{(n+1)q} < \frac{1}{b} \les \frac{1}{a} \les \frac{1}{2}, \qquad \frac{1}{\min\{q,r\}} \les \frac{1}{a} + \frac{1}{b}.  $$
Then provided that the phases $\Phi_j$ satisfy suitable curvature and transversality properties, we have
     \begin{equation}\label{eqn:gen bilinear restriction} 
        \|  uv \|_{L^q_t L^r_x} 
        \les \mb{C} \| u \|_{U^a_{\Phi_1}} \| v \|_{U^b_{\Phi_2}}
     \end{equation}
where the constant is given by 
    $$ \mb{C} \approx \mu^{n+1-\frac{n+1}{r} - \frac{2}{q}} \mc{V}_{max}^{\frac{1}{r}-1} \mc{H}_1^{1-\frac{1}{q} - \frac{1}{r}} \Big( \frac{\mc{H}_1}{\mc{H}_2} \Big)^{\frac{1}{q} - \frac{1}{2} + (n+1)(\frac{1}{2} - \frac{1}{a})} \Big( \frac{\mc{V}_{max}}{\mu \mc{H}_1}\Big)^{(1-\frac{1}{r}-\frac{1}{b})_+}$$
with $\mu = \min\{ \text{diam}(\widehat{u}), \text{diam}(\widehat{v})\}$, $\mc{H}_j = \|\nabla^2 \Phi_j\|_{L^\infty}$, $\mc{V}_{max} = \sup |\nabla \Phi_1(\xi) - \nabla \Phi_2(\eta)|$, and $\mc{H}_2 \les \mc{H}_1$. This estimate is essentially sharp, see \cite[Theorem 1.7]{Candy2017b} for a more precise statement. Note that Theorem \ref{thm:bilinear restriction} is a special case of the bilinear restriction estimate \eqref{eqn:gen bilinear restriction}, together with a rescaling argument, see the proof of Theorem 1.10 in \cite{Candy2017b}. 
\end{remark}

In the remainder of this section we give the proof Theorem \ref{thm:L2 bilinear restriction}, assuming only Tao's wave table construction for free waves \cite[Proposition 15.1]{Tao2001b}. More precisely, in Subsection \ref{subsec:wave table}, we introduce some additional notation, and state the wave table construction of Tao \cite{Tao2001b}, as well as an averaging of cubes lemma. In Subsection \ref{subsec:atomic wave tables}, following closely the arguments in \cite{Candy2017b}, we show how the wave table construction for free waves implies a key localised bilinear estimate for atoms. Finally in Subsection \ref{subsec:induction on scales}, we run the induction on scales argument, and complete the proof of Theorem \ref{thm:L2 bilinear restriction}.

\subsection{Notation and the wave table construction}\label{subsec:wave table}
We use same notation as in \cite{Candy2017b}.
Given a set $\Omega \subset \RR^n$, a vector $h \in \RR^n$, and a scalar $c>0$,  we let $\Omega + h = \{ x + h \mid x \in \Omega \}$ denote the translation of $\Omega$ by $h$, and define $\diam(\Omega)= \sup_{x, y \in \Omega} |x-y|$ and $\Omega + c = \{ x + y\mid x\in \Omega, \,|y|<c\}$ to  be the Minkowski sum of $\Omega$ and the ball $\{|x|< c\}$.

Let $R \g 1$ and $0<\epsilon\les 1$. The constant $R$ denotes the large space-time scale and $0<\epsilon\les 1$ is a small fixed parameter used to control the various error terms that arise. All cubes in this article are oriented parallel to the coordinate axis. Let $Q$ be a cube side length $R$, and take a subscale $0<r\les R$. We define $\mc{Q}_r(Q)$ to be a collection of disjoint subcubes of width $ 2^{-j_0} R$ which form a cover of $Q$, where $j_0$ is the unique integer such that $2^{-1-j_0} R< r \les 2^{-j_0} R$ (in other words we divide $Q$ up into smaller cubes of equal diameter $2^{-j_0} R$). Thus all cubes in $Q_{r}(Q)$ have side lengths $\approx r$, and moreover, if $r \les r'\les R$ and $q \in \mc{Q}_r(Q)$, $q' \in \mc{Q}_{r'}(Q)$ with $q\cap q' \not = \varnothing$, then $q \in \mc{Q}_{r}(q')$. To estimate various error terms which arise, we need to create some separation between cubes. To this end, following Tao, we introduce the following construction. Given $0<\epsilon \ll 1$ and a subscale $0<r\les R$, we let
        $$ I^{\epsilon, r}(Q) = \bigcup_{q \in \mc{Q}_r(Q)} (1-\epsilon) q. $$
Note that we have the crucial property, that if $q \in \mc{Q}_r(Q)$, and $(t,x) \not \in I^{\epsilon, r}(Q) \cap q$, then $ \dist\big( (t,x), q\big) \g \epsilon r$. Given sequences $(\epsilon_m)$ and $(r_m)$ with $\epsilon_m>0$ and $0<r_m \les R$, we define
        $$ X[Q] = \bigcap_{m=1}^M I^{\epsilon_m, r_m}. $$
Thus cubes inside $X[Q]$ are separated at multiple scales. An averaging argument allows us to move from a cube $Q_R$, to the set $X[Q]$ but with a larger cube $Q$.

\begin{lemma}[{\cite[Lemma 6.1]{Tao2001b}}]\label{lem:cube averaging}
Let $R>0$, $ \epsilon = \sum_{j=1}^M \epsilon_j \les 2^{-(n+2)}$, and $r_m \les R$. For every cube $Q_R$ of diameter $R$, there exists a cube $Q \subset 4Q_R$ of diameter $2R$ such that for every $F \in L^2_{t,x}(Q_R)$ we have
        $$ \| F \|_{L^2_{t,x}(Q_R)} \les ( 1+ 2^{n+2}\epsilon) \| F \|_{L^2_{t,x}(X[Q])}. $$
\end{lemma}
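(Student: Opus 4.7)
The plan is to produce the cube $Q$ by a translation-averaging argument. After translating so that $Q_R = [0,R]^{1+n}$, I will consider the family $Q_h := h + [0,2R]^{1+n}$ parametrized by $h$ in the translation set $H := [-R,0]^{1+n}$; each such $Q_h$ automatically satisfies $Q_R \subset Q_h \subset 4Q_R$ and has diameter $2R$. The objective will be to produce, via pigeonhole on $h$, a single $h^\ast \in H$ for which
$$ \| F \|_{L^2_{t,x}(Q_R \setminus X[Q_{h^\ast}])}^2 \les (1+n)\epsilon \| F \|_{L^2_{t,x}(Q_R)}^2. $$
A short rearrangement of $\|F\|_{L^2(Q_R)}^2 = \|F\|_{L^2(Q_R \cap X[Q_{h^\ast}])}^2 + \|F\|_{L^2(Q_R \setminus X[Q_{h^\ast}])}^2$, together with the elementary inequality $(1-t)^{-1/2} \les 1 + 2t$ valid for $t \les 1/2$, will then convert this into the claimed bound with constant $1 + 2(1+n)\epsilon \les 1 + 2^{n+2}\epsilon$; the hypothesis $\epsilon \les 2^{-(n+2)}$ is exactly what guarantees that $(1+n)\epsilon \les 1/2$, and the crude comparison $2(n+1) \les 2^{n+2}$ absorbs the remaining constant.

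To locate such an $h^\ast$, by Fubini it suffices to establish the averaged bound
$$ \frac{1}{|H|} \int_H |\{ x \in Q_R : x \notin X[Q_h] \}|_F \, dh \les (1+n)\epsilon \|F\|_{L^2(Q_R)}^2, $$
where $|\cdot|_F$ denotes the $|F|^2$-weighted Lebesgue measure. Switching the order of integration, this reduces to showing, for every fixed $x \in Q_R$, the pointwise probability estimate $|\{h \in H : x \in Q_R \setminus X[Q_h]\}| \les (1+n)\epsilon |H|$. Since $X[Q_h] = \bigcap_{m=1}^M I^{\epsilon_m, r_m}(Q_h)$, a union bound over $m$ further reduces this to the scale-$m$ analogue with $\epsilon_m$ in place of $\epsilon$.

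The geometric heart of the plan, and the only nontrivial ingredient, is the commensurability between the averaging range $H$ and the internal grid at each scale. At scale $m$ the subcubes in $\mc{Q}_{r_m}(Q_h)$ are an $h$-translate of a fixed axis-parallel grid of period $\ell_m := 2^{-j_0}R$ with $r_m \les \ell_m < 2r_m$; crucially, the side of $H$ is $R = 2^{j_0}\ell_m$, an exact integer number of periods. Hence as $h$ varies uniformly over $H$, the residue $(x_i - h_i) \bmod \ell_m$ is uniformly distributed on $[0,\ell_m)$ independently in each coordinate, and the event $\{x \in I^{\epsilon_m, r_m}(Q_h)\}$ is exactly the product event that every residue lies in a concentric subinterval of relative length $1-\epsilon_m$. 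Its probability is $(1-\epsilon_m)^{1+n}$, so its complement is bounded by $(1+n)\epsilon_m$ via Bernoulli, which is the scale-$m$ estimate sought.

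The main obstacle in planning the proof is securing precisely this grid-alignment: it is the reason for choosing $H$ of side exactly $R$ (a dyadic multiple of every $\ell_m$) rather than a smaller translation set. Once the alignment is in place, the remainder of the argument — the averaged bound via Fubini, pigeonhole selection of $h^\ast$, and conversion of the additive $(1+n)\epsilon$ error into the multiplicative factor $1 + 2^{n+2}\epsilon$ — is routine bookkeeping with no further surprises.
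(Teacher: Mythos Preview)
The paper does not give its own proof of this lemma; it is simply quoted from Tao. Your translation-averaging argument is the standard one and is correct. The only point worth verifying carefully is the grid-alignment claim, and it checks: since $r_m \les R$, applying the definition of $\mc{Q}_{r_m}$ to the side-$2R$ cube $Q_h$ forces the dyadic exponent to be at least $1$, so the subcube width $\ell_m$ is a power-of-two fraction of $2R$ that divides $R$ exactly. After that, the uniform distribution of residues, the Bernoulli and union bounds, the pigeonhole selection of $h^\ast$, and the conversion via $(1-t)^{-1/2}\les 1+2t$ all go through as you describe.

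One remark on quantifiers: your argument produces a cube $Q=Q_{h^\ast}$ depending on $F$, whereas the lemma as written places ``for every $F$'' after ``there exists $Q$''. The literal reading is false (take $F$ supported on $Q_R\setminus X[Q]$, a set of positive measure for any fixed $Q$), so the statement must be read with the quantifiers swapped. This $F$-dependent version is exactly what is invoked in the proof of Theorem~\ref{thm-main bilinear estimate Up case}, so your argument delivers what is actually needed.
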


All functions in the following are vector valued, thus maps $u:\RR^{1+n} \to \ell^2_c(\ZZ)$ where $\ell^2_c(\ZZ)$ is the set of complex valued sequences with finitely non-zero components. The vector valued nature of the waves plays a key role in the induction on scales argument. A function $u \in L^\infty_t L^2_x$ is a $\pm$-\emph{wave}, if $u = e^{\mp it |\nabla|} f$ for some (vector valued) $f \in L^2_x$. An \emph{atomic $\pm$-wave} is a function $v \in L^\infty_t L^2_x$ such that $ v = \sum_I \ind_I v_I$ with the intervals $I$ forming a partition of $\RR$, and  each $v_I$ is a $\pm$-wave (i.e. we take $e^{\pm i t|\nabla|} v \in \mathfrak{S}$). Given an atomic $\pm$-wave $v = \sum_I \ind_I v_I $, we let
        $$ \| v \|_{\ell^a L^2} = \Big( \sum_I \| v_I \|_{L^2}^a \Big)^\frac{1}{a}.$$\\

For a subset $\Omega \subset \RR^{1+n}$ we let $\ind_{\Omega}$ denote the indicator function of $\Omega$. Let $\mc{E}$ be a finite collection of subsets of $\RR^{1+n}$, and suppose we have a collection of ($\ell^2_c$-valued) functions $(u^{(E)})_{E\in \mc{E}}$. We then define the associated \emph{quilt}
            $$ [u^{(\cdot)}](t,x) = \sum_{E \in \mc{E}} \ind_E(t,x) |u^{(E)}(t,x)|.$$
This notation was introduced by Tao \cite{Tao2001b}, and plays a key technical role in localising the product $uv$ into smaller scales.\\

Following the argument in \cite{Candy2017b}, which adapts the proof of Tao to the setting of atomic waves, our goal is to show that Theorem \ref{thm:L2 bilinear restriction} is a consequence of the following \emph{wave table} construction of Tao.

\begin{theorem}[Wave Tables {\cite[Proposition 15.1]{Tao2001b}}]\label{thm:wave tables}
Fix $n\g 2$. There exists a constant $C>0$, such that, for all $0<\epsilon \ll 1$, $\lambda \g 1$, $R \g 100 \lambda$, all cubes $Q$ of diameter $R$, and $+$-waves $F$, and $\pm$-waves $G$ with
    $$ \supp \widehat{F} \subset \Lambda_1 + \frac{1}{100}, \qquad \supp \widehat{G} \subset \Lambda_2 + \frac{1}{100},$$
there exists for each $B \in \mathcal{Q}_{\frac{R}{4}}(Q)$, a $+$-wave $\mc{W}^{(B)}_{1,\epsilon} = \mc{W}_{1,\epsilon}^{(B)}[F; G,Q]$, and a $\pm$-wave $\mc{W}^{(B)}_{2,\epsilon} = \mc{W}^{(B)}_{2,\epsilon}[G; F,Q]$ satisfying
    $$ F = \sum_{B \in \mc{Q}_{\frac{R}{4}}(Q)} \mc{W}_{1,\epsilon}^{(B)}, \qquad G = \sum_{B \in \mc{Q}_{\frac{R}{4}}(Q)} \mc{W}_{2, \epsilon}^{(B)},$$
the Fourier support condition
    $$ \supp \widehat{\mc{W}}^{(B)}_{1,\epsilon} \subset \supp \widehat{F} + R^{-\frac{1}{2}} , \qquad \supp \widehat{\mc{W}}^{(B)}_{2,\epsilon} \subset \supp \widehat{G} + (\lambda R)^{-\frac{1}{2}}, $$
the energy estimates
    \begin{align*} \Big( \sum_{B \in \mc{Q}_{\frac{R}{4}}(Q)} \| \mc{W}^{(B)}_{1,\epsilon} \|_{L^\infty_t L^2_x}^2 \Big)^\frac{1}{2} &\les (1 + C \epsilon) \| F \|_{L^\infty_t L^2_x},\\
     \Big( \sum_{B \in \mc{Q}_{\frac{R}{4}}(Q)} \| \mc{W}_{2,\epsilon}^{(B)} \|_{L^\infty_t L^2_x}^2 \Big)^\frac{1}{2} &\les (1 + C \epsilon) \| G \|_{L^\infty_t L^2_x}, 
    \end{align*}
and the bilinear estimates
    $$ \big\| \big( |F| - [\mc{W}^{(\cdot)}_{1,\epsilon}]\big) G \big\|_{L^2_{t,x}(I^{\epsilon, \frac{R}{4}}(Q))} \les C \epsilon^{-C} R^{-\frac{n-1}{4}} \| F \|_{L^\infty_t L^2_x} \| G \|_{L^\infty_t L^2_x},$$
and
    $$\big\| F \big( |G| - [\mc{W}^{(\cdot)}_{2,\epsilon}]\big)  \big\|_{L^2_{t,x}(I^{\epsilon, \frac{R}{4}}(Q))} \les C \epsilon^{-C} \Big( \frac{R}{\lambda} \Big)^{-\frac{n-1}{4}} \| F \|_{L^\infty_t L^2_x} \| G \|_{L^\infty_t L^2_x}. $$
\end{theorem}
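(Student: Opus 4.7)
My plan is to implement the wave packet decomposition of Tao in the form of [Tao2001b, Proposition~15.1]. Since $F$ is a free $+$-wave, it is determined by its data $f = F(0)$ with $\widehat{f}$ supported in $\Lambda_1 + \tfrac{1}{100}$. First I would fix a smooth partition of unity $\{\psi_\theta\}$ on the sphere subordinate to a cover by caps of radius $R^{-1/2}$ centred at directions $\omega_\theta$, together with a smooth spatial partition $\{\eta_{x_0}\}_{x_0 \in R^{1/2}\ZZ^n}$ of $\RR^n$, and form the wave packets
$$ F_{\theta,x_0}(t,x) = e^{-it|\nabla|}\big[\psi_\theta(\tfrac{-i\nabla}{|\nabla|})(\eta_{x_0} \cdot \psi_\theta(\tfrac{-i\nabla}{|\nabla|}) f)\big](x). $$
Standard stationary phase shows that $F_{\theta,x_0}$ is concentrated in the tube $T_{\theta,x_0}$ of radius $\sim R^{1/2}$ in direction $(1,\omega_\theta)$ through $(0,x_0)$, with rapid (Schwartz) decay outside. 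The analogous construction is applied to $G$ but with angular caps of size $(\lambda R)^{-1/2}$, yielding tubes of radius $(\lambda R)^{1/2}$; the restriction $R \g 100\lambda$ guarantees these packets are also at a finer scale than $R$.

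Second, I would organise these packets into the wave table. For each $B \in \mathcal{Q}_{R/4}(Q)$ I assign a packet $F_{\theta,x_0}$ to $B$ if the tube $T_{\theta,x_0}$, evaluated at the centre time of $Q$, meets $B$ (ties broken deterministically), and set
$$ \mathcal{W}^{(B)}_{1,\epsilon} = \sum_{(\theta,x_0)\mapsto B} F_{\theta,x_0}, \qquad \mathcal{W}^{(B)}_{2,\epsilon} = \sum_{(\theta',x_0')\mapsto B} G_{\theta',x_0'}. $$
The decomposition $F = \sum_B \mathcal{W}^{(B)}_{1,\epsilon}$ is automatic from the partition of unity (modulo Schwartz tails that can be absorbed by a mild rearrangement). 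The Fourier support inclusion is immediate since each packet has Fourier support in a cap of radius $R^{-1/2}$ about $\omega_\theta$, hence inside $\supp\widehat{F} + R^{-1/2}$; the $G$-analogue gives the $(\lambda R)^{-1/2}$ thickening. The energy bound reduces to the almost-orthogonality
$$ \sum_{\theta,x_0} \|F_{\theta,x_0}\|_{L^\infty_t L^2_x}^2 \les (1+C\epsilon)\|F\|_{L^\infty_t L^2_x}^2, $$
which follows from Plancherel plus the Schwartz overlap of the spatial cutoffs at scale $R^{1/2}$ (the smoothed partitions can be chosen so that the pairwise inner products contribute $O(\epsilon)$ losses), combined with the square-sum-over-$B$ inequality enforced by the unique-assignment rule.

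The main obstacle is the bilinear estimate. At a point $(t,x) \in B_0 \cap I^{\epsilon,R/4}(Q)$, the quilt $[\mathcal{W}^{(\cdot)}_{1,\epsilon}](t,x) = |\mathcal{W}^{(B_0)}_{1,\epsilon}(t,x)|$, whereas $|F(t,x)|$ includes contributions from all packets passing through $(t,x)$, including those assigned to $B \neq B_0$. The construction ensures that for such rogue packets the tube $T_{\theta,x_0}$ passes through $B$ at the central time-slice but also hits $(t,x) \in B_0$, so $\dist(T_{\theta,x_0},(t,x)) \gtrsim \epsilon R$ by the $(1-\epsilon)$-shrinking, forcing the Schwartz decay of the packet to supply a factor $\epsilon^{-C}(R^{1/2}/\epsilon R)^N$ for any $N$. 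The remaining $R^{-(n-1)/4}$ gain comes from a Kakeya/stationary phase analysis exploiting transversality between $\Lambda_1$ and $\Lambda_2$: pairing a packet $F_{\theta,x_0}$ against $G$ and integrating in $L^2_{t,x}$ along its tube $T_{\theta,x_0}$, one uses that the directions $(1,\omega_\theta)$ and $(1,\pm\lambda e_2/|\lambda e_2|)$ are separated by an angle $\g \tfrac{1}{100}$, so the $G$-packets restricted to $T_{\theta,x_0}$ concentrate on a set of measure $\lesssim R \cdot (R^{1/2})^{n-1}$ and Cauchy--Schwarz yields the claimed $R^{-(n-1)/4}$. Square-summing over packets and using orthogonality closes the estimate with the $\epsilon^{-C}$ loss. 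The $G$-quilt bound is symmetric; it produces the $(R/\lambda)^{-(n-1)/4}$ because the $G$-tubes have the enlarged radius $(\lambda R)^{1/2}$, rescaling the Kakeya count accordingly. The most delicate bookkeeping is ensuring that the smooth cutoffs used to define the packets do not break either the Fourier support inclusion or the energy bound at the cost worse than $O(\epsilon)$; this is handled by Tao's iterated smoothing argument, which absorbs the Schwartz tails at each scale into the exponent $C$ in $\epsilon^{-C}$.
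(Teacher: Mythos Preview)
Your assignment rule is the gap. You send each packet $F_{\theta,x_0}$ to the cube $B$ that its tube meets at the central time slice of $Q$; in particular the wave table $\mc{W}^{(B)}_{1,\epsilon}$ you construct depends only on $F$ and $Q$, not on $G$. The bilinear estimate cannot hold for such a decomposition. Concretely, take $F$ to be a single wave packet whose tube, at the central time of $Q$, sits in some cube $B$, and take $G$ supported (in space-time) near a \emph{different} subcube $B_0$ through which the same tube also passes at an earlier time. At points $(t,x)\in B_0\cap I^{\epsilon,R/4}(Q)$ the quilt equals $|\mc{W}^{(B_0)}_{1,\epsilon}(t,x)|=0$, while $|F(t,x)|$ is of full size since the tube goes straight through $(t,x)$ --- there is no Schwartz decay here, the packet is not ``rogue'' in your sense at all. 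Hence $\|(|F|-[\mc{W}^{(\cdot)}_{1,\epsilon}])G\|_{L^2_{t,x}}\approx\|FG\|_{L^2_{t,x}}$ with no gain whatsoever in $R$. Your argument confuses packets that leak \emph{transversally} out of their tube (which do decay rapidly) with packets whose tube simply traverses several subcubes of $Q$ (which happens for essentially every packet).

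The construction the paper cites from Tao is genuinely different and this difference is the whole point: one sets $\mc{W}^{(B)}_{1,\epsilon}=\sum_T c_T(G,B)\,F_T$ with weights
\[
c_T(G,B)\approx \frac{\|G\|_{L^2_{t,x}(B\cap T)}^2}{\|G\|_{L^2_{t,x}(Q\cap T)}^2},
\]
so a packet $F_T$ is placed (fractionally) in $B$ according to how much of the $L^2$-mass of $G$ along the tube $T$ lies in $B$. Then on any $B_0$ the mismatch $|F|-[\mc{W}^{(\cdot)}_{1,\epsilon}]$ is built from packets $F_T$ for which $G|_T$ is \emph{not} concentrated on $B_0$, and it is this non-concentration, combined with transversality, that produces the $R^{-(n-1)/4}$ gain (a non-pigeonholed version of Wolff's argument). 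The $(1+C\epsilon)$ energy constant likewise requires Tao's refined wave packet decomposition, not just a generic smooth partition of unity.
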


\begin{remark} The notation used in Theorem \ref{thm:wave tables} differs somewhat from that used in \cite{Tao2001b}. For a proof in the general phase case, see \cite[Theorem 9.3]{Candy2017b}. It is important to note that Theorem \ref{thm:wave tables} is purely a statement about free waves, and does not involve any atomic structure.
\end{remark}

\begin{remark}
  The construction of the wave table $\mc{W}_{1,\epsilon}$ relies on a wave packet decomposition of $u$. Roughly speaking, we take
    $$ \mc{W}^{(B)}_{1,\epsilon} = \sum_{T} c_T F_T$$
  where $F_T$ is a wave packet concentrated in the tube $T$, and the (real-valued) coefficients $c_T = c_T(G,B)$ are given by
    $$ c_T \approx \Big( \frac{\| G \|_{L^2_{t,x}(B \cap T)}}{ \| G \|_{L^2_{t,x}(Q \cap T)}}\Big)^2.$$
  Thus $\mc{W}^{(B)}_{1,\epsilon}$ contains the wave packets $F_T$ of $F$, such that $G|_T$ is concentrated on the smaller cube $B$. Since $\mc{W}^{(B)}_{1,\epsilon}$ is a sum of wave packets of $F$, the support conclusion and fact that it is a $+$-wave essentially follow directly. Similarly, ignoring the constant and using the (almost) orthogonality of the wave packet decomposition, we have
    \begin{align*}  \sum_{B} \| \mc{W}^{(B)}_{1,\epsilon} \|_{L^\infty_t L^2_x}^2
            \lesa \sum_T \sum_B \| F_T \|_{L^\infty_t L^2_x}^2  \frac{\| G \|_{L^2_{t,x}(B \cap T)}^2}{ \| G \|_{L^2_{t,x}(Q \cap T)}^2}
            &\lesa \sum_T \| F_T \|_{L^\infty_t L^2_x}^2  \lesa \| F \|_{L^\infty_t L^2_x}^2.
    \end{align*}
  Improving the constant here to $1+C \epsilon$ requires an improved wave packet decomposition introduced by Tao. Finally, the bilinear estimate exploits the fact that $|u| - [\mc{W}^{(\cdot)}_{1,\epsilon}]$ on the cube $B$, only contains wave packets such that $G|_T$ is \emph{not} concentrated on $B$. This is essentially a non-pigeon holed version of the argument of Wolff \cite{Wolff2001}.
\end{remark}

\subsection{From wave tables to a bilinear estimate for atoms}\label{subsec:atomic wave tables}

In this section, we give the proof of the following consequence of Theorem \ref{thm:wave tables}.

\begin{theorem}\label{thm-main bilinear estimate Up case}
Let $\frac{1}{n+1}<\frac{1}{b}\les \frac{1}{a} \les \frac{1}{2}$, $0<\epsilon \ll 1$, and $Q_R$ be a cube of diameter $R \g 100 \lambda$. Then for any atomic $+$-wave $u = \sum_{I \in \mc{I}} \ind_I u_I$, and any atomic  $\pm$-wave $v = \sum_{J \in \mc{J}} \ind_J v_J$ with
		$$\supp \widehat{u} \subset \Lambda_1 + \frac{1}{100}, \qquad \supp \widehat{v} \subset \Lambda_2 + \frac{1}{100}$$
there exist a cube $Q$ of diameter $2R$ such that for each $I \in \mc{I}$ and $J \in \mc{J}$ we have a decomposition
	$$ u_I = \sum_{B \in \mc{Q}_{\frac{2R}{4^M}}(Q)} u_I^{(B)}, \qquad v_J = \sum_{B' \in \mc{Q}_{\frac{R}{2}}(Q)} v_J^{(B')}$$
where $M\in \NN$ with $4^{M-1} \les \lambda < 4^M $, and $u^{(B)}= \sum_{I \in \mc{I}} \ind_I u_I^{(B)}$ is an atomic $+$-wave, $v^{(B')}=\sum_{J \in \mc{J}} \ind_J v_J^{(B')}$ is an atomic $\pm$-wave, with the support properties
	$$ \supp \widehat{u}^{(B)} \subset \supp \widehat{u} + 2\Big( \frac{2 R}{\lambda} \Big)^{-\frac{1}{2}}, \qquad \supp \widehat{v}^{(B')} \subset \supp \widehat{v} +  2 \Big( \frac{2R}{\lambda} \Big)^{-\frac{1}{2}}.$$
Moreover, for any $a_0, b_0 \g 2$ we have the energy bounds
	$$ \Big(\sum_{B \in \mc{Q}_{\frac{2R}{4^M}}(Q)} \|u^{(B)}\|_{\ell^{a_0} L^2_x}^{a_0} \Big)^\frac{1}{a_0} \les (1 +C\epsilon) \| u \|_{\ell^{a_0} L^2_x}$$
	 $$\Big(\sum_{B' \in \mc{Q}_{\frac{R}{2}}(Q)} \|v^{(B')}\|_{\ell^{b_0} L^2_x}^{b_0} \Big)^\frac{1}{b_0} \les (1 + C \epsilon) \| v \|_{\ell^{b_0} L^2_x}$$
and the bilinear estimate
	\begin{align*}  \| u v \|_{L^2_{t,x}(Q_R)} \les (1+ &C\epsilon)  \big\| \big[u^{(\cdot)}\big] \big[v^{(\cdot)}\big] \big\|_{L^2_{t,x}(Q)} \\
	 &+ C \epsilon^{-C} \lambda^{(n+1)(\frac{1}{2} - \frac{1}{a})} \Big( \frac{R}{\lambda} \Big)^{\frac{n+1}{2}(\frac{1}{2} - \frac{1}{b})-\frac{n-1}{4}} \|u \|_{\ell^a L^2_x} \| v \|_{\ell^b L^2_x}
    \end{align*}
where the constant $C$ depends only on the dimension $n$, and the exponents $a,b$.
\end{theorem}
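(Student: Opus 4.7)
The plan is to adapt Tao's wave table construction (Theorem \ref{thm:wave tables}), which is purely a statement about free waves, to the atomic setting by applying it separately to each atom of $u$ and $v$ and gluing the resulting decompositions, following the strategy of \cite{Candy2017b}. First I would invoke Lemma \ref{lem:cube averaging} with the collection of scales $\{R/2, 2R/4^2, \ldots, 2R/4^M\}$ to replace $Q_R$ by a cube $Q$ of diameter $2R$ whose multi-scale-separated subset $X[Q]$ makes all relevant subcubes well-separated, at the cost of a $(1+C\epsilon)$ factor in the $L^2_{t,x}$ norm. Then, for each atom $v_J$, apply the high-frequency wave table $\mc{W}_{2,\epsilon}[v_J; u, Q]$ once on $Q$ to obtain $v_J = \sum_{B' \in \mc{Q}_{R/2}(Q)} v_J^{(B')}$ at scale $R/2$; for each atom $u_I$, iteratively apply the low-frequency wave table $\mc{W}_{1,\epsilon}$ through $M$ refinements starting at scale $R/2$ and reducing by a factor of four at each stage, reaching scale $2R/4^M \in [R/\lambda, 4R/\lambda]$ after $M$ iterations. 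The atomic waves $u^{(B)} = \sum_I \ind_I u_I^{(B)}$ and $v^{(B')} = \sum_J \ind_J v_J^{(B')}$ produced by gluing these piece-wise decompositions then satisfy the claimed properties by construction.

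The Fourier support conclusion follows from a geometric series argument: each iteration expands the support by at most $(2R/4^m)^{-1/2}$, and summing over $m = 1, \ldots, M$ is dominated by the finest scale $\approx (2R/\lambda)^{-1/2}$. For the energy estimates, the $\ell^2$ almost-orthogonality from Theorem \ref{thm:wave tables}, iterated through the $M$ scales with weights $\epsilon_m = 2^{-m}\epsilon$ chosen so that $\prod_m (1+C\epsilon_m) \le 1 + C\epsilon$, gives per atom the bound $\big(\sum_B \|u_I^{(B)}\|_{L^2_x}^2\big)^{1/2} \le (1+C\epsilon) \|u_I\|_{L^2_x}$. Combined with the pointwise inequality $\big(\sum_B x_B^{a_0}\big)^{1/a_0} \le \big(\sum_B x_B^2\big)^{1/2}$ (valid for $a_0 \g 2$ and $x_B \g 0$) and Fubini in $(I, B)$, this yields the claimed $\ell^{a_0} L^2_x$ estimate, and the same argument applies to $v$.

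For the bilinear estimate, the pointwise triangle inequality
\begin{equation*}
|u||v| \le \big(|u| - [u^{(\cdot)}]\big)|v| + [u^{(\cdot)}]\big(|v| - [v^{(\cdot)}]\big) + [u^{(\cdot)}][v^{(\cdot)}]
\end{equation*}
reduces the problem to bounding the two error terms on $X[Q]$. Since $|u| - [u^{(\cdot)}] = \sum_I \ind_I (|u_I| - [u_I^{(\cdot)}])$ (and similarly for $v$), the errors decompose atom-wise, and Tao's bilinear estimates from Theorem \ref{thm:wave tables}, iterated and geometrically summed through the $M$ scales, give for each pair $(I, J)$
\begin{equation*}
\big\|\ind_{I \cap J} \big(|u_I| - [u_I^{(\cdot)}]\big)|v_J|\big\|_{L^2_{t,x}(X[Q])} \lesa \epsilon^{-C} (R/\lambda)^{-(n-1)/4} \|u_I\|_{L^2_x} \|v_J\|_{L^2_x}.
\end{equation*}
Squaring, summing over $(I, J)$ using the disjointness of the time intervals, and taking square roots yields an $\ell^2 \times \ell^2$ version of the error bound; interpolation with a trivial atomic Strichartz-type bilinear estimate at the full scale $R$ then upgrades this to the stated $\ell^a \times \ell^b$ form with the precise scaling factors $\lambda^{(n+1)(1/2-1/a)}$ and $(R/\lambda)^{(n+1)(1/2 - 1/b)/2 - (n-1)/4}$.

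The hard part will be the careful bookkeeping in this last step: one must track the multiplicative constants through $M \approx \log_4 \lambda$ iterations, identify the correct trivial atomic bound at scale $R$ to interpolate against, and match the precise algebraic form of the stated error term, as carried out in detail in \cite{Candy2017b}.
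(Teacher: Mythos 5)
Your overall strategy is sound (apply Lemma \ref{lem:cube averaging} to pass to $X[Q]$, iterate Tao's wave table, control the errors via the wave table bilinear bounds), and your energy estimate is essentially the paper's. However, there are two substantial gaps and one smaller one.

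\textbf{First gap: the vector-valued free wave.} You propose to form $\mc{W}_{2,\epsilon}[v_J;\,u,Q]$ with the \emph{atomic} wave $u$ as the second argument, and similarly to run the iteration for $u_I$ against the atomic $v$. But Theorem~\ref{thm:wave tables} is a statement about free ($\pm$)-waves: both $F$ and $G$ must be free waves, since the construction is built out of a wave packet decomposition. Passing an atomic wave as $G$ is not well-defined. The paper's key device, which you omit, is to package the atoms into a \emph{single} $\ell^2_c$-valued free wave: first $V=(v_J)_{J\in\mc J}$ (with $|v|\les|V|$ and $\|V\|_{L^\infty_tL^2_x}=\|v\|_{\ell^2L^2}$), so that each $u_I$ is decomposed against $V$, not against an individual $v_J$; then $U=(u_I^{(B)})_{I,B}$, so that each $v_J$ is decomposed against the \emph{already-decomposed} $u$. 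This is essential for two reasons: (i) the decomposition of $u_I$ must be independent of $J$, otherwise $u^{(B)}=\sum_I\ind_Iu_I^{(B)}$ doesn't make sense; and (ii) the second bilinear error term $\|[u^{(\cdot)}](|v|-[v^{(\cdot)}])\|_{L^2}$ is estimated by $\||U|(|v_J|-[v_J^{(\cdot)}])\|_{L^2}$ via the wave table bilinear bound, which requires the decomposition of $v_J$ to be relative to $U$ and not to $u$. Your proposed order of operations (decompose $v$ first, then $u$) makes this step unavailable.

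\textbf{Second gap: interpolation order.} You propose to telescope and geometrically sum the wave table bilinear bound to obtain an $\ell^2\times\ell^2$ error bound with gain $(R/\lambda)^{-(n-1)/4}$, and \emph{then} interpolate against a single ``trivial atomic Strichartz-type bilinear estimate at the full scale $R$'' to reach $\ell^a\times\ell^b$. This does not reproduce the stated bound. The paper interpolates \emph{at each intermediate scale $m$} between the bilinear bound (decaying like $4^{-(M-m)(\frac{n-1}{4}-\delta C_0)}$) and a Bernstein-type linear bound which \emph{grows} like $4^{(M-m)/2}$; the net exponent $\delta^* = \frac{n+1}{2b}-\frac12-2\delta C_0/b$ is positive precisely when $\frac1b>\frac1{n+1}$, making the geometric sum converge. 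If you sum first and then interpolate, the best you can use on the other leg is a scale-$R$ (not scale $R/\lambda$) Strichartz bound, and the resulting estimate carries an unwanted extra factor $\lambda^{\frac12-\frac1b}$. Moreover, that linear bound at intermediate scales relies crucially on the quilt $[u^{(\cdot)}_m]$ being localised to cubes of side $\approx 2R/4^m$; there is no comparable ``trivial'' bound for the telescoping difference at scale $R$.

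\textbf{Minor gap: direction of the $\epsilon_m$.} You take $\epsilon_m=2^{-m}\epsilon$, decaying as $m$ increases. But the wave table bilinear error carries an $\epsilon_m^{-C_0}$ prefactor that competes against the $4^{-(M-m)\frac{n-1}{4}}$ decay; if $\epsilon_m\to 0$ as $m\to M$ you pick up an extra $\lambda^{C_0/2}$ at the dominant scale $m=M$. The paper chooses $\epsilon_m=4^{\delta(m-M)}\epsilon$, which \emph{increases} with $m$, so that $\epsilon_m^{-C_0}$ is smallest at $m=M$; the parameter $\delta$ is then taken small compared to $\frac{n-1}{4C_0}$.

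Your Fourier support geometric series and the $\ell^{a_0}$ energy estimates are handled correctly.
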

\begin{proof}
Let $C_0$ denote the constant appearing in Theorem \ref{thm:wave tables}. An application of Lemma \ref{lem:cube averaging} implies that there exists a cube $Q$ of radius $2R$ such that
        $$ \| uv \|_{L^q_t L^r_x(Q_R)} \les ( 1  + C \epsilon) \| uv \|_{L^q_t L^r_x(X[Q])}$$
where we take
	$$ X[Q] = \bigcap_{m=1,\dots, M} I^{\epsilon_m, 4^{-m}2R}(Q), \qquad \epsilon_m = 4^{\delta(m-M)} \epsilon$$
and $\delta>0$ is some fixed constant to be chosen later (which will depend only on the dimension $n$, the exponent $b$, and the constant $C_0$). Let $V=(v_J)_{J\in \mc{J}}$, perhaps after relabeling, we have  $V: \RR^{1+n} \to \ell^2_c(\ZZ)$. In particular,  $V$ is a $\pm$-wave such that $|v|\les |V|$ and $\| V \|_{L^\infty_t L^2_x} = \| v \|_{\ell^2 L^2}$. We now repeatedly apply the wave table construction in Theorem \ref{thm:wave tables} with $G=V$. More precisely, given $B_1 \in \mc{Q}_{\frac{R}{2}}(Q)$ we let
	$$ u^{(B_1)}_{I, 1} = \mc{W}^{(B_1)}_{1, \epsilon_1}(u_I; V, Q)$$
and assuming we have constructed $u_{I, m}^{(B_{m})}$ with $B_{m} \in \mc{Q}_{\frac{2R}{4^{m}}}(Q)$, we define for $B_{m+1} \in \mc{Q}_{\frac{2R}{4^{m+1}}}(B_m)$
		$$ u_{I, m+1}^{(B_{m+1})} = \mc{W}^{(B_{m+1})}_{1, \epsilon_{m+1}}\big( u^{(B_{m})}_{I, m}; V, B_{m}\big)$$
(thus we apply Theorem \ref{thm:wave tables} with $F=u^{(B_{m})}_{I, m}$, $G=V$, $\epsilon= \epsilon_m$, and $Q=B_m$). To extend this to the atomic waves, we simply take $u_m^{(B_m)} = \sum_{I \in \mc{I}} \ind_I(t) u_{I, m}^{(B_m)}$.
Finally, for $B \in \mc{Q}_{\frac{2R}{4^M}}(Q)$, we let $ u^{(B)}=u_M^{(B)}$. Clearly $u^{(B)}$ is again an atomic $+$-wave, and from an application of Theorem \ref{thm:wave tables} the Fourier supports satisfy
    \begin{align*}
     \supp u^{(B)} &\subset \supp u^{(B)}_{M-1} + \Big( \frac{2R}{4^{M-1}} \Big)^{-\frac{1}{2}} \\
     &\subset \supp \widehat{u} + \sum_{m=1}^{M-1} \Big( \frac{2R}{4^{m-1}}\Big)^{-\frac{1}{2} } \subset \supp \widehat{u} + 2 \Big( \frac{ 2R}{\lambda} \Big)^{-\frac{1}{2}}.
    \end{align*}
On the other hand, the energy inequality follows by exchanging the order of summation, using the fact that $a\g 2$, and repeatedly applying the energy estimate in Theorem \ref{thm:wave tables}
	\begin{align*}
  \bigg( \sum_{B \in \mc{Q}_{\frac{2R}{4^M}}(Q)} &\| u^{(B)} \|_{\ell^a L^2_x}^a \bigg)^\frac{1}{a}\\
            &\les \bigg( \sum_{I \in \mc{I}} \bigg( \sum_{B \in \mc{Q}_{\frac{2R}{4^M}}(Q)} \big\| u^{(B)}_{I} \big\|_{L^\infty_t L^2_x}^2 \bigg)^\frac{a}{2}\bigg)^{\frac{1}{a}} \\
            &\les (1 + C_0 \epsilon_{M}) \bigg( \sum_{I \in \mc{I}} \bigg( \sum_{B_{M-1} \in \mc{Q}_{\frac{2R}{4^{M-1}}}(Q)} \big\| u^{(B_{M-1})}_{I, M-1} \big\|_{L^\infty_t L^2_x}^2 \bigg)^\frac{a}{2}\bigg)^{\frac{1}{a}} \\
            &\les \Pi_{m=1}^M ( 1 + C_0 \epsilon_m) \bigg( \sum_{I \in \mc{I}} \| u_{I} \|_{L^\infty_t L^2_x}^a\bigg)^{\frac{1}{a}} \les ( 1 + C \epsilon) \| u \|_{\ell^a L^2}
\end{align*}
where $C$ depends only on $\delta$ and $C_0$. The next step is to decompose $v = \sum_{J \in \mc{J}} \ind_J(t) v_J$. Let $U = ( u_I^{(B)})_{I \in \mc{I}, B \in \mc{Q}_{\frac{2R}{4^M}}}$. Then again, perhaps after relabeling, $U:\RR^{1+n} \to \ell^2_c(\ZZ)$, and hence $U$ is a $+$-wave with the pointwise bound $[u^{(\cdot)}] \les |U|$ and the energy bound $\| U \|_{L^\infty_t L^2_x} \les (1 + C \epsilon) \| u \|_{\ell^2 L^2_x}$.
We now decompose each $v_J$ relative to $U$ and the cube $Q$, in other words we apply Theorem \ref{thm:wave tables} and take for every $B' \in \mc{Q}_{\frac{2R}{4}}(Q) $
		$$ v^{(B')}_J = \mc{W}^{(B')}_{2, \epsilon}\big( v_J; U, Q\big)$$
and finally define $v^{(B')} = \sum_{J \in \mc{J}} \ind_J(t) v_J^{(B')}$. It is clear that $v^{(B')}$ is an atomic $\pm$-wave and that $v^{(B')}$ satisfies the correct Fourier support conditions. Furthermore, by a similar argument to the $u^{(B)}$ case, the required energy inequality also holds.

We now turn to the proof of the bilinear estimate. After observing that
        \begin{align*} 
        \| uv \|_{L^2_{t,x}(X[Q])} \les \big\| [u^{(\cdot)}] &[v^{(\cdot)}] \big\|_{L^2_{t,x}(X[Q])} \\
        		&+ \big\| \big( |u| - [u^{(\cdot)}]\big) v \big\|_{L^2_{t,x}(X[Q])} + \big\| [u^{(\cdot)}] \big( |v| - [v^{(\cdot)}]\big) \big\|_{L^2_{t,x}(X[Q])}, 
        \end{align*}
an application of H\"older's inequality implies that it is enough to show that for $\frac{1}{n+1} < \frac{1}{b} \les \frac{1}{2}$, and $ \frac{1}{b} \les \frac{1}{a} \les \frac{1}{2}$  we have
    \begin{equation}\label{eqn-thm main bilinear Up est-temp bilinear est}
    		\begin{split}
        \big\| \big( |u| - [u^{(\cdot)}]\big) v \big\|_{L^2_{t,x}(X[Q])} +& \big\| [u^{(\cdot)}] \big( |v| - [v^{(\cdot)}]\big) \big\|_{L^2_{t_x}(X[Q])}\\
                &\lesa \epsilon^{-C}\Big( \frac{R}{\lambda} \Big)^{\frac{1}{2} - \frac{n+1}{2b}} \lambda^{(n+1)(\frac{1}{2} - \frac{1}{a})} \| u \|_{\ell^a L^2} \| v \|_{\ell^b L^2}.
         \end{split}
    \end{equation}
We start by estimating the first term. The point is to interpolate between the ``bilinear'' $L^2_{t,x}$ estimate given in Theorem \ref{thm:wave tables} which decays in $R$, and  a ``linear'' $L^2_{t,x}$ estimate which can lose powers of $R$, but gains in the summability of the intervals $I$ and $J$. We first observe that by construction, Theorem \ref{thm:wave tables} implies that
	\begin{align}
		&\Big\| \Big( \big[ u^{(\cdot)}_{m-1}\big]- \big[ u^{(\cdot)}_{m} \big] \Big) v \Big\|_{L^2_{t,x}(X[Q])}^2   \notag \\
		&\les \sum_{I \in \mc{I}} \sum_{B_{m-1} \in \mc{Q}_{\frac{2R}{4^{m-1}}}(Q)} \Big\| \Big( |u^{(B_{m-1})}_{I, m-1}| - \big[ \mc{W}^{(\cdot)}_{1, \epsilon_m}(u^{(B_{m-1})}_{I, m-1}; V, Q)\big] \Big) V \Big\|_{L^2_{t,x}(I^{\epsilon_m, \frac{2R}{4^{m}}}(B_{m-1}))}^2\notag \\
		&\les C_0^2 \epsilon_m^{-2C_0} \Big( \frac{4^{m-1}}{2R}\Big)^{\frac{n-1}{2}}  \sum_{I \in \mc{I}}\sum_{B_{m-1} \in \mc{Q}_{\frac{2R}{4^{m-1}}}(Q)} \| u^{(B_{m-1})}_{I, m-1} \|_{L^\infty_t L^2_x}^2 \| V \|_{L^\infty_t L^2_x}^2  \notag \\
		&\lesa \epsilon^{-2C_0}  4^{-2(M-m)(\frac{n-1}{4} - \delta C_0)} \Big( \frac{R}{\lambda} \Big)^{-\frac{n-1}{2}} \| u \|_{\ell^2 L^2}^2 \| v \|_{\ell^2 L^2}^2 \label{eqn-thm main bilinear Up-bilinear L2 estimate}
	\end{align}
where we used the definition of $\epsilon_m$ and $M$. On the other hand, to obtain the linear $L^2_{t,x}$ bound, we start by noting that for any $1\les m \les M$, and $a_0\g2$ we have
    \begin{align}
    \bigg( \sum_{B \in \mc{Q}_{\frac{2R}{4^m}}(Q) } \big\| u^{(B)}_{m} \big\|_{L^\infty_t L^2_x}^2 \bigg)^\frac{1}{2}
                &\lesa   \bigg( \sum_{B \in \mc{Q}_{\frac{2R}{4^m}}(Q) } \big\| u^{(B)}_{m} \big\|_{\ell^{a_0} L^2_x}^2 \bigg)^\frac{1}{2} \notag \\
                &\lesa   4^{m (\frac{n+1}{2} - \frac{n+1}{a_0})} \bigg( \sum_{B \in \mc{Q}_{\frac{2R}{4^m}}(Q) } \big\| u^{(B)}_{m} \big\|_{\ell^{a_0} L^2_x}^{a_0} \bigg)^\frac{1}{a_0}  \notag \\
                &\lesa \lambda^{ (n+1)(\frac{1}{2} - \frac{1}{a_0})}  \| u \|_{\ell^{a_0} L^2_x} \label{eqn-thm main bilinear U p-L2 linear bound for u}
    \end{align}
where we applied the energy inequality for $u_m^{(B)}$. Therefore, an application of H\"older's inequality gives for any $a_0 \g 2$
	\begin{align}
		\Big\| \Big( \big[ u^{(\cdot)}_{m-1}\big]& -\big[ u^{(\cdot)}_{m} \big]\Big) v \Big\|_{L^2_{t,x}(X[Q])}\notag\\
			&\lesa \Big( \frac{R}{4^m}\Big)^\frac{1}{2}  \Bigg(\sum_{B_{m-1}} \| u^{(B_{m-1})}_{m-1} v\|_{L^\infty_t L^2_x (B_{m-1})}^2 + \sum_{B_m} \| u^{(B_m)}_m v \|_{L^\infty_t L^2_x(B_m)}^2 \Bigg)^\frac{1}{2}   \notag \\
            &\lesa  \Bigg(\sum_{B_{m-1}} \| u^{(B_{m-1})}_{m-1}\|_{L^\infty_t L^2_x}^2 + \sum_{B_m} \| u^{(B_m)}_m \|_{L^\infty_t L^2_x}^2 \Bigg)^\frac{1}{2} \| v\|_{L^\infty_t L^2_x}   \notag \\
            &\lesa\Big( \frac{R}{\lambda} \big)^{\frac{1}{2}} \lambda^{ (n+1)(\frac{1}{2} - \frac{1}{a_0})}  4^{(M-m) \frac{1}{2}} \| u \|_{\ell^{a_0} L^2} \|v \|_{\ell^{\infty} L^2} \label{eqn-thm main bilinear Up-linear L2}
    \end{align}
where we used the fact that $v^{(B')}$ has Fourier support contained in a set of diameter $1$. Interpolating between (\ref{eqn-thm main bilinear Up-bilinear L2 estimate}) and (\ref{eqn-thm main bilinear Up-linear L2}) then gives for any  $\frac{1}{b}\les \frac{1}{a} \les \frac{1}{2}$,
	\begin{align*} \Big\| \Big( \big[ u^{(\cdot)}_{m-1}\big] &- \big[ u^{(\cdot)}_{m} \big]\Big) v \Big\|_{L^2_{t,x}(X[Q])}\\
			&\lesa \epsilon^{-C}  4^{-(M-m)\delta^*} \Big( \frac{R}{\lambda} \Big)^{\frac{1}{2} - \frac{n+1}{2b}} \lambda^{(n+1)(\frac{1}{2} - \frac{1}{a})} \| u \|_{\ell^a L^2_x} \| v \|_{\ell^b L^2_x}
	\end{align*}
where $\delta^*  = \frac{n+1}{2b} - \frac{1}{2} - 2 \delta C_0 \frac{1}{b}$. Consequently, provided that $\frac{1}{n+1}<\frac{1}{b} \les 1 - \frac{1}{r}$, and we choose $\delta$ sufficiently small depending only on $C_0$, $b$, and $n$, we have $\delta^*>0$. Thus by telescoping the sum over $m$ and letting $ u_0^{(Q)} = u$, we deduce that
	\begin{align*} \big\| \big( |u| - [u^{(\cdot)}]\big) v \big\|_{L^2_{t,x}(X[Q])} &\les \sum_{m=1}^M \big\| \big( [ u^{(\cdot)}_{m-1}] - [ u^{(\cdot)}_{m} ] \big) v \big\|_{L^2_{t,x}(X[Q])} \\
	&\lesa \epsilon^{-C}\Big( \frac{R}{\lambda} \Big)^{\frac{1}{2} - \frac{n+1}{2b}}  \lambda^{(n+1)(\frac{1}{2} - \frac{1}{a})} \| u \|_{\ell^a L^2_x} \| v \|_{\ell^b L^2_x}.
	\end{align*}
It only remains to estimate the second term on the left hand side of \eref{eqn-thm main bilinear Up est-temp bilinear est}. To this end, applying the definition of $v^{(B')}$ together with Theorem \ref{thm:wave tables}, we have
		\begin{align} \big\| [u^{(\cdot)}] \big( |v| - [v^{(\cdot)}] \big)\big\|_{L^2_{t,x} (X[Q])}^2 &\les\sum_{J \in \mc{J}} \big\| U \big( |v_J| - [v^{(\cdot)}_J] \big)\big\|_{L^2_{t,x} (I^{\epsilon_1, \frac{R}{2}}(Q))}^2  \notag \\
		&\lesa  \epsilon^{-2C_n}  \Big( \frac{R}{\lambda} \Big)^{-\frac{n-1}{2}} \| u \|_{\ell^2 L^2}^2 \| v \|_{\ell^2 L^2}^2. \label{eqn:thm main bilinear Up:bilinear v}
        \end{align}
On the other hand an application of H\"older's inequality together with the energy estimates and (\ref{eqn-thm main bilinear U p-L2 linear bound for u}) gives for any $a_0 \g 2$
	\begin{align}  \big\| [u^{(\cdot)}] &\big( |v| - [v^{(\cdot)}] \big)\big\|_{L^2_{t,x} (X[Q])}\notag \\
	&\lesa  \bigg( \sum_{B \in \mc{Q}_{\frac{2R}{4^M}}} \big\| u^{(B)} \|_{L^2_{t,x}(B)}^2 \bigg)^\frac{1}{2} \sup_{J \in \mc{J}} \Big( \|v_J \|_{L^\infty_t L^2_x} + \| [v^{(\cdot)}_J] \|_{L^\infty_t L^2_x}\Big) \notag \\
	 &\lesa \Big( \frac{R}{\lambda} \Big)^\frac{1}{2} \lambda^{ (n+1)(\frac{1}{2} - \frac{1}{a_0})}  \| u \|_{\ell^{a_0} L^2} \| v \|_{\ell^{\infty} L^2}.
	 \label{eqn-thm main bilinear Up est-v linear L2 bound}
	 \end{align}
Therefore, interpolating between \eref{eqn:thm main bilinear Up:bilinear v} and \eref{eqn-thm main bilinear Up est-v linear L2 bound}, we deduce that for $\frac{1}{b}\les \frac{1}{a} \les \frac{1}{2}$, we have
	$$ \big\| [u^{(\cdot)}] \big( |v| - [v^{(\cdot)}] \big)\big\|_{L^2_{t,x}(X[Q])} \lesa \epsilon^{-C} \Big( \frac{R}{\lambda} \Big)^{\frac{1}{2} - \frac{n+1}{2b}} \lambda^{(n+1)(\frac{1}{2} - \frac{1}{a})}   \| u \|_{\ell^a L^2} \| v \|_{\ell^b L^2}$$
and consequently (\ref{eqn-thm main bilinear Up est-temp bilinear est}) follows.
\end{proof}

\subsection{The Induction on Scales Argument}\label{subsec:induction on scales}

Here we apply Theorem \ref{thm-main bilinear estimate Up case} and give the proof of Theorem \ref{thm:L2 bilinear restriction}. We start with the following definition.

\begin{definition}
Given $R>0$, we let $A(R)>0$ denote the best constant such that for all cubes $Q$ of diameter $R\g 100 \lambda$, and all atomic $+$-waves $u$, and atomic $\pm$-waves $v$ such that
    $$\supp \widehat{u} \subset \Lambda_1 + 4\Big( \frac{R}{\lambda}\Big)^{-\frac{1}{2}}, \qquad \qquad \supp \widehat{v} \subset \Lambda_2 + 4\Big( \frac{R}{\lambda}\Big)^{-\frac{1}{2}}$$
we have
    $$ \| u v\|_{L^2_{t,x}(Q)} \les A(R) \| u \|_{\ell^a L^2_x} \| v \|_{\ell^b L^2}. $$
\end{definition}

It is clear that $A(R) \lesa R^\frac{1}{2}$. Our goal is to show that in fact we have $A(R) \lesa \lambda^{(n+1)(\frac{1}{2}- \frac{1}{a})}$ for all $R \g 100 \lambda$. This is a consequence of an induction on scales argument, using the following bounds.

\begin{proposition}[Induction Bounds]\label{prop:induction bounds}
 There exists $C>0$ such that for all $R \g 100 \lambda$ and $0<\epsilon < \frac{1}{100}$ we have
    \begin{equation}\label{eqn:init step}
        A(R) \les C \lambda^{(n+1)(\frac{1}{2}- \frac{1}{a})} \Big( \frac{R}{\lambda} \Big)^{10}
    \end{equation}
and
    \begin{equation}\label{eqn:induc step}
        A(2R) \les ( 1 + C \epsilon) A(R) + C \epsilon^{-C} \lambda^{(n+1)(\frac{1}{2}- \frac{1}{a})} \Big( \frac{R}{\lambda } \Big)^{(\frac{1}{2} - \frac{1}{b}) \frac{n+1}{2} - \frac{n-1}{4} }.
    \end{equation}
\end{proposition}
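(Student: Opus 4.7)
The two bounds have quite different flavors. For the initial bound \eqref{eqn:init step}, the plan is to use a crude Bernstein--H\"older estimate. Since $\supp \widehat{u} \subset \Lambda_1 + O(1)$ has measure $\lesa 1$ and $\supp \widehat{v} \subset \Lambda_2 + O(1)$ has measure $\lesa \lambda^n$, Bernstein gives $\|u\|_{L^\infty_{t,x}} \lesa \|u\|_{L^\infty_t L^2_x}$ and $\|v\|_{L^\infty_{t,x}} \lesa \lambda^{n/2} \|v\|_{L^\infty_t L^2_x}$. Combined with $|Q_R|\approx R^{n+1}$ and the trivial embedding $\|\cdot\|_{L^\infty_t L^2_x} \les \|\cdot\|_{\ell^a L^2}$, one obtains $\| uv\|_{L^2(Q_R)} \lesa R^{(n+1)/2}\lambda^{n/2}\|u\|_{\ell^a L^2}\|v\|_{\ell^b L^2}$. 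Using $R\g 100\lambda$ and $a\g 2$, the factor $R^{(n+1)/2}\lambda^{n/2}$ is easily absorbed into $C\lambda^{(n+1)(\frac12-\frac1a)}(R/\lambda)^{10}$, at least after increasing the exponent $10$ if necessary; the exact polynomial exponent is not important, only that some such polynomial bound exists to seed the recursion.

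For the inductive bound \eqref{eqn:induc step}, we apply Theorem~\ref{thm-main bilinear estimate Up case} with the input cube taken to have diameter $2R$. This produces a cube $Q$ of diameter $4R$ together with atomic wave decompositions $u = \sum_{B} u^{(B)}$ over cubes $B$ of diameter $\approx R/\lambda$, and $v = \sum_{B'} v^{(B')}$ over cubes $B'$ of diameter $\approx R$, along with the bilinear estimate
\[
\|uv\|_{L^2(Q_{2R})} \les (1+C\epsilon)\big\|[u^{(\cdot)}][v^{(\cdot)}]\big\|_{L^2(Q)} + C\epsilon^{-C} \lambda^{(n+1)(\frac12-\frac1a)}(R/\lambda)^{(\frac12-\frac1b)\frac{n+1}{2}-\frac{n-1}{4}}\|u\|_{\ell^a L^2}\|v\|_{\ell^b L^2},
\]
whose error term is precisely the one appearing in \eqref{eqn:induc step}. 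The task reduces to showing that the quilt term is controlled by $(1+C\epsilon)A(R)\|u\|_{\ell^a L^2}\|v\|_{\ell^b L^2}$. Using the disjointness of the cube covers and the fact that each $B$ is contained in a unique $B'$, one identifies $\|[u^{(\cdot)}][v^{(\cdot)}]\|_{L^2(Q)}^2 = \sum_{B'}\|U_{B'} v^{(B')}\|_{L^2(B')}^2$, where $U_{B'} = (u^{(B)})_{B\subset B'}$ is the vector-valued wave assembled from the pieces sitting inside $B'$. On each cube $B'$ of diameter $R$, the pair $(U_{B'}, v^{(B')})$ satisfies the Fourier support conditions defining $A(R)$ (after absorbing the small enlargement $2(2R/\lambda)^{-1/2}$ from the wave table construction), and we invoke the induction hypothesis $A(R)$ applied to the vector-valued wave $U_{B'}$ on $B'$.

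The crux of the argument is then a careful accounting of the energy estimates. For the vector-valued norm, observe that because $u = \sum_I \ind_I u_I$ with each $u_I$ a free $+$-wave, Theorem~\ref{thm:wave tables} applied separately to each atom $u_I$ yields $\big(\sum_B \|u_I^{(B)}\|_{L^2_x}^2\big)^{1/2} \les (1+C\epsilon)\|u_I\|_{L^2_x}$, and hence $\|U_{B'}\|_{\ell^a L^2} \les (1+C\epsilon)\|u\|_{\ell^a L^2}$ for each $B'$ individually, \emph{without} any loss in $\lambda$ (which is the whole point, since the naive Minkowski/H\"older bound would lose the very factor $\lambda^{(n+1)(1/2-1/a)}$ we are trying to prove). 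The remaining summation over the $O(1)$ cubes $B'$ is handled by H\"older combined with the energy inequality $\big(\sum_{B'}\|v^{(B')}\|_{\ell^b L^2}^b\big)^{1/b} \les (1+C\epsilon)\|v\|_{\ell^b L^2}$ from Theorem~\ref{thm-main bilinear estimate Up case}, which collectively yield the desired bound $\|[u^{(\cdot)}][v^{(\cdot)}]\|_{L^2(Q)} \les (1+C\epsilon)A(R)\|u\|_{\ell^a L^2}\|v\|_{\ell^b L^2}$. The main obstacle is ensuring that the coefficient of $A(R)$ remains $(1+C\epsilon)$ rather than acquiring multiplicative dimensional constants that would spoil the geometric iteration; this is precisely why the atom-by-atom application of the wave table energy estimate, rather than a blunt Minkowski inequality, is essential.
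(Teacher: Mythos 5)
Your treatment of the initial bound \eqref{eqn:init step} via Bernstein--H\"older is a reasonable way to seed the induction, though the explicit exponent $10$ is not recovered for large $n$ (you concede this). The paper instead derives \eqref{eqn:init step} from the quilt estimate with $\epsilon\approx 1$, obtaining an $(R/\lambda)^{1/2}$ loss, uniformly in $n$; this is a genuinely cleaner route, but since only $A(100\lambda)\lesa\lambda^{(n+1)(\frac12-\frac1a)}$ feeds the iteration, this discrepancy is ultimately cosmetic.

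The real gap is in \eqref{eqn:induc step}. After writing $\|[u^{(\cdot)}][v^{(\cdot)}]\|_{L^2(Q)}^2 \les \sum_{B'}\|U_{B'}v^{(B')}\|_{L^2}^2$ and applying $A(R)$ on each $B'$, you propose to first replace $\|U_{B'}\|_{\ell^a L^2}$ by $(1+C\epsilon)\|u\|_{\ell^a L^2}$ \emph{per cube $B'$} and then sum. This leaves you needing
\[
\Big(\sum_{B'}\|v^{(B')}\|_{\ell^b L^2}^2\Big)^{1/2}\les(1+C\epsilon)\|v\|_{\ell^b L^2},
\]
but the wave-table energy inequality gives only the $\ell^b$ version, $\big(\sum_{B'}\|v^{(B')}\|_{\ell^b L^2}^b\big)^{1/b}\les(1+C\epsilon)\|v\|_{\ell^b L^2}$. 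Converting the outer $\ell^2$-sum over the $\approx 4^{n+1}$ cubes $B'$ into an $\ell^b$-sum via H\"older picks up a factor $(\#B')^{\frac12-\frac1b}$, a fixed dimensional constant $>1$ whenever $b>2$. That constant would multiply $A(R)$ at every step of the recursion and destroy the geometric iteration --- precisely the failure mode you flag as ``the main obstacle,'' but your proposed remedy (the atom-by-atom energy estimate) addresses a different issue (avoiding the naive $\lambda^{(n+1)(\frac12-\frac1a)}$ loss in $u$), not the dimensional loss in the $B'$-sum.

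The paper's proof avoids this by \emph{not} decoupling: it keeps both factors inside the $B'$-sum and applies H\"older with the hypothesis $\frac1a+\frac1b\g\frac12$,
\[
\Big(\sum_{B'}\|U^{(B')}\|_{\ell^a L^2}^2\|v^{(B')}\|_{\ell^b L^2}^2\Big)^{1/2}
 \les \Big(\sum_{B'}\|U^{(B')}\|_{\ell^a L^2}^a\Big)^{1/a}\Big(\sum_{B'}\|v^{(B')}\|_{\ell^b L^2}^b\Big)^{1/b},
\]
which introduces \emph{no} constant. It then needs the stronger summed energy bound $\big(\sum_{B'}\|U^{(B')}\|_{\ell^a L^2}^a\big)^{1/a}\les(1+C\epsilon)\|u\|_{\ell^a L^2}$, which follows from the atom-by-atom estimate together with the nesting $\sum_{B'}\big(\sum_{B\subset B'}x_B\big)^{a/2}\les\big(\sum_B x_B\big)^{a/2}$ valid for $a\g2$. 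Your per-$B'$ observation is correct but insufficient; the summed version together with the $\frac1a+\frac1b\g\frac12$ H\"older is what keeps the coefficient of $A(R)$ at $1+C\epsilon$.
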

\begin{proof}
  Let $Q$ be a cube of diameter $2R$, and let $u = \sum_{I \in \mc{I}} \ind_I(t) u_I$ be an atomic $+$-wave, and $v = \sum_{J \in \mc{J}} \ind_J(t) v_J$ be an atomic $\pm$-wave satisfying the support conditions
	$$ \supp \widehat{u} \subset \Lambda_1 + 4\Big( \frac{2R}{\lambda}\Big)^{-\frac{1}{2}}, \qquad \supp \widehat{v}  \subset \Lambda_2 + 4\Big( \frac{2R}{\lambda}\Big)^{-\frac{1}{2}} .$$
An application of Theorem \ref{thm-main bilinear estimate Up case} gives a cube $Q'$ of diameter $4R$, and atomic waves $(u^{(B)})_{B \in \mc{Q}_{\frac{R}{4^{M-1}}}(Q)}$, $(v^{(B')})_{B'\in \mc{Q}_{R}(Q)}$ such that
	\begin{equation}\label{eqn-prop induction bound Up version-bounded by quilt}
        \begin{split} \| u v\|_{L^2_{t,x}(Q)} \les (1 + &C\epsilon) \big\| [u^{(\cdot)}] [v^{(\cdot)}] \big\|_{L^2_{t,x}(Q')} \\
        &+  C \epsilon^{-C} \lambda^{(n+1)(\frac{1}{2} - \frac{1}{a})} \Big( \frac{R}{\lambda} \Big)^{\frac{n+1}{2}(\frac{1}{2} - \frac{1}{b})-\frac{n-1}{4} } \|u \|_{\ell^a L^2} \| v \|_{\ell^b L^2}
        \end{split}
\end{equation}
and the support properties
	$$ \supp \widehat{u}^{(B)} \subset \supp \widehat{u} + 2 \Big( \frac{4R}{\lambda}\Big)^{-\frac{1}{2}} \subset \Lambda_1 + 4 \Big( \frac{R}{\lambda} \Big)^{-\frac{1}{2}}$$
and similarly $\supp \widehat{v} \subset \Lambda_2 + 4 (\frac{R}{\lambda})^{-\frac{1}{2}}$.

To prove \eref{eqn:induc step}, we let $B' \in \mc{Q}_{R}(Q')$ and define the atomic $+$-wave $U^{(B')}= \sum_{I\in \mc{I}} \ind_I(t) U^{(B')}_I$ with $U^{(B')}_I = ( u^{(B)}_I )_{B \in \mc{Q}_{\frac{R}{4^{M-1}}}(B')}$. Then for every $B'\in \mc{Q}_{R}(Q)$ we have an atomic $+$-wave $U^{(B')}$ and an atomic $\pm$-wave $v^{(B')}$ satisfying the correct support assumptions to apply the definition of $A(R)$. Thus
	\begin{align*}
	   \big\| [ u^{(\cdot)} ] [ v^{(\cdot)}] \big\|_{L^2_{t,x}(Q')} &\les \bigg( \sum_{B' \in \mc{Q}_R(Q')} \| U^{(B')} v^{(B')} \|_{L^2_{t,x}}^2 \bigg)^{\frac{1}{2}} \\
	   &\les A(R) \Big( \sum_{B' \in \mc{Q}_R(Q')} \|U^{(B')}\|_{\ell^a L^2_x}^{a} \Big)^\frac{1}{a}\Big( \sum_{B' \in \mc{Q}_R(Q')} \|v^{(B')}\|_{\ell^b L^2_x}^{b} \Big)^\frac{1}{b} \\
	   &\les (1+C \epsilon) A(R)
	\end{align*}
where the second line used the assumption $\frac{1}{a} + \frac{1}{b} \g  \frac{1}{2}$ and the last applied the energy inequalities in Theorem \ref{thm-main bilinear estimate Up case}. Therefore the induction bound \eref{eqn:induc step} follows from an application of (\ref{eqn-prop induction bound Up version-bounded by quilt}).

We now turn to the proof of \eref{eqn:init step}. We begin by observing that again using the bound (\ref{eqn-prop induction bound Up version-bounded by quilt}) with $\epsilon\approx 1$ it is enough to prove that for every $\frac{1}{b} \les \frac{1}{a} \les \frac{1}{2}$  we have the quilt bound
     \begin{equation}\label{eqn-prop induction bound Up version-initial quilt bound}\big\| [u^{(\cdot)}] [v^{(\cdot)}] \big\|_{L^2_{t,x}(Q')} \lesa  \lambda^{(n+1)(\frac{1}{2} - \frac{1}{\lambda})} \Big( \frac{R}{\lambda}\Big)^{\frac{1}{2} - \frac{1}{b}} \| u \|_{\ell^a L^2} \| v \|_{\ell^b L^2}.\end{equation}
But this follows by observing that since $[u^{(\cdot)}] = \sum_B \ind_B |u^{(B)}|$ is localised to cubes of diameter $\frac{R}{\lambda}$, an application of H\"older's inequality together with the energy estimates implies that
    \begin{align*}
      \big\| [u^{(\cdot)}] [v^{(\cdot)}] \big\|_{L^2_{t,x}(Q')} &\lesa \lambda^{(n+1)(\frac{1}{2}- \frac{1}{a})} \Big( \frac{R}{\lambda} \Big)^\frac{1}{2} \bigg( \sum_{B} \| u^{(B)} \|_{L^\infty_t L^2_x}^a \bigg)^\frac{1}{a} \sup_{B'} \|v^{(B')}\|_{L^\infty_t L^2_x} \\
      &\lesa \lambda^{(n+1)(\frac{1}{2}- \frac{1}{a})} \Big( \frac{R}{\lambda} \Big)^\frac{1}{2} \| u \|_{\ell^a L^2_x} \|v \|_{\ell^\infty L^2_x}.
    \end{align*}
\end{proof}

We now come to the proof of Theorem \ref{thm:L2 bilinear restriction}.

\begin{proof}[Proof of Theorem \ref{thm:L2 bilinear restriction}]
Let $C$ denote the constant in Proposition \ref{prop:induction bounds}. Let $R= 2^k 100 \lambda$ and  $\epsilon_k = 2^{-\delta k}$ with $0<\delta < \frac{1}{C} [\frac{n-1}{4} - (\frac{1}{2} - \frac{1}{b}) \frac{n+1}{2}]$. Then an application of \eref{eqn:induc step} gives
    $$ A( 2^{k+1} 100 \lambda) \les ( 1 + C 2^{-k\delta}) A(2^k 100 \lambda) +  C \lambda^{(n+1)(\frac{1}{2}- \frac{1}{a})} 2^{ [(\frac{1}{2} - \frac{1}{b}) \frac{n+1}{2} - \frac{n-1}{4} + \delta C] k }. $$
Since both exponents decay in $k$, after $k$ applications, we deduce that
    $$ A(2^{k+1} 100 \lambda) \lesa A( 100 \lambda) +  \lambda^{(n+1)(\frac{1}{2}- \frac{1}{a})}  \lesa \lambda^{(n+1)(\frac{1}{2}- \frac{1}{a})}$$
where we used the initial induction bound \eref{eqn:init step}. Hence Theorem \ref{thm:L2 bilinear restriction} follows.
\end{proof}

\section*{Acknowledgement}
The authors thank Kenji Nakanishi and Daniel Tataru for
sharing their part in the story of the division problem with us.
Also, the authors thank Daniel Tataru for providing a preliminary version of \cite{Tataru2001}.

Financial support by the
  DFG through the CRC 1283 ``Taming uncertainty and profiting from
  randomness and low regularity in analysis, stochastics and their
  applications'' is acknowledged.
\bibliographystyle{amsplain}
\bibliography{wave_maps}
\end{document}